\newcommand{\pa}[1]{\left(#1\right)}
\newcommand{\ac}[1]{\left\{#1\right\}}
\newcommand{\cX}{\mathcal{X}}
\newcommand{\cA}{\mathcal{A}}
\newcommand{\cN}{\mathcal{N}}
\newcommand{\cM}{\mathcal{M}}
\newcommand{\cP}{\mathcal{P}}
\newcommand{\cC}{\mathcal{C}}
\newcommand{\cF}{\mathcal{F}}
\newcommand{\cbF}{\overline{\mathcal{F}}}
\newcommand{\R}{\mathbb{R}}
\newcommand{\htheta}{\mbox{${\widehat \theta}$}}
\newcommand{\hgamma}{\mbox{${\widehat \gamma}$}}
\newcommand{\homega}{\mbox{${\widehat \omega}$}}
\newcommand{\ceil}[1]{\left\lceil #1 \right\rceil}
\DeclareMathOperator*{\argmax}{argmax}
\DeclareMathOperator*{\argmin}{argmin}
\DeclareMathOperator*{\minimize}{minimize}
\DeclareMathOperator{\supp}{supp}
\DeclareMathOperator{\Image}{Range}
\newtheorem{thm}{Theorem}
\newtheorem{lem}{Lemma}
\newtheorem{defi}{Definition}
\newtheorem{remark}{Remark}
\title{The price of unfairness in linear bandits with biased feedback}
\author[1]{Solenne Gaucher \thanks{solenne.gaucher@math.u-psud.fr}}
\author[2]{Alexandra Carpentier}
\author[1]{Christophe Giraud}
\affil[1]{Laboratoire de Mathématiques d'Orsay, Université Paris-Saclay}
\affil[2]{University of Potsdam}
\begin{document}

\maketitle

\begin{abstract}
In this paper, we study the problem of fair sequential decision making with biased linear bandit feedback. At each round, a player selects an action described by a covariate and by a sensitive attribute. The perceived reward is a linear combination of the covariates of the chosen action, but the player only observes a biased evaluation of this reward, depending on the sensitive attribute. To characterize the difficulty of this problem, we design a phased elimination algorithm that corrects the unfair evaluations, and establish upper bounds on its regret. We show that the worst-case regret is smaller than $\mathcal{O}(\kappa_*^{1/3}\log(T)^{1/3}T^{2/3})$, where $\kappa_*$ is an explicit geometrical constant characterizing the difficulty of bias estimation. We prove lower bounds on the worst-case regret for some sets of actions showing that this rate is tight up to a possible sub-logarithmic factor. We also derive gap-dependent upper bounds on the regret, and  matching lower bounds for some problem instance.
Interestingly, these results reveal a transition between a regime where the problem is as difficult as its unbiased counterpart, and a regime where it can be much harder.
\end{abstract}

\section{Introduction}
Artificial intelligence is increasingly used in a wide range of decision making scenarii with higher and higher stakes, with application in online advertisement \cite{MLAdd}, credit \cite{credit}, health care \cite{healthcare}, education \cite{education} and job interviews \cite{hiring}, in the hope of improving accuracy and efficiency. Recent works have shown that the decisions made by algorithms can be dangerously biased against certain categories of people, and have endeavored to mitigate this behavior \cite{fairHiring, mortgage, chawla2021individual, BiasInML}. Studies have underlined that the main cause of algorithmic unfairness is the presence of bias in the training set \cite{BiasInML}, which led to the development of methods aiming to guarantee the fairness of the algorithms. This paper, in lines with these works, addresses the problem of online decision making under biased feedback.

Linear bandits have become a very popular tool in online decision making problems, when side information on the actions is available in the form of covariates. In the present paper, we consider a variant of this problem, where the agent only has access to an unfair assessment of the action taken, that is systematically biased against a group of actions. For example, examiners may be prejudiced against people from a minority group, and give them lower grades; similarly, algorithms trained on biased data may produce unfair assessments of the credit risk of individuals belonging to a minority group. Note that not correcting biased evaluation can have adverse effects for all parties: on the one hand, actions disadvantaged by the evaluation mechanism will be unfairly discriminated against; on the other hand, the agent may spend his budget on an unfairly advantaged action that is actually sub-optimal. The problem of sequential decision making under biased feedback can be formalized as follows.

\paragraph{Biased linear bandit problem} A player is presented with a set of $k$ distinct actions characterized by covariates $x \in \mathcal{X} \subset \mathbb{R}^{d}$, and by known sensitive attributes $z_x \in \{-1,1\}$ indicating the group of the action. At each round $t\leq T$, the player chooses the action $x_t$ and receives an unobserved reward $x_t^{\top}\gamma^*$, where $\gamma^* \in \mathbb{R}^{d}$ is the regression parameter specifying the true value of the action. The regret of the player is given by
\begin{equation}\label{eq:regret}
    R_T = \mathbb{E}\Big[\sum_{t\leq T}(x^*-x_{t})^{\top}\gamma^*\Big], \quad \text{ where } \quad x^* \in \argmax_{x \in \mathcal{X}} x^{\top}\gamma^*.
\end{equation} By contrast to the classical linear bandit, the player does not observe a noisy version of the unbiased reward $x_t^{\top}\gamma^*$. Instead, she observes an unfair evaluation $y_t$ of the value of the action $x_t^{\top}\gamma^*$, given by the following biased linear model:
$$y_t = x_t^{\top}\gamma^* + z_{x_t}\omega^* + \xi_t$$ where $\xi_t \overset{i.i.d}{\sim} \mathcal{N}(0,1)$ is a noise term. The evaluation are systematically biased against a certain group: this unequal treatment of the groups is captured by the bias parameter $\omega^* \in \mathbb{R}$.

\paragraph{Preliminary discussion}
The biased linear bandit is a variant of the linear bandit. By contrast, in the classical linear bandit model, the agent observes a noisy version of the reward. Obviously, applying directly an algorithm designed for linear bandit to biased linear bandits without correcting the evaluations would lead to a linear regret if the evaluation mechanism is prejudiced against the group of the best action in terms of reward, and if the best action in terms of feedback belongs to the advantaged group. To avoid this pitfall, one must estimate the bias in order to correct the evaluations. This implies a change in the exploration-exploitation trade-off, as exploration becomes more expensive. Indeed, in classical bandit problems, one can compare the rewards of two actions by repeatedly sampling them - or, to put it differently, one can find the best action by sampling only those actions that seem optimal. This does not hold in the biased linear bandit: if, at some point, the set of potentially optimal actions contains representatives from both groups, and does not span $\mathbb{R}^d$, one is forced to sample sub-optimal actions to estimate the bias and improve the estimation of the unbiased rewards. For this reason, classical algorithm for linear bandit that only sample actions considered as potentially optimal, such as OFUL \cite{NIPS2011_e1d5be1c} or Phase Elimination \cite{BanditBook}, can suffer linear regret. This underlines the necessity to ensure sufficient estimation of the bias parameter, even when it implies sampling sub-optimal actions.

\subsection{Related work}
Fairness in bandit problems has mostly been studied from the perspective of fair budget allocation between actions. This problem is motivated by the fact that classical bandit algorithms select sub-optimal actions only a vanishing fraction of the time, which may be undesirable in many situations. To mitigate this problem and guarantee diversity in the actions selected, some papers \cite{celis2018algorithmic,Patil2020,ClaureCMJN20,DiversityBandits2020,wang2021fairness} have proposed new algorithms ensuring fairness of the selection frequency of each action. The framework studied in this paper is different: we consider here that the mechanism for observing the rewards is unfair, and we aim at correcting it in order to maximize a (fair) true cumulative reward.

The biased linear model has been studied in the batch setting in \cite{chzhen2020minimax}, where the authors investigate the optimal trade-off between minimax risk and Demographic Parity. Detection of systematic bias, interpreted as a treatment effect, has been investigated in a batch setting in \cite{FairTreatmentEffect19}. In \cite{barik2021fair}, the authors consider a similar model, with unobserved sensitive attribute $z$ and known bias parameter $\omega^*$, under additional assumption that the sensitive attribute $z$ is independent from the covariate $x$. By contrast, we show that bias estimation is one of the main difficulties of the biased bandit problem. 

The linear bandit with biased feedback can be viewed as a stochastic partial monitoring game. With the terminology of partial monitoring, the biased problem considered in the present paper is globally observable but not locally observable: in this case, the optimal worst-case regret rate typically increases as $\tilde{O}(T^{2/3})$. This regret rate is for example achieved in the related problem of partial linear monitoring with linear feedback and linear reward using an Information Directed Sampling algorithm \cite{PartialFeedbackCOLT2020}. However, the dependence of the regret on the geometry of the action set and on the dimension $d$ remains in most cases an open question \cite{PartialFeedbackICML2014, PartialFeedbackNIPS2016,PartialFeedbackCOLT2020}. In this paper, we characterize the geometry of the biased linear bandit problem, and we investigate dependence of the regret on the gaps.

\subsection{Contribution and outline}

In this paper, we introduce the linear bandit problem with biased feedback. We design a new algorithm based on optimal design for this problem. We derive an upper bound on the worst case regret of this algorithm of order $\kappa_*^{1/3}\log(T)^{1/3}T^{2/3}$ for large $T$, where $\kappa_*$ is an explicit constant depending on the geometry of the action set. We provide matching lower bounds on some problem instances, showing that the constant $\kappa_*$ characterizes the difficulty of the action set.  Note that this regret is higher than the classical rates of order $\tilde{O}(dT^{1/2})$ obtained for $d$-dimensional linear bandits: this increase corresponds to the price to pay for debiasing the unfair evaluations.

We also characterize the gap-depend regret,  showing that it is of order $\left(\nicefrac{d}{\Delta_{\min}}\vee \nicefrac{\kappa(\Delta)}{\Delta_{\neq}^2}\right)\log(T)$, where $\Delta_{\min}$ is the minimum gap, $\Delta_{\neq}$ is the gap between the best actions of the two groups, and $\kappa(\Delta)$ corresponds to the minimum regret to pay for estimating the bias with a given variance. This bound underlines the relative difficulties of the $d$-dimensional linear bandit and of the bias estimation. When $\nicefrac{d}{\Delta_{\min}}\geq \nicefrac{\kappa(\Delta)}{\Delta_{\neq}^2}$, i.e. when one group contains all near-optimal actions, the difficulty is dominated by that of the corresponding linear bandit problem. When both groups contain near-optimal actions, and $\nicefrac{d}{\Delta_{\min}}\leq \nicefrac{\kappa(\Delta)}{\Delta_{\neq}^2}$, the regret corresponds to the price of debiasing the rewards.

The rest of the paper is organized as follows. In Section \ref{sec:algo}, we present the \textsc{Fair Phased Elimination} algorithm: we first discuss parameter estimation in Section \ref{subsec:parameter_estimation}, before presenting a sketch of the algorithm in Section \ref{subsec:algo} (a detailed version of this algorithm is provided in  Appendix \ref{sec:detailed_algo}). Then, in Section \ref{sec:worst_case}, we establish an upper bound on its worst-case regret.
In Section \ref{sec:gap_dependent}, we derive a gap-dependent upper bound on the regret of our algorithm. In Section \ref{sec:lower_bounds}, we establish lower bounds on some action sets for both the worst-case and the gap-dependent regret, showing that these rates are sharp respectively up to a sub-logarithmic factor and an absolute multiplicative constant. Additional discussions on the geometry of bias estimation are postponed to Appendix \ref{sec:kappa}.

\subsection{Notations and additional assumptions} \label{subsec:notations}
We assume that all covariates $x \in \mathcal{X}$ are distinct, which implies that the group $z_x$ of action $x$ is well defined. We also assume that no group is empty, that the set $\{\left({x \atop z_x}\right) \ : \ x \in \cX\}$ spans $\mathbb{R}^{d+1}$ (which guarantees identifiability of the parameters), and that the rewards are bounded: $\max_{x \in \mathcal{X}} \vert x^{\top}\gamma^*\vert \leq 1$. 

When necessary, we underline the dependence of the regret on the parameter $\theta$ by denoting it $R_T^{\theta}$. We denote by $a_x = \left({x \atop z_x}\right)$ the vector describing an action and its group, by $\theta^* = \left({\gamma^* \atop \omega^*}\right) \in \mathbb{R}^{d + 1}$ the unknown parameter, and by $\mathcal{A} = \left\{a_x : x \in \cX\right\}$ the set of actions and of corresponding sensitive attributes. We denote by $\Delta = (\Delta_x)_{x \in \mathcal{X}}$ the vector of gaps $\Delta_x = \max_{x' \in \mathcal{X}}(x' - x)^{\top}\gamma^*$, and by $\cC(\cX) = \left\{\gamma \in \mathbb{R}^d : \forall x \in \cX, \vert x^{\top}\gamma \vert\leq 1\right\}$ the set of admissible parameters. Note that for all $x\in \cC(\cX)$, $\Delta_x\leq 2$. For $i \leq d+1$, let $e_i$ be the $i$-th vector of the canonical  basis of $\mathbb{R}^{d+1}$, and for any matrix $M$, let $M^+$ be a generalized inverse of $M$. We denote by $\mathcal{P}^{\mathcal{X}}$ the set of probability measures on $\cX$, and $\mathcal{M}^{\mathcal{X}} = \{\mu : \cX \mapsto \mathbb{R}_+\}$. For any $\mu \in \mathcal{P}^{\mathcal{X}}$ or $\mu \in \mathcal{M}^{\mathcal{X}}$, we denote $V(\mu) = \sum_{x \in \mathcal{X}}\mu(x)a_x a_x^{\top}$ the covariance matrix corresponding to this allocation. Moreover, for $u \in \mathbb{R}^{d+1}$ (resp. $\mathcal{U} \in \mathbb{R}^{d+1}$), we denote by $\mathcal{P}^{\mathcal{X}}_u$ (resp. $\mathcal{M}^{\mathcal{X}}_u$) the measures $\mu$ in $\mathcal{P}^{\mathcal{X}}$ (resp. in $\mathcal{M}^{\mathcal{X}}$) such that $u \in \Image(V(\mu))$. For $\mathcal{U}\subset \mathbb{R}^{d+1}$, we denote by $\mathcal{P}^{\mathcal{X}}_{\mathcal{U}}$ (resp. $\mathcal{M}^{\mathcal{X}}_{\mathcal{U}}$) the measures $\mu$ such that $\mu \in \mathcal{P}^{\mathcal{X}}_u$ (resp. $\mathcal{M}^{\mathcal{X}}_u$) for all $u \in \mathcal{U}$.

\section{Fair Phased Elimination algorithm}
\label{sec:algo}
The Fair Phased Elimination algorithm belongs to the category of sequential elimination algorithms. Classical sequential elimination algorithms typically proceed by phases, indexed by $l=1,2,\ldots$. At phase $l$, these algorithms consider a set of potentially optimal actions $\cX_l$. The rewards of all actions $x \in \cX_l$ are then estimated with a given precision $O(\epsilon_{l})$, typically chosen as $\epsilon_{l} = 2^{2-l}$, by sampling actions in $\cX_l$. Actions sub-optimal by a gap larger than the precision level are then removed from the set $\cX_{l+1}$ of potentially optimal actions for the phase $l+1$.

As underlined previously, sequential elimination algorithms may suffer linear regret in the biased linear bandit problem if actions allowing to estimate the bias are discarded by the algorithm before the best group is identified. To mitigate this problem, we first estimate the biased evaluations of the potentially optimal actions, using ordinary least squares estimation. We then debias the estimations using an estimator for the bias relying on independent observations, which may be obtained by sampling sub-optimal actions. Before presenting the algorithm, let us discuss the estimation of the evaluations and of the bias parameter.

\subsection{Optimal design for parameter estimation in the biased linear bandit} 
\label{subsec:parameter_estimation}
\paragraph{G-optimal design for biased evaluation estimation}
As in the Phased Elimination algorithm \cite{BanditBook}, we rely on G-optimal design to estimate the biased evaluations $a_x^{\top}\theta^*$ with small error uniformly over a set of actions $\cX_{l}$. More precisely, for a given set of potentially optimal actions $\cX_{l}$, we compute the G-optimal design solution to the problem
\begin{equation} \label{eq:G-opt-design}
    \underset{\pi \in\cP_{\mathcal{X}_l}^{\mathcal{X}_l}}{\minimize}\ \underset{x \in \mathcal{X}_{l}}{\max}\ a_x^{\top}\left(V(\pi)\right)^{+}a_x \ . \ \ \ \ \ \ \  \ \ \ \ \ \ \ \text{(G-optimal design)}
\end{equation}
This can be done using polynomial-time algorithms, relying for example on interior points method \cite{Boyd}, or on mixed integer second-order cone programming \cite{Sagnol2009ComputingOD}. The celebrated General Equivalence theorem of Kiefer \cite{Kiefer} and Pukelsheim \cite{Pukelsheim} states that the value of Equation \eqref{eq:G-opt-design} is bounded by $d + 1$.  Let $\pi^*$ denote any design solution to the G-optimal design problem \eqref{eq:G-opt-design}, and let $\widehat{\theta}$ denote the ordinary least square estimator obtained by sampling each action $x \in \cX_{l}$ exactly $\ceil{n\pi^*(x)}$ times for a given $n>0$. Then, for all $x \in \cX_l$, the General Equivalence theorem implies that the variance of the estimate $a_x^{\top}\widehat{\theta}$ is smaller than $\nicefrac{(d+1)}{n}$. Moreover, the G-optimal design $\pi^*$ can be chosen so that it is supported by at most $\nicefrac{(d+1)(d+2)}{2}$ points, so the total number of samples is at most $n + \nicefrac{(d+1)(d+2)}{2}$.

\paragraph{$\Delta$-optimal design for bias evaluation} In this paragraph, we introduce the $\Delta$-optimal design, which is discussed in greater depth in Appendix \ref{subsec:Delta_des}. To estimate the bias parameter $\omega^*$, we use the estimator $\widehat{\omega} = e_{d+1}^{\top}\widehat{\theta}$, where $\widehat{\theta}$ is the ordinary least square estimator for the full parameter $\theta^*$. Now, if we sample each action $x\in \cX$ exactly $\mu(x)$ time, the variance of $\widehat{\omega}$ is equal to $e_{d+1}^{\top} V(\mu)^+e_{d+1}$. Given the vector of gaps $\Delta$, the design $\mu$ minimizing the regret of this exploration phase, while ensuring that the variance of $\widehat{\omega}$ is smaller than $1$, is solution of the problem
\begin{eqnarray}\label{eq:kappa_equivalent}
&& \underset{\mu \in\cM_{\mathcal{X}}^{e_{d+1}}}{\minimize}\ \sum_x\mu(x)\Delta_x \quad  \text{such that } \quad e_{d+1}^{\top} V(\mu)^+e_{d+1} \leq  1. \quad \text{ ($\Delta$-optimal design)}
\end{eqnarray}
Let us denote $\mu^{\Delta}$ a minimizer of \eqref{eq:kappa_equivalent}, and $\kappa(\Delta)= {\sum}_{x \in \cX}\mu^{\Delta}(x)\Delta_x$. Lemma \ref{lem:calcul:kappa} in Appendix \ref{sec:kappa} explains how to compute the design $\mu^{\Delta}$ in polynomial time by adapting tools from $c$-optimal design. This lemma also shows that the support of $\mu^{\Delta}$ can be chosen to be of cardinality at most $d+1$. Then, choosing each action exactly $\lceil n \mu^{\Delta}(x) \rceil$ times for a given $n>0$ allows us to estimate the bias with variance lower than $n^{-1}$ and a regret no larger than $n\kappa(\Delta) + 2(d + 1)$. Obviously, we do not know the gap vector $\Delta$ beforehand, so we must estimate it as we go.

\subsection{Outline of the Fair Phased Elimination algorithm}
\label{subsec:algo}
The Fair Phased Elimination  algorithm, sketched in Algorithm \ref{alg:FPE:short}, relies on the following key ideas.  First, note that within a group, the order of the true rewards and of the biased evaluations are the same. Hence, within a group, we can use classical algorithms for linear bandits to choose the actions and estimate the biased evaluations with a controlled within-group regret: this is done using \textbf{$\mathbf{G}$-exploration and elimination}. Second, to compare actions belonging to different groups, we independently estimate the bias parameter $\omega^*$, using \textbf{$\mathbf{\Delta}$-exploration and elimination}. Finally, we underline that bias estimation may require to sample very sub-optimal actions. Therefore, it can be overly costly to estimate the bias up to the precision level required to identify the best group. To prevent this, we use a \textbf{stopping criteria}.

\paragraph{G-exploration and elimination}
At each phase $l = 1,2, ...$, we keep two sets of potentially optimal actions belonging to the groups $+1$ and $-1$, denoted respectively $\cX_l^{(+1)}$ and $\cX_l^{(-1)}$. If we have not identified the group containing the best action, we run a \textsc{G-Exp-Elim} routine \ref{alg:G-Explore and Eliminate} on each set $\cX_l^{(z)}$ for $z = 1$ and $z = -1$. This routine samples actions according to a rounded G-optimal design on $\cX_l^{(z)}$, with a total number of observations chosen so that the biased evaluations of all actions in $\cX_l^{(z)}$ are known with an error at most $\epsilon_l$. The set $\cX_{l+1}^{(z)}$ is obtained by removing from $\cX_{l}^{(z)}$ actions whose estimated evaluations are sub-optimal by a gap larger than $3\epsilon_l$, compared to the empirical best action in the group. This allows to ensure that only actions sub-optimal by a gap $\mathcal{O}(\epsilon_l)$ remain in $\cX_{l+1}^{(z)}$, and to estimate the gap vector $\Delta$ with a precision sufficient for $\Delta$-optimal estimation. 

If the group containing the best action has been identified, we discard the other group, and run a \textsc{G-Exp-Elim} routine \ref{alg:G-Explore and Eliminate} on the set of potentially optimal actions in this group.
\floatname{algorithm}{Routine}
\begin{algorithm}[h!] 
\caption{\textsc{ G-Exp-Elim}\,($\mathcal{X},n,\epsilon$)}\label{alg:G-Explore and Eliminate}
\begin{algorithmic}[1]
\State Compute G-optimal design $\pi$ solution of (\ref{eq:G-opt-design}) on $\mathcal{X}$, with $|\supp(\pi)|\leq \nicefrac{(d+1)(d+2)}{2}$
\State Sample $\ceil{n\pi(x)}$ times each action $a_{x}$ for $x\in \mathcal{X}$ \Comment{G-optimal parameter estimation}
\State Compute the ordinary least square estimator $\widehat \theta$
\State $\mathcal{X}^{'}\gets \ac{x\in\cX:\max_{x'\in\cX}(x'-x)^\top \widehat \theta\leq 3\epsilon}$ \Comment{Suboptimal actions elimination}
\State \Return $\widehat \theta$ and $\cX'$
\end{algorithmic}
\end{algorithm}

\paragraph{$\Delta$-exploration and elimination}
If the group of the best action has not been found before phase $l$, we run the \textsc{$\Delta$-Exp-Elim} routine \ref{alg:Delta-Explore and Eliminate}. More precisely, relying on a previous estimate $\widehat{\Delta}^l$ of the gap vector $\Delta$, we compute the $\widehat{\Delta}^l$-optimal design $\widehat{\mu}$. We then estimate the bias using actions sampled according to a rounded version of this design, with a total number of observations chosen so that the error of bias estimation is smaller than $\epsilon_l$, and use it to debias the reward estimation. If the debiased evaluation of the best action of each group are separated by a gap larger than $4\epsilon_l$, we consider that the best group is the one containing the empirical best action in terms of biased evaluation, and we discard the other group.

If we cannot find the best group, we rely on estimates of the bias and of the biased evaluations obtained during the previous round to update the estimate of the gap vector $\widehat{\Delta}^{l+1}$. 

\begin{algorithm}[h!] 
\caption{\textsc{ $\Delta$-Exp-Elim}\,($\mathcal{X},(\cX^{(z)},\widehat \theta^{(z)})_{z\in\ac{-1,1}},\widehat\Delta,n,\epsilon$)}\label{alg:Delta-Explore and Eliminate}
\begin{algorithmic}[1]
\State Compute $\widehat\Delta$-optimal design $\big(\hat\mu,\kappa(\hat\Delta)\big)$ solution of (\ref{eq:kappa_equivalent})  on $\cX$, with $|\supp(\hat\mu)|\leq d+1$
\State Sample $\ceil{n\hat \mu(x)}$ times each action $a_{x}$ for $x\in \mathcal{X}$ \Comment{$\widehat\Delta$-optimal bias estimation}
\State Compute $\homega=e_{d+1}^\top \widehat\theta$, where $\widehat \theta$ is the ordinary least square estimator
\For{$z\in\ac{-1,1}$ and $x\in\cX^{(z)}$}{ $\widehat m_{x}\gets a_{x}^\top \widehat \theta^{(z)} -z\homega$} \Comment{Debiased rewards estimation}
\EndFor
\If{$\exists z\in\ac{-1,1}$ such that $\displaystyle{\max_{x\in\cX^{(z)}}\widehat m_{x} \geq \max_{x\in\cX^{(-z)}}\widehat m_{x} +4\epsilon  }$}{ $\mathcal{Z}\gets \ac{z}$} \Comment{Group elimination}
\Else { $\widehat\Delta_{x}\gets 2\wedge\pa{\max_{x'\in\cX^{(-1)}\cup \cX^{(1)}}\widehat m_{x'}-\widehat m_{x}+4\epsilon}$ \ for all $x\in\cX^{(-1)}\cup \cX^{(1)}$}
\EndIf
\State \Return $\mathcal{Z}$ and $\widehat \Delta$
\end{algorithmic}
\end{algorithm}

\paragraph{Stopping criteria } As underlined previously, the \textsc{$\Delta$-Exp-Elim} routine samples actions that can be very sub-optimal. As a consequence, when the gap between the best two actions of each group is small, finding the best group can be overly costly in terms of regret. To prevent this, if the best group has not been found at stage $l$ fulfilling $\epsilon_{l}\leq \big(\nicefrac{\kappa(\widehat\Delta^{l}) \log(T)}{T}\big)^{1/3}$, the bias estimation is stopped and the empirical best action in $\cX^{(1)}_{l+1}\cup\cX^{(-1)}_{l+1}$ is sampled for the remaining time (see Algorithm \ref{alg:FPE:short})
\floatname{algorithm}{Algorithm}

\begin{algorithm}[h!] 
\caption{\textsc{Fair Phased Elimination} (sketched)} \label{alg:FPE:short}
\begin{algorithmic}[1]
\State {\bf input:} $\delta$, $T$, $\mathcal{X}$, $k=|\mathcal{X}|$, $\epsilon_l=2^{2-l}$ for $l\geq 1$
\State {\bf initialize:} $\mathcal{X}_1^{(+1)} \gets \{x : z_x = 1\}$, $\mathcal{X}_1^{(-1)} \gets \{x : z_x = -1\}$, \\ \quad \quad \quad \quad $\mathcal{Z}_1 \gets \{-1,+1\}$, $\widehat\Delta^{1}\gets \left(2, ..., 2\right)$, $l \gets 0$
\While{the budget is not spent} {$l\gets l + 1$}
\For{$z \in \mathcal{Z}_l$}
\State $\left(\widehat \theta^{(z)},\mathcal{X}_{l+1}^{(z)}\right) \gets \textsc{G-Exp-Elim}\left(\mathcal{X}_{l}^{(z)},{2(d+1)\over \epsilon_l^{2}}\log\left(\frac{kl(l+1)}{ \delta}\right), \epsilon_l\right)$ 
\EndFor
\If{ $\mathcal{Z}_{l} = \{-1,+1\}$}
\If{ $\epsilon_{l}\leq \left(\kappa(\widehat\Delta^{l}) \log(T)/T\right)^{1/3}$} \Comment{Stop bias estimation}
\State Sample best action in $\mathcal{X}_{l+1}^{(-1)} \cup \mathcal{X}_{l+1}^{(+1)}$ for the remaining time
\Else
\State $\pa{\mathcal{Z}_{l+1},\widehat \Delta^{l+1} }\gets \Delta\textsc{-Exp-Elim}\left(\cX,\Big(\mathcal{X}^{(z)}_{l+1},\widehat\theta_{l}^{(z)}\Big)_{z\in\ac{-1,1}},\widehat\Delta^l,{2\over\epsilon_l^{2}}\log\left(\frac{l(l+1)}{ \delta}\right), \epsilon_l\right)$
\EndIf
\EndIf
\EndWhile
\end{algorithmic}
\end{algorithm}

\section{Upper bound on the worst-case regret of \textsc{Fair Phased Elimination}} \label{sec:worst_case}

The regret of the \textsc{Fair Phased Elimination} depends on the difficulty of estimating the bias parameter, captured by $\kappa(\Delta)$. Lemma \ref{lem:kappa2} in Appendix \ref{subsec:Delta_des} shows that for all parameter $\gamma^*\in \cX$, $\kappa(\Delta)$ is upper bounded by $2\kappa_*$, where $\kappa_*$ is the \textit{minimal variance of the bias estimator} given by $$\kappa_*=\underset{\pi \in \mathcal{P}^{\mathcal{X}}_{e_{d+1}}}{\min}\ e_{d+1}^{\top}\left(V(\pi)\right)^{+}e_{d+1}.$$ 
The following theorem provides a bound on the worst case regret depending on $\kappa_*$. Proofs are postponed to Appendix \ref{app:proof_upper_bound_worst_case}.

\begin{thm}\label{thm:upper_bound_worst_case}
For the choice $ \delta = T^{-1}$, there exists two numerical constants $C,C'>0$ such that the following bound on the regret of the \textsc{Fair Phased Elimination} algorithm \ref{alg:FPE} holds
\begin{align*}
    R_T & \leq C\left(\kappa_{*}^{1/3}T^{2/3}\log(T)^{1/3} + (d\vee \kappa_{*})\log(T) + d^2 +  d\kappa_{*}^{-1/3}T^{1/3}\log(kT)\log(T)^{-1/3} \right)\\
    & \leq C'\kappa_{*}^{1/3}T^{2/3}\log(T)^{1/3} \quad\quad \textrm{for}\quad T\geq \frac{\left((d\vee \kappa_*)^{3/2}\log(T)\right)  \vee d^3}{\sqrt{\kappa_*}} \vee \frac{(d\log(kT))^{3}}{(\kappa_*\log(T))^{2}}.
\end{align*}
\end{thm}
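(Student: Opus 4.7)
The plan is to decompose the regret over the phases of the algorithm, with the main technical device being a single high-probability ``good event'' on which every OLS estimator constructed inside \textsc{G-Exp-Elim} and $\Delta$-\textsc{Exp-Elim} meets its advertised accuracy. Concretely, I would define $E$ as the intersection over all phases $l$ and every action $x \in \mathcal{X}_l^{(\pm 1)}$ of the events $|a_x^\top(\widehat\theta^{(z)} - \theta^*)| \leq \epsilon_l$ and $|\widehat\omega - \omega^*| \leq \epsilon_l$. Combining the Kiefer–Pukelsheim bound (which gives variance $\leq (d+1)/n$ for the G-optimal OLS estimate and $\leq 1/n$ for the bias estimate, by the discussion of $\Delta$-optimal design) with Gaussian tail bounds and the sample counts chosen in Algorithm~\ref{alg:FPE:short}, a union bound shows $\mathbb{P}(E^c) \leq \delta = 1/T$, contributing at most $2\delta T = 2$ to the regret.

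The next step is to prove that on $E$ the elimination rules are conservative: the true optimal action $x^*$ always survives in $\mathcal{X}^{(z^*)}_l$, the surviving actions in $\mathcal{X}^{(z)}_l$ have within-group gap $\max_{x'\in \cX^{(z)}}(x'-x)^\top \gamma^* = O(\epsilon_{l-1})$, and if a group $z$ is eliminated at line 8 of $\Delta$-\textsc{Exp-Elim}, then every action in that group is genuinely sub-optimal. A parallel argument shows that the updated gap estimates satisfy $|\widehat\Delta^l_x - \Delta_x| \leq C\epsilon_{l-1}$ on $E$, so the Lemma cited from Appendix~\ref{subsec:Delta_des} yields $\kappa(\widehat\Delta^l) \leq 2\kappa_*$; this is the key ingredient that lets us replace the data-dependent constant by the geometric constant $\kappa_*$.

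I would then bound the regret phase by phase on $E$. The \textsc{G-Exp-Elim} routine at phase $l$ pulls $O\bigl(d\epsilon_l^{-2}\log(kl^2/\delta)\bigr) + O(d^2)$ arms whose within-group gap is $O(\epsilon_{l-1})$, costing $O\bigl(d\epsilon_l^{-1}\log(kT)\bigr)+O(d^2\epsilon_{l-1})$. The $\Delta$-\textsc{Exp-Elim} routine at phase $l$ pulls $O\bigl(\epsilon_l^{-2}\log(l^2/\delta)\bigr)+O(d)$ arms weighted by $\widehat\mu$, each contributing at most $\kappa(\widehat\Delta^l) \leq 2\kappa_*$ to the sampled regret, for a cost $O\bigl(\kappa_*\epsilon_l^{-2}\log T\bigr)+O(d\kappa_*)$. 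Writing $L^*$ for the index at which the stopping criterion $\epsilon_l \leq (\kappa(\widehat\Delta^l)\log T / T)^{1/3}$ first triggers (or the group of $x^*$ is found), summing the geometric series $\sum_{l\leq L^*}\epsilon_l^{-2}$ and $\sum_{l\leq L^*}\epsilon_l^{-1}$ yields exactly the three leading terms of the theorem, with $\epsilon_{L^*} \asymp (\kappa_*\log T / T)^{1/3}$ producing the dominant $\kappa_*^{1/3}T^{2/3}\log(T)^{1/3}$ contribution. The post-stopping phase plays the empirical best action for the remaining $\leq T$ rounds; since on $E$ its true gap is at most $O(\epsilon_{L^*})$, this contributes another $O(\kappa_*^{1/3}T^{2/3}\log(T)^{1/3})$.

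The main obstacle I anticipate is the circular dependence between the bias estimate, the gap estimates $\widehat\Delta^l$, and the variance $\kappa(\widehat\Delta^l)$: the algorithm uses $\widehat\Delta^l$ to choose how to sample, yet $\widehat\Delta^l$ itself is built from a previous bias estimate that depended on a previous $\widehat\Delta^{l-1}$. Handling this requires a careful induction on the phase index, showing simultaneously that (i) $\widehat\Delta^l$ is entrywise within $O(\epsilon_{l-1})$ of $\Delta$, (ii) consequently $\kappa(\widehat\Delta^l) \leq 2\kappa_*$, and (iii) the variance control on $\widehat\omega$ from the $\widehat\Delta^l$-optimal design is indeed $\leq \epsilon_l^2$. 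The other subtlety is bookkeeping the $\lceil n\pi(x)\rceil$ rounding and the auxiliary terms $d^2$, $(d\vee \kappa_*)\log T$, which merge into the additive lower-order terms of the stated bound and are absorbed into $C'\kappa_*^{1/3}T^{2/3}\log(T)^{1/3}$ under the displayed condition on $T$.
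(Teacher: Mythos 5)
Your overall architecture matches the paper's: a single good event $\overline{\mathcal{F}}$ with $\mathbb{P}(\mathcal{F})\leq 2\delta$, conservatism of the elimination rules (the paper's Lemmas \ref{lem:E} and \ref{lem:discard_subopt}), a phase-by-phase decomposition of the regret into G-exploration, $\Delta$-exploration and Recovery, and absorption of rounding terms. However, there is a genuine gap in your treatment of the cumulative $\Delta$-exploration cost. You bound the phase-$l$ cost by $O(\kappa_*\epsilon_l^{-2}\log T)$ using $\kappa(\widehat\Delta^l)\leq 2\kappa_*$, and then assert that the stopping index satisfies $\epsilon_{L^*}\asymp(\kappa_*\log T/T)^{1/3}$. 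Only one direction of that asymptotic holds. The stopping rule is calibrated to $\kappa(\widehat\Delta^l)$, not to $\kappa_*$, and since $\widehat\Delta^l_x$ is of order $\epsilon_l$ for near-optimal actions, one can have $\kappa(\widehat\Delta^l)\asymp\epsilon_l\kappa_*\ll\kappa_*$; the non-stopping condition $\epsilon_l>(\kappa(\widehat\Delta^l)\log T/T)^{1/3}$ then only forces $\epsilon_l\gtrsim(\kappa_*\log T/T)^{1/2}$, so the last exploration level $\epsilon_{L^{(0)}}$ can be far below $(\kappa_*\log T/T)^{1/3}$. In that regime your sum $\kappa_*\epsilon_{L^{(0)}}^{-2}\log T$ is of order $T$, i.e.\ the bound degenerates to linear regret. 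The point is that the per-phase cost must be kept as $\kappa(\widehat\Delta^l)\epsilon_l^{-2}$, and one must show that the \emph{sum} over phases collapses onto the last term $\kappa(\widehat\Delta^{L^{(0)}})\epsilon_{L^{(0)}}^{-2}$, which the stopping criterion then bounds by $\kappa(\widehat\Delta^{L^{(0)}})^{1/3}(T/\log T)^{2/3}\leq(2\kappa_*)^{1/3}(T/\log T)^{2/3}$.

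Making that collapse rigorous is the content of the paper's Lemma \ref{lem:kappa-l} (built on Lemma \ref{lem:kappa-l:tech}): on $\overline{\mathcal{F}}$ one has $\kappa(\widehat\Delta^l)\leq 513(1+\epsilon_l/\tau)\,\kappa(\Delta\vee\tau)$ for every $\tau>0$, together with the matching lower bound $\kappa(\widehat\Delta^{L^{(0)}})\geq\kappa(\Delta\vee\epsilon_{L^{(0)}})$. This is also where your accuracy claim $|\widehat\Delta^l_x-\Delta_x|\leq C\epsilon_{l-1}$ needs repair: it is only valid while $x$ remains in the active sets; once $x$ is eliminated its gap estimate is frozen at a value accurate to $O(\epsilon_{\ell_x})$ where $\ell_x$ is its elimination phase, and the correct uniform statement is the multiplicative one $\widehat\Delta^l_x\leq 513(\Delta_x\vee\epsilon_l)$. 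Your induction handling the circularity between $\widehat\omega$, $\widehat\Delta^l$ and $\kappa(\widehat\Delta^l)$ is the right instinct, but it must be carried out to produce this multiplicative comparison across phases rather than the uniform bound $\kappa(\widehat\Delta^l)\leq 2\kappa_*$, which by itself is too lossy to yield the $\kappa_*^{1/3}T^{2/3}\log(T)^{1/3}$ rate.
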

In Section \ref{subsec:lower_bounds_worst_case}, we show that the upper bound obtained in Theorem  \ref{thm:upper_bound_worst_case} is sharp in some settings, up to the sub-logarithmic factor $\log(T)^{1/3}$. 

Theorem \ref{thm:upper_bound_worst_case} shows that the worst-case regret of the Fair Phased Elimination  algorithm asymptotically grows as $C\kappa^{1/3}_{*}T^{2/3}\log\left(T\right)^{1/3}$. This worst-case regret rate is higher than the typical rate $Cd\log(T)T^{1/2}$ obtained under unbiased feedback on the rewards (see, e.g., \cite{NIPS2011_e1d5be1c}). This increase in the regret corresponds to the cost of learning from unfair evaluations. It is due to the fact that the algorithm may need to sample actions that are sub-optimal in order to estimate the bias parameter. Note that this rate $\widetilde{\mathcal{O}}(T^{2/3})$ is typical for globally observable bandit problems with partial linear monitoring, and can be obtained by applying results established in \cite{PartialFeedbackCOLT2020} for in the partial linear monitoring setting to the biased linear bandit problem.

By contrast to previous results, Theorem \ref{thm:upper_bound_worst_case} characterizes precisely the dependence of the worst-case regret on the geometry of the action set. The relevant constant $\kappa_*$ is the minimal variance for estimating the bias, which appears when considering the related $c$-optimal design problem. While the connection between G-optimal design and the linear bandit problem has already been exploited, it is to the best of our knowledge the first time that $c$-optimal design is related to a partial monitoring problem.

The constant $\kappa_*$ corresponds to the minimum number of samples required for estimating the bias with a variance equal to $1$ (up to rounding issues). Intuitively, if the actions are very correlated with their sensitive attributes, more samples will be needed to estimate the bias with the same precision. This situation corresponds to cases where $\kappa_*$ is large, and leads to a higher regret. Lemma \ref{lem:upsilon_margin}, illustrated in Figure \ref{fig:margin}, relates $\kappa_*$ to the margin between the two groups of actions.

\begin{lem}\label{lem:upsilon_margin} 
$\kappa_*$ is the largest constant $\kappa\geq 0$ such that, there exists an hyperplane $\mathcal{H}$ containing zero and separating the two groups, and such that, the margin to $\mathcal{H}$ is at least $\nicefrac{\sqrt{\kappa}-1}{\sqrt{\kappa}+1}$ times the maximum distance of all points to the hyperplane (see Figure \ref{fig:margin}). When no such hyperplane exists, then $\kappa_* = 1$.
\end{lem}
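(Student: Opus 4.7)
The plan is to reduce the geometric condition in the statement to a dual reformulation of the $c$-optimal design problem defining $\kappa_*$. The core tool is a version of Elfving's theorem (which should be established in the companion appendix on $c$-optimal design used to compute $\mu^\Delta$). Applied with $c = e_{d+1}$ and to the augmented action vectors $a_x = (x, z_x)$, it yields
\[
\sqrt{\kappa_*} \;=\; \max\Bigl\{v \in \mathbb{R} : \exists u \in \mathbb{R}^d,\ \max_{x\in \mathcal{X}} |u^\top x + v z_x| \leq 1\Bigr\}.
\]
This identity comes from writing $\sqrt{\kappa_*} = \mu_E(e_{d+1})$ for the Minkowski gauge of the Elfving set $E = \operatorname{conv}\{\pm a_x : x \in \mathcal{X}\}$, and combining it with the polar-dual description $\mu_E(c) = \sup_{y \in E^\circ}\langle c, y\rangle$ and $E^\circ = \{(u,v) : |u^\top x + v z_x| \leq 1\ \forall x \in \mathcal{X}\}$.

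Assuming $\kappa_* > 1$, I would let $(u_*, v_*)$ attain the maximum above and consider the hyperplane $\mathcal{H} = \{x : u_*^\top x = 0\}$, which contains the origin. Writing $h(x) = u_*^\top x / \|u_*\|$ for the signed distance to $\mathcal{H}$, the feasibility constraint $|u_*^\top x + v_* z_x| \leq 1$ unfolds into $h(x) \leq (1 - v_*)/\|u_*\|$ for $x$ in group $+1$ and $h(x) \geq (v_* - 1)/\|u_*\|$ for $x$ in group $-1$, together with $|h(x)| \leq (v_* + 1)/\|u_*\|$ for all $x$. Hence $\mathcal{H}$ strictly separates the two groups, with margin $M \geq (v_* - 1)/\|u_*\|$ and maximum distance $D \leq (v_* + 1)/\|u_*\|$, giving the ratio $M/D \geq (v_*-1)/(v_*+1) = (\sqrt{\kappa_*} - 1)/(\sqrt{\kappa_*} + 1)$.

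For the converse, suppose some hyperplane through the origin with unit normal $n$ separates the two groups with margin $M$ and maximum distance $D$ satisfying $M/D \geq (\sqrt{\kappa} - 1)/(\sqrt{\kappa} + 1)$ for some $\kappa > 1$. I would exhibit a feasible pair $(u, v) = (\alpha n, \sqrt{\kappa})$ in the dual program: examining the constraint group-by-group, both parts reduce to the single pair of inequalities $\alpha M \geq \sqrt{\kappa} - 1$ and $\alpha D \leq \sqrt{\kappa} + 1$, which admit a common $\alpha > 0$ exactly under the margin hypothesis. This yields $\sqrt{\kappa_*} \geq \sqrt{\kappa}$. The degenerate case $\kappa_* = 1$ follows because $(u, v) = (0, 1)$ is always feasible, so $\kappa_* \geq 1$, while the forward direction shows that any feasible $v > 1$ would produce a strict separation: thus $\kappa_* = 1$ precisely when no hyperplane through zero separates the two groups.

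The main obstacle is the Elfving-type identity in the first paragraph; once it is in hand, the geometric content of the lemma is just a book-keeping unfolding of the inequalities $|\alpha h(x) + \sqrt{\kappa}\, z_x| \leq 1$. The identity is standard in optimal design theory but must be applied to the \emph{augmented} vectors $a_x = (x, z_x)$ and to the specific direction $c = e_{d+1}$; it can be established directly by a convex duality argument (Lagrangian / minimax swap) applied to the minimization of $e_{d+1}^\top V(\pi)^+ e_{d+1}$ over the simplex, which the appendix already needs for the computation of $\kappa(\Delta)$.
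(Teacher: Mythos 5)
Your proposal is correct and follows essentially the same route as the paper: your Elfving/polar-duality identity $\sqrt{\kappa_*}=\max\{v:\exists u,\ \max_x|u^{\top}x+vz_x|\leq 1\}$ is exactly the paper's Lemma \ref{lem:upsilon} (proved there via Theorem \ref{thm:Elfving}), and the rest is the same unfolding of the constraints into margin and maximum-distance bounds, plus the same treatment of the degenerate case via the feasible point $(0,1)$. The only difference is cosmetic: your homogeneous $(u,v)$ parametrization needs only one-sided inequalities on the margin and the maximum distance, which lets you skip the paper's rescaling argument showing that at the optimum the extreme points are exactly centered (i.e.\ that the margin equals $1-\kappa_*^{-1/2}$ and the maximum distance equals $1+\kappa_*^{-1/2}$).
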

\begin{figure}[h!]

\begin{subfigure}[b]{0.48\textwidth}
\centering
\includegraphics[width=0.9\textwidth]{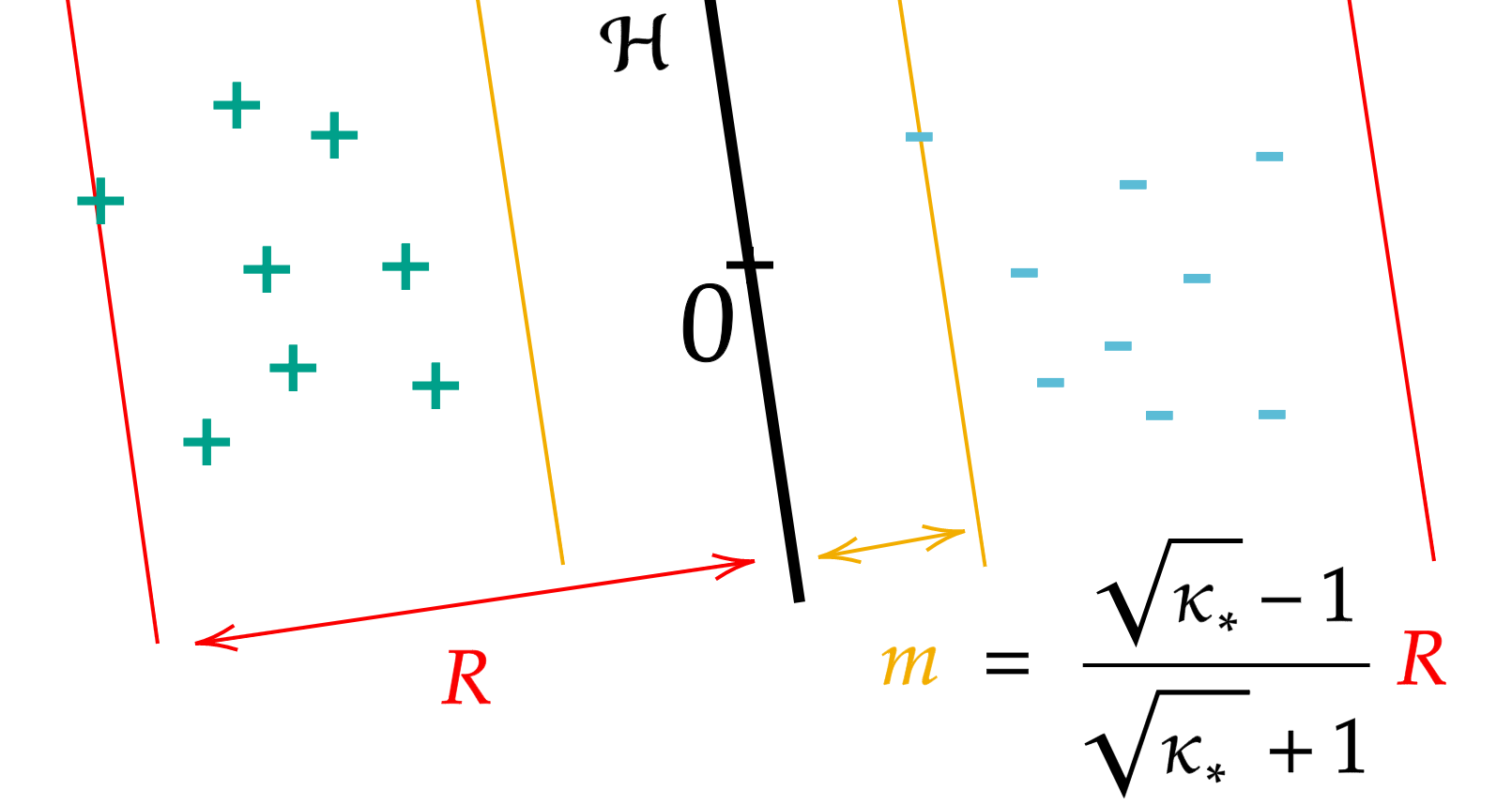}
\caption{The margin $m$ is equal to $\nicefrac{\sqrt{\kappa_*}-1}{\sqrt{\kappa_*}+1}$ times the maximum distance $R$ of any action to the hyperplane.}
\label{subfig:margin}
\end{subfigure}
\hspace{0.5cm}
\begin{subfigure}[b]{0.48\textwidth}
\centering
\includegraphics[width=0.9\textwidth]{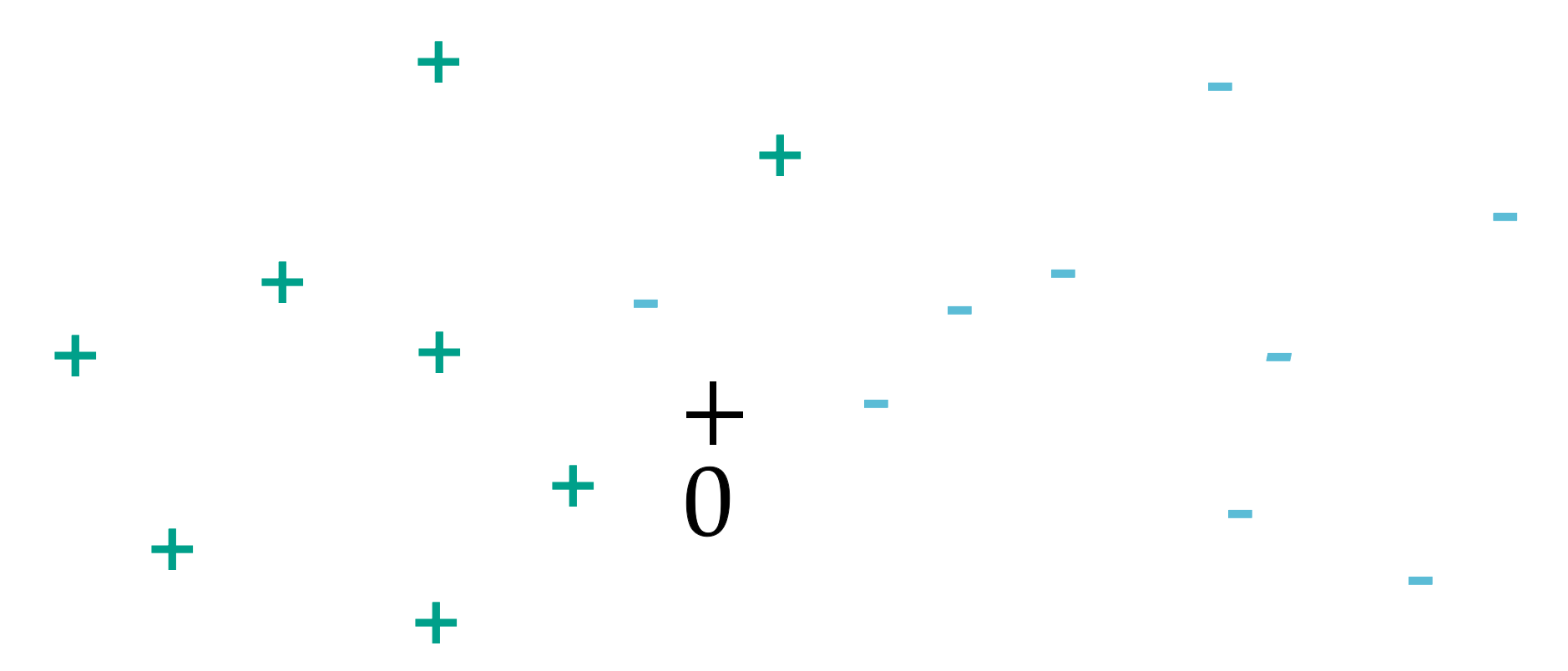}
\caption{$\kappa_* = 1$: the groups cannot be separated by a hyperplane containing $0$.}
\label{subfig:nomargin}
\end{subfigure}
 \caption{Interpretation of $\kappa_*$ in terms of separation of the groups.}
\label{fig:margin}
\vspace{-0.5cm}
\end{figure}

Interestingly, Lemma \ref{lem:upsilon_margin} underlines that under reasonable assumptions, the constant $\kappa_*$ may not depend on the ambient dimension $d$, and it can even be equal to $1$. By contrast, the previous bounds obtained for an Information Directed Sampling algorithm are of order $\alpha^{1/3}d^{1/2}T^{2/3}\log(kT)^{1/2}$, where $\alpha$ is a measure of the complexity of the action set called the worst-case alignment constant. Lemma \ref{lem:upsilon_alpha} in Appendix \ref{sec:kappa} shows that $\alpha$ is equivalent to the minimal variance of the bias estimator $\kappa_*$. Hence, our bound improves over previous results by a factor $d^{1/2}\log(T)^{1/6}(\log(kT)/\log(T))^{1/2}$.

The gaps are not involved in the definition of the minimal variance of bias estimation $\kappa_*$. The reader may have expected to get, instead of $\kappa_*$, the minimax regret for estimating the bias $$\widetilde{\kappa} =  \underset{\gamma\in \cC(\cX), x' \in \cX}{\max} \underset{x \in \cX}{\sum}\widetilde{\mu}(x)(x' - x)^{\top}\gamma,\quad\ \text{where}$$ 
$$ \displaystyle \widetilde{\mu} = \underset{\mu}{\argmin} \underset{x' \in \cX, \gamma \in \cC(\cX)}{\max} \underset{x \in \cX}{\sum}\mu(x)(x' - x)^{\top}\gamma, \ 
\text{such that }  \mu \in \cM^{\cX}_{e_{d+1}}\ 
 \text{and }  \ e_{d+1}^{\top} V(\mu)^+e_{d+1} \leq  1.$$
Next lemma shows that $\kappa_*$ and $\widetilde{\kappa}$ are in equivalent up to  a factor 2. We refer the interested reader to Appendix \ref{sec:kappa}, where further discussions on the geometry of bias estimation are postponed, due to space constraints.
\begin{lem}\label{lem:kappa_tilde}
$\displaystyle \nicefrac{\widetilde{\kappa}}{2} \leq \kappa_* \leq 2\widetilde{\kappa}.$
\end{lem}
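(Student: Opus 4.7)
The plan is to reduce both inequalities to the identity that $\kappa_*$ equals the smallest total mass $S = \sum_{x}\mu(x)$ over non-negative measures $\mu \in \cM^{\cX}_{e_{d+1}}$ satisfying the variance bound $e_{d+1}^\top V(\mu)^{+} e_{d+1} \leq 1$. This is a direct scaling argument: the probability measure $\pi := \mu/S$ satisfies $e_{d+1}^\top V(\pi)^{+} e_{d+1} = S\cdot e_{d+1}^\top V(\mu)^{+} e_{d+1} \leq S$, forcing $S \geq \kappa_{*}$; conversely, with $\pi^{*}$ attaining the minimum in $\kappa_{*}$, the rescaled measure $\mu^{*} := \kappa_{*}\pi^{*}$ is feasible and satisfies $\sum_{x}\mu^{*}(x) = \kappa_{*}$ together with $e_{d+1}^\top V(\mu^{*})^{+} e_{d+1} = 1$.

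The upper bound $\widetilde{\kappa} \leq 2\kappa_{*}$ then follows by using $\mu^{*}$ as a candidate in the minimax problem defining $\widetilde{\mu}$. The bounded-reward hypothesis $|x^\top \gamma| \leq 1$ forces $(x' - x)^\top \gamma \leq 2$ for every $\gamma \in \cC(\cX)$ and $x,x' \in \cX$, so
\[
\max_{\gamma \in \cC(\cX),\, x' \in \cX}\, \sum_{x \in \cX} \mu^{*}(x)(x'-x)^\top \gamma\ \leq\ 2\sum_{x \in \cX}\mu^{*}(x)\ =\ 2\kappa_{*},
\]
and since $\widetilde{\mu}$ minimises this maximum over feasible measures, we get $\widetilde{\kappa} \leq 2\kappa_{*}$ immediately.

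The reverse bound $\kappa_{*} \leq 2\widetilde{\kappa}$ is the substantive direction. Feasibility of $\widetilde{\mu}$ combined with the first paragraph gives $\sum_{x}\widetilde{\mu}(x) \geq \kappa_{*}$, so it is enough to prove $\widetilde{\kappa} \geq \tfrac{1}{2}\sum_{x}\widetilde{\mu}(x)$. Writing $S = \sum_{x}\widetilde{\mu}(x)$ and the centre of mass $\bar{x} = \sum_{x}\widetilde{\mu}(x)\,x / S$, the identity $\sum_{x}\widetilde{\mu}(x)(x'-x)^\top\gamma = S\,(x'-\bar{x})^\top\gamma$ reduces the claim to the purely geometric statement
\[
\max_{x' \in \cX,\ \gamma \in \cC(\cX)}\ (x' - \bar{x})^\top \gamma\ \geq\ \tfrac{1}{2}\quad\text{for every }\bar{x} \in \mathrm{conv}(\cX).
\]

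This last inequality is the main obstacle, and my plan is to handle it via polarity. The functional $\|v\|_{\star} := \sup_{\gamma \in \cC(\cX)}v^\top\gamma$ coincides, by the bipolar theorem, with the Minkowski gauge of the symmetric convex body $B = \mathrm{conv}(\pm\cX)$, so the target becomes $\max_{x' \in \cX}\|x' - \bar{x}\|_{\star} \geq \tfrac{1}{2}$. Since $\bar{x}$ is a convex combination of points of $\cX \subset B$ and $\pm \cX$ spans $B$ as its set of generating extreme points, a separation/pigeonhole argument on the decomposition of $\bar{x}$ should force one of the actions $x' \in \cX$ to lie at gauge-distance at least $\tfrac{1}{2}$ from $\bar{x}$; the feasibility constraint $e_{d+1} \in \mathrm{Range}(V(\widetilde{\mu}))$ will likely be needed here to rule out degenerate configurations in which the support of $\widetilde{\mu}$ clusters on one side of $\bar{x}$. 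Once established, this geometric inequality combines with the first two paragraphs to yield $\kappa_{*} \leq \sum_{x}\widetilde{\mu}(x) \leq 2\widetilde{\kappa}$.
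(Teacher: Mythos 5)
Your first two paragraphs are correct and coincide with the paper's argument: the identification of $\kappa_*$ with the minimal total mass of a feasible measure, and the bound $\widetilde{\kappa}\leq 2\kappa_*$ obtained by plugging $\mu^*=\kappa_*\pi^*$ into the minimax objective and using $(x'-x)^\top\gamma\leq 2$, are exactly what the paper does. The reduction in your third paragraph (total mass of $\widetilde{\mu}$ at least $\kappa_*$, then pass to the barycentre $\bar{x}$) is also sound and parallels the paper, which works with $\widetilde{x}=\widetilde{n}^{-1}\sum_x\widetilde{\mu}(x)x$ and evaluates the minimax objective at the two test pairs $(x^{(1)},\widetilde{\gamma})$ and $(x^{(2)},-\widetilde{\gamma})$ (using the symmetry of $\cC(\cX)$) to cancel the barycentre term and arrive at $\widetilde{\kappa}/\kappa_*\geq\Delta_{\max}/2$, where $\Delta_{\max}=\max_{x,x'\in\cX,\gamma\in\cC(\cX)}(x-x')^\top\gamma$; your target inequality $\max_{x'\in\cX}\|x'-\bar{x}\|_{\star}\geq 1/2$ is exactly $\Delta_{\max}\geq 1$ combined with the triangle inequality for the gauge.

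The genuine gap is your fourth paragraph: that geometric inequality is the entire substance of the hard direction, and you do not prove it --- you only announce that a ``separation/pigeonhole argument \ldots should force'' it and that the range condition ``will likely be needed''. As described, a pigeonhole on the convex decomposition of $\bar{x}$ cannot succeed on its own: if all actions of $\cX$ are mutually close in the gauge of $B=\mathrm{conv}(\pm\cX)$ (i.e.\ cluster near a single extreme point of $B$), then every $x'\in\cX$ is gauge-close to $\bar{x}$, and nothing in the decomposition of $\bar{x}$ rules this configuration out. What is actually needed, and what the paper supplies, is a lower bound on the gauge-diameter of $\cX$ itself: the paper proves $\Delta_{\max}\geq 1$ by showing that $\Delta_{\max}<1$ would force, for every nonzero direction $u$, all inner products $x^\top u$ for $x\in\cX$ to share a common strict sign, so that $\cX$ lies strictly on one side of every hyperplane through the origin, which is then contradicted using the structural assumptions on the action set. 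Until you establish $\max_{x'\in\cX}\|x'-\bar{x}\|_{\star}\geq 1/2$ (equivalently $\Delta_{\max}\geq 1$), the direction $\kappa_*\leq 2\widetilde{\kappa}$ is not proven.
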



\section{Upper bound on the gap-depend regret of \textsc{Fair Phased Elimination}}
\label{sec:gap_dependent}

In this section, we provide an upper bound on the worst-case regret that depends on the gap between the two best actions, and on the gap between the best actions of the two groups. Compared to instance-dependent bounds, established in the linear bandit problem in \cite{pmlr-v54-lattimore17a, AOIDS}, gap-dependent bounds characterize the dependence of the regret on a small number of parameters. They are typically less sharp than instance-dependent bounds, but allow to better highlight the influence of the parameters on the difficulty of the problem. The bound established in the following theorem relates the difficulty of the biased linear bandit to that of bias estimation, and to that of the corresponding $d$-dimensional linear bandit. Proofs are postponed to Appendix \ref{app:proof_upper_bound_worst_case}.

\begin{thm}\label{thm:upper_bound_delta}
Assume that $x^*\in \argmax_{x \in \mathcal{X}} x^{\top}\gamma^*$ is unique. Then, there exists two numerical constants $C,C'>0$ such that, for the choice $ \delta = T^{-1}$, the following bound on the regret of the \textsc{Fair Phased Elimination} algorithm \ref{alg:FPE} holds
\begin{eqnarray*}
         R_T &\leq &  C\left( \left(\frac{d}{\Delta_{\min}} \lor \frac{\kappa\big(\Delta\vee \Delta_{\neq}\vee \varepsilon_{T}\big)}{\Delta_{\neq}^2}\right)\log(T) + d^2  +\frac{d}{\Delta_{\min}} \log\left(k\right) \right)\\
         &\leq&C' \left(\frac{d}{\Delta_{\min}} \lor \frac{\kappa\big(\Delta\vee \Delta_{\neq}\vee \varepsilon_{T}\big)}{\Delta_{\neq}^2}\right)\log(T) \quad \quad\textrm{for } \quad T\geq k\vee e^{d\Delta_{\min}}\
\end{eqnarray*}
where  $\Delta_{\min} = \min_{x \in \mathcal{X}\setminus x^*}\Delta_{x}$, $\Delta_{\neq} = \min_{x \in \mathcal{X} : z_x = -z_{x^*}}\Delta_{x}$, and $ \varepsilon_{T}= (\nicefrac{\kappa_{*} \log(T)}{T})^{1/3}.$
\end{thm}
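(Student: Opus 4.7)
My plan is to carry out the standard phased-elimination regret analysis, adapted to the two-layer structure of Algorithm \ref{alg:FPE:short}, while tracking the gap dependence through $\kappa(\widehat{\Delta}^l)$. First, I would introduce a favorable event $\mathcal{E}$ on which the within-group G-regression satisfies $|a_x^\top(\widehat{\theta}^{(z)}_l-\theta^\ast)|\leq \epsilon_l$ uniformly on $\mathcal{X}_l^{(z)}$, and whenever $\Delta$-Exp-Elim runs, $|\widehat{\omega}_l-\omega^\ast|\leq \epsilon_l$. The sample sizes $n_l$ chosen in the algorithm are exactly what is needed so that, combined with the variance bound $(d{+}1)/n_l$ from the General Equivalence Theorem and the bound $1/n_l$ from the $\widehat{\Delta}$-optimal design, Gaussian tail bounds plus a union bound give $\mathbb{P}(\mathcal{E}^c)\leq\delta=T^{-1}$; off $\mathcal{E}$ the regret contributes only $O(1)$. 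On $\mathcal{E}$, an induction on $l$ then establishes: (i) $x^\ast\in\mathcal{X}_l^{(z^\ast)}$ with $z^\ast=z_{x^\ast}$; (ii) every surviving action has within-group gap $O(\epsilon_{l-1})$; (iii) the wrong group is never eliminated, since its empirical debiased best is $\epsilon_l$-close to the true value and the $4\epsilon_l$-separation in the group test cannot be met; (iv) $\widehat{\Delta}^l_x\leq \Delta_x + c\,\epsilon_{l-1}$, so by monotonicity and positive $1$-homogeneity of $\kappa$ (Lemma \ref{lem:calcul:kappa} in the appendix), $\kappa(\widehat{\Delta}^l)\leq 2\kappa(\Delta\vee c\,\epsilon_{l-1})$.

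I would then sum the regret phase by phase. The G-Exp-Elim call on $\mathcal{X}_l^{(z)}$ uses $n_l+(d{+}1)(d{+}2)/2$ samples of within-group gap $O(\epsilon_{l-1})$, contributing $O\bigl(d\epsilon_l^{-1}\log(kl^2/\delta)+d^2\epsilon_{l-1}\bigr)$; the $\Delta$-Exp-Elim call samples $n_l+d+1$ actions for total regret $n_l\kappa(\widehat{\Delta}^l)+O(d)$, i.e.\ $O\bigl(\kappa(\Delta\vee\epsilon_{l-1})\epsilon_l^{-2}\log(l^2/\delta)+d\bigr)$. Since $\epsilon_l=2^{2-l}$, these geometric series are dominated by their terminal phase $l^\ast$, and the cumulative regret up to $l^\ast$ is bounded by
\begin{equation*}
C\left(\frac{d}{\epsilon_{l^\ast}}\log(T)+\frac{\kappa(\Delta\vee\epsilon_{l^\ast-1})}{\epsilon_{l^\ast}^{\,2}}\log(T)+d^2+\frac{d}{\Delta_{\min}}\log(k)\right).
\end{equation*}

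The final step is a case split according to why $\Delta$-Exp-Elim stops. In the \emph{identification case}, the separation test in line~6 of Routine~2 succeeds at the first phase with $\epsilon_{l^\ast}\lesssim \Delta_{\neq}$ (the two group debiased empirical bests are $O(\epsilon_{l^\ast})$-accurate and separated by $\Delta_{\neq}$). The $\Delta$-exploration cost is then $O\bigl(\kappa(\Delta\vee\Delta_{\neq})\Delta_{\neq}^{-2}\log T\bigr)$, after which the algorithm reduces to phased elimination inside the winning group, a standard linear-bandit analysis yielding $O\bigl(d\Delta_{\min}^{-1}\log T+d^2\bigr)$. In the \emph{stopping-criterion case}, $\epsilon_{l^\ast}\leq(\kappa(\widehat{\Delta}^{l^\ast})\log T/T)^{1/3}$; after $l^\ast$ the algorithm plays the empirical best action, whose true gap is at most $O(\epsilon_{l^\ast})$ on $\mathcal{E}$, so the exploitation adds at most $T\cdot O(\epsilon_{l^\ast})=O\bigl(\kappa(\widehat{\Delta}^{l^\ast})^{1/3}T^{2/3}\log(T)^{1/3}\bigr)$. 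The delicate step is merging both cases into the single form $\kappa(\Delta\vee\Delta_{\neq}\vee\varepsilon_T)/\Delta_{\neq}^{2}\log T$: when $\Delta_{\neq}\geq \varepsilon_T$ the identification case applies and $\Delta_{\neq}\vee\varepsilon_T=\Delta_{\neq}$; when $\Delta_{\neq}<\varepsilon_T$ the stopping criterion necessarily triggers, and using $\kappa(\widehat{\Delta}^{l^\ast})\leq 2\kappa(\Delta\vee\epsilon_{l^\ast-1})\leq 2\kappa(\Delta\vee\varepsilon_T)$ together with $\Delta_{\neq}^{2}\leq \varepsilon_T^{2}$ shows that both the cumulative and exploitation regrets are absorbed into $\kappa(\Delta\vee\varepsilon_T)\Delta_{\neq}^{-2}\log T$. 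This bookkeeping between the algorithm's threshold, the monotonicity of $\kappa$, and the accuracy of $\widehat{\Delta}^{l^\ast}$ is the main obstacle of the proof.
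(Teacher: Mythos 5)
Your architecture matches the paper's proof: a favorable concentration event with failure probability $O(\delta)$, an induction showing that the optimal action and its group always survive and that surviving actions have gap $O(\epsilon_l)$ (the paper's Lemmas \ref{lem:E} and \ref{lem:discard_subopt}), the comparison $\kappa(\widehat{\Delta}^l)\lesssim \kappa(\Delta\vee\epsilon_l)$ (Lemma \ref{lem:kappa-l}), phase-by-phase summation dominated by the terminal phase, and a case split on whether the stopping/Recovery branch is entered. The G-exploration accounting via $l_{\Delta_{\min}}$ and the identification-case accounting via $l_{\Delta_{\neq}}$ are exactly the paper's.

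The one step that does not close as written is the absorption in the stopping-criterion case. You bound the exploitation cost by $T\epsilon_{l^\ast}\leq \kappa(\widehat\Delta^{l^\ast})^{1/3}T^{2/3}\log(T)^{1/3}$ and then invoke $\Delta_{\neq}^2\leq\varepsilon_T^2$ to convert this into $\kappa(\Delta\vee\varepsilon_T)\Delta_{\neq}^{-2}\log(T)$. Writing $\eta=(\kappa(\widehat\Delta^{l^\ast})\log(T)/T)^{1/3}$, the quantity $\kappa(\widehat\Delta^{l^\ast})^{1/3}T^{2/3}\log(T)^{1/3}$ equals $\kappa(\widehat\Delta^{l^\ast})\log(T)/\eta^2$, so what you need is $\Delta_{\neq}\lesssim\eta$, not $\Delta_{\neq}\lesssim\varepsilon_T$. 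These are not interchangeable: since $\widehat\Delta^l_x$ can be of order $\epsilon_l$ for every $x$, one can have $\kappa(\widehat\Delta^{l^\ast})\approx\epsilon_{l^\ast}\kappa_\ast\ll\kappa_\ast$ and hence $\eta\ll\varepsilon_T$, so $\Delta_{\neq}\leq\varepsilon_T$ is too weak. The missing ingredient --- which the paper uses, and which you already state in the identification case but do not carry over --- is the contrapositive of the group-elimination lemma (Lemma \ref{lem:discard_z}): since the best group has not yet been identified when the stopping criterion fires, $\epsilon_{l^\ast}\geq\Delta_{\neq}/16$ on the good event. Combining this with the stopping inequality $\epsilon_{l^\ast}\leq\eta$ gives $\Delta_{\neq}\leq 16\eta$, which yields $T\epsilon_{l^\ast}\leq \kappa(\widehat\Delta^{l^\ast})\log(T)\epsilon_{l^\ast}^{-2}\lesssim \kappa(\Delta\vee\varepsilon_T)\Delta_{\neq}^{-2}\log(T)$, and the same lower bound $\epsilon_{l^\ast}\gtrsim\Delta_{\neq}$ is what produces the $\Delta_{\neq}^{-2}$ denominator in the cumulative $\Delta$-exploration sum. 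The relation $\epsilon_{l^\ast}\leq\varepsilon_T$ (from $l^\ast\geq L_T$) is then used only to replace $\kappa(\Delta\vee\epsilon_{l^\ast})$ by $\kappa(\Delta\vee\varepsilon_T)$, not to control the denominator.
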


The term $\nicefrac{d}{\Delta_{\min}} \lor\nicefrac{\kappa(\Delta \vee \Delta_{\neq}\vee\varepsilon_T)}{\Delta_{\neq}^2}$ highlights the two sources of difficulty of the problem. On the one hand, the term $\nicefrac{d}{\Delta_{\min}}$ is unavoidable: even if the algorithm knew beforehand the group containing the best action, it would still need to play a game of $d$-dimensional linear bandits in this group, and suffer, in the worst-case, the corresponding gap-dependent regret \cite{NIPS2011_e1d5be1c}. Note that lower bounds on gap-depend regret of classical linear bandits follow from considering a setting with one near-optimal action with gap $\Delta_{\min}$ in each of the $d$ dimensions. Then, any algorithm needs to explore each dimension up to $\Delta_{\min}^{-2}\log(T)$ times in order to find the best action, but can do so by choosing the near-optimal actions, thus having a regret $\Delta_{\min}^{-1}\log(T)$ in each direction. By contrast, the term $\nicefrac{\kappa(\Delta \vee \Delta_{\neq}\vee\varepsilon_T)}{\Delta_{\neq}^2}$ is characteristic of the biased linear bandit problem: it is due to the fact that the algorithm may need to sample very sub-optimal actions in order to find the group containing the best action. Indeed, to identify this group, one must estimate the bias with a precision $\Delta_{\neq}$, i.e. sample sub-optimal actions with average regret $\kappa(\Delta)$ approximately $\Delta_{\neq}^{-2}\log(T)$ times.

When $\nicefrac{d}{\Delta_{\min}} \leq \nicefrac{\kappa(\Delta \vee \Delta_{\neq}\vee\varepsilon_T)}{\Delta_{\neq}^2}$, the regret corresponds to the regret of this bias estimation phase. In other words, when both groups contain near-optimal actions, the difficulty of the problem is dominated by the price to pay for debiasing the unfair evaluations. Interestingly, when $\nicefrac{d}{\Delta_{\min}} > \nicefrac{\kappa(\Delta \vee \Delta_{\neq}\vee\varepsilon_T)}{\Delta_{\neq}^2}$, the difficulty of the linear bandit with systematic bias is dominated by that of the classical $d$-linear bandit. In this case, the algorithm is able to find the group containing the best action, and the problem reduces to a linear bandit in dimension  $d$. Thus, the linear bandit with systematic bias is a non trivial example of a globally observable game that can be locally observable around the best action.

Finally, we underline that the magnitude of the bias does not appear in the regret: intuitively, no matter its magnitude, the algorithm always need to estimate it up to the same precision (of order $\Delta_{\neq}$) in order to find the best group and to be optimal in terms of gap-depend regret. This indicates that our algorithm is robust against important discriminations in the evaluation mechanism.


\section{Lower bounds on the regret}\label{sec:lower_bounds}

In this section, we derive lower bounds on the worst-case regret and the gap-dependent regret that respectively match the upper bounds established in Theorems \ref{thm:upper_bound_worst_case} and \ref{thm:upper_bound_delta} up to sub-logarithmic factors or numerical constants.

\subsection{Lower bound on the worst-case regret}\label{subsec:lower_bounds_worst_case}
Theorems \ref{thm:upper_bound_worst_case} and  \ref{thm:upper_bound_delta} underline the dependence of the regret on the geometry of the action set. Before stating our result, we begin by introducing the notion of $\kappa_*$-correlated action set.
\begin{defi}[$\kappa_*$-correlated action set]
For $\kappa_* \geq 1$, a set of actions $\mathcal{A}$ is $\kappa_*$-correlated if $\mathcal{A} \in \mathbf{A}_{\kappa_*, d}$, where
\begin{equation*}
\mathbf{A}_{\kappa_*, d} = \left\{
\begin{tabular}{l}
    $\mathcal{A} = \left\{a_1, ..., a_k \right\} \subset \Big(\mathbb{R}^{d} \times \{-1,+1\}\Big)^{k} :$\\
    $k\in \mathbb{N}^{*},\underset{\pi \in \mathcal{P}^{\mathcal{A}}_{e_{d+1}}}{\min}\left\{e_{d+1}^{\top}\Big(\underset{a \in \mathcal{A}}{\displaystyle \sum}\pi(a) a a^{\top} \Big)^+e_{d+1}\right\} \geq \kappa_*$
\end{tabular}
\right\}
\end{equation*}
is the set of actions sets such that the minimal variance of the bias estimator is larger than $\kappa_*$.
\end{defi}
In the following theorem, we establish a lower bound on the regret valid for all $\kappa_*\geq 1$ by designing $\kappa_*$-correlated sets of actions $\cA \in \mathbf{A}_{\kappa_*, d}$, and obtaining lower bounds on the regret of any algorithm on these sets of actions.

\begin{thm}\label{thm:lower_bound_worst_case}
 Let $\kappa_* \geq 1$, $d\geq 2$ and $T\geq 4^3\kappa_* $. There exists an action set $\mathcal{A} \in \mathbf{A}_{\kappa_* , d}$ such that for any algorithm, there exists a bandit problem with parameter $\theta_T \in \mathbb{R}^{d+1}$ such that the regret of this algorithm on the problem characterized by $\theta_T$ satisfies $\displaystyle
    R_T^{\theta_T} \geq \nicefrac{\kappa_* ^{1/3}T^{2/3}}{8e}.$
\end{thm}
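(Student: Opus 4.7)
The plan is to prove the lower bound via an explicit two-point construction combined with Bretagnolle--Huber.

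\smallskip
\noindent\textbf{Step 1: a $\kappa_*$-correlated action set.} Set $\alpha = (\sqrt{\kappa_*}-1)/(\sqrt{\kappa_*}+1)\in[0,1)$ and take, in $\mathbb{R}^{d+1}$,
\[
a_1=(e_1+e_2,+1),\ a_2=(-e_1+e_2,-1),\ a_3=(\alpha e_1,+1),\ a_{3+i}=(e_{2+i},+1)\text{ for }i=1,\ldots,d-2.
\]
The covariates are distinct, both groups are non-empty, and the set spans $\mathbb{R}^{d+1}$. The pair $a_1,a_2$ plays the role of the near-optimal candidates in opposite groups, $a_3$ is the only (deliberately weak) bias probe, and the $a_{3+i}$ only guarantee the span condition. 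For the symmetric design $\pi(a_1)=\pi(a_2)=p,\ \pi(a_3)=1-2p$, a Schur complement on the $3\times 3$ submatrix of $V(\pi)$ indexed by $\{1,2,d+1\}$, together with the identity $(u+w)(u+w\alpha^2)-(u+w\alpha)^2=uw(1-\alpha)^2$ (with $u=2p,w=1-2p$), gives
\[
e_{d+1}^\top V(\pi)^+ e_{d+1} = \frac{2p+(1-2p)\alpha^2}{2p(1-2p)(1-\alpha)^2},
\]
whose minimum over $p\in(0,1/2)$ is $(1+\alpha)^2/(1-\alpha)^2=\kappa_*$. A further Schur complement shows that adding mass on the $a_{3+i}$ cannot decrease this value, because the cross-block $CD^{-1}C^\top$ subtracts from the $(d+1,d+1)$ entry exactly the amount those actions had added, leaving the bias variance unchanged. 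Hence $\mathcal{A}\in\mathbf{A}_{\kappa_*,d}$.

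\smallskip
\noindent\textbf{Step 2: two statistically confusing parameters.} Let $\epsilon=(\kappa_*/T)^{1/3}\leq 1/4$ (using $T\geq 4^3\kappa_*$) and define
\[
\theta^0=\bigl(\tfrac{\epsilon}{2},\,1-\tfrac{\epsilon}{2},\,0,\ldots,0,\,0\bigr)^\top,\qquad \theta^1=\bigl(-\tfrac{\epsilon}{2},\,1-\tfrac{\epsilon}{2},\,-\epsilon,\ldots,-\epsilon,\,\epsilon\bigr)^\top.
\]
Admissibility $\max_x|x^\top\gamma^i|\leq 1$ holds by inspection. Writing $\delta:=\theta^0-\theta^1$, one checks $a_1^\top\delta=a_2^\top\delta=a_{3+i}^\top\delta=0$ and $a_3^\top\delta=(\alpha-1)\epsilon$, so only samples from $a_3$ carry any information distinguishing the two instances. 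Under $\theta^0$ the best action is $a_1$ with reward $1$ and gap $\epsilon$ to $a_2$; under $\theta^1$ the best is $a_2$ by symmetry. The gap of $a_3$ under either instance is at least $1-\alpha\epsilon/2\geq 1/2$.

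\smallskip
\noindent\textbf{Step 3: Bretagnolle--Huber.} Let $B=\max(R_T^{\theta^0},R_T^{\theta^1})$ and argue by contradiction: assume $B<\kappa_*^{1/3}T^{2/3}/(8e)$. The $\geq 1/2$ gap of $a_3$ gives $\mathbb{E}^{\theta^0}[N_{a_3}]\leq 2B$, and the Gaussian KL decomposition yields
\[
D:=D_{\mathrm{KL}}(P_T^{\theta^0}\|P_T^{\theta^1})=\tfrac12(a_3^\top\delta)^2\mathbb{E}^{\theta^0}[N_{a_3}]\leq \frac{4\epsilon^2 B}{(\sqrt{\kappa_*}+1)^2}\leq \frac{4\epsilon^2 B}{\kappa_*}<\frac{1}{2e},
\]
after substituting $\epsilon^3=\kappa_*/T$. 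Applying Bretagnolle--Huber to the event $A=\{N_{a_1}\geq T/2\}$, and noting that on $A^c$ under $\theta^0$ (resp.\ on $A$ under $\theta^1$) at least $T/2$ plays have gap $\geq\epsilon$,
\[
R_T^{\theta^0}+R_T^{\theta^1}\ \geq\ \tfrac{\epsilon T}{2}\bigl(\mathbb{P}^0(A^c)+\mathbb{P}^1(A)\bigr)\ \geq\ \tfrac{\epsilon T}{4}e^{-D}\ \geq\ \tfrac{\epsilon T}{4}e^{-1/(2e)}.
\]
Halving and substituting $\epsilon T=\kappa_*^{1/3}T^{2/3}$ yields $B\geq \kappa_*^{1/3}T^{2/3}\,e^{-1/(2e)}/8$; since $e^{-1/(2e)}>e^{-1}$, this strictly exceeds $\kappa_*^{1/3}T^{2/3}/(8e)$, a contradiction.

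\smallskip
\noindent\textbf{Main obstacle.} The delicate part is Step 1: one must verify that the extra coordinate-spanning actions $a_{3+i}$ neither reduce the minimum bias variance below $(1+\alpha)^2/(1-\alpha)^2$ nor secretly distinguish $\theta^0$ from $\theta^1$. The cancellation $a_{3+i}^\top\delta=\Delta\gamma_{2+i}+\Delta\omega=\epsilon-\epsilon=0$ is exactly why $\delta$ was given the coordinatewise shift $-\epsilon$ on dimensions $3,\ldots,d$, and the rank-one structure of the cross-block $C$ in $V(\pi)$ is what makes the Schur calculation go through. Once the construction and these two checks are in place, the remaining Le Cam--style lower bound is routine.
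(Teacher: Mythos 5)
Your proposal follows essentially the same route as the paper's proof: a two-point argument in which all actions except one heavily sub-optimal ``bias probe'' (your $a_3$, the paper's $a_{d+1}$) have identical observation distributions under the two instances, the probe's gap of at least $1/2$ caps $\mathbb{E}[N_{\text{probe}}]$ in terms of the regret, and Bretagnolle--Huber applied to the event $\{N_{a_1}\geq T/2\}$ closes the argument with the same constant $1/(8e)$. Your action set and parameter pair differ from the paper's (the paper uses $a_1=e_1+e_{d+1}$, $a_i=e_i-e_{d+1}$, $a_{d+1}=-(1-\tfrac{2}{\sqrt{\kappa_*}+1})e_1-e_{d+1}$ and a case split on $\mathbb{E}^{(1)}[N_{d+1}(T)]$ rather than your contradiction argument), but these are cosmetic variations; Steps 2 and 3 check out, including the indistinguishability identities $a^\top\delta=0$ and the KL bound $D<1/(2e)$.

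The one place your write-up does not yet prove what it claims is the verification that $\mathcal{A}\in\mathbf{A}_{\kappa_*,d}$. Computing $e_{d+1}^\top V(\pi)^+e_{d+1}$ over the \emph{symmetric} family $\pi(a_1)=\pi(a_2)=p$ and minimizing over $p$ only shows that the global minimum variance is \emph{at most} $\kappa_*$; membership in $\mathbf{A}_{\kappa_*,d}$ requires it to be \emph{at least} $\kappa_*$, and your argument never rules out asymmetric designs on $\{a_1,a_2,a_3\}$ doing better. This is repairable: either note that the map swapping $a_1\leftrightarrow a_2$ (composed with $a_3\mapsto -a_3$) leaves the variance invariant and invoke convexity of $\pi\mapsto e_{d+1}^\top V(\pi)^+e_{d+1}$ to reduce to symmetric designs, or exhibit the dual certificate $u=(-\tfrac{2}{1+\alpha},0,-1,\dots,-1)$, for which $\max_x(x^\top u+z_x)^2=1/\kappa_*$, so that Lemma \ref{lem:upsilon} (equivalently Elfving's theorem, which is how the paper certifies its own set in Lemma \ref{lem:kappa}) gives the lower bound directly. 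With that certificate added, the proof is complete.
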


Previous lower bounds on the regret of linear bandits with partial monitoring, established in \cite{PartialFeedbackCOLT2020}, state that the regret must be at least $c_{\mathcal{A}}T^{2/3}$ for some parameter $\theta_T \in \mathbb{R}^{d+1}$, where $c_{\mathcal{A}}>0$ is a constant depending (not explicitly) on $\mathcal{A}$. By contrast, Theorem \ref{thm:lower_bound_worst_case} provides an explicit characterization of the dependence of the regret rate on the geometry of the problem, which matches the upper bound of Theorem \ref{thm:upper_bound_worst_case} up to a sub-logarithmic factor. Note that the assumption $d \geq 2$ is necessary here: if $d = 1$, there are at most two potentially optimal actions (namely, $\max\{x : x \in \cX\}$ and $\min\{x : x \in \cX\}$). Then, the problem becomes locally observable, and regret of order $\widetilde{O}(T^{1/2})$ can be achieved \cite{PartialFeedbackCOLT2020}.

\subsection{Lower bound on the gap-dependent regret}\label{subsec:lower_bounds_delta}
We now present a lower bound on the gap-dependent regret. More precisely, for given values of $\Delta_{\min}$ and $\Delta_{\neq}$, we establish a lower bound on the worst case regret among parameters $\theta$ verifying $\Delta_{\min} \leq \min_{x \in \mathcal{X}\setminus x^*}\Delta_{x}$, and $\Delta_{\neq} \leq \min_{x \in \mathcal{X} : z_x = -z_{x^*}}\Delta_{x}$. Before stating formally the result, let us define the corresponding parameter set.  For an action set $\mathcal{A} \in \mathbf{A}_{\kappa_*,d}$, and for $\left(\Delta_{\min}, \Delta_{\neq}\right) \in (0,1)^2$ such that $\Delta_{\min}\leq \Delta_{\neq}$, we denote
\begin{equation*}
\mathbf{\Theta}^{\mathcal{A}}_{\Delta_{\min}, \Delta_{\neq}} = \left\{
\begin{tabular}{l}
    $\theta = \left({\gamma  \atop \omega }\right) :\  \gamma  \in \cC(\cX),\ \exists\ !\ \left({x^* \atop z_{x^*}}\right) \in \argmax_{\left({x \atop z_x}\right) \in \mathcal{A}}\{x^{\top}\gamma \},$\\
    $\forall \left({x' \atop z_{x'}}\right) \in \mathcal{A}\text{ such that } x' \neq x^*,  \left(x^* - x'\right)^{\top}\gamma  \geq \Delta_{\min},$\\
    $\forall \left({x' \atop z_{x'}}\right) \in \mathcal{A}\text{ such that } z_{x'} \neq z_{x^*},  \left(x^* - x'\right)^{\top}\gamma  \geq \Delta_{\neq}$
\end{tabular}
\right\}
\end{equation*}
the set of parameters with minimum gap $\Delta_{\min}$, and minimum between-group-gap $\Delta_{\neq}$. 

The upper bounds established in Theorem \ref{thm:upper_bound_delta} underline the dependence of the gap-dependent regret on the minimal regret $\kappa(\Delta)$ for estimating the bias. Before stating our results, we define a class of problems $\mathbf{\Theta}^{\mathcal{A}}_{\Delta_{\min}, \Delta_{\neq}, \kappa}$ such that $\kappa(\Delta)\leq \kappa$. For a parameter $\gamma \in \cC(\cX)$, let us denote $\Delta(\gamma)_x = \max_{x' \in \cX}(x'-x)^{\top}\gamma$, and $\Delta(\gamma) = \left(\Delta(\gamma)_x\right)_{x \in \cX}$. Moreover, for a given set $\cA$, let us denote 
\begin{equation*}
\mathbf{\Theta}^{\mathcal{A}}_{\Delta_{\min}, \Delta_{\neq}, \kappa} = \mathbf{\Theta}^{\mathcal{A}}_{\Delta_{\min}, \Delta_{\neq}} \cap \left\{\theta = \left({\gamma \atop \omega}\right):\ \gamma \in \cC(\cX),\ \kappa(\Delta(\gamma))\leq \kappa\right\}.
\end{equation*}

\begin{thm}\label{thm:lower_bound_gap}
 For all $\kappa \geq 2$ and all $d \geq 4$, there exists a set of actions $\cA\in \mathbb{R}^{d+1}$ such that for all $(\Delta_{\min},\Delta_{\neq})  \in (0,\nicefrac{1}{8})^2$ with $\Delta_{\min}\leq\Delta_{\neq}$,
 \begin{eqnarray}\label{eq:lb_delta_gap}
 \underset{T \rightarrow\infty}{\liminf} \underset{\theta \in \mathbf{\Theta}^{\mathcal{A}}_{\Delta_{\min}, \Delta_{\neq}, \kappa}}{\sup} \frac{ R_T^{ \theta}}{\log\left(T\right)} \geq \left[\frac{d}{10\Delta_{\min}}\right] \lor \left[
\frac{\kappa+2}{8\Delta_{\neq}^2} \right].
\end{eqnarray}
\end{thm}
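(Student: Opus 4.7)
My plan is to exhibit, within a single action set $\cA = \cA(d,\kappa) \subset \mathbb{R}^{d+1}$ depending only on $d$ and $\kappa$, two distinct families of parameters which separately witness the two terms in the maximum: a family $\{\theta^{\mathrm{lb}}\}$ yielding regret at least $d\log(T)/(10\Delta_{\min})$, and a family $\{\theta^{\mathrm{be}}\}$ yielding regret at least $(\kappa+2)\log(T)/(8\Delta_{\neq}^2)$. Since the theorem takes the supremum over $\theta\in\mathbf{\Theta}^{\mathcal{A}}_{\Delta_{\min},\Delta_{\neq},\kappa}$, this is enough. Both bounds rest on the Kaufmann--Capp\'e--Garivier change-of-measure inequality for Gaussian bandits: for any consistent algorithm and any alternative $\theta'$ under which a different action is optimal,
\[
\sum_{x\in\cX}\mathbb{E}_\theta[N_x(T)]\,\tfrac{1}{2}\bigl(a_x^\top(\theta-\theta')\bigr)^2 \;\ge\; (1+o(1))\log T ,
\]
after which the regret lower bound follows from minimizing $\sum_x N_x\Delta_x$ under this information constraint.

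\paragraph{Construction of $\cA$.} In Group $+1$ I would place the $d$ canonical basis actions $(e_1,+1),\dots,(e_d,+1)$, which embed a canonical $d$-dimensional linear bandit in the best group. In Group $-1$ I would add a near-copy $(\rho e_1,-1)$ with $\rho=(\sqrt{\kappa}-1)/(\sqrt{\kappa}+1)$, so that by Lemma \ref{lem:upsilon_margin} the minimum bias-estimation variance on $\cA$ is exactly $\kappa$, together with one auxiliary Group $-1$ action providing enough flexibility to realize arbitrary $\Delta_{\neq}\in (0,1/8)$. For any admissible $(\Delta_{\min},\Delta_{\neq})$, I then take $\gamma=(a,a-\Delta_{\min},\dots,a-\Delta_{\min})$ with $a=\Delta_{\neq}/(1-\rho)$, which makes $(e_1,+1)$ the unique optimum with every Group $+1$ action at gap $\Delta_{\min}$ and the best Group $-1$ action at gap exactly $\Delta_{\neq}$. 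Admissibility $\gamma\in\cC(\cX)$ uses $\Delta_{\neq}\le 1/8$, and the constraint $\kappa(\Delta(\gamma))\le\kappa$ is checked by exhibiting an explicit feasible allocation for \eqref{eq:kappa_equivalent} supported on $e_1$ and the Group $-1$ actions.

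\paragraph{The two lower bounds.} For the linear-bandit term, I set $\theta^{\mathrm{lb}}=(\gamma,0)$ as above and, for each $j=2,\dots,d$, I introduce an alternative $\theta^{(j)}$ obtained by swapping the $1$st and $j$th coordinates of $\gamma$; under $\theta^{(j)}$ action $(e_j,+1)$ becomes uniquely optimal with the same gap pattern, so $\theta^{(j)}\in\mathbf{\Theta}^{\mathcal{A}}_{\Delta_{\min},\Delta_{\neq},\kappa}$. The KL inequality applied to each of the $d-1$ alternatives gives $\mathbb{E}[N_{e_j}(T)]\ge\log(T)/(2\Delta_{\min}^2)(1+o(1))$; since every such pull costs $\Delta_{\min}$, summing yields $R_T^{\theta^{\mathrm{lb}}}\ge (d-1)\log(T)/(2\Delta_{\min})\ge d\log(T)/(10\Delta_{\min})$ for $d\ge 4$. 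For the bias-estimation term, I take $\theta^{\mathrm{be}}=\theta^{\mathrm{lb}}$ and consider the alternative $\theta'=\theta^{\mathrm{be}}+(\eta,\delta)$ that makes the best Group $-1$ action $a^-$ optimal, which requires $(a^--e_1)^\top\eta\ge 2\Delta_{\neq}$. Because $\omega$ does not affect the true reward, $\delta$ may be freely optimized alongside $\eta$: minimizing $\sum_x N_x\cdot\tfrac12(a_x^\top(\eta,\delta))^2$ under the flipping constraint reduces, by duality, to the $\Delta$-optimal design problem \eqref{eq:kappa_equivalent} at $\Delta(\gamma)$, rescaled by $(2\Delta_{\neq})^2$. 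Combining with the $\log T$ KL lower bound and the construction-guaranteed equality $\kappa(\Delta(\gamma))=\kappa$ on $\cA$, one obtains $R_T^{\theta^{\mathrm{be}}}\ge(\kappa+2)\log(T)/(8\Delta_{\neq}^2)$, where the extra $+2$ accounts for the two inescapable samples of $(e_1,+1)$ and $a^-$ that any feasible allocation for \eqref{eq:kappa_equivalent} must contribute.

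\paragraph{Main obstacle.} The most delicate step is the joint design: $\cA$ must be universal in $(\Delta_{\min},\Delta_{\neq})$, yet both $\theta^{\mathrm{lb}}$ and $\theta^{\mathrm{be}}$ and all their alternatives must remain in $\mathbf{\Theta}^{\mathcal{A}}_{\Delta_{\min},\Delta_{\neq},\kappa}$ for every admissible pair. In particular, verifying that the coupled constraint $\kappa(\Delta(\gamma))\le\kappa$ is preserved under the perturbations $\gamma\mapsto\gamma+\eta$ used in both alternatives is non-trivial and likely requires a continuity argument on the optimal-design value combined with explicit feasible designs; the numerical constants $1/10$ and $1/8$ should then fall out of the standard regret-to-information conversion.
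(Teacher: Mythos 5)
There are two genuine gaps, each of which breaks one of the two terms in the bound.

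\textbf{The bias term.} Your Group $-1$ witness is $(\rho e_1,-1)$ with $\rho=(\sqrt{\kappa}-1)/(\sqrt{\kappa}+1)\in(0,1)$, so its reward $\rho\gamma_1$ is dominated by that of $(e_1,+1)$ whenever $\gamma_1>0$; no perturbation with $\gamma_1'>0$ can ever make it optimal, and decreasing $\gamma_1$ (which is what your constraint $(a^--e_1)^\top\eta\ge 2\Delta_{\neq}$ forces) decreases $\rho\gamma_1'$ as well. The unspecified ``auxiliary'' Group $-1$ action is therefore carrying the entire argument, and the mechanism that makes the group flip information-theoretically expensive is never established: for the alternative to be hard to detect, the evaluations $a_x^\top\theta$ of \emph{all} near-optimal actions in \emph{both} groups must be left unchanged while their rewards move by $\pm\Delta_{\neq}$ according to group, and the only action whose evaluation changes must have gap $\Omega(1)$ and per-sample KL of order $\Delta_{\neq}^2/\kappa$. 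The paper achieves this by splitting the basis directions between the groups ($e_i+e_{d+1}$ for $i\le\lfloor d/2\rfloor$, $e_i-e_{d+1}$ for $i>\lfloor d/2\rfloor$) and adding the single informative action $-(1-\tfrac{2}{\sqrt{\kappa_*}+1})e_1-e_{d+1}$, whose gap is at least $1/2$ and whose evaluation shifts by only $\tfrac{2\Delta_{\neq}}{\sqrt{\kappa_*}+1}$ under the group-flipping alternative. In your set, with all of $e_1,\dots,e_d$ in Group $+1$, keeping their evaluations fixed forces $\eta=-\delta\bone$, and the evaluation of $(\rho e_1,-1)$ then moves by $(1+\rho)\delta$, i.e.\ by order $\sqrt{\kappa}\,\Delta_{\neq}$ rather than $\Delta_{\neq}/\sqrt{\kappa}$ --- the change of measure is cheap, not expensive. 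A separate admissibility problem: $\gamma_1=\Delta_{\neq}/(1-\rho)=\Delta_{\neq}(\sqrt{\kappa}+1)/2$ exceeds $1$ for large $\kappa$ and $\Delta_{\neq}$ near $1/8$, so $\gamma\notin\cC(\cX)$.

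\textbf{The linear-bandit term.} Obtaining $\theta^{(j)}$ by \emph{swapping} coordinates $1$ and $j$ changes the mean observation of the optimal arm $(e_1,+1)$ by $\Delta_{\min}$. Since that arm is pulled $T(1-o(1))$ times under $\theta^{\mathrm{lb}}$, the divergence $KL(\mathbb{P}^{\theta},\mathbb{P}^{\theta^{(j)}})$ contains a term of order $T\Delta_{\min}^2$, the information inequality is satisfied vacuously, and no lower bound on $\mathbb{E}[N_{e_j}(T)]$ follows. The perturbation must be one-sided, touching only the suboptimal coordinate, as in the paper's $\gamma^{(i)}=\gamma^{(1)}+2\Delta_{\min}e_i+2\Delta_{\min}e_{\lfloor d/2\rfloor+i}$ (the second shift keeps $\theta^{(i)}$ inside $\mathbf{\Theta}^{\mathcal{A}}_{\Delta_{\min},\Delta_{\neq},\kappa}$). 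Finally, the Kaufmann--Capp\'e--Garivier inequality in the form you invoke applies only to consistent algorithms, whereas the theorem is for arbitrary algorithms; the paper handles this with Bretagnolle--Huber plus the case split ``either the regret already exceeds the claimed bound, or the log term survives,'' and some such device is needed here too.
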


Theorem \ref{thm:lower_bound_gap} shows that for some action sets $\cA$, the gap-depend regret of the \textsc{Fair Phased Elimination} algorithm is asymptotically optimal up to a numerical constant. Note that the assumption $d\geq 4$ is necessary in our proof to design an action set $\mathcal{A}$ such that Equation \eqref{eq:lb_delta_gap} holds for all $\Delta_{\min}, \Delta_{\neq} \in (0,\nicefrac{1}{8})$. On the other hand,  as discussed in Appendix \ref{app:lower_bound_gap_2}, for $d \geq 2$, for all $\Delta_{\min}, \Delta_{\neq} \in (0,1/8)$, we can show that there exists action sets $\mathcal{A}$ and $\theta \in \mathbf{\Theta}^{\mathcal{A}}_{\Delta_{\min}, \Delta_{\neq}}$ such that the lower bound in Equation \eqref{eq:lb_delta_gap} still holds, by considering separately the cases $\nicefrac{d}{\Delta_{\min}} >\nicefrac{\kappa}{\Delta_{\neq}^2}$ and $\nicefrac{d}{\Delta_{\min}} \leq\nicefrac{\kappa}{\Delta_{\neq}^2}$.



\section{Conclusion}
In this paper, we addressed the problem of online decision making under biased bandit feedback. We designed a new algorithm based on $\Delta$- and G-optimal design, and obtained worst-case and gap-dependent upper bounds on its regret. We obtained lower bounds on the regret for some problem instances showing that these rates are tight up to sub-logarithmic factors in some settings. These rates highlight two behaviors: on the one hand, the worst case rate  $\mathcal{O}(\kappa_{*}^{1/3}\log(T)^{1/3}T^{2/3})$ highlights the cost induced by the biased feedback, and the need to select sub-optimal actions in order to debias it. On the other hand, the gap-dependent bound shows that for some instance, the problem can be locally observable around the best action: then, the difficulty of the problem is dominated by the difficulty of the corresponding linear bandit problem, and is no more difficult than this problem. When this is not the case, the regret scales as $\kappa(\Delta)\Delta_{\neq}^{-2}\log(T)$, where $\Delta_{\neq}$ is the gap between the best actions of the two groups, and $\kappa(\Delta)$ is the minimum regret for estimating the bias with a given precision. This work paves the way for studying other bandit models with unfair feedback, considering for example continuous, multi-dimensional sensitive attributes.

\bibliographystyle{abbrv}
\bibliography{refFairBandit}
\newpage
\appendix

\section*{Appendix}

The Appendix is organized as follows. We begin  in Section \ref{sec:kappa} by further discussing the interpretation and computation of $\kappa_*$ and $\kappa(\Delta)$, and their relation to the worst-case alignment constant of \cite{PartialFeedbackCOLT2020} and to the problem of optimal estimation of the bias against the worst parameter. Then, we provide in Section \ref{sec:detailed_algo} a detailed version of the \textsc{Fair Phased Elimination} algorithm \ref{alg:FPE:short}. Then, in Section \ref{sec:proofs}, we prove the main results of this paper.

\section{On the geometry of bias estimation} \label{sec:kappa}
The constants $\kappa_*$ and $\kappa(\Delta)$ respectively characterize the difficulty of the worst-case problem, and of the gap-depend problem, and highlight the dependence of the regret on the geometry of the action set. In this section, we begin by discussing  in Section \ref{subsec:c-opt} the interpretation of the constant $\kappa_*$ as the variance of the $e_{d+1}$-optimal design. Using Elfving's characterization of the $e_{d+1}$-optimal design, we then derive an alternative characterization of $\kappa_*$ in terms of separation of the actions of the two groups in Section \ref{subsec:upsilon}

\subsection{Bias estimation as a $\mathbf{e_{d+1}}$-optimal design problem} 
\label{subsec:c-opt} 
Recall that $\kappa_*$ is the minimal variance of the bias estimator related to the problem of $e_{d+1}$-optimal design.

\paragraph{$\mathbf{e_{d+1}}$-optimal design} Optimal design theory addresses the following problem: a scientist must design a set of $n$ experiments $\{x_1, ..., x_n\}\in\mathcal{X}^n$ so as to estimate at best a parameter of interest, where each experiment $x \in \cX$ corresponds to a point $a_x \in \mathbb{R}^{d+1}$. The aim of the scientist is to choose a design, i.e. a function $\mu :\cX \mapsto \mathbb{N}$ indicating the budget $\mu(x)$ to be allocated to each experiment $x \in \cX$. Each experiment $x$ is then repeated exactly $\mu(x)$ times, and the corresponding observations $y_{x,1}, ..., y_{x,\mu(x)}$ are collected for each $x\in\cX$. The law of the observations corresponding to experiment $x$ at point $a_x$ is given by $$y_{x,i} = a_{x}^{\top}\theta^* + \xi_{x,i},$$
where $\xi_{x,i} \sim \mathcal{N}(0,1)$ are independent noise terms, and $\theta^*\in \mathbb{R}^{d + 1}$ is an unknown parameter. The aim of the scientist is to choose the design $\mu$ so as to best estimate (some features of) the parameter $\theta^*$, under a constraint on the total number of experiments $\sum_{x \in \mathcal{X}}\mu(x) \leq n$ for some $n \in \mathbb{N}$.

Different criteria can be used to characterize the optimality of a design $\mu$.  For example, one may need to estimate the full parameter $\theta^*$, in order to predict the outcomes of the experiments $x \in \cX$ with a small uniform error: this leads to the G-optimal design problem (\ref{eq:G-opt-design}). Alternatively, for $c$ a vector in $\mathbb{R}^{d+1}$, one may aim at finding the best design $\mu \in \cN^{\cX}$ for estimating the scalar product $c^{\top}\theta^*$ under a budget constraint $\underset{x \in \mathcal{X}}{\sum}\mu(x) \leq n$, where $\cN^{\cX} = \left\{\mu : \cX \rightarrow \mathbb{N}\right\}$. This problem is known as \textit{$c$-optimal design}. Unbiased linear estimation of $c^{\top}\theta^*$ is possible only when $c$ belongs to the image of $V(\mu)$, and in this case the  best linear unbiased estimator  of the scalar product $c^{\top}\theta^*$ is given by $c^\top \widehat\theta$, where $\widehat\theta$ is the least-square estimator defined as
\begin{equation}
    \widehat{\theta} = V(\mu)^{+}\sum_{x\in \mathcal{X}}a_x \left(\sum_{i \leq \mu(x)}y_{x,i}\right) \ \ \ \text{ for }  \ \ \  V(\mu) = \sum_{x \in \mathcal{X}}\mu(x) a_x a_x^{\top}.
\end{equation}
The variance of the estimator $c^\top \widehat\theta$ is then equal to $c^{\top}V(\mu)^{+}c$.

Exact $c$-optimal design aims at choosing the allocation $\mu \in \cN^{\cX}$ minimizing the variance of $c^\top \widehat\theta$ for a given budget $\sum_x \mu(x) \leq n$, under the constraint that $c\in\Image(V(\mu))$. Let us define  the normalized design $\pi: x \in \cX \mapsto \mu(x)/n$, and let us underline that $\pi$ defines a probability on $\cX$. The variance of $c^{\top}\widehat{\theta}$ is then equal to $n^{-1}c^{\top}V(\pi)^{+}c$. In the limit $n \rightarrow + \infty$, the problem is equivalent to the problem of approximate $c$-optimal design (sometimes simply referred to as $c$-optimal design), that aims at finding a probability measure $\pi \in \mathcal{P}_c^{\cX}:=\{\pi\in\mathcal{P}
^{\cX}:c\in\Image(V(\pi))\}$ solution to the following problem
\begin{equation*}
    \underset{\pi \in \mathcal{P}_c^{\cX}}{\min}\ c^{\top}V(\pi)^{+}c\,. \ \ \ \ \ \ \  \ \ \ \ \ \ \ \text{($c$-optimal design)}
\end{equation*}

Note that when $\{a_x : x \in \cX\}$ spans $\mathbb{R}^{d+1}$, for any $c \in \mathbb{R}^{d+1}$, there exists a design $\pi$ such that $c\in\Image(V(\pi))$, and hence the $c$-optimal design problem admits a solution.

\paragraph{Computation of the $\mathbf{e_{d+1}}$-optimal design}
 Finding an exact optimal allocation $\mu \in \cN^{\cX}$ under the constraint that $\sum_{x \in \mathcal{X}}\mu(x) \leq n$ is unfortunately NP-complete. However, finding an approximate optimal design $\pi \in \mathcal{P}^{\mathcal{X}}_{c}$ can be done in polynomial time \cite{Complexity-c-Optimal}. Several algorithms, including multiplicative algorithms \cite{Fellman} and a simplex method of linear programming  \cite{Simplex-c-Optimal}, have been proposed to iteratively approximate the optimal design. More recently, \cite{pronzato:removing} suggested using screening tests to remove inessential points to accelerate optimization algorithms.
 
Classical results from $e_{d+1}$-optimal design show that there exists a $c$-optimal design supported by at most $d+1$ points (see, e.g., \cite{Pazman,Simplex-c-Optimal} for a proof of this result). The following Lemma indicates how to obtain an exact design by rounding an approximate design supported by at most $d+1$ points.

\begin{lem}\label{lem:c-opt}
For any $\pi\in \cM^{\cX}_{e_{d+1}}$ and any $m>0$, the estimator $e_{d+1}^{\top}\widehat\theta_{\mu}$ computed from the design $\mu: x \mapsto \ceil{m\pi(x)}$ is an unbiased estimator of $e_{d+1}^\top\theta$ and it has a variance at most $m^{-1} e_{d+1}^\top V(\pi)^{+}e_{d+1}$.
\end{lem}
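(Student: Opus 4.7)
The plan is to exploit the single key inequality $\mu(x) \geq m\pi(x)$, which follows from the definition of the ceiling. Summing over $x$ gives the Loewner ordering $V(\mu) - m V(\pi) = \sum_{x \in \cX} (\mu(x) - m\pi(x))\, a_x a_x^\top \succeq 0$, from which I would extract both conclusions in turn.

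For unbiasedness, the Loewner ordering implies $\ker(V(\mu)) \subseteq \ker(V(\pi))$, hence $\Image(V(\pi)) \subseteq \Image(V(\mu))$; combined with the hypothesis $e_{d+1} \in \Image(V(\pi))$, this gives $e_{d+1} \in \Image(V(\mu))$, so that $V(\mu)^+ V(\mu)$, which is the orthogonal projector onto $\Image(V(\mu))$, fixes $e_{d+1}$. A one-line computation from the definition of $\widehat{\theta}_\mu$ gives $\mathbb{E}[\widehat{\theta}_\mu] = V(\mu)^+ V(\mu)\, \theta^*$, whence $\mathbb{E}[e_{d+1}^\top \widehat{\theta}_\mu] = e_{d+1}^\top V(\mu)^+ V(\mu)\, \theta^* = e_{d+1}^\top \theta^*$ by symmetry of $V(\mu)^+ V(\mu)$.

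For the variance, writing the error as $\widehat{\theta}_\mu - \mathbb{E}[\widehat{\theta}_\mu] = V(\mu)^+ \sum_{x,i} a_x \xi_{x,i}$ and using independence of the standard Gaussian noise terms leads to
\[
\mathrm{Var}(e_{d+1}^\top \widehat{\theta}_\mu) = \sum_{x} \mu(x)\bigl(e_{d+1}^\top V(\mu)^+ a_x\bigr)^2 = e_{d+1}^\top V(\mu)^+ V(\mu) V(\mu)^+ e_{d+1} = e_{d+1}^\top V(\mu)^+ e_{d+1},
\]
where I invoke the Moore--Penrose identity $V(\mu)^+ V(\mu) V(\mu)^+ = V(\mu)^+$.

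The final step is to establish $e_{d+1}^\top V(\mu)^+ e_{d+1} \leq m^{-1}\, e_{d+1}^\top V(\pi)^+ e_{d+1}$ via the Fenchel-type variational representation
\[
v^\top A^+ v = \max_{w \in \mathbb{R}^{d+1}} \bigl\{2 v^\top w - w^\top A w\bigr\}, \qquad v \in \Image(A),
\]
valid for any symmetric positive semi-definite matrix $A$ (concavity gives a unique maximum, attained on the affine set $\{w : Aw = v\}$). Since $V(\mu) \succeq m V(\pi)$, the objective for $A = V(\mu)$ is pointwise no larger than the objective for $A = m V(\pi)$, so the maxima inherit the same ordering and yield the desired bound. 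The only real subtlety I anticipate is that Loewner order does not transfer to Moore--Penrose inverses in general; the variational representation sidesteps this cleanly, provided that $v = e_{d+1}$ lies in the image of both matrices, which is exactly what the first step of the argument guarantees.
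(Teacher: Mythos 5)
Your proof is correct and follows essentially the same route as the paper's: both arguments rest on the Loewner ordering $V(\mu) \succcurlyeq m V(\pi)$ together with the membership $e_{d+1} \in \Image(V(\mu))$, which guarantees unbiasedness. The only differences are ones of completeness: you derive the image inclusion from the ordering itself (the paper instead uses that $\mu$ and $\pi$ have the same support, giving equality of images), and you justify the key monotonicity $e_{d+1}^{\top}V(\mu)^{+}e_{d+1} \leq m^{-1}e_{d+1}^{\top}V(\pi)^{+}e_{d+1}$ via the variational representation of $v^{\top}A^{+}v$ --- a step the paper asserts without proof and which, as you rightly note, needs an argument since Loewner order does not transfer to pseudo-inverses in general.
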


Obviously, similar results also hold for G-optimal design.

\begin{lem}\label{lem:G-opt}
Let $\pi$ be a solution of the G-optimal design problem \eqref{eq:G-opt-design}. Then, for any $m>0$ and any $x\in \mathcal{X}$, the estimator $a_x^{\top}\widehat\theta_{\mu}$ computed from the design $\mu: x \mapsto \ceil{m\pi(x)}$ is an unbiased estimator of the evaluation $a_x^\top\theta$, and it has a variance 
$$a_x^{\top}V(\mu)^+a_x \leq m^{-1}(d+1).$$
\end{lem}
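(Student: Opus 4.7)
The plan is to establish the three ingredients of the statement: that $a_x \in \Image(V(\mu))$ so the estimator is well-defined at $a_x$, unbiasedness of $a_x^\top \widehat\theta_\mu$, and the variance bound $a_x^\top V(\mu)^+ a_x \leq m^{-1}(d+1)$.

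First, since $\mu(x)=\lceil m\pi(x)\rceil \geq m\pi(x)$, the support of $\mu$ contains that of $\pi$, whence $\Image(V(\mu))\supseteq \Image(V(\pi))$. The General Equivalence Theorem of Kiefer and Wolfowitz applied at a G-optimal $\pi$ guarantees that $a_x^\top V(\pi)^+a_x$ is finite for every $x\in\mathcal{X}$ (and in fact bounded by $d+1$), so in particular $a_x \in \Image(V(\pi)) \subseteq \Image(V(\mu))$.

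For unbiasedness, I would substitute $y_{x,i}=a_x^\top\theta+\xi_{x,i}$ into the least-squares formula to get $\widehat\theta_\mu = V(\mu)^+ V(\mu)\,\theta + V(\mu)^+\sum_{x}a_x\sum_{i\leq \mu(x)}\xi_{x,i}$. Taking expectations kills the noise term, and the identity $a_x^\top V(\mu)^+V(\mu)=a_x^\top$ (which holds since $a_x\in\Image(V(\mu))$) yields $\mathbb{E}[a_x^\top\widehat\theta_\mu]=a_x^\top\theta$. The same decomposition, together with the pseudo-inverse identity $V(\mu)^+ V(\mu) V(\mu)^+ = V(\mu)^+$, identifies the variance of $a_x^\top\widehat\theta_\mu$ as $a_x^\top V(\mu)^+ a_x$.

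For the variance bound, the key inequality is the Loewner ordering $V(\mu)=\sum_x\lceil m\pi(x)\rceil a_xa_x^\top \succeq m\sum_x\pi(x)a_xa_x^\top = mV(\pi)$. I would then invoke monotonicity of the pseudo-inverse restricted to the common range: for PSD matrices $A\succeq B\succeq 0$ and $v\in\Image(A)\cap\Image(B)$, one has $v^\top A^+v \leq v^\top B^+v$. This follows cleanly from the variational identity $v^\top A^+v = \sup_{u}\{2v^\top u - u^\top A u\}$ (valid for $v\in\Image(A)$), since decreasing $A$ to $B$ only enlarges the supremum. Applied with $A=V(\mu)$, $B=mV(\pi)$, and $v=a_x$, this gives $a_x^\top V(\mu)^+a_x \leq m^{-1}a_x^\top V(\pi)^+a_x \leq m^{-1}(d+1)$, where the last step is again the General Equivalence Theorem. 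The main subtlety in the whole argument is handling pseudo-inverse monotonicity when the matrices may be rank-deficient; the variational reformulation sidesteps this transparently without requiring an invertibility reduction.
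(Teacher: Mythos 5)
Your proof is correct and follows essentially the same route as the paper: range containment between $V(\mu)$ and $V(\pi)$, the Loewner ordering $V(\mu)\succcurlyeq mV(\pi)$, and the Kiefer--Wolfowitz bound $\max_x a_x^\top V(\pi)^+a_x\leq d+1$. Two small remarks. First, your justification that $a_x\in\Image(V(\pi))$ via ``$a_x^\top V(\pi)^+a_x$ is finite, hence $a_x$ is in the range'' is not a valid inference when $V(\pi)^+$ is taken literally as a pseudo-inverse matrix (the quadratic form is always finite); the membership is instead immediate from the feasibility constraint $\pi\in\cP^{\cX}_{\cX}$ in the definition of the G-optimal design problem \eqref{eq:G-opt-design}, which is how the paper gets it (it argues $\Image(V(\mu))=\Image(V(\pi))$ from equality of supports). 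So the conclusion stands, but the stated reason should be replaced. Second, your variational identity $v^\top A^+v=\sup_u\{2v^\top u-u^\top Au\}$ for $v\in\Image(A)$ is a clean and fully rigorous justification of the pseudo-inverse monotonicity step $a_x^\top V(\mu)^+a_x\leq m^{-1}a_x^\top V(\pi)^+a_x$, which the paper asserts without proof; this is the one place where your write-up adds genuine detail beyond the paper's argument.
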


\subsection{Interpretation of $\kappa_*$ in terms of separation of the groups}
\label{subsec:upsilon}
Next theorem, due to Elfving, characterizes solutions to the $c$-optimal design problem.
\begin{thm}[\cite{Elfving}]\label{thm:Elfving}
Let $\mathcal{S} = \text{convex hull}\left\{ +a_x, -a_x : x \in \mathcal{X} \right\}$ be the Elfving's set of $\{a_x : x\in \cX \}\subset \mathbb{R}^{d+1}$, and let $\partial \mathcal{S}$ denote the boundary of $\mathcal{S}$. A design $\pi \in \mathcal{P}_c^{\cX}$ is $c$-optimal for $c \in \mathbb{R}^{d+1}$ if and only if there exists $\zeta \in \{-1,+1\}^{\mathcal{X}}$ and $t>0$ such that 
 \begin{equation*}
tc = \underset{x\in\mathcal{X}}{\sum}\pi(x)\zeta_x a_x \in \partial \mathcal{S}.
\end{equation*}
Moreover, $t^{-2} = c^{\top}\left(V(\pi)\right)^{+}c$ is value of the $c$-optimal design problem.
\end{thm}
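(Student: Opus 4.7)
The plan is to reduce the $c$-optimal design problem to a geometric problem on the set $\mathcal{S}$ via a variational identity for $c^\top V(\pi)^+c$, coupled with a swap of the order of minimization.

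First, I would establish the identity
\[
c^\top V(\pi)^+ c \;=\; \min\!\left\{\sum_{x\in\mathcal{X}}\frac{\beta_x^2}{\pi(x)} \;:\; c=\sum_{x\in\mathcal{X}}\beta_x a_x,\ \beta_x=0\text{ whenever }\pi(x)=0\right\},
\]
which follows either from the Gauss--Markov theorem applied to the weighted regression problem, or from a direct computation using the block structure of $V(\pi)^+$. This identity reinterprets the $c$-optimal variance as the best achievable quadratic cost of an unbiased decomposition of $c$ in the atoms $\{a_x\}_{x\in\mathcal{X}}$.

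Plugging this into the $c$-optimal design problem and swapping the minimizations in $\pi$ and $\beta$, the problem becomes
\[
\min_{\pi\in\mathcal{P}_c^{\mathcal{X}}} c^\top V(\pi)^+ c
\;=\;\min_{\beta : c=\sum \beta_x a_x}\ \min_{\pi : \sum\pi(x)=1,\,\pi\geq 0}\ \sum_{x}\frac{\beta_x^2}{\pi(x)}.
\]
For fixed $\beta$, I would apply Cauchy--Schwarz:
\[
\|\beta\|_1^2 \;=\;\Bigl(\sum_x\frac{|\beta_x|}{\sqrt{\pi(x)}}\sqrt{\pi(x)}\Bigr)^{2}\leq \Bigl(\sum_x \frac{\beta_x^2}{\pi(x)}\Bigr)\Bigl(\sum_x\pi(x)\Bigr)=\sum_x\frac{\beta_x^2}{\pi(x)},
\]
with equality if and only if $|\beta_x|\propto \pi(x)$. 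The choice $\pi(x)=|\beta_x|/\|\beta\|_1$ attains equality, so the inner minimum equals $\|\beta\|_1^2$. The $c$-optimal value thus reduces to the pure geometric program
\[
\min\bigl\{\|\beta\|_1^2:\ c=\textstyle\sum_x\beta_x a_x\bigr\}.
\]

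Second, I would translate this geometric program into the Elfving picture. Writing $\zeta_x=\mathrm{sign}(\beta_x)\in\{-1,+1\}$ and $p_x=|\beta_x|/\|\beta\|_1$, the constraint becomes $c/\|\beta\|_1=\sum_x p_x\,\zeta_x a_x\in \mathcal{S}$, since the right-hand side is a convex combination of points of $\{\pm a_x\}$. Hence $\|\beta\|_1\geq 1/t^\star$, where $t^\star:=\max\{s>0:\,sc\in\mathcal{S}\}$ is the (finite, positive) radial distance from $0$ to $\partial\mathcal{S}$ in the direction $c$, well-defined because $\mathcal{S}$ is a centrally symmetric compact convex set containing $0$ in its interior (using the spanning assumption). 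Conversely, any representation $t^\star c=\sum_x\pi^\star(x)\zeta_x a_x\in\partial\mathcal{S}$ with $\sum \pi^\star(x)=1$ yields a feasible $\beta^\star_x=\zeta_x\pi^\star(x)/t^\star$ with $\|\beta^\star\|_1=1/t^\star$. Therefore the optimal value is $1/t^{\star 2}$, matching the claimed $t^{-2}=c^\top V(\pi)^+ c$.

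Finally, to get the \emph{iff} characterization I would trace back the equality conditions. If $\pi$ is $c$-optimal, then it must simultaneously minimize both the inner problem (forcing $|\beta_x|\propto\pi(x)$ by the Cauchy--Schwarz equality case) and the outer problem (forcing $\|\beta\|_1=1/t^\star$, i.e.\ the representation $tc\in\partial\mathcal{S}$). Writing $\beta_x=\zeta_x\pi(x)/t$ and substituting into $c=\sum_x\beta_x a_x$ gives exactly $tc=\sum_x\pi(x)\zeta_x a_x\in\partial\mathcal{S}$. Conversely, if such $\zeta,t$ exist, the same substitution produces a feasible $\beta$ with $\sum_x\beta_x^2/\pi(x)=t^{-2}$, saturating both inequalities, so $c^\top V(\pi)^+ c=t^{-2}=t^{\star-2}$ and $\pi$ is optimal. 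The main delicate point in the execution will be handling the support carefully in the pseudo-inverse identity and in the Cauchy--Schwarz step when some $\pi(x)=0$ (adopting the convention $0/0=0$ and $\beta_x=0$ on the zero-support, which is automatic from the $|\beta_x|\propto\pi(x)$ equality case); once that is done, the rest is a clean chain of equalities.
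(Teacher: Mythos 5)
The paper does not prove this statement: Theorem \ref{thm:Elfving} is Elfving's classical 1952 result, quoted with a citation and used as a black box, so there is no in-paper argument to compare yours against. On its own merits, your proposal is a correct and essentially complete rendition of the standard proof from the optimal-design literature (e.g.\ Pukelsheim): (i) the Gauss--Markov variational identity $c^{\top}V(\pi)^{+}c=\min\{\sum_x \beta_x^2/\pi(x): c=\sum_x\beta_x a_x\}$, (ii) exchange of the two minimizations, (iii) Cauchy--Schwarz to solve the inner problem in closed form, reducing the design problem to $\min\{\|\beta\|_1^2: c=\sum_x\beta_x a_x\}$, and (iv) identification of $1/\min\|\beta\|_1$ with the exit parameter $t^{\star}$ of the ray $\{sc\}_{s>0}$ through the centrally symmetric body $\mathcal{S}$; the equality cases of the two inequalities then deliver the ``if and only if'' exactly as you trace them. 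Two small points to make explicit in a full write-up, beyond the support/pseudo-inverse conventions you already flag: in the converse direction, $tc\in\partial\mathcal{S}$ with $t>0$ forces $t=t^{\star}$ because $0$ lies in the interior of $\mathcal{S}$ (this is where the paper's standing assumption that $\{a_x\}$ spans $\mathbb{R}^{d+1}$ enters, and it is also what makes $t^{\star}$ finite and positive); and when producing a feasible $\beta$ from a convex-combination representation of the boundary point $t^{\star}c$, a priori both $+a_x$ and $-a_x$ could carry positive weight for the same $x$, but any such cancelation would give $\|\beta\|_1<1/t^{\star}$, contradicting the lower bound, so it cannot occur and the representation can indeed be put in the stated form $\sum_x\pi(x)\zeta_x a_x$ with $\pi$ a probability measure. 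With those details filled in, the argument is complete and matches the theorem as stated, including the value $t^{-2}=c^{\top}V(\pi)^{+}c$.
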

Elfving's characterization of the $e_{d+1}$-optimal design allows us to derive the following equivalent characterization of $\kappa_*$.

\begin{lem}\label{lem:upsilon}  $\displaystyle \kappa_* = \max_{u \in \mathbb{R}^{d}} \frac{1}{\max_{x \in \cX} \left(x^{\top}u + z_x\right)^2}$.
\end{lem}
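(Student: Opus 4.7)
The plan is to apply Elfving's theorem (Theorem \ref{thm:Elfving}) with the vector $c = e_{d+1}$, since by definition $\kappa_* = \min_{\pi \in \mathcal{P}^{\cX}_{e_{d+1}}} e_{d+1}^{\top}\left(V(\pi)\right)^{+}e_{d+1}$ is exactly the value of the $e_{d+1}$-optimal design problem. Elfving's theorem tells us that this value equals $1/t_*^{2}$, where
\[
t_* = \max\{t \ge 0 : t\, e_{d+1} \in \mathcal{S}\}, \qquad \mathcal{S} = \mathrm{conv}\{\pm a_x : x \in \cX\}\subset \mathbb{R}^{d+1}.
\]
Thus it suffices to prove that $t_* = \min_{u \in \mathbb{R}^d}\max_{x \in \cX}|x^{\top}u + z_x|$, which after squaring and inverting yields the claimed identity
\[
\kappa_* = \frac{1}{t_*^2} = \frac{1}{\min_{u}\max_{x}(x^\top u+z_x)^2} = \max_{u \in \mathbb{R}^d}\frac{1}{\max_{x\in\cX}(x^{\top}u+z_x)^2}.
\]

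The next step is to compute the support function of $\mathcal{S}$. Since $\mathcal{S}$ is a centrally symmetric convex polytope in $\mathbb{R}^{d+1}$, for any $v \in \mathbb{R}^{d+1}$ we have $\max_{s \in \mathcal{S}} v^{\top}s = \max_{x \in \cX}|v^{\top}a_x|$. By the supporting hyperplane characterization of closed convex sets, a point $y \in \mathbb{R}^{d+1}$ belongs to $\mathcal{S}$ if and only if $|v^{\top}y| \le \max_{x \in \cX}|v^{\top}a_x|$ for every $v \in \mathbb{R}^{d+1}$. I will apply this criterion to $y = t\, e_{d+1}$: writing $v = \bigl({u\atop s}\bigr)$ with $u \in \mathbb{R}^d$ and $s \in \mathbb{R}$, the condition becomes
\[
|t s| \le \max_{x \in \cX}|u^{\top}x + s\, z_x| \qquad \text{for every } (u,s)\in\mathbb{R}^{d}\times\mathbb{R}.
\]
Inequalities with $s = 0$ are automatic (both sides are non-negative and the left is zero), so only $s \ne 0$ matters; by homogeneity in $(u,s)$ and by symmetry of the problem in the sign of $s$, we may set $s = 1$ and let $u$ range freely. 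The binding constraint thus reduces to $t \le \max_{x \in \cX}|u^{\top}x + z_x|$ for all $u \in \mathbb{R}^d$, so
\[
t_* \;=\; \min_{u \in \mathbb{R}^d} \max_{x \in \cX}|x^{\top}u + z_x|.
\]

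Combining the two displays gives the lemma. Two minor points need to be checked. First, I should observe that $t_* > 0$, so that dividing by $t_*^2$ is legitimate: this follows from the identifiability assumption that $\{a_x : x \in \cX\}$ spans $\mathbb{R}^{d+1}$, since a vanishing $\min_u \max_x |x^\top u + z_x|$ would force some nonzero vector $\bigl({u\atop 1}\bigr)$ to be orthogonal to every $a_x$. Second, I should note that $\min_u \max_x|x^\top u+z_x|$ is attained (the objective is a continuous coercive function of $u$ on the nonempty feasible set $\mathbb{R}^d$, coercivity again coming from the spanning property), so the $\max_u$ in the statement is indeed attained as well.

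The only subtle step is the reduction from the general $(u,s)$ to $s = 1$; apart from that the proof is essentially a direct application of Elfving's theorem together with the support-function description of a symmetric polytope, and I do not anticipate any serious obstacle.
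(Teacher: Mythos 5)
Your proof is correct and rests on the same foundation as the paper's: Elfving's theorem applied to the $e_{d+1}$-optimal design, reducing the lemma to identifying the scale $t_*$ at which $t_*e_{d+1}$ hits the boundary of the Elfving set $\mathcal{S}$. The execution of the duality step differs slightly: the paper analyzes the supporting hyperplane of $\mathcal{S}$ at the specific boundary point $\kappa_*^{-1/2}e_{d+1}$, and must separately argue (via the spanning assumption) that its normal vector has nonzero last coordinate before normalizing it to the form $\left({u \atop 1}\right)$; you instead invoke the support-function membership criterion $y\in\mathcal{S}\iff |v^{\top}y|\le\max_{x}|v^{\top}a_x|$ for all $v$, and observe that the constraints with vanishing last coordinate $s=0$ are vacuous, so the homogeneity reduction to $s=1$ goes through without any case analysis at the boundary point. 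Your route is marginally cleaner for that reason, and your closing checks (positivity of $t_*$ and attainment of the min, both from the spanning assumption) mirror the coercivity arguments the paper uses in the proofs of Lemmas \ref{lem:upsilon_margin} and \ref{lem:upsilon_alpha}. No gaps.
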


Lemma \ref{lem:upsilon_margin} follows from the characterization in Lemma \ref{lem:upsilon}. When $\kappa_*>1$, the vector $\tilde{u}$ defined as

$\tilde{u} = \argmax_{u \in \mathbb{R}^{d}} \frac{1}{\max_{x \in \cX} \left(x^{\top}u + z_x\right)^2}$ is a normal vector of the separating hyperplane $\mathcal{H}$ in Figure \ref{fig:margin}. Moreover, as shown in the proof of Lemma \ref{lem:upsilon_margin}, the margin is in this case equal to $1-\kappa_*^{-1/2}$, while the maximum distance of all points to the hyperplane is $1+\kappa_*^{-1/2}$.

Lemma \ref{lem:upsilon} also allows us to compare the bound in Theorem \ref{thm:upper_bound_worst_case} with previous results on linear bandit with partial monitoring, expressed in terms of the worst-case alignment constant. 

\subsection{Comparison to the worst-case alignment constant}

Previous work on linear bandit with partial linear monitoring measures the difficulty of the bandit game using the \textit{worst-case alignment constant} $\alpha$, defined as
$$\alpha = \max_{u \in \mathbb{R}^d} \frac{\max_{x, x' \in \cX}((x - x')^{\top}u)^2}{\max_{x \in \cX}(z_x  x^{\top}u+1)^2}.$$
The following Lemma shows that this constant is essentially equivalent to the minimal variance of the bias estimator $\kappa_*$. 

\begin{lem}\label{lem:upsilon_alpha}
$\frac{\kappa_*}{3} \leq \alpha \leq 16\kappa_*$.
\end{lem}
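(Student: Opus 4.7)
My plan rests on Lemma \ref{lem:upsilon}, which reformulates $\kappa_* = \max_{u \in \mathbb{R}^d} 1/D(u)$ with $D(u) := \max_{x \in \cX}(x^\top u + z_x)^2 = \max_{x \in \cX}(z_x x^\top u + 1)^2$ (precisely the denominator appearing in $\alpha$). Because $D(0) = 1$, we automatically have $\kappa_* \geq 1$, a fact I will use throughout. The whole proof pivots on the single identity
$(x-x')^\top u = (x^\top u + z_x) - (x'^\top u + z_{x'}) - (z_x - z_{x'}),$
which, combined with the triangle inequality, yields the two-sided estimate $|(x-x')^\top u| \leq 2\sqrt{D(u)} + |z_x - z_{x'}|$, and, when $z_x \neq z_{x'}$, the reverse bound $|(x-x')^\top u| \geq |z_x - z_{x'}| - 2\sqrt{D(u)}$.

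For the upper bound, I plug the first inequality (together with $|z_x - z_{x'}| \leq 2$) into the ratio: $N(u)/D(u) \leq 4(1 + 1/\sqrt{D(u)})^2$. Lemma \ref{lem:upsilon} gives $1/D(u) \leq \kappa_*$, so $\alpha = \max_u N(u)/D(u) \leq 4(1 + \sqrt{\kappa_*})^2$, and $\kappa_* \geq 1$ collapses this to $16\kappa_*$. For the lower bound, I pick $u^* \in \argmin_u D(u)$, so $D(u^*) = 1/\kappa_*$, and any pair $x,x'$ in different groups, guaranteed by the non-empty-group assumption. The minimizing property gives $|x^\top u^* + z_x|, |x'^\top u^* + z_{x'}| \leq 1/\sqrt{\kappa_*}$, so the reverse inequality with $|z_x - z_{x'}| = 2$ forces $|(x-x')^\top u^*| \geq 2 - 2/\sqrt{\kappa_*} \geq 0$, and therefore $\alpha \geq N(u^*)/D(u^*) \geq 4(\sqrt{\kappa_*} - 1)^2$. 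An elementary algebraic check (the quadratic $11 s^2 - 24 s + 12 \geq 0$ in $s := \sqrt{\kappa_*}$ holds for $s \geq (12 + 2\sqrt{3})/11$) shows this dominates $\kappa_*/3$ as soon as $\kappa_* \gtrsim 2$.

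The main obstacle is the residual regime $\kappa_* \in [1, 2)$, where the bound $4(\sqrt{\kappa_*} - 1)^2$ degenerates to $0$ while the target $\kappa_*/3$ stays bounded away from $0$. There the target is itself a fixed constant ($\kappa_*/3 < 2/3$), so it suffices to exhibit a universal lower bound on $\alpha$; I would obtain one by a sharper choice of $u$ than the minimizer of $D$, for instance a convex combination of $u^*$ with a direction extracted from Elfving's theorem (Theorem \ref{thm:Elfving}) applied to the $c$-optimal design with $c = (x-x',0)$ for a cross-group pair, using the non-empty-group assumption to ensure $N(u) \gtrsim 1$ while keeping $D(u) = O(1)$. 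This small-$\kappa_*$ regime is the delicate step where the numerical constant $1/3$ gets tuned, and I expect it to be where most of the care is needed.
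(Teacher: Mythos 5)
Your upper bound and your large-$\kappa_*$ lower bound are correct and essentially coincide with the paper's argument: the paper also works from the characterization $\kappa_*^{-1/2}=\min_u\max_x\vert z_x x^\top u+1\vert$ of Lemma \ref{lem:upsilon}, proves $\alpha\leq 8(\kappa_*+1)\leq 16\kappa_*$ by the same triangle-inequality manipulation of the numerator, and proves $\alpha\geq 2(\sqrt{\kappa_*}-1)^2$ by evaluating the ratio at the minimizer $\tilde u$ of $D$ and using that the two groups lie on opposite sides of $\tilde u^\perp$ with $\vert z_x x^\top\tilde u\vert\geq 1-\kappa_*^{-1/2}$ (your constant $4$ instead of $2$ is fine, even slightly sharper). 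The regime split you identify is exactly the one the paper uses.

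The genuine gap is the residual regime $\kappa_*\in[1,2)$, which you flag but do not close. There the bound $4(\sqrt{\kappa_*}-1)^2$ degenerates and you need a universal lower bound $\alpha\geq 2/3$ (the paper proves $\alpha\geq 1$); your proposed route via a convex combination of $u^*$ with an Elfving direction for $c=\left({x-x'\atop 0}\right)$ is only a heuristic — you give no argument that such a $u$ simultaneously achieves $N(u)\gtrsim 1$ and $D(u)=O(1)$, and Elfving's theorem as stated concerns designs, not directions, so extracting the needed vector is itself a nontrivial step. The paper's resolution is much more elementary and does not touch optimal design at all: pick any $\tilde u$ for which the covariates straddle the hyperplane $\tilde u^\perp$ (i.e.\ $\max_x x^\top\tilde u>0\geq\min_x x^\top\tilde u$, available since $\cX$ spans $\mathbb{R}^d$ in dimension $d\geq 2$), note that then $\max_{x,x'}((x-x')^\top\tilde u)^2\geq\max_x(x^\top\tilde u)^2$, and evaluate the ratio at $u=\lambda\tilde u$ with $\lambda\to\infty$; the denominator grows like $\lambda^2\max_x(x^\top\tilde u)^2$, so the ratio tends to at least $1$ and hence $\alpha\geq 1$. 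You should replace your Elfving sketch with this scaling argument (or something equally concrete) — as written, the lower bound is only established for $\kappa_*\gtrsim 2$.
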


On the one hand, Lemma \ref{lem:upsilon_alpha} shows that $\kappa_*$ and $\alpha$ are essentially equivalent. In particular, Theorem \ref{thm:lower_bound_worst_case} implies that the large $T$ regret is of order $\alpha^{1/3}\log(T)^{1/3}T^{2/3}$. This improves over previous known rates, obtained in \cite{PartialFeedbackCOLT2020}, by a factor $d^{1/2}\log(T)^{1/6}(\log(kT)/\log(T))^{1/2}$.

On the other hand, as underlined, the constant $\kappa_*$ appears when considering the well-studied problem of $c$-optimal design. Therefore, classical results and algorithms for optimal design can be used to characterize and compute this constant.

\subsection{Optimal bias estimation against the worst parameter}
The constant $\kappa_*$ also appears naturally when considering the related problem of optimal bias estimation against the worst parameter.
\paragraph{Regret of $e_{d+1}$-optimal design}
Recall that $\kappa_*$ denotes the \textit{minimal variance of the bias estimator}, i.e. the value of the solution of the $e_{d+1}$-optimal design problem
\begin{equation*}
\kappa_*=\underset{\pi \in \mathcal{P}^{\mathcal{X}}_{e_{d+1}}}{\min}\ e_{d+1}^{\top}\left(V(\pi)\right)^{+}e_{d+1} \,,
\end{equation*}
The $e_{d+1}$-optimal design can be equivalently defined as the solution of the problem 
\begin{align}\label{eq:kappa_equivalent-1}
\text{minimize } & \underset{x \in \cX}{\sum}\mu(x)\quad
\text{such that }  \mu \in \cM^{\cX}_{e_{d+1}}\ 
 \text{and }  \ e_{d+1}^{\top} V(\mu)^+e_{d+1} \leq  \kappa_*.
\end{align} The characterization given in Equation \eqref{eq:kappa_equivalent-1} underlines that the $e_{d+1}$-optimal design provides (up to discretization issues) the minimal number of samples required for estimating $\omega^*$ with a variance $\kappa_*$. Let us denote by $\mu^*$ the optimal design for estimating $\omega^*$ with a variance 1, defined as 
\begin{eqnarray*}\label{eq:mu*}
\mu^*& = & \underset{\mu}{\argmin}  \underset{x \in \cX}{\sum}\mu(x)\quad
\text{such that }  \mu \in \cM^{\cX}_{e_{d+1}}\ 
 \text{and }  \ e_{d+1}^{\top} V(\mu)^+e_{d+1} \leq  1.
\end{eqnarray*}
Note that from the definition of $\kappa_*$, we have $\sum_x \mu^*(x) = \kappa_*$. 

A first (naive) approach to obtain an estimate of the bias parameter $\omega^*$ with precision level $\epsilon>0$ would consist in sampling actions according to $\epsilon^{-2}\mu^*$, rounded according to the procedure defined in Lemma \ref{lem:c-opt}. Let us denote by $\Delta_{x}$ the gap $\Delta_{x}=\max_{x'\in\cX} (x'-x)^{\top}\gamma^*$ between the (non-observed) reward of the best action and the reward of the action $x$. The regret corresponding to this estimation phase would then be 
$$\epsilon^{-2}\underset{x \in \cX}{\sum}\mu^{*}(x)\Delta_x,$$
which can be as large as $\kappa_*\epsilon^{-2}\max_{x}\Delta_x$. Interestingly, we show that the regret corresponding to the $e_{d+1}$-optimal design is equivalent (up to a small multiplicative constant) to the minimax regret.

\paragraph{Optimal worst-case estimation}
The minimax regret corresponds to the regret of the best sampling scheme against the worst admissible parameter $\gamma$. Note that, for a given design $\mu$, this worst-case regret is given by $$ \max_{x' \in \cX, \gamma \in \cC(\cX)}\sum_x \mu(x)(x' - x)^{\top}\gamma,$$ where we recall that $\cC(\cX) = \left\{\gamma \in \mathbb{R}^d : \forall x \in \cX, \vert x^{\top}\gamma \vert\leq 1\right\}$ is the set of admissible parameters. To achieve the lowest regret against the worst parameter, we must use the minimax optimal design $\widetilde{\mu}$ solution to the problem 
\begin{eqnarray*}
\widetilde{\mu} = \underset{\mu}{\argmin} \underset{x' \in \cX, \gamma \in \cC(\cX)}{\max} \underset{x \in \cX}{\sum}\mu(x)(x' - x)^{\top}\gamma \ 
\text{such that }  \mu \in \cM^{\cX}_{e_{d+1}}\ 
 \text{and }  \ e_{d+1}^{\top} V(\mu)^+e_{d+1} \leq  1.
\end{eqnarray*}

Lemma \ref{lem:kappa_tilde} underlines that the regret corresponding to the $e_{d+1}$-optimal design is no larger than twice the minimax regret.

\subsection{On the $\Delta$-optimal design} \label{subsec:Delta_des}
Recall that for a vector of gaps $\Delta = \left(\Delta_x\right)_{x \in \cX}$, $\mu^{\Delta}$  denotes the $\Delta$-optimal design, defined as the solution of the following problem 

\begin{eqnarray*}
\mu^{\Delta} &=& \underset{\mu}{\argmin} \underset{x \in \cX}{\sum}\mu(x)\Delta_x\quad
\text{such that }  \mu \in \cM^{\cX}_{e_{d+1}}\ 
 \text{and }  \ e_{d+1}^{\top} V(\mu)^+e_{d+1} \leq  1. \quad (\Delta\text{-optimal design})
\end{eqnarray*}

If we knew the gaps $\Delta_x$, we could sample the actions according to the $\Delta$-optimal design $\mu^{\Delta}$, and pay the regret $\epsilon^{-2}\kappa(\Delta)$ (up to rounding error) for estimating $\omega^*$ with an error smaller than $\epsilon$, where 
$$\kappa(\Delta) = \underset{x \in \cX}{\sum}\mu^{\Delta}(x)\Delta_x.$$ 

\begin{lem}\label{lem:kappa2} If $\gamma^* \in \cC(\cX)$, then $\kappa(\Delta)\leq 2\kappa_*$
\end{lem}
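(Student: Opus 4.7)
The plan is to use the minimum-variance design $\mu^*$ as a feasible competitor in the optimization problem defining $\mu^{\Delta}$, and then exploit the boundedness of the gaps that follows from the assumption $\gamma^*\in\cC(\cX)$.

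More precisely, recall that $\mu^*$ is defined as a minimizer of $\sum_x \mu(x)$ over designs $\mu \in \cM^{\cX}_{e_{d+1}}$ satisfying the variance constraint $e_{d+1}^{\top} V(\mu)^{+} e_{d+1} \leq 1$, and that $\sum_{x} \mu^*(x) = \kappa_*$. These are exactly the same constraints as those appearing in the $\Delta$-optimal design problem defining $\mu^{\Delta}$, so $\mu^*$ is a feasible point for that problem. By optimality of $\mu^{\Delta}$, this immediately gives
\begin{equation*}
\kappa(\Delta) \;=\; \sum_{x\in\cX} \mu^{\Delta}(x)\, \Delta_x \;\leq\; \sum_{x\in\cX} \mu^*(x)\, \Delta_x.
\end{equation*}

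Next I would use the assumption $\gamma^*\in\cC(\cX)$. By definition of $\cC(\cX)$, $|x^\top\gamma^*|\leq 1$ for every $x\in\cX$, hence for every $x\in\cX$,
\begin{equation*}
\Delta_x \;=\; \max_{x'\in\cX} (x'-x)^\top \gamma^* \;\leq\; |x'^\top\gamma^*| + |x^\top\gamma^*| \;\leq\; 2.
\end{equation*}
Plugging this pointwise bound into the previous inequality yields
\begin{equation*}
\kappa(\Delta) \;\leq\; 2\sum_{x\in\cX} \mu^*(x) \;=\; 2\kappa_*,
\end{equation*}
which is the claim. There is no real obstacle here: the lemma is a direct consequence of (i) the fact that the constraints in the $e_{d+1}$-optimal design problem are the same as those in the $\Delta$-optimal design problem, so $\mu^*$ can be used as a feasible candidate, and (ii) the crude uniform bound $\Delta_x\leq 2$ arising from the admissibility assumption on $\gamma^*$.
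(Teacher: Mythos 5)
Your proof is correct and follows essentially the same route as the paper: both arguments bound $\Delta_x\leq 2$ using $\gamma^*\in\cC(\cX)$ and then exhibit a feasible design of total mass $\kappa_*$ for the $\Delta$-optimal design problem (the paper constructs it explicitly as $\kappa_*\mu_*$ from the normalized $e_{d+1}$-optimal design, which is exactly the $\mu^*$ you invoke). No gap.
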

\begin{proof}
Be definition of $\cC(\cX)$, for all $\gamma^* \in \cC(X)$, all $x, x'\in \cX$, we have
$$(x-x')^{\top}\gamma^* \leq \vert x^{\top}\gamma^* \vert + \vert x'^{\top}\gamma^* \vert \leq 2.$$
Then, 
\begin{eqnarray*}
\kappa(\Delta) &\leq& 2\,\underset{\mu}{\min} \underset{x \in \cX}{\sum}\mu(x)\quad
\text{such that }  \mu \in \cM^{\cX}_{e_{d+1}}\ 
 \text{and }  \ e_{d+1}^{\top} V(\mu)^+e_{d+1} \leq  1.
\end{eqnarray*}
Let $\mu_*$ be the solution of the $e_{d+1}$-optimal design problem 
\begin{eqnarray*}
\underset{\mu}{\minimize }\  e_{d+1}^{\top} V(\mu)^+ e_{d+1} \ \text{such that }  \mu \in \cP^{\cX}_{e_{d+1}}.
\end{eqnarray*}
By definition of $\kappa_*$, we see that $e_{d+1}^{\top} V(\mu_*)^+e_{d+1} = \kappa_*$. This implies that the measure $\kappa_* \times \mu_*$ verifies the constraints $e_{d+1}^{\top} V(\kappa_* \times \mu_*)^+e_{d+1} \leq 1$ and $\kappa_*\mu_* \in \cM^{\cX}_{e_{d+1}}$. Thus,
\begin{eqnarray*}
\kappa(\Delta) &\leq& 2\underset{x \in \cX}{\sum}\kappa_*\mu_*(x) = 2\kappa_*.
\end{eqnarray*}
\end{proof}

\paragraph{On the regret $\kappa(\Delta)$} The function $\kappa$ verifies the following properties.

\begin{lem}\label{lem:prop_kappa}
For two vectors of gaps $\Delta$, $\Delta'$, denote by $\Delta \land \Delta'$ (respectively $\Delta \lor \Delta'$) the vector of gaps given by  $\left(\Delta \land \Delta'\right)_x = \Delta_x \land \Delta'_x$ (respectively  $\left(\Delta \lor \Delta'\right)_x = \Delta_x \lor \Delta'_x$) for all $x \in \cX$. Moreover, denote $\Delta \leq \Delta'$ if $\Delta_x \leq \Delta'_x$ for all $x \in \cX$. Then, the following properties hold :
\begin{enumerate}[label=\roman*)]
    \item for all $c>0$, $\kappa(c\Delta) = c\kappa(\Delta)$;\label{eq:prop_kappa_i}
    \item if $\Delta \leq \Delta'$, then $\kappa(\Delta) \leq \kappa(\Delta')$; \label{eq:prop_kappa_ii}
    \item $\kappa(\Delta \lor \Delta') \geq \kappa(\Delta) \lor \kappa(\Delta');$\label{eq:prop_kappa_iii}
    \item the function $\epsilon  \mapsto \kappa(\Delta \vee  \epsilon)$ is continuous at $0$.\label{eq:prop_kappa_iv}
\end{enumerate}
\end{lem}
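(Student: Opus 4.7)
My plan is to handle parts (i), (ii), (iii) by direct manipulation of the definition of $\kappa$, and to treat (iv) with slightly more care by invoking Lemma \ref{lem:calcul:kappa} to control the total mass of an optimal design. The crucial structural observation used throughout is that the feasible set in the definition of $\mu^{\Delta}$, namely $\mu\in\mathcal{M}^{\mathcal{X}}_{e_{d+1}}$ with $e_{d+1}^{\top}V(\mu)^{+}e_{d+1}\le 1$, does not depend on $\Delta$; only the linear objective $\sum_{x}\mu(x)\Delta_{x}$ does.

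For (i), the objective $\sum_{x}\mu(x)(c\Delta_{x})=c\sum_{x}\mu(x)\Delta_{x}$ factors through the minimization since $c>0$, so $\kappa(c\Delta)=c\kappa(\Delta)$. For (ii), given $\Delta\le\Delta'$, I plug the minimizer $\mu^{\Delta'}$ of the $\Delta'$-problem into the $\Delta$-problem as a feasible point, yielding $\kappa(\Delta)\le \sum_{x}\mu^{\Delta'}(x)\Delta_{x}\le \sum_{x}\mu^{\Delta'}(x)\Delta'_{x}=\kappa(\Delta')$. Part (iii) is then immediate from (ii) since $\Delta\vee\Delta'\ge\Delta$ and $\Delta\vee\Delta'\ge\Delta'$ coordinatewise, so $\kappa(\Delta\vee\Delta')\geq \kappa(\Delta)$ and $\kappa(\Delta\vee\Delta')\geq \kappa(\Delta')$.

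For (iv), part (ii) already supplies $\kappa(\Delta\vee\epsilon)\ge \kappa(\Delta)$ for $\epsilon\ge 0$, so the substance is the reverse inequality. By Lemma \ref{lem:calcul:kappa}, $\mu^{\Delta}$ can be chosen supported on at most $d+1$ points; since each mass is a non-negative real and the support is finite, the total mass $M:=\sum_{x}\mu^{\Delta}(x)$ is finite. Plugging $\mu^{\Delta}$ as a feasible point of the $(\Delta\vee\epsilon)$-problem gives
\[
\kappa(\Delta\vee\epsilon)\;\le\;\sum_{x}\mu^{\Delta}(x)(\Delta_{x}\vee\epsilon)\;\le\;\sum_{x}\mu^{\Delta}(x)\Delta_{x}+\epsilon M\;=\;\kappa(\Delta)+\epsilon M,
\]
and letting $\epsilon\to 0^{+}$ yields continuity at $0$ (noting that for $\epsilon\le 0$, $\Delta\vee\epsilon=\Delta$ since $\Delta\ge 0$, so the function is already constant there).

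The only non-trivial step is the appeal in (iv) to Lemma \ref{lem:calcul:kappa}: without it, the minimizer could a priori be supported on infinitely many points (and even have infinite total mass if one took a bad representative of the argmin when some $\Delta_{x}=0$). My fallback strategy, should one wish to avoid the lemma, is to pick for each $\eta>0$ an $\eta$-near-minimizer of finite support by a Carathéodory-style reduction, obtain $\kappa(\Delta\vee\epsilon)\le \kappa(\Delta)+\eta+\epsilon M_{\eta}$, and then send $\epsilon\to 0^{+}$ before $\eta\to 0^{+}$. With Lemma \ref{lem:calcul:kappa} in hand the single-limit argument above is cleaner and suffices.
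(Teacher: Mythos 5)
Parts \emph{i)}--\emph{iii)} are correct and essentially the paper's argument; the only cosmetic difference is that you derive \emph{iii)} from \emph{ii)}, whereas the paper bounds $\min_\mu\sum_x\mu(x)(\Delta_x\vee\Delta'_x)$ directly, and both routes are fine. The problem is in \emph{iv)}, where your ``primary'' argument is not actually available in the only case where the claim has content. If $\Delta_x>0$ for all $x$ then $\Delta\vee\epsilon=\Delta$ for all $\epsilon<\min_x\Delta_x$ and there is nothing to prove; the statement matters precisely when some $\Delta_x=0$ (e.g.\ $\Delta_{x^*}=0$, which is the situation in which the paper invokes it, cf.\ the proof of Lemma \ref{lem:borne_inf_kappa}). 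But Lemma \ref{lem:calcul:kappa} is stated only for $\Delta\in(0,+\infty)^{\cX}$ --- the rescaling by $\Delta_x^{-1/2}$ is undefined at a zero gap --- so you cannot use it to produce a finitely supported minimizer here. Worse, when some $\Delta_x=0$ the minimizer $\mu^{\Delta}$ may simply fail to exist: mass on zero-gap actions is free in the objective but strictly decreases the variance constraint, so the infimum can be approached only along designs whose total mass diverges (a two-action example in $d=1$ with $a_1=e_1+e_2$, $a_2=e_1-e_2$, $\Delta_1=0$, $\Delta_2=1$ already exhibits this: the constraint reads $\tfrac14(\mu_1^{-1}+\mu_2^{-1})\le 1$ and the infimum $\mu_2=\tfrac14$ is attained only as $\mu_1\to\infty$). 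So the quantity $M=\sum_x\mu^{\Delta}(x)$ on which your bound rests need not be defined. Note also that the worry about infinite support is moot --- $\cX$ is finite, so every design has finite support and finite mass --- the genuine issue is non-attainment of the infimum.

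Your fallback is the correct proof, and it is exactly what the paper does: take a minimizing sequence $\mu_n\in\widetilde{\cM}$ with $\kappa_n:=\sum_x\mu_n(x)\Delta_x\to\kappa(\Delta)$, use $\kappa(\Delta)\le\kappa(\Delta\vee\epsilon)\le\kappa_n+\epsilon\sum_x\mu_n(x)$, send $\epsilon\to0^{+}$ for fixed $n$, then $n\to\infty$. You should promote this from a parenthetical alternative to the actual argument (no Carath\'eodory reduction is needed, again because $\cX$ is finite); as written, the proof of \emph{iv)} rests on an inapplicable lemma and a possibly nonexistent minimizer.
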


\paragraph{Computation of the $\Delta$-optimal design}
In practice, the $\Delta$-optimal design can be computed by adapting algorithms designed for finding the $e_{d+1}$-optimal design. Indeed, the next lemma shows that the computation of the $\Delta$-optimal design amounts to computing an $e_{d+1}$-optimal design for some  rescaled features.

\begin{lem}\label{lem:calcul:kappa}
For any vector $\Delta\in (0,+\infty)^{\cX}$, let $\pi^{\Delta}$ be the $e_{d+1}$-optimal design relative to the set
$\mathcal{A}^{\Delta}=\left\{\Delta_{x}^{-1/2}\left({x \atop z_x}\right): x\in \mathcal{X}  \right\}$ and let $\kappa^\Delta=e_{d+1}^\top V(\pi^{\Delta})^{+}e_{d+1}$ be the $e_{d+1}$-optimal variance relative to $\cA^{\Delta}$.
Then, the $\Delta$-optimal design $\mu^{\Delta}$ is given by
$\mu^{\Delta}(x)= \kappa^{\Delta}\pi^{\Delta}(x)\Delta_{x}^{-1}$ for all $x\in\cX$. In addition, the support of $\mu^{\Delta}$ can be chosen to be of cardinality at most $d+1$.
\end{lem}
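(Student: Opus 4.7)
The plan is to reduce the $\Delta$-optimal design problem to an ordinary $e_{d+1}$-optimal design problem on the rescaled action set $\mathcal{A}^{\Delta}$ via a homogeneity change of variables. Concretely, I would parameterize every nonzero $\mu \in \cM^{\cX}_{e_{d+1}}$ as $\mu(x) = c\,\pi(x)/\Delta_x$ with $c>0$ and $\pi \in \cP^{\cX}$; this is a bijection with $(c,\pi) \in (0,\infty)\times\cP^{\cX}$, given explicitly by $c = \sum_x \Delta_x\mu(x)$ and $\pi(x) = \Delta_x\mu(x)/c$. Writing $\widetilde V(\pi) := \sum_{x}\pi(x)\Delta_x^{-1}a_x a_x^{\top}$ for the covariance matrix associated with $\pi$ on $\mathcal{A}^{\Delta}$, the substitution gives $V(\mu) = c\,\widetilde V(\pi)$.

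From $V(\mu) = c\,\widetilde V(\pi)$ with $c>0$ I would read off three facts: the range condition $e_{d+1} \in \Image(V(\mu))$ is equivalent to $e_{d+1} \in \Image(\widetilde V(\pi))$; the pseudo-inverse scales as $V(\mu)^{+} = c^{-1}\widetilde V(\pi)^{+}$, so the variance constraint $e_{d+1}^\top V(\mu)^{+}e_{d+1} \leq 1$ rewrites as $c \geq e_{d+1}^\top \widetilde V(\pi)^{+} e_{d+1}$; and the objective becomes $\sum_x \mu(x)\Delta_x = c\sum_x\pi(x) = c$.

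With these reformulations, for each fixed $\pi$ the admissible $c$ minimizing the objective is obtained by saturating the variance constraint, i.e.\ $c^{*}(\pi) = e_{d+1}^\top \widetilde V(\pi)^{+} e_{d+1}$. Substituting this back collapses the outer minimization to
\[
\min_{\pi \in \cP^{\mathcal{A}^{\Delta}}_{e_{d+1}}} e_{d+1}^\top \widetilde V(\pi)^{+} e_{d+1},
\]
which is by definition the $e_{d+1}$-optimal design problem on $\mathcal{A}^{\Delta}$, with minimizer $\pi^{\Delta}$ and optimal value $\kappa^{\Delta}$. Reading off the optimal $\mu$ from the change of variables then yields $\mu^{\Delta}(x) = \kappa^{\Delta}\pi^{\Delta}(x)/\Delta_x$, as announced.

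For the support statement, I would invoke the classical fact from $c$-optimal design theory cited earlier in the paper (\cite{Pazman,Simplex-c-Optimal}): the $e_{d+1}$-optimal design on any finite subset of $\mathbb{R}^{d+1}$ can be chosen with support of cardinality at most $d+1$. Since $\mu^{\Delta}$ and $\pi^{\Delta}$ have the same support by construction (all $\Delta_x > 0$), the bound transfers. I do not foresee a genuine obstacle: the only point requiring a little care is checking that the change of variables preserves the range condition defining $\cM^{\cX}_{e_{d+1}}$, and this is immediate from $V(\mu) = c\,\widetilde V(\pi)$ with $c > 0$.
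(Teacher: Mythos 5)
Your proof is correct and follows essentially the same route as the paper: both reduce the $\Delta$-optimal design problem to the $e_{d+1}$-optimal design problem on the rescaled features via the substitution $\lambda_x=\mu(x)\Delta_x$ (you merely make the paper's concluding ``by homogeneity'' step explicit by factoring $\lambda=c\,\pi$ and saturating the variance constraint in $c$), and both obtain the support bound from the classical $d+1$-point result for $c$-optimal designs.
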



\section{Detailed Fair Phased Elimination  algorithm}\label{sec:detailed_algo}

We present the notations used in Algorithm \ref{alg:FPE}. The phases are indexed by $l\in \mathbb{N}^*$. The sets $\cX_l^{(z)}$ for $z\in \{-1, +1\}$ corresponds to actions in group $z$ that are considered as potentially optimal in phase $l$. The variable $\widehat{z^*_l}$ encodes the group determined as optimal: it is $0$ as long as this group has not been determined. The subscript $(z)$ refer to the group $z$ when $z\in \{-1, +1\}$, and otherwise to the estimation of the bias $\omega^*$: for example, the  probability $\pi^{(z)}_l$ for $z\in \{-1, +1\}$ and $l>1$ corresponds to the approximate G-optimal design on $\cX_l^{(z)}$. Then, for $z\in \{-1, +1\}$, allocations $\mu^{(z)}$ (resp. $\mu^{(0)}$) correspond to allocation of samples in the exploration phase Exp$_l^{(z)}$ (resp. Exp$_l^{(0)}$). Similarly, $V^{(z)}_l$ (resp $V^{(0)}_l$) denotes the variance matrix of the estimator $\left({\hgamma_l^{(z)} \atop \homega_l^{(z)}}\right)$ (resp. $\homega_l^{(0)}$) obtained from observations made during phase Exp$_l^{(z)}$ (resp. Exp$_l^{(0)}$). Finally, $\text{Explore}_l^{(z)}$ (resp. $\text{Explore}_l^{(0)}$) is a Boolean variable indicating whether the exploration  at phase $l$ for group $z$ (resp. for the bias parameter) has been performed. It is used in the proofs to ensure that the corresponding estimators are well defined.

\begin{algorithm}[H] 
\caption{Fair Phased Elimination (detailed version)}\label{alg:FPE}
\begin{algorithmic}[1]
\State {\bf Input:} $\delta$, $T$, $k=|\mathcal{X}|$
\State {\bf Initialize:} Recovery $ \gets \emptyset$, $t \gets 0$, $l\gets 1$ $\widehat{z^*_1} \gets 0$,\\
 \ \ \ \ \ \ \ \ \ \ \ \ \ \   $\mathcal{X}_1^{(+1)} \gets \{x : z_x = 1\}$, $\mathcal{X}_1^{(-1)} \gets \{x : z_x = -1\}$, $\widehat{\Delta}_{x}^1\gets 2$ for $x\in\cX$
\While{ $t<T$}
    \State {\bf Initialize: }$\epsilon_l \gets 2^{2-l}$, \ $\widehat{z^*}_{l+1} \gets \widehat{z^*}_l$, \ $\widehat \Delta^{l+1}\gets \widehat \Delta^{l}$, $\text{Explore}_l^{(z)} \gets \text{False}$ for $z \in \{-1,0,+1\}$
    \For{$z \in \{-1, +1\}$ such that $z \neq -\widehat{z^*_l}$} \Comment{G-optimal Exploration and Elimination}
    	\State$\pi^{(z)}_{l} \gets \underset{\pi}{\argmin} \left\{\underset{x \in \mathcal{X}_l^{(z)}}{\max} a_{x}^{\top}V(\pi)^{+}a_{x} : \pi \in  \mathcal{P}^{\mathcal{X}_l^{(z)}}_{\mathcal{X}_l^{(z)}},\  |\supp(\pi)| \leq \frac{(d+1)(d+2)}{2}\right\}$
        \State $\mu^{(z)}_{l}(x) \gets \ceil{ \frac{2(d+1)\pi^{(z)}_{l}(x)}{\epsilon_l^2} \log\left(\frac{kl(l+1)}{ \delta}\right)}$ for all $x \in \mathcal{X}_l^{(z)}$
        \State $n^{(z)}_l \gets  \underset{{x \in \mathcal{X}_l^{(z)}}}{\sum} \mu^{(z)}_{l}(x)$, $\text{Exp}_l^{(z)} \gets \left\{t+1, ...,T \land (t+ n^{(z)}_l )\right\}$
        \If{$t + n^{(z)}_l \leq T$}{}
        		\State $\text{Explore}_l^{(z)} \gets \text{True}$, choose each action $x \in \mathcal{X}_l^{(z)}$ exactly $\mu^{(z)}_{l}(x)$ times
        		\State $V_l^{(z)} \gets \sum_{t \in \text{Exp}_l^{(z)}}a_{x_{t}}a_{x_{t}}^{\top}$, \ \ $\htheta_{l}^{(z)}
		\gets \left(V_l^{(z)}\right)^{+}\sum_{t \in \text{Exp}_l^{(z)}} y_t a_{x_{t}} $
        		\State $\mathcal{X}_{l+1}^{(z)} \gets \left\{x \in \mathcal{X}_{l}^{(z)} : \max_{x' \in\mathcal{X}_{l}^{(z)}}\left(a_{x'} - a_{x}\right)^{\top}\htheta_l^{(z)} \leq 3\epsilon_l \right\}$
        \Else{ for $t \in \text{Exp}_l^{(z)}$, sample empirical best action in $\mathcal{X}_l^{(z)}$} 
    	\EndIf
    	\State $t \gets t+ n^{(z)}_l$
    \EndFor
    \If{$\widehat{z^*_{l}} = 0$} 
        \State compute the $\widehat{\Delta}^l$-optimal design $\widehat{\mu}_{l}$ and the corresponding regret $\kappa(\widehat{\Delta}^l)$
	    \If{$\epsilon_{l}\leq \pa{\kappa(\widehat\Delta^{l}) \log(T)/T}^{1/3}$} \Comment{Recovery phase}
        	\State Recovery $ \gets \{t, ..., T\}$
        	\State sample empirical best action  in $\cX_{l+1}^{(-1)}\cup\cX_{l+1}^{(1)}$ until the end of the budget, $t \gets T$
	    \Else \Comment{$\widehat{\Delta}^l$-optimal Exploration and Elimination}
	    \State $\mu_{l}^{(0)}(x) \gets \ceil{ \frac{2\hat\mu_{l}(x)}{\epsilon_l^2} \log\left(\frac{l(l+1)}{ \delta}\right)}$ for all $ x \in \mathcal{X}$
	    \State $n^{(0)}_l \gets  \underset{{x \in \mathcal{X}}}{\sum} \mu^{(0)}_{l}(x)$,  $\text{Exp}_l^{(0)} \gets \left\{t, ..., T\land (t + n_l^{(0)})\right\}$
        \If{$t + n_l^{(0)} \leq T$}
            \State $\text{Explore}_l^{(0)} \gets \text{True}$,  choose each action $x \in \mathcal{X}$ exactly $\mu^{(0)}_{l}(x)$ times
            \State $V_l^{(0)} \gets \sum_{t\in \text{Exp}_l^{(0)}} a_{x_{t}}a_{x_{t}}^{\top}$, \ \ $\homega_l^{(0)} \gets e_{d+1}^{\top}\left(V_l^{(0)}\right)^{+}\sum_{t\in \text{Exp}_l^{(0)}}y_t a_{x_{t}}$
            \For{$x\in \cX_{l+1}^{(-1)}\cup\cX_{l+1}^{(1)}$} 
            	\State $\widehat m_{l,x} \gets a_{x}^\top \htheta_{l}^{(z_{x})}-z_{x}\widehat \omega^{(0)}_{l}$
            	\State $\widehat \Delta^{l+1}_{x}\gets \pa{\max_{x'\in\cX_{l+1}^{(-1)}\cup\cX_{l+1}^{(1)}}  \widehat m_{l,x'} -\widehat m_{l,x}+4\epsilon_{l}}\wedge 2$
            \EndFor
            \For{ $z \in \{-1,+1\}$}
                \If{$\underset{x \in\mathcal{X}_{l+1}^{(z)}}{\max} \widehat m_{l,x}- 2 \epsilon_{l}\geq \underset{x \in\mathcal{X}_{l+1}^{(-z)}}{\max} \widehat m_{l,x}+ 2\epsilon_l$}
                {$\widehat{z^*}_{l+1} \gets z$}
                \EndIf
            \EndFor
            \Else \ sample empirical best action in $\cX_{l+1}^{(-1)}\cup\cX_{l+1}^{(1)}$ until the end of the budget, $t \gets T$
        \EndIf
        \State $t \gets t+ n^{(0)}_l$
        \EndIf
    \EndIf
    \State $l\gets l+1$
\EndWhile
\end{algorithmic}
\end{algorithm}


\section{Proofs}\label{sec:proofs}
For an event $\mathcal{F}$ such that $\mathbb{P}\left(\mathcal{F}\right)>0$, we denote by $\mathbb{E}_{\vert \mathcal{F}}$ (resp. $\mathbb{P}_{\vert \mathcal{F}}$) the expectation (resp. the probability) conditionally on $\mathcal{F}$.  

\subsection{Proof of Theorem \ref{thm:upper_bound_worst_case}}\label{app:proof_upper_bound_worst_case}

We begin by defining for $z \in \{-1,0,+1\}$
\begin{align*}
L^{(z)}&=\max\ac{l\geq 1: \text{Explore}_l^{(z)} = \text{True}}\nonumber
\end{align*}
the largest integer $l$ such that Explore$^{(z)}_l=$ True. Recall that $\kappa_*$ is the $e_{d+1}$-optimal variance. By definition of the algorithm, for all $l \leq L^{(0)}+1$, $\widehat{\Delta}^{l} \leq 2$, so  $\kappa(\widehat{\Delta}^{l}) \leq 2\kappa_*$. Now, let us also define
\begin{align*}
L_{T}&=\max\ac{l\geq 1: \epsilon_{l}> \pa{2\kappa_{*} \log(T)\over T}^{1/3}}.\nonumber
\end{align*}
Then, if Recovery$\neq \emptyset$, we must have $L^{(0)} \geq L_T$. Moreover, we see that since $\epsilon_{L_T} = 2^{2-L_T}$, we have  $L_T \leq 2 + \frac{\log_2\left(T/(2\kappa_*\log(T))\right)}{3} \leq 3\log_2\left(T\right)$ when $T >1$.

We define a "bad" event $\mathcal{F}$, such that, on $\overline{\mathcal{F}}$, our estimators $\hgamma_l^{(z)}$ and $\homega_l^{(z)}$ are close to the true parameters $\gamma^*$ and $\omega^*$ for all rounds $l$. More precisely, let
\begin{eqnarray}\label{eq:defEl}
\mathcal{F} &=& \underset{l\geq 1}{\bigcup}\mathcal{F}_l,
\end{eqnarray}
where for $l\geq 1$
\begin{eqnarray*}
    \mathcal{F}_l &=& \left\{\exists z\in \{-1,1\} \text{ such that Explore}_l^{(z)} =\text{True, and } x\in \mathcal{X}_l^{(z)} \text{ such that } \ \left\vert  \left({\hgamma_l^{(z)} - \gamma^* \atop \homega_l^{(z)} - \omega^*}\right)^{\top} \left({x \atop z_x}\right)\right \vert \geq \epsilon_l\right\} \nonumber \\
    && \bigcup \left\{\text{Explore}_l^{(0)} =\text{True and }\left\vert  \homega_l^{(0)}- \omega^*\right \vert \geq \epsilon_l\right\}.\nonumber
\end{eqnarray*}
Then, the regret decomposes as
\begin{equation}\label{eq:regret_dec}
    R_T \leq \underset{t\leq T}{\sum}\mathbb{E}_{\vert \overline{\mathcal{F}}}\left[(x^* - x_t)^{\top}\gamma^*\right] + 2T\mathbb{P}\left[\mathcal{F}\right].
\end{equation}
The following lemma relies on concentration of Gaussian variables to bound the probability of the event $\mathcal{F}$. 
\begin{lem}\label{lem:proba_F}
$\mathbb{P}\left(\mathcal{F}\right) \leq 2 \delta.$
\end{lem}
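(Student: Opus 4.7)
The plan is to bound $\mathbb{P}(\mathcal{F}_l)$ uniformly in $l$ by an expression of order $\delta/(l(l+1))$, and then to sum via the telescoping identity $\sum_{l\geq 1}\frac{1}{l(l+1)} = 1$. The event $\mathcal{F}_l$ decomposes naturally into a G-phase failure (deviation of $a_x^\top \htheta_l^{(z)}$ from $a_x^\top\theta^*$ for some $x\in \cX_l^{(z)}$, $z\in\{-1,+1\}$) and a $\Delta$-phase failure (deviation of $\homega_l^{(0)}$ from $\omega^*$). Both are controlled by Gaussian tail inequalities applied to OLS linear statistics, since $\xi_t$ is standard Gaussian and the sampling allocations within each exploration block are data-independent (they depend only on quantities computed before the block starts).

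For the G-phase at level $l$ and group $z$, conditionally on $\{\text{Explore}_l^{(z)} = \text{True}\}$ the estimator $\htheta_l^{(z)}$ is the OLS estimator built from the rounded G-optimal design $\mu_l^{(z)}(x) = \lceil n_l^{(z)} \pi_l^{(z)}(x)\rceil$ with $n_l^{(z)} = \frac{2(d+1)}{\epsilon_l^2}\log(kl(l+1)/\delta)$. By Lemma~\ref{lem:G-opt}, the variance of $a_x^\top(\htheta_l^{(z)}-\theta^*)$ is at most $\frac{\epsilon_l^2}{2\log(kl(l+1)/\delta)}$, and the standard two-sided Gaussian tail gives $\mathbb{P}(|a_x^\top(\htheta_l^{(z)}-\theta^*)|\geq \epsilon_l)\leq \frac{2\delta}{kl(l+1)}$. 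Since $\cX_l^{(+1)}\cup\cX_l^{(-1)}\subseteq \cX$ comprises at most $k$ pairs $(z,x)$, a union bound yields $\frac{2\delta}{l(l+1)}$ for the G-phase contribution to $\mathbb{P}(\mathcal{F}_l)$.

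The $\Delta$-phase is handled analogously. By construction the design $\hat\mu_l$ is feasible for~\eqref{eq:kappa_equivalent}, so $e_{d+1}^\top V(\hat\mu_l)^+ e_{d+1}\leq 1$, and Lemma~\ref{lem:c-opt} applied to $\mu_l^{(0)}(x)=\lceil \frac{2}{\epsilon_l^2}\log(l(l+1)/\delta)\hat\mu_l(x)\rceil$ bounds $\mathrm{Var}(\homega_l^{(0)})$ by $\frac{\epsilon_l^2}{2\log(l(l+1)/\delta)}$. The Gaussian tail then produces $\mathbb{P}(|\homega_l^{(0)}-\omega^*|\geq \epsilon_l)\leq \frac{2\delta}{l(l+1)}$. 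Combining with the previous step gives $\mathbb{P}(\mathcal{F}_l)\leq \frac{4\delta}{l(l+1)}$, and telescoping yields $\mathbb{P}(\mathcal{F})\leq 4\delta$; the sharper constant $2\delta$ in the statement can be recovered by a mild recalibration of the logarithmic prefactors inside $n_l^{(z)}$ and $n_l^{(0)}$, or by absorbing the factor $2$ of the two-sided Gaussian tail into the definition of the confidence budget.

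The argument is essentially a concentration/union-bound exercise and contains no genuine obstacle. The two subtleties to keep in mind are that outside $\{\text{Explore}_l^{(z)} = \text{True}\}$ (respectively $\{\text{Explore}_l^{(0)} = \text{True}\}$) the corresponding sub-event of $\mathcal{F}_l$ is vacuous, so conditioning on these events is harmless; and that the ceilings $\lceil n\pi(x)\rceil\geq n\pi(x)$ used in the rounding of both designs only reduce the resulting OLS variance, so Lemmas~\ref{lem:G-opt} and~\ref{lem:c-opt} apply directly to the actual (rounded) allocations employed by the algorithm.
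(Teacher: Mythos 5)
Your proof follows essentially the same route as the paper's: Lemma~\ref{lem:proba_F} is obtained there by combining Lemmas~\ref{lem:control_gamma} and~\ref{lem:control_omega}, each of which is exactly your G-phase and $\Delta$-phase bound --- condition on the exploration event, note that the design within a block is measurable with respect to the past so the OLS linear statistic is a centered Gaussian with variance controlled by Lemmas~\ref{lem:G-opt} and~\ref{lem:c-opt}, apply a Gaussian tail bound, and union-bound over the at most $k$ actions and over $l$ using $\sum_{l\geq 1}\frac{1}{l(l+1)}=1$.

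The one loose end is the factor of $2$: you end with $4\delta$ and propose to recover $2\delta$ by ``recalibrating the logarithmic prefactors inside $n_l^{(z)}$ and $n_l^{(0)}$,'' but those constants are part of the fixed algorithm whose regret is being analyzed, so that move is not available. The paper instead uses the sharper two-sided tail $\mathbb{P}(|Z|\geq t)\leq e^{-t^2/(2\sigma^2)}$ for a centered Gaussian with variance $\sigma^2$ (which follows from $\mathbb{P}(Z\geq t)\leq \tfrac{1}{2}e^{-t^2/(2\sigma^2)}$ for $t\geq 0$), giving $\frac{\delta}{kl(l+1)}$ per action rather than $\frac{2\delta}{kl(l+1)}$, hence $\delta$ for each of the two failure types and $2\delta$ in total. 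With that substitution your argument is complete and matches the paper's.
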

Now, the first term of \eqref{eq:regret_dec} can be decomposed as
\begin{eqnarray*}
    \underset{t\leq T}{\sum}(x^* - x_t)^{\top}\gamma^* & \leq& \underset{z \in \{-1,0,+1\}}{\sum}\overset{L^{(z)}+1}{\underset{l= 1}{\sum}}\underset{t \in \text{Exp}^{(z)}_l}{\sum}(x^* - x_t)^{\top}\gamma^* + \underset{t \in \text{Recovery}}{\sum}(x^* - x_t)^{\top}\gamma^*,
\end{eqnarray*}
where we use as convention that the sum over an empty set is null. Note that for $z \in \{-1,+1\}$, during the phase $\text{Exp}^{(z)}_l$ the algorithm only samples actions from $\mathcal{X}^{(z)}_l$. By contrast, during the phase $\text{Exp}^{(0)}_l$, even actions eliminated from the sets $\mathcal{X}_l^{(z)}$ can be sampled.  Finally, if the algorithm stops during phase Exp$_{L^{(0)}+1}^{(0)}$, but does not have enough budget to complete the last $\widehat{\Delta}^l$-optimal Exploration and Elimination Phase, it samples the remaining actions in the set $\mathcal{X}_{L^{(0)}+2}^{(-1)}\cup\mathcal{X}_{L^{(0)}+2}^{(+1)}$. Hence, the first term of \eqref{eq:regret_dec} can be upper-bounded by
\begin{eqnarray}\label{eq:rt_dec}
    \underset{t\leq T}{\sum}(x^* - x_t)^{\top}\gamma^* &\leq& \underset{z \in \{-1, +1\}}{\sum}\overset{L_T}{\underset{l= 1}{\sum}}\left(\underset{x \in \mathcal{X}_l^{(z)}}{\sum}\mu_{l}^{(z)}(x)\right)\underset{x \in \mathcal{X}_l^{(z)}}{\max}(x^* - x)^{\top}\gamma^*\\
    && +\underset{z \in \{-1, +1\}}{\sum}\overset{L^{(z)}+1}{\underset{l= L_T+1}{\sum}}\underset{t \in \text{Exp}^{(z)}_l}{\sum}(x^* - x_t)^{\top}\gamma^* + \underset{t \in \text{Recovery}}{\sum}(x^* - x_t)^{\top}\gamma^* \nonumber \\
    && + \overset{L^{(0)}}{\underset{l=1}{\sum}}\underset{x \in \mathcal{X}}{\sum}\mu_{l}^{(0)}(x)\Delta_{x} + \mathds{1}\left\{\text{Explore}_{L^{(0)}+1}^{(0)} = \text{False}\right\}\underset{t \in \text{Exp}^{(0)}_{L^{(0)}+1}}{\sum} \underset{x \in \mathcal{X}_{L^{(0)}+2}^{(-1)}\cup \mathcal{X}_{L^{(0)}+2}^{(+1)}}{\max}(x^* - x)^{\top}\gamma^*\nonumber.
\end{eqnarray}

We begin by bounding the sum of the regret corresponding to the Recovery phase and to the phases Exp$_L^{(z)}$ for $z\in\{-1,+1\}$ and $l >L_T$ on the event $\overline{\cF}$.

\paragraph{Bound on }$\displaystyle\underset{z \in \{-1, +1\}}{\sum}\overset{L^{(z)}+1}{\underset{l= L_T+1}{\sum}}\underset{t \in \text{Exp}^{(z)}_l}{\sum}(x^* - x_t)^{\top}\gamma^* + \underset{t \in \text{Recovery}}{\sum}(x^* - x_t)^{\top}\gamma^*$.\\

\begin{lem}\label{lem:discard_subopt} Let $x^*\in \argmax_{x\in \mathcal{X}} x^{\top}\gamma^*$ be an optimal action. Then, on the event $\overline{\mathcal{F}}$ defined in Equation \eqref{eq:defEl}, for $l\geq 1$ such that Explore$_l^{(z_{x^*})}=$ True, 
\begin{eqnarray}\label{eq:subopt_goodz}
    \mathcal{X}_{l+1}^{(z_{x^*})} &\subset& \left\{x \in \mathcal{X}_{1}^{(z_{x^*})} : (x^* - x)^{\top}\gamma^* <10 \epsilon_{l+1} \right\}.
\end{eqnarray} 
Moreover, for $l\geq 1$ such that Explore$_l^{(-z_{x^*})}=$ True,
\begin{eqnarray*}
    \mathcal{X}_{l+1}^{(-z_{x^*})} &\subset& \left\{x \in \mathcal{X}_{1}^{(-z_{x^*})} : (x^* - x)^{\top}\gamma^* <42 \epsilon_{l+1} \right\}.
\end{eqnarray*}
\end{lem}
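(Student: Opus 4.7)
The plan is to prove the two inclusions separately, using the following structural facts on $\overline{\mathcal{F}}$: (a) for each $l, z$ with Explore$_l^{(z)}=$True and each $x\in\mathcal{X}_l^{(z)}$, one has $|a_x^{\top}(\widehat{\theta}_l^{(z)}-\theta^*)|<\epsilon_l$; (b) within a group the bias cancels, so for $x,x'$ with $z_x=z_{x'}$, $(a_{x'}-a_x)^{\top}\theta^*=(x'-x)^{\top}\gamma^*$; (c) for every $x\in\mathcal{X}_l^{(-1)}\cup\mathcal{X}_l^{(+1)}$ with Explore$_{l-1}^{(0)}=$True one has $|\widehat{m}_{l-1,x}-x^{\top}\gamma^*|<2\epsilon_{l-1}$.

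For the first inclusion, I would first show by induction on $l$ that $x^*\in\mathcal{X}_l^{(z_{x^*})}$. The base case is by construction; for the inductive step, for every $x'\in\mathcal{X}_l^{(z_{x^*})}$ facts (a) and (b) give $(a_{x'}-a_{x^*})^{\top}\widehat{\theta}_l^{(z_{x^*})}\leq(x'-x^*)^{\top}\gamma^*+2\epsilon_l\leq 2\epsilon_l<3\epsilon_l$, so $x^*$ passes the G-elim test. Since $x^*\in\mathcal{X}_l^{(z_{x^*})}$, any $x\in\mathcal{X}_{l+1}^{(z_{x^*})}$ satisfies $(a_{x^*}-a_x)^{\top}\widehat{\theta}_l^{(z_{x^*})}\leq 3\epsilon_l$, hence $(x^*-x)^{\top}\gamma^*<3\epsilon_l+2\epsilon_l=5\epsilon_l=10\epsilon_{l+1}$, as claimed.

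Before handling the second inclusion, I would rule out $\widehat{z^*_l}=-z_{x^*}$ on $\overline{\mathcal{F}}$: the triggering condition would require $\max_{x\in\mathcal{X}_l^{(-z_{x^*})}}\widehat{m}_{l-1,x}\geq\max_{x\in\mathcal{X}_l^{(z_{x^*})}}\widehat{m}_{l-1,x}+4\epsilon_{l-1}$, but using (c) and $x^*\in\mathcal{X}_l^{(z_{x^*})}$, the left side is strictly below $x^{*\top}\gamma^*+2\epsilon_{l-1}$ while the right side strictly exceeds it. Now fix $l$ with Explore$_l^{(-z_{x^*})}=$True. For $l=1$ the bound is trivial from $|x^{\top}\gamma^*|\leq 1$ since $(x^*-x)^{\top}\gamma^*\leq 2<42=42\epsilon_2$. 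For $l\geq 2$, the preceding dichotomy forces $\widehat{z^*_l}=0$, which means $\widehat{z^*_{l-1}}=0$ and the algorithm did not terminate at phase $l-1$, so Explore$_{l-1}^{(0)}=$True. Since $\widehat{z^*_l}$ was not set to $z_{x^*}$ either, the $\Delta$-elim condition gives $\max_{x\in\mathcal{X}_l^{(z_{x^*})}}\widehat{m}_{l-1,x}-2\epsilon_{l-1}<\max_{x\in\mathcal{X}_l^{(-z_{x^*})}}\widehat{m}_{l-1,x}+2\epsilon_{l-1}$. Bounding the left side below by $\widehat{m}_{l-1,x^*}-2\epsilon_{l-1}>x^{*\top}\gamma^*-4\epsilon_{l-1}$ and the right side above by $\widetilde{x}_l^{\top}\gamma^*+4\epsilon_{l-1}$, where $\widetilde{x}_l=\argmax_{x\in\mathcal{X}_l^{(-z_{x^*})}}x^{\top}\gamma^*$, yields $(x^*-\widetilde{x}_l)^{\top}\gamma^*<8\epsilon_{l-1}=16\epsilon_l$. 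Finally, applying the G-elim argument at phase $l$ within the group $-z_{x^*}$ — using (a), (b), and the fact that $\widetilde{x}_l\in\mathcal{X}_l^{(-z_{x^*})}$ is the maximizer of the biased reward there as well — gives $(\widetilde{x}_l-x)^{\top}\gamma^*<5\epsilon_l$ for any $x\in\mathcal{X}_{l+1}^{(-z_{x^*})}$. Adding the two gaps yields $(x^*-x)^{\top}\gamma^*<21\epsilon_l=42\epsilon_{l+1}$.

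The main obstacle is the bookkeeping around the Booleans Explore$_l^{(z)}$ and the variable $\widehat{z^*_l}$: the $\Delta$-elim bound is only usable when Explore$_{l-1}^{(0)}=$True, and the ``$-z_{x^*}$ elimination impossible'' step must be established first so that we can conclude $\widehat{z^*_l}\in\{0,z_{x^*}\}$ and, under Explore$_l^{(-z_{x^*})}=$True, in fact $\widehat{z^*_l}=0$. Once these algorithmic invariants are established, the inequalities themselves are straightforward arithmetic with the concentration bounds.
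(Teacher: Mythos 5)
Your proposal is correct and follows essentially the same route as the paper: the paper's proof also first establishes (as a separate Lemma, \ref{lem:E}) that on $\overline{\mathcal{F}}$ the optimal action survives G-elimination in its group and the group $z_{x^*}$ is never discarded, then combines the elimination criterion with the $\epsilon_l$-concentration to get the within-group bound $5\epsilon_l$, and uses the failure of the group-elimination test at phase $l-1$ to get the cross-group bound $16\epsilon_l$, summing to $21\epsilon_l = 42\epsilon_{l+1}$. The only cosmetic difference is that you inline the invariants of Lemma \ref{lem:E} (and handle the $l=1$ edge case explicitly) rather than citing them as a standalone lemma.
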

Recall that if Recovery$\neq \emptyset$, $L^{(0)}\geq L_T$. Then, all actions sampled during the Recovery phase belong to $\mathcal{X}_{l+1}^{(-1)} \cup \mathcal{X}_{l+1}^{(+1)}$ for some $l \geq L_T$. Lemma \ref{lem:discard_subopt} shows that, on $\overline{\mathcal{F}}$, for $l \geq L_T$, the actions in $\mathcal{X}_{l+1}^{(z)}$ are sub-optimal by at most $42\epsilon_{L_{T} +1}$. Then, we get that on the event $\overline{\cF}$,
\begin{eqnarray}\label{eq:wc_EP}
    \underset{z \in \{-1, +1\}}{\sum}\overset{L^{(z)}+1}{\underset{l= L_T+1}{\sum}}\underset{t \in \text{Exp}^{(z)}_l}{\sum}(x^* - x_t)^{\top}\gamma^* + \underset{t \in \text{Recovery}}{\sum}(x^* - x_t)^{\top}\gamma^* &\leq &T \times 42\epsilon_{L_{T+1}}\nonumber\\
    &\leq& 53 \kappa_{*}^{1/3}T^{2/3}\log(T)^{1/3}.
\end{eqnarray}
\bigskip
\paragraph{Bound on }$ \overset{L^{(0)}}{\underset{l=1}{\sum}}\underset{x \in \mathcal{X}}{\sum}\mu_{l}^{(0)}(x)\Delta_{x}  + \mathds{1}\left\{\text{Explore}_{L^{(0)}+1}^{(0)} = \text{False}\right\}\underset{t \in \text{Exp}^{(0)}_{L^{(0)}+1}}{\sum} \underset{x \in \mathcal{X}_{L^{(0)}+2}^{(-1)}\cup \mathcal{X}_{L^{(0)}+2}^{(+1)}}{\max}(x^* - x)^{\top}\gamma^*$.\newline
We begin by bounding $ \mathds{1}\left\{\text{Explore}_{L^{(0)}+1}^{(0)} = \text{False}\right\}\underset{t \in \text{Exp}^{(0)}_{L^{(0)}+1}}{\sum} \underset{x \in \mathcal{X}_{L^{(0)}+2}^{(-1)}\cup \mathcal{X}_{L^{(0)}+2}^{(+1)}}{\max}(x^* - x)^{\top}\gamma^*.$ Recall that $n_{L^{(0)}+1}^{(0)} = \underset{x \in \cX}{\sum}\mu_{L^{(0)}+1}^{(0)}(x)$ is the budget that would be necessary to complete the $\widehat{\Delta}^{l}$-optimal Exploration and Elimination phase at phase $L^{(0)}+1$. On the one hand, Lemma \ref{lem:discard_subopt} implies that on the event $\overline{\cF}$,
\begin{eqnarray*}
\mathds{1}\left\{\text{Explore}_{L^{(0)}+1}^{(0)} = \text{False}\right\}\underset{t \in \text{Exp}^{(0)}_{L^{(0)}+1}}{\sum} \underset{x \in \mathcal{X}_{L^{(0)}+2}^{(-1)}\cup \mathcal{X}_{L^{(0)}+2}^{(+1)}}{\max}(x^* - x)^{\top}\gamma^* \leq 42n_{L^{(0)}+1}^{(0)}\epsilon_{L^{(0)}+2} \leq 21n_{L^{(0)}+1}^{(0)}\epsilon_{L^{(0)}+1}.
\end{eqnarray*}
On the other hand, for all $l\leq L^{(0)}+1$, the definition of $\widehat{\Delta}^l$ implies that $\widehat{\Delta}^l_x \geq \epsilon_l$ for all $x \in \cX$. Therefore, $21n_{L^{(0)}+1}^{(0)}\epsilon_{L^{(0)}+1} \leq 21n_{L^{(0)}+1}^{(0)}\min_x\widehat{\Delta}^{L^{(0)}+1}_x$. This implies that on $\overline{\cF}$,
\begin{eqnarray}\label{eq:terme_de_bord}
\mathds{1}\left\{\text{Explore}_{L^{(0)}+1}^{(0)} = \text{False}\right\}\underset{t \in \text{Exp}^{(0)}_{L^{(0)}+1}}{\sum} \underset{x \in \mathcal{X}_{L^{(0)}+2}^{(-1)}\cup \mathcal{X}_{L^{(0)}+2}^{(+1)}}{\max}(x^* - x)^{\top}\gamma^* \leq 21\underset{x \in \mathcal{X}}{\sum}\mu_{L^{(0)}+1}^{(0)}(x)\widehat{\Delta}^{L^{(0)}+1}_{x}.
\end{eqnarray}

Next, to bound the remaining terms of Equation \eqref{eq:rt_dec}, we bound the regret $\underset{x \in \mathcal{X}}{\sum}\mu_{l}^{(0)}(x)\Delta_x$ of exploration phase $\text{Exp}^{(0)}_l$ using the following lemma.
\begin{lem}\label{lem:time}
For all $l>0$, and $z \in \{-1, +1\}$, we have
\begin{equation*}
\sum_{x \in \mathcal{X}_l^{(z)}}\mu_{l}^{(z)}(x) \leq \frac{2(d+1)}{\epsilon_l^2} \log\left(\frac{kl(l+1)}{ \delta}\right) + \frac{(d+1)(d+2)}{2}.
\end{equation*}
and on $\overline{\mathcal{F}}$, we have 
\begin{equation*}
\sum_{x \in \mathcal{X}}\mu_{l}^{(0)}(x) \Delta_{x} \leq \sum_{x \in \mathcal{X}}\mu_{l}^{(0)}(x) \widehat{\Delta}_{x}^{l}\leq  \frac{2\kappa(\widehat \Delta^{l})}{\epsilon_l^2} \log\left(\frac{l(l+1)}{ \delta}\right) + 2(d+1).
\end{equation*}
\end{lem}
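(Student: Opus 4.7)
My plan is to establish each of the three inequalities in Lemma~\ref{lem:time} by unpacking the definitions used in the algorithm, controlling the rounding, and invoking the structural facts about the G-optimal and $\Delta$-optimal designs.

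For the first bound, I would recall that $\mu_l^{(z)}(x) = \lceil 2(d+1)\pi_l^{(z)}(x)\epsilon_l^{-2}\log(kl(l+1)/\delta)\rceil$, so that the ceiling contributes at most $|\supp(\pi_l^{(z)})|$ extra samples. Summing over $x\in\cX_l^{(z)}$ and using $\sum_x \pi_l^{(z)}(x) = 1$ (since $\pi_l^{(z)}\in\cP^{\cX_l^{(z)}}_{\cX_l^{(z)}}$) together with the support constraint $|\supp(\pi_l^{(z)})|\leq (d+1)(d+2)/2$ enforced in the computation of the G-optimal design yields the stated bound.

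For the second inequality, I would argue on $\overline{\cF}$ that $\widehat{\Delta}^l_x \geq \Delta_x$ for every $x\in\cX$, so that $\mu_l^{(0)}(x)\Delta_x\leq \mu_l^{(0)}(x)\widehat{\Delta}^l_x$. The case $l=1$ is immediate because $\widehat{\Delta}^1_x=2\geq\Delta_x$. For $l\geq 2$, use the update rule $\widehat{\Delta}^l_x = 2\wedge\big(\max_{x'\in\cX_l^{(-1)}\cup\cX_l^{(+1)}}\widehat{m}_{l-1,x'} - \widehat{m}_{l-1,x} + 4\epsilon_{l-1}\big)$. On $\overline{\cF}$, Lemma~\ref{lem:discard_subopt} guarantees that $x^*$ is never eliminated, so $x^*\in\cX_l^{(z_{x^*})}$, and the estimator bounds defining $\overline{\cF}$ give $|\widehat{m}_{l-1,x}-x^{\top}\gamma^*|\leq 2\epsilon_{l-1}$ for every $x$ in this union (one $\epsilon_{l-1}$ from the G-optimal estimation of the biased evaluation and one from the bias estimation). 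Therefore $\max_{x'}\widehat{m}_{l-1,x'} - \widehat{m}_{l-1,x}\geq \Delta_x - 4\epsilon_{l-1}$, which combined with $\Delta_x\leq 2$ yields $\widehat{\Delta}^l_x\geq\Delta_x$.

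For the final bound, I would again use the ceiling decomposition $\mu_l^{(0)}(x) \leq 2\widehat{\mu}_l(x)\epsilon_l^{-2}\log(l(l+1)/\delta) + \mathds{1}\{x\in\supp(\widehat{\mu}_l)\}$, so that
\begin{equation*}
\sum_{x\in\cX}\mu_l^{(0)}(x)\widehat{\Delta}^l_x \leq \frac{2\log(l(l+1)/\delta)}{\epsilon_l^2}\sum_{x\in\cX}\widehat{\mu}_l(x)\widehat{\Delta}^l_x \;+\; \sum_{x\in\supp(\widehat{\mu}_l)}\widehat{\Delta}^l_x.
\end{equation*}
The first sum equals $\kappa(\widehat{\Delta}^l)$ by the very definition of the $\widehat{\Delta}^l$-optimal design $\widehat{\mu}_l$. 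For the second term, Lemma~\ref{lem:calcul:kappa} ensures that $\widehat{\mu}_l$ can be chosen with support of cardinality at most $d+1$, and the truncation $\widehat{\Delta}^l_x\leq 2$ then gives the bound $2(d+1)$.

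The only genuinely non-trivial step is the monotonicity claim $\widehat{\Delta}^l_x\geq\Delta_x$; once that is in place, the remaining work is bookkeeping around the ceilings and invoking the support bounds for the G- and $\Delta$-optimal designs.
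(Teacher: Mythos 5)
Your proof is correct and follows essentially the same route as the paper's: the ceiling/support decomposition for both the G-optimal and $\widehat{\Delta}^l$-optimal allocations, the identity $\sum_x\widehat{\mu}_l(x)\widehat{\Delta}^l_x=\kappa(\widehat{\Delta}^l)$, the truncation $\widehat{\Delta}^l_x\leq 2$, and the monotonicity $\widehat{\Delta}^l_x\geq\Delta_x$ on $\overline{\cF}$ — which the paper merely asserts, while you supply the supporting argument via the update rule and the $2\epsilon_{l-1}$ accuracy of the debiased estimates. One small slip: the fact that $x^*$ is never eliminated (so that the max in the update rule dominates $x^{*\top}\gamma^*-2\epsilon_{l-1}$) is Lemma \ref{lem:E}, not Lemma \ref{lem:discard_subopt}, though the needed statement is indeed established in the paper.
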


Then, Equation \eqref{eq:terme_de_bord} and Lemma \ref{lem:time} imply that on $\overline{\mathcal{F}}$
\begin{align}\label{eq:borne_r_0_0}
\overset{L^{(0)}}{\underset{l=1}{\sum}}\underset{x \in \mathcal{X}}{\sum}\mu_{l}^{(0)}(x)\Delta_{x} + \mathds{1}\left\{\text{Explore}_{L^{(0)}+1}^{(0)} = \text{False}\right\}\underset{t \in \text{Exp}^{(0)}_{L^{(0)}+1}}{\sum}& \underset{x \in \mathcal{X}_{L^{(0)}+2}^{(-1)}\cup \mathcal{X}_{L^{(0)}+2}^{(+1)}}{\max}(x^* - x)^{\top}\gamma^*\nonumber\\
&\leq  21\overset{L^{(0)}+1}{\underset{l=1}{\sum}}\underset{x \in \mathcal{X}}{\sum}\mu_{l}^{(0)}(x)\widehat{\Delta}^{l}_{x}\nonumber\\
& \leq 42 \overset{L^{(0)}+1}{\underset{l=1}{\sum}} \frac{\kappa(\widehat \Delta^{l})}{\epsilon_l^2} \log\left(\frac{l(l+1)}{ \delta}\right) + 42(d+1) (L^{(0)}+1) 
\end{align}

We rely on the following Lemma to bound $\kappa(\widehat \Delta^{l})$.
\begin{lem}\label{lem:kappa-l}
On $\overline{\mathcal F}$, we have for any $l\geq 1$ and any $\tau>0$
\begin{equation*}
\kappa(\widehat \Delta^{l}) \leq 513 \pa{1+{\epsilon_{l}\over \tau}} \kappa(\Delta \vee \tau).
\end{equation*}
and 
\begin{equation*}
\kappa(\widehat \Delta^{l}) \geq \kappa(\Delta \vee \epsilon_l).
\end{equation*}
\end{lem}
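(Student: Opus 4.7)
The plan is to analyse how $\widehat\Delta^l$ relates pointwise to $\Delta$ on the good event $\overline{\mathcal{F}}$, and then transfer the pointwise comparisons to $\kappa(\cdot)$ using the homogeneity and monotonicity properties in Lemma \ref{lem:prop_kappa} (i)--(ii). Concretely, I would first show that on $\overline{\mathcal{F}}$ the debiased estimate $\widehat m_{l,x}=a_x^\top \widehat\theta_l^{(z_x)}-z_x\widehat\omega_l^{(0)}$ concentrates around the true reward: combining $|a_x^\top(\widehat\theta_l^{(z_x)}-\theta^\ast)|\leq \epsilon_l$ (from the G-Exp-Elim step on $\overline{\mathcal{F}}$) with $|\widehat\omega_l^{(0)}-\omega^\ast|\leq \epsilon_l$ (from the $\Delta$-Exp-Elim step) gives $|\widehat m_{l,x}-x^\top\gamma^\ast|\leq 2\epsilon_l$ for every $x\in \cX^{(-1)}_{l+1}\cup \cX^{(+1)}_{l+1}$. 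Next, I would verify that $x^\ast$ is never discarded on $\overline{\mathcal{F}}$: the G-Exp-Elim criterion applied to $x^\ast$ involves only within-group differences, for which the bias cancels, so by the same argument as in Lemma \ref{lem:discard_subopt} we have $x^\ast\in \cX^{(z_{x^\ast})}_{l+1}$. In particular $\max_{x'\in \cX^{(-1)}_{l+1}\cup\cX^{(+1)}_{l+1}} \widehat m_{l,x'}\in [x^{\ast\top}\gamma^\ast-2\epsilon_l,\ x^{\ast\top}\gamma^\ast+2\epsilon_l]$.

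For the lower bound, plugging these two-sided estimates into the update formula $\widehat\Delta^{l+1}_x=(\max_{x'}\widehat m_{l,x'}-\widehat m_{l,x}+4\epsilon_l)\wedge 2$ gives $\widehat\Delta^{l+1}_x\geq \Delta_x$ (the $4\epsilon_l$ offset exactly absorbs the $2\epsilon_l+2\epsilon_l$ concentration error) and trivially $\widehat\Delta^{l+1}_x\geq 4\epsilon_l\geq \epsilon_{l+1}$. For $x$ already eliminated at an earlier phase, $\widehat\Delta^l_x$ keeps its last updated value, to which the same argument applies at that earlier phase, yielding $\widehat\Delta^l_x\geq \Delta_x\vee \epsilon_l$ by induction; the base case $l=1$ is trivial since $\widehat\Delta^1=2\mathbf{1}$ and $\Delta_x\leq 2$, $\epsilon_1=2$. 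Monotonicity of $\kappa$ (Lemma \ref{lem:prop_kappa} (ii)) then yields $\kappa(\widehat\Delta^l)\geq \kappa(\Delta\vee \epsilon_l)$.

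For the upper bound, the same concentration gives the one-sided estimate $\widehat\Delta^{l+1}_x\leq \Delta_x+8\epsilon_l$ for $x\in \cX^{(-1)}_{l+1}\cup \cX^{(+1)}_{l+1}$. Here I would split cases according to whether $x$ is currently alive or was eliminated at some phase $l_x<l$. In the first case, I bound $\Delta_x+8\epsilon_{l-1}\leq (\Delta_x\vee \tau)(1+16\epsilon_l/\tau)$ by considering separately $\Delta_x\geq \tau$ and $\Delta_x<\tau$. In the second case, $\widehat\Delta^l_x$ is frozen at a value bounded by $\Delta_x+8\epsilon_{l_x-1}$; but elimination in G-Exp-Elim forces, on $\overline{\mathcal{F}}$, the existence of $x'$ in the same group with $(x'-x)^\top\gamma^\ast>\epsilon_{l_x}$, so $\Delta_x\geq \epsilon_{l_x}$ and hence $\widehat\Delta^l_x\leq C\Delta_x\leq C(\Delta_x\vee \tau)$ for a numerical constant $C$. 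Combining the two cases gives a pointwise bound of the form $\widehat\Delta^l_x\leq C'(1+\epsilon_l/\tau)(\Delta_x\vee\tau)$ for some numerical constant $C'$, and one last appeal to Lemma \ref{lem:prop_kappa} (i)--(ii) delivers the claim with constant $513$ (loose enough to absorb the base-case contribution from $\widehat\Delta^1=2\mathbf{1}$ and the truncation $\wedge 2$).

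The main obstacle is the bookkeeping for actions that have already been eliminated: their entry in $\widehat\Delta^l$ is not refreshed, so one cannot appeal directly to the current round's concentration. Resolving this requires the observation that elimination on $\overline{\mathcal{F}}$ certifies a lower bound on the true gap $\Delta_x$, which then bounds the stale value multiplicatively in terms of $\Delta_x$ itself, independently of $\epsilon_l$. Once this is in place, everything else is a clean pointwise-to-$\kappa$ transfer via the scaling and monotonicity properties of $\kappa$.
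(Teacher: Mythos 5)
Your proposal is correct and follows essentially the same route as the paper's proof: two-sided pointwise bounds on $\widehat\Delta^l_x$ (the bound $\widehat\Delta^{l+1}_x\le\Delta_x+8\epsilon_l$ while $x$ is still alive, the certified gap $\Delta_x>\epsilon_{\ell_x}$ at elimination to control frozen entries, and $\widehat\Delta^l_x\ge\Delta_x\vee\epsilon_l$ for the lower bound), then transferred to $\kappa$ via the homogeneity and monotonicity properties of Lemma \ref{lem:prop_kappa}. The only difference is the bookkeeping for eliminated actions: the paper routes through its auxiliary claim that $\Delta_x\ge 21\epsilon_l$ forces $\ell_x\le l$ and takes a maximum over six consecutive phases (hence the constant $513$), whereas your direct use of $\epsilon_{\ell_x}<\Delta_x$ gives the same conclusion with a smaller constant.
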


Lemma \ref{lem:time} and Lemma \ref{lem:kappa-l} with $\tau = \epsilon_{L^{(0)}}$ imply that on $\overline{\cF}$,
\begin{eqnarray}\label{eq:borne_r_0.1}
   \overset{L^{(0)}+1}{\underset{l=1}{\sum}} \frac{\kappa(\widehat \Delta^{l})}{\epsilon_l^2} \log\left(\frac{l(l+1)}{ \delta}\right) &\leq &  513\kappa(\Delta \vee \epsilon_{L^{(0)}})\log\left(\frac{(L^{(0)}+1)(L^{(0)}+2)}{ \delta}\right)\left(\underset{l=1}{\overset{L^{(0)}+1}{\sum}}  \frac{1}{\epsilon_l^2} +  \underset{l=1}{\overset{L^{(0)}+1}{\sum}}  \frac{1}{\epsilon_l\epsilon_{L^{(0)}}} \right)\nonumber\\
    &\leq & 513\kappa(\Delta \vee \epsilon_{L^{(0)}})\log\left(\frac{6L^{(0)}}{ \delta}\right)\left( \frac{16}{\epsilon_{L^{(0)}}^2} + \frac{4}{\epsilon_{L^{(0)}}^2}\right)\nonumber\\
     &\leq & 10260\log\left(\frac{6L^{(0)}}{ \delta}\right) \frac{\kappa(\widehat{\Delta}^{L^{(0)}})}{\epsilon_{L^{(0)}}^2}
\end{eqnarray}
where the last line follows from the second claim of  Lemma \ref{lem:kappa-l}. Now, by definition of $L^{(0)}$, $\epsilon_{L^{(0)}}\geq \left(\kappa(\widehat{\Delta}^{L^{(0)}}) \log(T)/T\right)^{1/3}$. Then, Equation \eqref{eq:borne_r_0.1} implies that
\begin{eqnarray}\label{eq:borne_r_0.1.1}
   \overset{L^{(0)}+1}{\underset{l=1}{\sum}} \frac{\kappa(\widehat \Delta^{l})}{\epsilon_l^2} \log\left(\frac{l(l+1)}{ \delta}\right) &\leq & 10260\log\left(\frac{6L^{(0)}}{ \delta}\right)\kappa(\widehat{\Delta}^{L^{(0)}})^{1/3}\log(T)^{-2/3}T^{2/3}.
\end{eqnarray}
Moreover, we observe that during each phase $l$, but the last one, we sample at least 
$$\max_{z\in \ac{-1,1}} \sum_{x\in \cX^{(z)}_{l}}\tau_{l,x}^{(z)}\geq {2 (d+1)\over \delta_{l}^2} \log(kl(l+1)/\delta)$$
actions during the G-optimal explorations, so the number of phases $L^{(0)}$ is never larger than 
\begin{equation*}
\ell_{T} = 1\vee \log_{4}(T).
\end{equation*} Using this remark, together with Equations \eqref{eq:borne_r_0_0} and \eqref{eq:borne_r_0.1.1}, we find that on $\overline{\cF}$
\begin{eqnarray}\label{eq:borne_r_0.2}
  \overset{L^{(0)}}{\underset{l=1}{\sum}}\underset{x \in \mathcal{X}}{\sum}\mu_{l}^{(0)}(x)\widehat{\Delta}^{l}_{x}  &+&  \mathds{1}\left\{\text{Explore}_{L^{(0)}+1}^{(0)} = \text{False}\right\}\underset{t \in \text{Exp}^{(0)}_{L^{(0)}+1}}{\sum} \underset{x \in \mathcal{X}_{L^{(0)}+2}^{(-1)}\cup \mathcal{X}_{L^{(0)}+2}^{(+1)}}{\max}(x^* - x)^{\top}\gamma^*\nonumber\\ &&\leq 2^{19}\log\left(\frac{6L^{(0)}}{ \delta}\right) \kappa(\widehat{\Delta}^{L^{(0)}})T^{2/3}\log(T)^{-2/3} + 42\ell_T.
\end{eqnarray}

\paragraph{Bound on $\underset{z \in \{-1, +1\}}{\sum}\overset{L_T}{\underset{l= 1}{\sum}}\left(\underset{x \in \mathcal{X}_l^{(z)}}{\sum}\mu_{l}^{(z)}(x)\right)\underset{x \in \mathcal{X}_l^{(z)}}{\max}(x^* - x)^{\top}\gamma^*$.}
We bound the remaining term in Equation \eqref{eq:rt_dec} using the first claim in Lemma \ref{lem:time} and Lemma \ref{lem:discard_subopt}. On $\overline{\cF}$,

\begin{eqnarray}\label{eq:bound_regret_1}
   \underset{z \in \{-1, +1\}}{\sum}\overset{L_T}{\underset{l= 1}{\sum}}\left(\underset{x \in \mathcal{X}_l^{(z)}}{\sum}\mu_{l}^{(z)}(x)\right)\underset{x \in \mathcal{X}_l^{(z)}}{\max}(x^* - x)^{\top}\gamma^*
   &\leq&  2\underset{l=1}{\overset{L_T}{\sum}}  \left(\frac{2(d+1)}{\epsilon_l^2} \log\left(\frac{kl(l+1)}{ \delta}\right) + \frac{(d+1)(d+2)}{2}\right) 42 \epsilon_{l} \nonumber\\
    &\leq & \frac{336(d+1)}{\epsilon_{L_T}} \log\left(\frac{kL_T(1+L_T)}{ \delta}\right) + 168(d+1)(d+2)\nonumber\\
    &\leq &  267(d+1)\kappa_{*}^{-1/3}T^{1/3}\log(T)^{-1/3} \log\left(\frac{kL_T(1+L_T)}{ \delta}\right) \nonumber\\
    &&+ 168(d+1)(d+2).
\end{eqnarray}
Combing Equations \eqref{eq:regret_dec}, \eqref{eq:rt_dec}, \eqref{eq:wc_EP}, \eqref{eq:borne_r_0.2}, and \eqref{eq:bound_regret_1}, and using $\delta = T^{-1}$, $\kappa(\widehat{\Delta}^{L^{(0)}})\leq  \kappa_*$ and $L_T\leq 4T/\log(2)$, we get for all $T\geq 1$
\begin{equation*}
    R_T \leq C\left(\kappa_{*}^{1/3}T^{2/3}\log(T)^{1/3} + (d\vee \kappa_{*})\log(T) + d^2 +  d\kappa_{*}^{-1/3}T^{1/3}\log(kT)\log(T)^{-1/3} \right)
\end{equation*}
for some absolute constant $C>0$. Finally, for 
$$T\geq \frac{\left((d\vee \kappa_*)^{3/2}\log(T)\right)  \vee d^3}{\sqrt{\kappa_*}} \vee \frac{(d\log(kT))^{3}}{(\kappa_*\log(T))^{2}},$$ 
we get
\begin{equation*}
    R_T \leq C'\kappa_{*}^{1/3}T^{2/3}\log(T)^{1/3}.
\end{equation*}

\subsection{Proof of Theorem \ref{thm:upper_bound_delta}}

The beginning of the proof of Theorem \ref{thm:upper_bound_delta} follows the same lines as the proof of Theorem \ref{thm:upper_bound_worst_case}. We begin by decomposing the regret as 
\begin{equation}\label{eq:regret_dec_delta}
    R_T \leq \underset{t\leq T}{\sum}\mathbb{E}_{\vert \overline{\mathcal{F}}}\left[(x^* - x_t)^{\top}\gamma^*\right] + 2T\mathbb{P}\left[\mathcal{F}\right].
\end{equation}
where $\mathcal{F}$ is defined in Equation \eqref{eq:defEl}. On the one hand, Lemma \ref{lem:proba_F} implies $T\mathbb{P}\left[\mathcal{F}\right]\leq 2\delta T$. Then, Equation \eqref{eq:regret_dec_delta} implies
\begin{eqnarray}\label{eq:rt_dec_delta}
    R_T &\leq& 4\delta T + \mathbb{E}_{\vert \overline{\mathcal{F}}}\left[\underset{z \in \{-1, +1\}}{\sum}\overset{L^{(z)}+1}{\underset{l\geq 1}{\sum}}\underset{t \in \text{Exp}^{(z)}_l}{\sum}(x^* - x_t)^{\top}\gamma^*\right]  + \mathbb{E}_{\vert \overline{\mathcal{F}}}\left[\underset{t \in \text{Recovery}}{\sum}(x^* - x_t)^{\top}\gamma^*\right]\\
    && + \mathbb{E}_{\vert \overline{\mathcal{F}}}\left[\overset{L^{(0)}}{\underset{l=1}{\sum}}\underset{x \in \mathcal{X}}{\sum}\mu_{l}^{(0)}(x)\Delta_{x}\right] + \mathbb{E}_{\vert \overline{\mathcal{F}}}\left[ \mathds{1}\left\{\text{Explore}_{L^{(0)}+1}^{(0)} = \text{False}\right\}\underset{t \in \text{Exp}^{(0)}_{L^{(0)}+1}}{\sum} \underset{x \in \mathcal{X}_{L^{(0)}+2}^{(-1)}\cup \mathcal{X}_{L^{(0)}+2}^{(+1)}}{\max}(x^* - x)^{\top}\gamma^*\right]\nonumber
\end{eqnarray}
where $\mathcal{F}$ is defined in Equation \eqref{eq:defEl}, and where we used the convention that the sum over an empty set is null. 

\paragraph{Bound on }$\mathds{1}\left\{\text{Explore}_{L^{(0)}+1}^{(0)} = \text{False}\right\}\underset{t \in \text{Exp}^{(0)}_{L^{(0)}+1}}{\sum} \underset{x \in \mathcal{X}_{L^{(z)}}+1}{\max}(x^* - x)^{\top}\gamma^*$.\newline
Similarly to the proof of Theorem \ref{thm:upper_bound_worst_case}, we use Lemma \ref{lem:discard_subopt} and Lemma \ref{lem:kappa-l} to show that on $\cbF$
\begin{eqnarray}\label{eq:terme_bords_delta}
  \mathds{1}\left\{\text{Explore}_{L^{(0)}+1}^{(0)} = \text{False}\right\}\underset{t \in \text{Exp}^{(0)}_{L^{(0)}+1}}{\sum} \underset{x \in \mathcal{X}_{L^{(z)}}+1}{\max}(x^* - x)^{\top}\gamma^* \leq 21 \underset{x \in \mathcal{X}}{\sum}\mu_{L^{(0)+1}}^{(0)}(x)\widehat{\Delta}^{L^{(0)}+1}_{x}.
\end{eqnarray}

\paragraph{Bound on }$\underset{z \in \{-1, +1\}}{\sum}\overset{L^{(z)}+1}{\underset{l\geq 1}{\sum}}\underset{t \in \text{Exp}^{(z)}_l}{\sum}(x^* - x_t)^{\top}\gamma^*.$\newline
Lemma \ref{lem:discard_subopt} shows that for $l \leq L^{(z)}$, the actions in $\mathcal{X}_{l+1}^{(z)}$ are sub-optimal by at most an additional factor at most $21\epsilon_l$. Let us set $l_{\Delta_{\min}} = \lceil-\log_{2}(\Delta_{\min}/21)\rceil$, so that
\begin{equation*}
{\Delta_{\min}\over 42} \leq \epsilon_{l_{\Delta_{\min}}} \leq {\Delta_{\min}\over 21}.
\end{equation*}
For $l \geq l_{\Delta_{\min}}$, we have  $\mathcal{X}_{l+1}^{(-1)}\cup \mathcal{X}_{l+1}^{(+1)} = \{x_{z^*}\}$. Thus, $l^{(-z_{x^*})} \leq l_{\Delta_{\min}}$, and for $l \geq l_{\Delta_{\min}}$, the algorithm selects only $x^*$ during the phase Exp$_l^{(z^*)}$. Then, combining Lemmas \ref{lem:time} and \ref{lem:discard_subopt}, and the fact that $L^{(z)} +1 \leq \ell_T$, we find that, on $\cbF$,
\begin{eqnarray}\label{eq:bound_two_bandits_delta}
    \underset{z \in \{-1, +1\}}{\sum} \overset{L^{(z)}+1}{\underset{l= 1}{\sum}}\underset{t \in \text{Exp}^{(z)}_l}{\sum}(x^* - x_t)^{\top}\gamma^* &\leq & \underset{z \in \{-1, +1\}}{\sum}\overset{l_{\Delta_{\min}}+1 \land \ell_{T}}{\underset{l= 1}{\sum}}\left(\underset{x \in \mathcal{X}_l^{(z)}}{\sum}\mu_{l}^{(z)}(x)\right)\underset{x \in \mathcal{X}_l^{(z)}}{\max}(x^* - x)^{\top}\gamma^*\nonumber \\
    &\leq & 2\underset{l=1}{\overset{l_{\Delta_{\min}}+1\land \ell_{T}}{\sum}}  \left(\frac{2(d+1)}{\epsilon_l^2} \log\left(\frac{kl(l+1)}{ \delta}\right) + \frac{(d+1)(d+2)}{2}\right)42 \epsilon_{l}\nonumber\\
    &\leq & 84(d+1)(d+2) + \epsilon_{l_{\Delta_{\min}}}^{-1}\times 672(d+1) \log\left(\frac{k(1+\ell_{T})(2+\ell_{T})}{ \delta}\right)\nonumber\\
&\leq & 84(d+1)(d+2) +\frac{28224(d+1)}{\Delta_{\min}} \log\left(\frac{k(1+\ell_{T}))(2+\ell_{T}))}{ \delta}\right).
\end{eqnarray}

\paragraph{Bound on }$\displaystyle \underset{t \in \text{Recovery}}{\sum}(x^* - x_t)^{\top}\gamma^* + \overset{L^{(0)}}{\underset{l=1}{\sum}}\underset{x \in \mathcal{X}}{\sum}\mu_{l}^{(0)}(x)\Delta_{x}+ \underset{x \in \mathcal{X}}{\sum}\mu_{L^{(0)}+1}^{(0)}(x)\widehat{\Delta}^{L^{(0)}+1}_{x}.$

We use the following lemma to bound the number of phases necessary to eliminate the sub-optimal group.

\begin{lem}\label{lem:discard_z}
On the event $\overline{\mathcal{F}}$ defined in  Equation \eqref{eq:defEl}, for $l\geq 1$ such that $\epsilon_l \leq \frac{\Delta_{\neq}}{8}$ and Explore$_L^{(0)} = $ True, $\widehat{z^*}_{l+1} = z_{x^*}$.
\end{lem}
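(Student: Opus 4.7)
The plan is to show that on $\overline{\cF}$ with both the group-wise $G$-explorations and the bias-exploration successfully completed at phase $l$, the debiased estimates $\widehat m_{l,x}$ track the true rewards $x^\top \gamma^*$ within an additive error of $2\epsilon_l$. Combined with the gap condition $\epsilon_l \leq \Delta_{\neq}/8$, this separation will be large enough to trigger the group elimination test in favor of $z_{x^*}$.

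\textbf{Step 1 (accuracy of $\widehat m_{l,x}$).} By construction, $\widehat m_{l,x} = a_x^\top \widehat\theta_l^{(z_x)} - z_x\widehat\omega_l^{(0)}$. Writing $a_x = (x; z_x)$ and $\theta^* = (\gamma^*; \omega^*)$, we have
\begin{equation*}
\widehat m_{l,x} - x^\top \gamma^* = a_x^\top\big(\widehat\theta_l^{(z_x)} - \theta^*\big) \;-\; z_x\big(\widehat\omega_l^{(0)} - \omega^*\big).
\end{equation*}
On $\overline{\cF}$, the first term is bounded by $\epsilon_l$ (good event for $\text{Explore}_l^{(z_x)}$) and the second by $\epsilon_l$ (good event for $\text{Explore}_l^{(0)}$). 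Since $\text{Explore}_l^{(0)}=\text{True}$ in particular forces $\widehat{z^*_l}=0$ and hence the $G$-Exp-Elim routine to be run on both groups before $\Delta$-Exp-Elim, both controls are available for every $x\in \cX_{l+1}^{(-1)}\cup \cX_{l+1}^{(+1)}$. Thus $|\widehat m_{l,x}-x^\top\gamma^*|\leq 2\epsilon_l$ uniformly.

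\textbf{Step 2 (retention of $x^*$).} I would show by induction on $l$ that on $\overline{\cF}$, $\widehat{z^*_{l}}\in\{0,z_{x^*}\}$ and $x^*\in \cX_{l}^{(z_{x^*})}$, so that whenever $G$-Exp-Elim is invoked it is run on the group $z_{x^*}$. The retention inside $\cX_{l+1}^{(z_{x^*})}$ follows from the fact that for $x'\in \cX_l^{(z_{x^*})}$ the sensitive-attribute components cancel in $(a_{x'}-a_{x^*})^\top\widehat\theta_l^{(z_{x^*})}=(x'-x^*)^\top \widehat\gamma_l^{(z_{x^*})}$, and on $\overline{\cF}$ this quantity is at most $(x'-x^*)^\top\gamma^* + 2\epsilon_l \leq 2\epsilon_l < 3\epsilon_l$, so $x^*$ is never discarded by the G-Exp-Elim elimination test. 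The fact that $\widehat{z^*_{l+1}}\neq -z_{x^*}$ then follows from Step 1, since the $\Delta$-Exp-Elim test for $z=-z_{x^*}$ requires $\max_{\cX_{l+1}^{(-z_{x^*})}}\widehat m_{l,\cdot}\geq \max_{\cX_{l+1}^{(z_{x^*})}}\widehat m_{l,\cdot}+4\epsilon_l$, but the left side is at most $x^{*\top}\gamma^* - \Delta_{\neq} + 2\epsilon_l$ while the right is at least $x^{*\top}\gamma^* - 2\epsilon_l + 4\epsilon_l$, contradicting $\Delta_{\neq}>0$.

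\textbf{Step 3 (positive identification).} By Step 1 and $x^*\in\cX_{l+1}^{(z_{x^*})}$,
\begin{equation*}
\max_{x\in\cX_{l+1}^{(z_{x^*})}}\widehat m_{l,x} \;\geq\; \widehat m_{l,x^*} \;\geq\; x^{*\top}\gamma^* - 2\epsilon_l,
\end{equation*}
while for any $x\in\cX_{l+1}^{(-z_{x^*})}$ we have $x^\top\gamma^* \leq x^{*\top}\gamma^* - \Delta_{\neq}$, hence $\widehat m_{l,x}\leq x^{*\top}\gamma^* - \Delta_{\neq} + 2\epsilon_l$. Under the assumption $\epsilon_l\leq \Delta_{\neq}/8$, subtracting $2\epsilon_l$ from the first and adding $2\epsilon_l$ to the second gives the group-elimination inequality for $z=z_{x^*}$:
\begin{equation*}
\max_{x\in\cX_{l+1}^{(z_{x^*})}}\widehat m_{l,x} - 2\epsilon_l \;\geq\; x^{*\top}\gamma^* - 4\epsilon_l \;\geq\; x^{*\top}\gamma^* - \Delta_{\neq} + 4\epsilon_l \;\geq\; \max_{x\in\cX_{l+1}^{(-z_{x^*})}}\widehat m_{l,x} + 2\epsilon_l,
\end{equation*}
so the algorithm sets $\widehat{z^*}_{l+1}=z_{x^*}$. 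The only delicate point, which I expect to be the main obstacle, is cleanly handling the joint induction in Step 2 to guarantee that up to phase $l$ the algorithm has neither eliminated the good group nor discarded $x^*$; everything else is a short direct computation using the good-event bounds on $\overline{\cF}$.
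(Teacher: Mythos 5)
Your proof is correct and follows essentially the same route as the paper: control $\widehat m_{l,x}$ within $2\epsilon_l$ of $x^\top\gamma^*$ on $\overline{\mathcal{F}}$, use the retention of $x^*$ and of the group $z_{x^*}$ (which the paper obtains from its Lemma \ref{lem:E} rather than re-deriving it by induction as you sketch in Step 2), and then check that $\Delta_{\neq}\geq 8\epsilon_l$ forces the group-elimination test to fire for $z_{x^*}$. The only difference is presentational — your Step 2 reproves the paper's auxiliary Lemma \ref{lem:E} in sketch form instead of citing it — and the computations in Steps 1 and 3 match the paper's chain of inequalities.
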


Let $l_{\Delta_{\neq}} = \lceil-\log(\Delta_{\neq}/8)/\log(2)\rceil $ be such that 
\begin{equation}\label{eq:delta:neq}
{\Delta_{\neq}\over 16}\leq \epsilon_{l_{\Delta_{\neq}}} \leq {\Delta_{\neq}\over 8}.
\end{equation}
Lemma \ref{lem:discard_z} implies that on $\cbF$, $ L^{(0)} \leq l_{\Delta_{\neq}}$. 

\bigskip
To bound the remaining terms, we consider two cases, corresponding to Recovery$=\emptyset$ and Recovery$\neq \emptyset$.

\bigskip
\underline{\textbf{Case 1:} Recovery$=\emptyset$.}
Our case assumption implies that 
\begin{equation}\label{eq:no_recovery_1}
    \underset{t \in\text{Recovery}}{\sum}(x^* - x_t)^{\top}\gamma^* = 0.
\end{equation}
Lemma \ref{lem:kappa-l} implies that 
\begin{eqnarray*}
  \overset{L^{(0)}}{\underset{l=1}{\sum}}\underset{x \in \mathcal{X}}{\sum}\mu_{l}^{(0)}(x)\Delta_{x}+ \underset{x \in \mathcal{X}}{\sum}\mu_{L^{(0)}+1}^{(0)}(x)\widehat{\Delta}^{L^{(0)}+1}_{x} \leq \overset{L^{(0)}+1}{\underset{l=1}{\sum}}\underset{x \in \mathcal{X}}{\sum}\mu_{l}^{(0)}(x)\widehat{\Delta}^l_{x}.
\end{eqnarray*}
Moreover, $ L^{(0)} \leq l_{\Delta_{\neq}} \land \ell_T$, so on $\cbF$
\begin{eqnarray*}
    \overset{L^{(0)}+1}{\underset{l=1}{\sum}}\underset{x \in \mathcal{X}}{\sum}\mu_{l}^{(0)}(x)\widehat{\Delta}^l_{x}&\leq &\overset{(l_{\Delta_{\neq}}\land\ell_T) +1}{\underset{l=1}{\sum}}\underset{x \in \mathcal{X}}{\sum}\mu_{l}^{(0)}(x)\widehat{\Delta}^l_{x}.\nonumber
\end{eqnarray*}
Using Lemma \ref{lem:time}, we find that on  $\overline{\mathcal{F}}$
\begin{align*}
\overset{(l_{\Delta_{\neq}}\land\ell_T) +1}{\underset{l=1}{\sum}}\underset{x \in \mathcal{X}}{\sum}\mu_{l}^{(0)}(x)\widehat{\Delta}^l_{x} &\leq 
\sum_{l=1}^{(l_{\Delta_{\neq}}\land\ell_T) +1} \frac{2\kappa(\widehat \Delta^{l})}{\epsilon_l^2} \log\left(\frac{l(l+1)}{ \delta}\right) +2(d+1)(\ell_T+1)\\
&\leq  2\log\left(\frac{(\ell_T+1)(\ell_T+2)}{\delta}\right) \sum_{l=1}^{l_{\Delta_{\neq}}+1} \frac{\kappa(\widehat \Delta^{l})}{\epsilon_l^2}+2(d+1)(\ell_T+1).
\end{align*}
Using Lemma \ref{lem:kappa-l} with $\tau = \Delta_{\neq}$ and (\ref{eq:delta:neq}), we have on $\cbF$
\begin{align*}
 \sum_{l=1}^{l_{\Delta_{\neq}}+1} \frac{\kappa(\widehat \Delta^{l})}{\epsilon_l^2} &\leq 513 \kappa(\Delta \vee \Delta_{\neq}) \sum_{l=1}^{l_{\Delta_{\neq}}+1} \pa{\epsilon_{l}^{-2}+\epsilon_{l}^{-1}/\Delta_{\neq}}\\
 & \leq {2^{18} \kappa(\Delta \vee \Delta_{\neq})\over \Delta_{\neq}^2}.
\end{align*}
We obtain on $\cbF$
\begin{eqnarray}\label{eq:rp_delta_2}
\overset{L^{(0)}+1}{\underset{l=1}{\sum}}\underset{x \in \mathcal{X}}{\sum}\mu_{l}^{(0)}(x)\widehat{\Delta}^l_{x}
    &\leq & 2^{19}\log\left(\frac{(\ell_T+1)(\ell_T+2)}{\delta}\right) { \kappa(\Delta \vee \Delta_{\neq})\over \Delta_{\neq}^2}+2(d+1)(\ell_T+1).
\end{eqnarray}
Combining Equations  \eqref{eq:bound_two_bandits_delta}, \eqref{eq:terme_bords_delta}, \eqref{eq:no_recovery_1}, and \eqref{eq:rp_delta_2}, we find that on $\cbF$, when Recovery$=\emptyset$, there exsists an absolute constant $c>0$ such that for $\delta = T^{-1}$, 
\begin{align}\label{eq:fin_case_1}
    \underset{z \in \{-1, +1\}}{\sum}\overset{L^{(z)}+1}{\underset{l\geq 1}{\sum}}\underset{t \in \text{Exp}^{(z)}_l}{\sum}(x^* - x_t)^{\top}\gamma^* + & \underset{t \in \text{Recovery}}{\sum}(x^* - x_t)^{\top}\gamma^*
    +  \overset{L^{(0)}}{\underset{l=1}{\sum}}\underset{x \in \mathcal{X}}{\sum}\mu_{l}^{(0)}(x)\Delta_{x} \\
    + \mathds{1} \big\{\text{Explore}_{L^{(0)}+1}^{(0)} = \text{False}\big\} &\underset{t \in \text{Exp}^{(0)}_{L^{(0)}+1}}{\sum} \underset{x \in \mathcal{X}_{L^{(0)}+2}^{(-1)}\cup \mathcal{X}_{L^{(0)}+2}^{(+1)}}{\max} (x^* - x)^{\top}\gamma^* \nonumber \\
     \leq  c\bigg( d^2 +&\left(\frac{d}{\Delta_{\min}} \vee \frac{\kappa(\Delta\vee\Delta_{\neq})}{\Delta_{\neq}^2}\right)\log(T) + \frac{d}{\Delta_{\min}}\log(k)\bigg).\nonumber
\end{align}

\bigskip

\underline{\textbf{Case 2:} Recovery$\neq\emptyset$.}
In this case, the algorithm enters Recovery at phase $L^{(0)}$, so 
$\text{Explore}_{L^{(0)}+1}^{(0)}=$False and $\text{Exp}^{(0)}_{L^{(0)}+1} = \emptyset$, and
\begin{equation}\label{eq:cas2}
    \mathds{1} \big\{\text{Explore}_{L^{(0)}+1}^{(0)} = \text{False}\big\} \underset{t \in \text{Exp}^{(0)}_{L^{(0)}+1}}{\sum} \underset{x \in \mathcal{X}_{L^{(0)}+2}^{(-1)}\cup \mathcal{X}_{L^{(0)}+2}^{(+1)}}{\max} (x^* - x)^{\top}\gamma^* =0.
\end{equation}

Using Lemma \ref{lem:discard_subopt}, we see that
\begin{equation*}
    \underset{t \in \text{Recovery}}{\sum}(x^* - x_t)^{\top}\gamma^* \leq 21T\epsilon_{L^{(0)}+1}.
\end{equation*}
On the other hand, in the Recovery phase, $\epsilon_{L^{(0)}+1} \leq \left(\kappa(\widehat{\Delta}^{L^{(0)}+1})\log(T)/T)\right)^{1/3}$. Thus, 
\begin{equation*}
    \underset{t \in \text{Recovery}}{\sum}(x^* - x_t)^{\top}\gamma^* \leq \frac{21\kappa(\widehat{\Delta}^{L^{(0)}+1})\log(T)}{\epsilon_{L^{(0)}+1}^2}.
\end{equation*}
Now, Lemma \ref{lem:time} show that
\begin{equation*}
    \overset{L^{(0)}}{\underset{l=1}{\sum}}\underset{x \in \mathcal{X}}{\sum}\mu_{l}^{(0)}(x)\Delta_{x} \leq 4\log(2L^{(0)}\delta^{-1})\overset{L^{(0)}}{\underset{l=1}{\sum}}\frac{\kappa(\widehat{\Delta}^{l})}{\epsilon_{l}^2} + 4dL^{(0)}.
\end{equation*}

Combining these results, and using $L^{(0)}\leq \ell_T$, we see that 
\begin{equation}\label{eq:cas2_bourne_Rec}
    \underset{t \in \text{Recovery}}{\sum}(x^* - x_t)^{\top}\gamma^* +  \overset{L^{(0)}}{\underset{l=1}{\sum}}\underset{x \in \mathcal{X}}{\sum}\mu_{l}^{(0)}(x)\Delta_{x} \leq  4dL^{(0)} + \left(4\log(2\ell_T\delta^{-1}) \vee 21\log(T)\right)\overset{L^{(0)}+1}{\underset{l=1}{\sum}} \frac{\kappa(\widehat{\Delta}^{l})}{\epsilon_{l}^2}.
\end{equation}
Using Lemma \ref{lem:kappa-l} with $\tau = \epsilon_{L^{(0)}}$, we see that

\begin{eqnarray*}
    \overset{L^{(0)}+1}{\underset{l=1}{\sum}} \frac{\kappa(\widehat{\Delta}^{l})}{\epsilon_{l}^2} &\leq& 513\overset{L^{(0)}+1}{\underset{l=1}{\sum}} \frac{\kappa(\Delta \vee \epsilon_{L^{(0)}})}{\epsilon_{l}^2} + 513\overset{L^{(0)}+1}{\underset{l=1}{\sum}} \frac{\kappa(\Delta \vee \epsilon_{L^{(0)}})}{\epsilon_{L^{(0)}} \epsilon_{l}} \nonumber\\
    && \leq 10260\frac{\kappa(\Delta \vee \epsilon_{L^{(0)}})}{\epsilon_{L^{(0)}}^2}.
\end{eqnarray*}

Now, the algorithm enters the Recovery phase before finding the best group, so we must have $L^{(0)}\leq l_{\Delta_{\neq}}$. This implies that 
\begin{eqnarray*}
    \overset{L^{(0)}+1}{\underset{l=1}{\sum}} \frac{\kappa(\widehat{\Delta}^{l})}{\epsilon_{l}^2} &\leq&
   2^{18}\frac{\kappa(\Delta \vee \epsilon_{L^{(0)}})}{\Delta_{\neq}^2}.
\end{eqnarray*}
Finally, note that $L^{(0)} \geq L_T$, so $\epsilon_{L^{(0)}}\leq \epsilon_{L_T} = \varepsilon_T$, and

\begin{eqnarray}\label{eq:cas2last}
    \overset{L^{(0)}+1}{\underset{l=1}{\sum}} \frac{\kappa(\widehat{\Delta}^{l})}{\epsilon_{l}^2} &\leq&
     2^{18}\frac{\kappa(\Delta \vee \varepsilon_{T})}{\Delta_{\neq}^2}.
\end{eqnarray}

Combining Equations  \eqref{eq:bound_two_bandits_delta}, \eqref{eq:cas2}, \eqref{eq:cas2_bourne_Rec}, and \eqref{eq:cas2last}, we find that on $\cbF$, when Recovery$\neq\emptyset$, there exists an absolute constant $c>0$ such that for $\delta = T^{-1}$, 
\begin{align}\label{eq:fin_case_2}
    \underset{z \in \{-1, +1\}}{\sum}\overset{L^{(z)}+1}{\underset{l\geq 1}{\sum}}\underset{t \in \text{Exp}^{(z)}_l}{\sum}(x^* - x_t)^{\top}\gamma^* + & \underset{t \in \text{Recovery}}{\sum}(x^* - x_t)^{\top}\gamma^*
    +  \overset{L^{(0)}}{\underset{l=1}{\sum}}\underset{x \in \mathcal{X}}{\sum}\mu_{l}^{(0)}(x)\Delta_{x} \\
    + \mathds{1} \big\{\text{Explore}_{L^{(0)}+1}^{(0)} = \text{False}\big\} &\underset{t \in \text{Exp}^{(0)}_{L^{(0)}+1}}{\sum} \underset{x \in \mathcal{X}_{L^{(0)}+2}^{(-1)}\cup \mathcal{X}_{L^{(0)}+2}^{(+1)} }{\max} (x^* - x)^{\top}\gamma^* \nonumber \\
    \leq  c\bigg( d^2 +&\left(\frac{d}{\Delta_{\min}} \vee \frac{\kappa(\Delta\vee\varepsilon_T)}{\Delta_{\neq}^2}\right) \log(T) + \frac{d\log(k)}{\Delta_{\min}}\bigg).\nonumber
\end{align}

\bigskip

\paragraph{Conclusion}
We conclude the proof of Theorem \ref{thm:upper_bound_delta} by combining Equations \eqref{eq:rt_dec_delta}, \eqref{eq:fin_case_1} and \eqref{eq:fin_case_2}.


\subsection{Proof of Theorem \ref{thm:lower_bound_worst_case}} \label{sec:lower_bound_worst_case_d_2}
Consider the actions $\mathcal{A}$ defined in the following lemma.

\begin{lem} \label{lem:kappa}
 Let the action set be given by $\mathcal{A} =\left\{\left({x_1 \atop z_{x_1}}\right), ...,\left({x_{d+1} \atop z_{x_{d+1}}}\right) \right\}$, where $\left({x_1 \atop z_{x_1}}\right) = e_1 + e_{d+1}$, $\left({x_i \atop z_{x_i}}\right) = e_i - e_{d+1}$ for $i \in \{2, ..., d\}$, and $\left({x_{d+1} \atop z_{x_{d+1}}}\right) = -\left(1-\frac{2}{\sqrt{\kappa_*}+1} \right)e_1 - e_{d+1}$. It holds that
\begin{equation*}
   \underset{\pi \in \mathcal{P}^{\mathcal{A}}}{\min}\left\{ e_{d+1}^{\top}\left(\underset{\left({x \atop z}\right) \in \mathcal{A}}{\displaystyle \sum}\pi_x \left({x \atop z_x}\right) \left({x \atop z_x}\right)^{\top} \right)^+e_{d+1}\right\} = \kappa.
\end{equation*}
\end{lem}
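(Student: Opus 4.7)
The plan is to invoke Lemma \ref{lem:upsilon} directly, which rewrites $\kappa_*$ as
\[
\kappa_* \;=\; \max_{u \in \mathbb{R}^{d}} \frac{1}{\max_{x \in \mathcal{X}} (x^{\top}u + z_x)^{2}},
\]
and then verify that the right-hand side equals $\kappa_*$ for the explicit action set. Writing $c = 1 - \tfrac{2}{\sqrt{\kappa_*}+1} = \tfrac{\sqrt{\kappa_*}-1}{\sqrt{\kappa_*}+1}$, the terms in the inner maximum are
\[
(u_1+1)^2,\quad (u_i-1)^2\ \text{for}\ i=2,\ldots,d,\quad (-c u_1 - 1)^2.
\]
Each coordinate $u_i$ with $i\ge 2$ appears in exactly one term, so we set $u_i = 1$ and those terms vanish. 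The problem reduces to the one-dimensional minimization
\[
\min_{u_1 \in \mathbb{R}} \max\bigl(|u_1+1|,\,|c u_1 + 1|\bigr).
\]

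First I would compute this minimum. The map $u_1 \mapsto \max(|u_1+1|,|cu_1+1|)$ is piecewise linear and convex; its candidate minimizers are the two kinks $u_1 = -1$ and $u_1 = -1/c$ (with values $1-c$ and $1/c-1$) and the interior point where the two absolute values coincide with opposite signs, i.e.\ $u_1+1 = -(cu_1+1)$, giving $u_1^{*} = -\tfrac{2}{1+c}$ and common value $\tfrac{1-c}{1+c}$. Since $c \in (0,1)$, one checks $\tfrac{1-c}{1+c} \le 1-c$ and $\tfrac{1-c}{1+c} \le 1/c - 1$, so $u_1^*$ achieves the minimum. Plugging in $c = \tfrac{\sqrt{\kappa_*}-1}{\sqrt{\kappa_*}+1}$ gives
\[
\frac{1-c}{1+c} \;=\; \frac{2/(\sqrt{\kappa_*}+1)}{2\sqrt{\kappa_*}/(\sqrt{\kappa_*}+1)} \;=\; \frac{1}{\sqrt{\kappa_*}}.
\]

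Therefore
\[
\max_{u\in\mathbb{R}^d} \frac{1}{\max_i(x_i^\top u + z_{x_i})^2} \;=\; \frac{1}{(1/\sqrt{\kappa_*})^2} \;=\; \kappa_*,
\]
and Lemma \ref{lem:upsilon} yields the claim. The only real subtlety is confirming that the interior solution $u_1^*$ dominates both endpoint kinks, which is the short arithmetic check above; everything else is a direct substitution into the separation-of-groups characterization of $\kappa_*$.
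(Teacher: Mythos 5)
Your proof is correct, but it takes a genuinely different route from the paper's. The paper proves this lemma by applying Elfving's theorem (Theorem \ref{thm:Elfving}) directly on the design side: it writes the condition $t\,e_{d+1}=\sum_i \pi_i\zeta_i a_i\in\partial\mathcal{S}$ coordinate by coordinate, which forces $\pi_i=0$ for $i=2,\dots,d$ and pins down the ratio $\pi_1/\pi_{d+1}$, and then solves for $t^{-2}=\kappa_*$ (with a unicity remark to certify that the point really lies on the boundary of the Elfving set). You instead invoke the dual characterization of Lemma \ref{lem:upsilon} and reduce everything to the one-dimensional Chebyshev-type problem $\min_{u_1}\max(|u_1+1|,|cu_1+1|)$, whose value $(1-c)/(1+c)=\kappa_*^{-1/2}$ you compute explicitly. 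There is no circularity, since Lemma \ref{lem:upsilon} is proved in the paper independently of this lemma. Your approach is cleaner and avoids exhibiting the optimal design (which the paper's argument produces as a by-product and reuses in Lemma \ref{lem:borne_inf_kappa}); its only mild informality is the claim that the crossing point $u_1^*=-2/(1+c)$ is the global minimizer. Your comparison with the two kinks is the right check, and it can be made airtight in one line by convexity: $(1+c)\max(|u_1+1|,|cu_1+1|)\geq c|u_1+1|+|cu_1+1|\geq |c(u_1+1)-(cu_1+1)|=1-c$, so $(1-c)/(1+c)$ is a lower bound attained at $u_1^*$. Two small points worth stating: Lemma \ref{lem:upsilon} requires $\mathcal{A}$ to span $\mathbb{R}^{d+1}$, which holds here since $a_1+a_{d+1}=(1-c)e_1$ with $c<1$ recovers $e_1$, hence $e_{d+1}$ and the remaining $e_i$; and the reduction to $u_1$ by setting $u_i=1$ for $i\geq 2$ is legitimate because each such $u_i$ enters exactly one term, which can be zeroed without affecting the others.
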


By Lemma \ref{lem:kappa}, $\mathcal{A}\in \mathbf{A}_{\kappa_*,d}$. We will introduce two bandit problems characterized by two parameters $\theta^{(1)}_T$ and $\theta^{(2)}_T$ - assuming that the noise $\xi_t$ is Gaussian and i.i.d. - and we prove that for any algorithm, the regret for one of those two problems must be of larger order than $\kappa_*^{1/3}T^{2/3}$.

We also consider the following two alternative problems. For a small $1/4>\rho_T>0$ where $\rho_T = T^{-1/3}\kappa_*^{1/3}$ (satisfied since $T > 4^3 \kappa_*$), the two alternative action parameters are defined as:
\begin{eqnarray*}
    \gamma^{(1)}_T & =& \frac{1+\rho_T}{2}e_1 + \frac{1 - \rho_T}{2}e_{2} - \frac{\rho_T}{2}  \left(\sum_{3 \leq j \leq d} e_j\right) \\
    \gamma^{(2)}_T & =& \frac{1-\rho_T}{2}e_1 + \frac{1 + \rho_T}{2}e_{2} +\frac{\rho_T}{2}  \left(\sum_{3\leq j \leq d} e_j\right).
\end{eqnarray*}
On top of this, two bias parameters are defined as $\omega^{(1)}_T = -\frac{\rho_T}{2}$ and $\omega^{(2)}_T = \frac{\rho_T}{2}$. Through this, we define the two bandit problems of the sketch of proof of Lemma \ref{lem:kappa} characterized by $\theta^{(1)}_T = \left({\gamma^{(1)}_T \atop \omega^{(1)}_T}\right)$ and $\theta^{(2)}_T= \left({\gamma^{(2)}_T \atop \omega^{(2)}_T}\right)$ - and where the distribution of the noise $\xi_t$ is supposed to be Gaussian and i.i.d. We refer to these two problems respectively as {\bf Problem 1} and {\bf Problem 2}. We write $R_T^{(1)}$, $\mathbb{P}^{(1)}$ and $\mathbb{E}^{(1)}$ (respectively $R_T^{(2)}$,  $\mathbb{P}^{(2)}$ and $\mathbb{E}^{(2)}$) for the regret, probability and expectation for the first bandit problem, when the parameter is $\theta^{(1)}_T$ (respectively the second bandit problem with $\theta^{(2)}_T$). We also write $\mathbb{P}^{(i)}_j$ for the distribution of a sample received in {\bf Problem i} when sampling action $x_j$ at any given time $t$ - note that by definition of the bandit problems, this distribution does not depend on $t$ and on the past samples given that action $x_j$ is sampled.

The three following facts hold on these two bandit problems:
\begin{enumerate}
\item[{\bf Fact 1}] The parameters $\gamma^{(1)}_T$ and $\gamma^{(2)}_T$ are chosen so that $x_1$ is the unique best action for {\bf Problem 1}, and $x_2$ is the unique best action for {\bf Problem 2}. Choosing any sub-optimal action induces an instantaneous regret of at least $\rho_T$, and choosing the very sub-optimal action $x_{d+1}$ induces an instantaneous regret of at least $1/2$.
\item[{\bf Fact 2}] Because of the chosen bias parameters, the distributions of the evaluations of all actions but $x_{d+1}$ are exactly the same under the two bandit problems characterized by $\theta^{(1)}$ and $\theta^{(2)}_T$ - i.e.~exactly the same data is observed under the two alternative bandit problems defined by the two alternative parameters for all actions but $x_{d+1}$. More precisely, for $i\in \{1,2\}$, in {\bf Problem i} and at any time $t$, when sampling action $x_i$ where $i \leq 2$, we observe a sample distributed according to $\mathcal N(1/2,1)$ - i.e.~$\mathbb{P}^{(i)}_j$ is $\mathcal N(1/2,1)$ - and when sampling action $x_i$ where $2 <i \geq d+1$, we observe a sample distributed according to $\mathcal N(0,1)$ - i.e.~$\mathbb{P}^{(i)}_j$ is $\mathcal N(0,1)$.

\item[{\bf Fact 3}] The distributions of the outcomes of the evaluation of action $x_{d+1}$ differs in the two bandit problems. Set $\alpha = 2/(\sqrt{\kappa_*}+1)$. In {\bf Problem 1}, $\mathbb{P}^{(1)}_{d+1}$ is $\mathcal N(-\frac{1-\alpha - \rho_T\alpha}{2},1)$. In {\bf Problem 2}, $\mathbb{P}^{(2)}_{d+1}$ is $\mathcal N(-\frac{1-\alpha + \rho_T\alpha}{2},1)$. So that the difference between the means of the evaluations of action $x_{d+1}$ in the two bandit problems is $\bar \Delta = \rho_T\alpha = \frac{2\rho_T}{\sqrt{\kappa_*}+1} \leq \frac{2\rho_T}{\sqrt{\kappa_*}}$.
\end{enumerate}

For $i\leq d+1$, we write $N_{i}(T)$ for the number of times that action $x_i$ has been selected before time $T$. In {\bf Problem 1}, choosing the action $x_{d+1}$ leads to an instantaneous regret larger than $\frac{1}{2}$ ({\bf Fact 1}), so that $$R_T^{(1)} \geq \frac{\mathbb{E}^{(1)}\left[N_{x_{d+1}}(T)\right]}{2}.$$
If $\mathbb{E}^{(1)}\left[N_{d+1}(T)\right] \geq \frac{T^{2/3}\kappa_*^{1/3}}{2}$, then Theorem \ref{thm:upper_bound_worst_case} follows immediately; we therefore consider from now on the case when 
\begin{equation}\label{eq:E}
\mathbb{E}^{(1)}\left[N_{d+1}(T)\right] \leq \frac{T^{2/3}\kappa_*^{1/3}}{2}.
\end{equation}

Now, let us define the event
$$F = \left\{N_{1}(T) \geq \frac{T}{2}\kappa_*^{1/3}\right\}.$$
Note that action $x_1$ is optimal for {\bf Problem 1} and that action $x_2$ is optimal for {\bf Problem 2} ({\bf Fact 1}). Since choosing an action that is sub-optimal leads to an instantaneous regret larger than $\rho_T$ ({\bf Fact 1}), we also have 
$$R_T^{(1)} \geq \frac{T\rho_T}{2}\mathbb{P}^{(1)}\left(\overline{F} \right)$$ and $$R_T^{(2)} \geq \frac{T\rho_T}{2}\mathbb{P}^{(2)}\left(F \right).$$

Then, Bretagnolle-Huber inequality (see, e.g., Theorem 14.2 in \cite{BanditBook}) implies that
\begin{equation*}
    R_T^{(1)} + R_T^{(2)} \geq \frac{T\rho_T}{4}\exp\left(-KL\left(\mathbb{P}^{(1)},\mathbb{P}^{(2)} \right)\right).
\end{equation*}
For the choice $\rho_T = T^{-1/3}\kappa_*^{1/3}$, this implies that 
\begin{equation}\label{eq:lb_regret}
    R_T^{(1)} + R_T^{(2)} \geq \frac{T^{2/3}\kappa_*^{1/3}}{4}\exp\left(-KL\left(\mathbb{P}^{(1)},\mathbb{P}^{(2)} \right)\right).
\end{equation}

Now, the Kullback-Leibler divergence between $\mathbb{P}^{(1)}$ and $\mathbb{P}^{(2)}$ can be rewritten as follows (see, e.g., Lemma 15.1 in \cite{BanditBook}) :
\begin{eqnarray*}
    KL(\mathbb{P}^{(1)}, \mathbb{P}^{(2)}) &=& \frac{1}{2}\underset{j \leq d+1}{\sum} \mathbb{E}^{(1)}\left[N_{j}(T)\right]KL(\mathbb{P}^{(1)}_{j}, \mathbb{P}^{(2)}_{j}).
\end{eqnarray*}
By {\bf Fact 2}, we have that for any $j\leq d$, $\mathbb{P}^{(1)}_{j} =\mathbb{P}^{(2)}_{j}$. So that
\begin{eqnarray*}
    KL(\mathbb{P}^{(1)}, \mathbb{P}^{(2)}) &=& \frac{1}{2} \mathbb{E}^{(1)}\left[N_{d+1}(T)\right]KL(\mathbb{P}^{(1)}_{d+1}, \mathbb{P}^{(2)}_{d+1}).
\end{eqnarray*}
By the characterization of $\mathbb{P}^{(1)}_{d+1}, \mathbb{P}^{(2)}_{d+1}$ in {\bf Fact 3}, and recalling that the Kullback-Leibler divergence between two normalized Gaussian distributions is given by the squared distance between their means, we find that
\begin{eqnarray*}
    KL(\mathbb{P}^{(1)}, \mathbb{P}^{(2)}) &=& \frac{1}{2}\mathbb{E}^{(1)}\left[N_{d+1}(T)\right] \bar \Delta^2.
\end{eqnarray*}
Thus, by the definition of $\bar \Delta$ in {\bf Fact 3} and by Equation~\eqref{eq:E}
\begin{equation}\label{eq:lb_kl}
    KL\left(\mathbb{P}^{(1)},\mathbb{P}^{(2)}\right) = \frac{1}{2}\mathbb{E}^{(1)}\left[N_{d+1}(T)\right]\left(\frac{2\rho_T}{\sqrt{\kappa_*}+1}\right)^2 \leq \frac{T^{2/3}\kappa_*^{1/3}}{4} \times\frac{4\rho_T^{2}}{\kappa_*} = 1,
\end{equation}
reminding that $\rho_T = T^{-1/3}\kappa_*^{1/3}$.

Combining Equations~\eqref{eq:lb_regret} and~\eqref{eq:lb_kl} implies that 
\begin{equation*}
    \max\left\{R_T^{(1)}, R_T^{(2)}\right\}\geq \frac{T^{2/3}\kappa^{1/3}}{8}\exp(-1),
\end{equation*}
which concludes the proof of Theorem \ref{thm:lower_bound_worst_case}.

\subsection{Proof of Theorems \ref{thm:lower_bound_gap}} 
Theorems \ref{thm:lower_bound_gap} follows directly from the next Theorem.

\begin{thm}\label{thm:lower_bound}
For all $\kappa_*\geq 1$ and all $d\geq 4$, there exists an action set $\mathcal{A} \in \mathbf{A}_{\kappa_*, d}$, such that for all bandit algorithms, for all $(\Delta_{\min},\Delta_{\neq})  \in (0,\nicefrac{1}{8})^2$ with $\Delta_{\min}\leq\Delta_{\neq}$, and for all budget $T \geq 2$, there exists a problem characterized by $\theta\in \mathbf{\Theta}^{\mathcal{A}}_{\Delta_{\min}, \Delta_{\neq}} $ such that the regret of the algorithm on the problem satisfies
\begin{eqnarray}\label{eq:lb_delta_deux_dep}
R_T^{\theta} &\geq& \left[\frac{d}{10\Delta_{\min}}\log \left(T\right)\left[1 -\frac{\log\left(\frac{8 d \log \left(T\right)}{\Delta_{\min}^2}\right)}{\log\left(T\right)}\right]\right] \lor \left[
\frac{\kappa_*+1}{4\Delta_{\neq}^2} \log \left(T\right)\left[1 - \frac{\log\left(\frac{8\kappa_*\log \left(T\right)}{\Delta_{\neq}^3}\right)}{\log\left(T\right)}\right]\right]\nonumber\\ 
&&\vee \left[\frac{\kappa_*}{4\Delta_{\neq}^2}\left[1\land \log\left(\frac{T\Delta_{\neq}^3}{8\kappa_*}\right)\right]\right].
\end{eqnarray}
Moreover, on this problem, $\kappa(\Delta) \in \left[\nicefrac{\kappa_*}{8}, 2\kappa_*\right]$.
\end{thm}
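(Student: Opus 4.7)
The bound is a maximum of three terms, and since the statement quantifies over "any algorithm" and only requires the existence of one bad $\theta$, it suffices to produce, for each term, a concrete problem witnessing it: picking the $\theta$ on which the algorithm does worst automatically dominates the max. The plan is therefore to fix a single action set $\mathcal{A} \in \mathbf{A}_{\kappa_*, d}$ and build on it two separate families of problems, one that forces the classical $d$-dimensional linear-bandit regret $d \log T /(10 \Delta_{\min})$, and one that forces the bias-estimation regret $(\kappa_*+1)\log T /(4 \Delta_{\neq}^2)$ together with its small-$T$ version. Both arguments will be standard Bretagnolle-Huber change-of-measure arguments, in the same spirit as the proof of Theorem \ref{thm:lower_bound_worst_case}.

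\textbf{Action set and the two families of problems.} For $\mathcal{A}$, I would take exactly the $\kappa_*$-correlated configuration of Lemma \ref{lem:kappa}: the action $x_1 = e_1$ in group $+1$, the actions $x_i = e_i$ for $i = 2, \ldots, d$ in group $-1$, and the bias-discriminating action $x_{d+1} = -\alpha' e_1$ (with $\alpha' = 1 - 2/(\sqrt{\kappa_*}+1)$) in group $-1$. This set already carries the two sub-structures we need: $d-1$ orthogonal coordinate directions $e_2, \ldots, e_d$ inside group $-1$ (for the linear-bandit argument) and the pair $\{x_1, x_{d+1}\}$ forcing bias estimation to be costly (for the bias argument). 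For the first term, I would fix a reference $\theta_1 = (\gamma_1, \omega_1)$ under which $x_2$ is uniquely optimal in group $-1$ with gap $\Delta_{\min}$ against each of $x_3, \ldots, x_d$, and with $\omega_1$ calibrated so that the best action in group $+1$ is sub-optimal by exactly $\Delta_{\neq}$; for each $j = 3, \ldots, d$, an alternative $\theta_1^{(j)}$ is built by shifting $\gamma_1$ by $2\Delta_{\min}$ along $e_j$, making $x_j$ uniquely optimal. For the second and third terms, I would build $\theta_2$ under which $x_1$ is uniquely optimal (in group $+1$) by gap $\Delta_{\neq}$ against $x_2$, together with a bias-flipped alternative $\theta_2'$ under which $x_2$ becomes uniquely optimal, calibrated exactly as in the proof of Theorem \ref{thm:lower_bound_worst_case}, with the separation $\rho_T$ now set to a $\Delta_{\neq}$-dependent quantity.

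\textbf{The Bretagnolle-Huber arguments.} For the first term, only samples of $x_j$ distinguish $\theta_1$ from $\theta_1^{(j)}$ (Gaussian mean-shift of order $\Delta_{\min}$), so the standard argument yields $\mathbb{E}_{\theta_1}[N_{x_j}(T)] \gtrsim \log T / \Delta_{\min}^2$ on one of the two problems, hence $\Omega(\log T / \Delta_{\min})$ regret per auxiliary direction; summing over the $d-2 \geq d/2$ alternatives (using $d \geq 4$) gives the first term. For the second and third terms, the construction of $(\theta_2, \theta_2')$ ensures that only samples of $x_{d+1}$ are informative, with mean difference $\bar{\Delta} \leq 2\Delta_{\neq}/\sqrt{\kappa_*}$ exactly as in Theorem \ref{thm:lower_bound_worst_case}; Bretagnolle-Huber then forces $\Omega(\kappa_* \log T /\Delta_{\neq}^2)$ pulls of $x_{d+1}$ on one of the two problems, and since $x_{d+1}$ is $\Theta(1)$-sub-optimal under both, each pull costs $\Theta(1)$ regret, producing the second term. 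The third term is obtained by tracking the Bretagnolle-Huber inequality quantitatively in the regime where KL is not $\ll 1$: even when the budget is too small for the logarithmic scaling, the same argument still produces a baseline $\Omega(\kappa_*/\Delta_{\neq}^2)$ regret with the advertised $[1 \wedge \log(T\Delta_{\neq}^3/(8\kappa_*))]$ correction.

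\textbf{Main obstacle and loose ends.} The most delicate part is the bookkeeping: calibrating the constants in both constructions so that $\theta_1, \theta_1^{(j)}, \theta_2, \theta_2'$ simultaneously (i) satisfy $\gamma \in \mathcal{C}(\mathcal{X})$, (ii) have a unique optimum, and (iii) have minimum gap exactly $\Delta_{\min}$ and minimum between-group gap exactly $\Delta_{\neq}$. The assumption $\Delta_{\min} \leq \Delta_{\neq} < 1/8$ leaves enough slack to fit all four parameters in the admissible class $\mathbf{\Theta}^{\mathcal{A}}_{\Delta_{\min},\Delta_{\neq}}$, and the assumption $d \geq 4$ is exactly what guarantees $d - 2 \geq 2$ linear-bandit directions while keeping the two bias-related coordinates. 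Finally, the additional claim $\kappa(\Delta) \in [\kappa_*/8, 2\kappa_*]$ follows from an explicit computation of the $\Delta$-optimal design via Lemma \ref{lem:calcul:kappa}: on the two problems, $x_{d+1}$ is essentially the only action with enough correlation to the bias direction, it receives weight $\Theta(\kappa_*)$ in the optimal design, and it has gap $\Theta(1)$; the upper bound $\kappa(\Delta) \leq 2\kappa_*$ is then a direct consequence of Lemma \ref{lem:kappa2}, while the lower bound $\kappa(\Delta) \geq \kappa_*/8$ comes from the $\kappa_*$-correlation structure of $\mathcal{A}$.
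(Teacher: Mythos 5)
Your proposal is correct and follows the same method as the paper: a Bretagnolle--Huber change-of-measure on a coordinate action set augmented with the bias-discriminating action $x_{d+1}=-(1-\tfrac{2}{\sqrt{\kappa_*}+1})e_1-e_{d+1}$, one family of single-coordinate perturbations for the $d\Delta_{\min}^{-1}\log T$ term, one bias-flipped pair for the $\kappa_*\Delta_{\neq}^{-2}\log T$ term, and a quantitative reading of the same inequality for the small-$T$ term. The only structural difference is the instantiation: the paper's main proof uses the symmetric action set of Lemma~\ref{lem:kappa_2} (about $d/2$ actions per group), a single reference {\bf Problem 1} serving all three bounds, and paired shifts $2\Delta_{\min}(e_i+e_{\lfloor d/2\rfloor+i})$ that perturb one action in each group; you instead use the asymmetric set of Lemma~\ref{lem:kappa} (group $+1=\{x_1\}$), two separate reference problems, and single-coordinate shifts. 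Your two sub-constructions are essentially the paper's own Cases 1 and 2 from Appendix~\ref{app:lower_bound_gap_2}, merged onto one action set --- which does work, since group $-1$ there contains $d-1\geq d/2$ coordinate directions for the linear-bandit argument while $\{x_1,x_{d+1}\}$ still forces the bias-estimation cost; note only that the logical reading must be ``$\sup_\theta R_T^\theta$ exceeds each term separately,'' exactly as in the paper. Two small points to tighten: the between-group gap $\Delta_{\neq}$ is a property of $\gamma$ alone (true rewards), so it cannot be ``calibrated by $\omega_1$'' as you write; and the claim $\kappa(\Delta)\geq\kappa_*/8$ deserves the explicit Elfving computation (any $e_{d+1}$-estimating design on this set must put mass at least $(\sqrt{\kappa_*}+1)^2/4\geq\kappa_*/4$ on $x_{d+1}$, whose gap is at least $1/2$), which is what the paper's Lemma~\ref{lem:borne_inf_kappa} supplies.
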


\begin{remark}
Note that Theorem \ref{thm:lower_bound} allows us to recover a lower bound similar to that of Theorem \ref{thm:lower_bound_worst_case} by choosing $\Delta_{\neq}$ and $\Delta_{\min}$ of the order $\kappa_*^{1/3}T^{-1/3}$, however this bound only holds for $d$ larger than 4.
\end{remark}

We prove Theorem \ref{thm:lower_bound} for the following set of actions $\cA$: $\mathcal{A} =\left\{\left({x_1 \atop z_{x_1}}\right), ...,\left({x_{d+1} \atop z_{x_{d+1}}}\right) \right\}$, where $\left({x_i \atop z_{x_i}}\right) = e_i + e_{d+1}$, for $i \in \{2, ..., \lfloor d/2\rfloor\}$, $\left({x_i \atop z_{x_i}}\right) = e_i - e_{d+1}$ for $i \in \{\lfloor d/2\rfloor +1, ..., d\}$, and $\left({x_{d+1} \atop z_{x_{d+1}}}\right) = -\left(1-\frac{2}{\sqrt{\kappa_*}+1} \right)e_1 - e_{d+1}$. Then, by Lemma \ref{lem:kappa_2}, for this choice of action set, we have $\mathcal{A}\in \mathbf{A}_{\kappa_*,d}$. 

We consider the following set of bandit problems: for $i \in \{1, ..., \lfloor d/2 \rfloor +1\}$  {\bf Problem i} is characterized by the parameter $\theta^{(i)}$, where $\theta^{(i)} = \left({\gamma^{(i)}\atop \omega^{(i)}}\right)$ is defined as:
\begin{eqnarray*}
    \gamma^{(1)} & =& \frac{1+ \Delta_{\neq} - \Delta_{\min}}{2}\left(\sum_{1\leq j \leq \lfloor d/2\rfloor} e_j\right) +\frac{1 - \Delta_{\neq}-\Delta_{\min}}{2} \left(\sum_{\lfloor d/2\rfloor + 1 \leq j \leq d} e_j\right) + \Delta_{\min}e_1 + \Delta_{\min}e_{\lfloor d/2\rfloor +1}\\
    \gamma^{(i)}  &=& \gamma^{(1)} + 2\Delta_{\min}e_{i} + 2\Delta_{\min}e_{\lfloor d/2\rfloor +i} \ \ \ \forall i \in \{2, ..., \lfloor d/2\rfloor\}\\
    \gamma^{(\lfloor d/2\rfloor + 1)}  &=& \frac{1- \Delta_{\neq} - \Delta_{\min}}{2}\left(\sum_{1\leq j \leq \lfloor d/2\rfloor} e_j\right) +\frac{1 + \Delta_{\neq}-\Delta_{\min}}{2} \left(\sum_{\lfloor d/2\rfloor + 1 \leq j \leq d} e_j \right)+ \Delta_{\min}e_1 + \Delta_{\min}e_{\lfloor d/2\rfloor +1},
\end{eqnarray*}
and the bias parameters are defined as $\omega^{(i)} = -\frac{\Delta_{\neq}}{2}$ $\forall i \in \{1, ..., \lfloor d/2\rfloor\}$, and otherwise $\omega^{(\lfloor d/2\rfloor+1)} = \frac{\Delta_{\neq}}{2}$. We write $\mathbb E^{(i)}, \mathbb P^{(i)}, R^{(i)}_T$ for resp.~the probability, expectation, and regret, in {\bf Problem i}. Note that this choice of parameters ensures that $\forall i \in \{1, ..., \lfloor d/2\rfloor +1\}$, $\theta^{(i)} \in \mathbf{\Theta}^{\mathcal{A}}_{\Delta_{\min}, \Delta_{\neq}}$.

Set $\mathcal{A} =\left\{\left({x_1 \atop z_{x_1}}\right), ...,\left({x_{d+1} \atop z_{x_{d+1}}}\right) \right\}$, where $\left({x_i \atop z_{x_i}}\right) = e_i + e_{d+1}$, for $i \in \{2, ..., \lfloor d/2\rfloor\}$, $\left({x_i \atop z_{x_i}}\right) = e_i - e_{d+1}$ for $i \in \{\lfloor d/2\rfloor +1, ..., d\}$, and $\left({x_{d+1} \atop z_{x_{d+1}}}\right) = -\left(1-\frac{2}{\sqrt{\kappa_*}+1} \right)e_1 - e_{d+1}$. Then, Lemma \ref{lem:kappa_2} shows that $\cA \in \mathbf{A}_{\kappa_*, d}$.
\begin{lem} \label{lem:kappa_2}
It holds that
\begin{equation*}
   \underset{\pi \in \mathcal{P}^{\mathcal{A}}_{e_{d+1}}}{\min}\left\{ e_{d+1}^{\top}\left(\underset{\left({x \atop z}\right) \in \mathcal{A}}{\displaystyle \sum}\pi(x) \left({x \atop z_x}\right) \left({x \atop z_x}\right)^{\top} \right)^+e_{d+1}\right\} = \kappa_*.
\end{equation*}
\end{lem}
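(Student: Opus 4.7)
The approach is to invoke Lemma \ref{lem:upsilon}, which yields the dual characterization
\[
\underset{\pi \in \mathcal{P}^{\mathcal{A}}_{e_{d+1}}}{\min}\ e_{d+1}^{\top}V(\pi)^{+}e_{d+1} \;=\; \max_{u \in \mathbb{R}^{d}} \frac{1}{\max_{x\in \mathcal{X}}\pa{x^\top u + z_x}^2},
\]
so it suffices to show that the right-hand side evaluates to exactly $\kappa_*$ for the action set of Lemma \ref{lem:kappa_2}.

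First, I would write out $x_i^{\top}u + z_{x_i}$ explicitly: this gives $u_i + 1$ for $i \in \{1,\dots,\lfloor d/2\rfloor\}$, $u_i - 1$ for $i \in \{\lfloor d/2\rfloor+1,\dots,d\}$, and $-(1-\alpha)u_1 - 1$ for $i = d+1$, where $\alpha = 2/(\sqrt{\kappa_*}+1)$. The crucial observation is that for every $i \in \{2,\dots,d\}$, the coordinate $u_i$ appears in exactly one of these expressions, so choosing $u_i = -z_{x_i}$ zeroes that term out without affecting any other. This reduces the inner minimization of $\max_x |x^\top u + z_x|$ to the one-dimensional problem
\[
\min_{u_1 \in \mathbb{R}} \max\!\pa{|u_1 + 1|,\ |(1-\alpha)u_1 + 1|}.
\]

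Then I would solve this 1D minimax explicitly: the optimum lies in the opposite-sign branch, where $u_1 + 1 = -\big((1-\alpha)u_1 + 1\big)$, giving $u_1^* = -2/(2-\alpha)$ and common absolute value $\alpha/(2-\alpha)$. Substituting $\alpha = 2/(\sqrt{\kappa_*}+1)$ simplifies $\alpha/(2-\alpha)$ to $1/\sqrt{\kappa_*}$. Squaring and inverting then yields $\max_u 1/\max_x(x^\top u + z_x)^2 = \kappa_*$, which is the claim.

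I do not expect any serious obstacle here: the computation essentially parallels Lemma \ref{lem:kappa}, since the only change in $\cA$ is the relabeling of group memberships for $i \in \{2,\dots,\lfloor d/2\rfloor\}$, and this relabeling is harmless because each such $u_i$ is still a free coordinate entering a single constraint. The one point needing care is the degenerate case $\kappa_* = 1$ (so $\alpha = 1$), where the action $x_{d+1}$ contributes simply $|-1| = 1$ regardless of $u_1$; in that case the minimax trivially equals $1$, consistent with $\kappa_* = 1$.
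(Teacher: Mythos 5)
Your proof is correct, but it follows a genuinely different route from the paper's. The paper proves Lemma \ref{lem:kappa_2} by applying Elfving's theorem (Theorem \ref{thm:Elfving}) directly to the action set: it writes down the linear system that an $e_{d+1}$-optimal design $\pi$ and sign vector $\zeta$ must satisfy (the first $d$ coordinates of $\sum_i \pi_i\zeta_i a_{x_i}$ vanish, the weights sum to one, and the last coordinate equals $t$ with $t^{-2}$ the optimal variance), and then solves for $t$. You instead invoke the dual characterization of Lemma \ref{lem:upsilon}, exploit the fact that each coordinate $u_i$ with $i\in\{2,\dots,d\}$ enters exactly one constraint to collapse the problem to a one-dimensional minimax in $u_1$, and solve that explicitly; your algebra ($u_1^*=-2/(2-\alpha)$, common value $\alpha/(2-\alpha)=\kappa_*^{-1/2}$) checks out, and the reduction is valid since zeroing the free coordinates can only help and any $u$ is lower-bounded by the two constraints involving $u_1$ alone. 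Your route is arguably cleaner: Lemma \ref{lem:upsilon} already packages the Elfving duality, and you avoid the boundary-membership point that the paper dispatches somewhat tersely via a uniqueness remark. What the paper's computation buys in exchange is the optimal design $\pi$ itself (not just its value), which is reused in essentially the same form in the proof of Lemma \ref{lem:borne_inf_kappa}. One small presentational point: it would be worth stating explicitly that the constructed set satisfies the standing assumptions under which Lemma \ref{lem:upsilon} is proved (distinct covariates, both groups nonempty, $\mathcal{A}$ spanning $\mathbb{R}^{d+1}$); all of these do hold here since $\alpha=2/(\sqrt{\kappa_*}+1)\in(0,1]$.
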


The following facts hold:
\begin{enumerate}
\item[{\bf Fact 1}] For any $i \in \{1, ..., \lfloor d/2\rfloor +1\}$, action $x_i$ is the unique optimal action in {\bf Problem i}. Since $1/2 \geq \Delta_{\neq} \geq \Delta_{\min}$, sampling any other (sub-optimal) action leads to an instantaneous regret of at least $\Delta_{\min}$. Moreover, choosing an action in the group $-z_i$ leads to an instantaneous regret of at least $\Delta_{\neq}$.
\item[{\bf Fact 2}] In {\bf Problem i} for any $i \in \{1, ..., \lfloor d/2\rfloor +1\}$, action $d+1$ is very sub-optimal and sampling it leads to an instantaneous regret higher than $(1 - 2/(\sqrt{\kappa_*} +1))(1-\Delta_{\neq}+\Delta_{\min}) + (1+\Delta_{\neq}+\Delta_{\min})/2 \geq 1/2$, since $\kappa_* \geq 1$ and $1/2 \geq \Delta_{\neq} \geq \Delta_{\min}$.
\item[{\bf Fact 3}] In {\bf Problem i}, for $i \in \{1, ..., \lfloor d/2\rfloor +1\}$, when sampling action $x_j$ at time, $t$ the distribution of the observation does not depend on $t$ or on the past (except through the choice of $x_j$) and is $\mathbb P_j^{(i)}$. It is characterized as:
\begin{align*}
&\forall i \in\{1, ...,\lfloor d/2\rfloor+1\}, \mathbb P_1^{(i)},  \mathbb P_{\lfloor d/2\rfloor+1}^{(i)}~~~\mathrm{are}~~~\mathcal N((1+\Delta_{\min})/2,1)\\
&\forall i \in\{1, ..., \lfloor d/2\rfloor+1\},\forall j \in \{2, ..., d\}\setminus\{\lfloor d/2\rfloor+1, i, \lfloor d/2\rfloor+i\}, \mathbb P_j^{(i)}~~~\mathrm{is}~~~\mathcal N((1-\Delta_{\min})/2,1),\\
&\forall i \in\{2, \lfloor d/2\rfloor\}, \mathbb P_i^{(i)}~~~\mathrm{is}~~~\mathcal N((1+3\Delta_{\min})/2,1)~~~~\mathbb P_{\lfloor d/2\rfloor+i}^{(i)}~~~\mathrm{is}~~~\mathcal N((1+3\Delta_{\min})/2,1)\\
&\forall i \in\{1, \lfloor d/2\rfloor\}, \mathbb P_{d+1}^{(i)}~~~\mathrm{is}~~~~\mathcal N(-(1-\alpha)(1+\Delta_{\neq}+\Delta_{\min})/2 + \Delta_{\neq}/2,1),\\
&\mathbb P_{d+1}^{(\lfloor d/2\rfloor+1)}~~~~\mathrm{is}~~~~\mathcal N(-(1-\alpha)(1-\Delta_{\neq}+\Delta_{\min})/2 - \Delta_{\neq}/2,1)~~~\mathrm{where}~~\alpha = 2/(\sqrt{\kappa_*} +1).
\end{align*}
So that:
\begin{itemize}
\item[{\bf Fact 3.1}] For any $i \in \{2, ..., \lfloor d/2\rfloor \}$, between {\bf Problem 1} and {\bf Problem i}, the only actions that provide different evaluations when sampled are action $i$ and action $\lfloor d/2\rfloor+i$, and the mean gaps in both cases is $2\Delta_{\min}$.
\item[{\bf Fact 3.2}] Between {\bf Problem 1} and {\bf Problem $\lfloor d/2\rfloor +1$}, the only action that provide different evaluation when sampled is action $d+1$, and the mean gap in this case is $\alpha\Delta_{\neq}$.
\end{itemize} 
\end{enumerate}

For $j \leq d+1$, we write $N_{j}(T)$ for the total number of times action $x_j$ has been selected before time $T$.  Then, for $j \in \{1, ..., \lfloor d/2\rfloor\}$, let $E^{(j)} = \left\{N_{i}(T) \leq T/2  \right\}$. Note that for $i \in \{1, ..., \lfloor d/2\rfloor\}$, in {\bf Problem i} the action $x_i$ is the optimal action. Therefore, for any efficient algorithm, for all $i \in \{1, ..., \lfloor d/2\rfloor\}$ the event $E^{(i)}$ should have a low probability under $\mathbb{P}^{(i)}$. Indeed,  for $i \in \{1, ..., \lfloor d/2\rfloor\}$, the regret of the algorithm under {\bf Problem i} 
can be lower-bounded as follows - see {\bf Facts 1 and 2}:
\begin{eqnarray}\label{eq:lbregret}
    R_T^{(i)}
    &\geq&\underset{j \leq \lfloor d/2\rfloor,\ j \neq i}{\sum} \mathbb{E}^{(i)}\left[N_{j}(T)\right] \Delta_{\min} +  \underset{\lfloor d/2\rfloor + 1\leq  j \leq d}{\sum}\mathbb{E}^{(i)}\left[N_{j}(T)\right]\Delta_{\neq} + \frac{\mathbb{E}^{(i)}\left[N_{d + 1}(T)\right]}{2}.
\end{eqnarray} 
Since $\sum_{j}\mathbb{E}^{(i)}\left[N_j(T)\right] = T$ and $\Delta_{\min}\leq \Delta_{\neq}\leq \frac{1}{2}$, this implies together with {\bf Facts 1}:
\begin{eqnarray*}
     R_T^{(i)} &\geq& \left(T -  \mathbb{E}^{(i)}\left[N_{i}(T)\right]\right)\Delta_{\min}
\end{eqnarray*}
Using the definition of $E^{(i)}$, we find that
\begin{eqnarray}
     R_T^{(i)} &\geq& \frac{T\Delta_{\min}}{2} \mathbb{P}^{(i)}\left(E^{(i)}\right).\label{eq:Ri}
\end{eqnarray}
In particular for {\bf Problem 1}, for any $i \in \{1, ..., \lfloor d/2\rfloor\}$, 
\begin{align}
    R_T^{(1)} &\geq \frac{T\Delta_{\min}}{2} \mathbb{P}^{(1)}\left(\overline{E^{(i)}}\right).\label{eq:R1}
\end{align}
since $E^{(1)} \supset \overline{E^{(i)}}$.

\medskip
 Similarly, let us also define the event $F = \left\{ \underset{i\leq \lfloor d/2\rfloor}{\sum}N_i(T)\geq T/2\right\}$. Then, in {\bf Problem 1}, the group $1$ contains the optimal action, and so for any efficient algorithm, the event $F$ should have a low probability under $\mathbb{P}^{(1)}$. Indeed, Equation \eqref{eq:lbregret} also implies
\begin{eqnarray}\label{eq:R1d}
    R_T^{(1)}&\geq& \left(T- \mathbb{E}^{(1)}\left[\underset{i\leq \lfloor d/2\rfloor}{\sum} N_{i}(T)\right]\right)\Delta_{\neq}\geq \frac{T\Delta_{\neq}}{2}\mathbb{P}^{(1)}\left(\overline{F} \right).
\end{eqnarray}
On the other hand,  for any efficient algorithm, the event $F$ should have high probability under $\mathbb{P}^{(\lfloor d/2\rfloor +1)}$. Indeed,under problem \textbf{Problem }$\mathbf{\lfloor d/2\rfloor +1}$, the regret can be lower-bounded as follows - see {\bf Facts 1 and 2}:
\begin{eqnarray*}
    R_T^{(\lfloor d/2 \rfloor+1)}
    &\geq& \underset{j \leq \lfloor d/2\rfloor}{\sum} \mathbb{E}^{(\lfloor d/2 \rfloor+1)}\left[N_{j}(T)\right] \Delta_{\neq} +  \underset{\lfloor d/2\rfloor + 2\leq  j \leq d }{\sum}\mathbb{E}^{(\lfloor d/2 \rfloor+1)}\left[N_{j}(T)\right]\Delta_{\min} + \frac{\mathbb{E}^{(\lfloor d/2 \rfloor+1)}\left[N_{d + 1}(T)\right]}{2}.
\end{eqnarray*}
which implies that
\begin{eqnarray}\label{eq:Rd}
    R_T^{(\lfloor d/2 \rfloor+1)}
    &\geq& \underset{j \leq \lfloor d/2\rfloor}{\sum} \mathbb{E}^{(\lfloor d/2 \rfloor+1)}\left[N_{j}(T)\right] \Delta_{\neq} \geq \frac{T\Delta_{\neq}}{2}\mathbb{P}^{(\lfloor d/2 \rfloor+1)}\left(F \right).
\end{eqnarray}

\medskip

Now, Bretagnolle-Huber inequality (see, e.g., Theorem 14.2 in \cite{BanditBook}) implies that for all $i \in \{2, ..., \lfloor d/2\rfloor\}$,
\begin{eqnarray}\label{eq:bretagnollei}
    \frac{1}{2}\exp \left( - KL\left(\mathbb{P}^{(1)}, \mathbb{P}^{(i)}\right)\right) & \leq & \mathbb{P}^{(i)}\left(E^{(i)}\right) + \mathbb{P}^{(1)}\left(\overline{E^{(i)}}\right)
\end{eqnarray}
and that
\begin{eqnarray}\label{eq:bretagnolled}
    \frac{1}{2}\exp \left( - KL\left(\mathbb{P}^{(1)}, \mathbb{P}^{( \lfloor d/2\rfloor +1)}\right)\right) & \leq & \mathbb{P}^{( \lfloor d/2\rfloor+1)}\left(F\right) + \mathbb{P}^{(1)}\left(\overline{F}\right).
\end{eqnarray}
On the one hand, Equation \eqref{eq:bretagnollei} implies that for any $i \in \{2, ..., \lfloor d/2\rfloor\}$,
\begin{eqnarray}
    KL\left(\mathbb{P}^{(1)}, \mathbb{P}^{(i)}\right)
    & \geq & - \log \left(2 \mathbb{P}^{(i)}\left(E^{(i)}\right) + 2\mathbb{P}^{(1)}\left(\overline{E^{(i)}}\right)\right)\nonumber\\
    & \geq &  \log\left(T\right) - \log\left(2T\mathbb{P}^{(i)}\left(E^{(i)}\right) + 2T\mathbb{P}^{(1)}\left(\overline{E^{(i)}}\right)\right)\label{eq:BHKli}.
\end{eqnarray}
Combining Equations \eqref{eq:Ri}, \eqref{eq:R1}, and \eqref{eq:BHKli}, we find that
\begin{eqnarray}\label{eq:KLi}
    KL\left(\mathbb{P}^{(1)}, \mathbb{P}^{(i)}\right) & \geq & \log\left(T\right) - \log\left(\frac{4(R_T^{(i)} + R_T^{(1)})}{ \Delta_{\min}}\right).
\end{eqnarray}
On the other hand, Equation \eqref{eq:bretagnolled} implies that 
\begin{eqnarray}
    KL\left(\mathbb{P}^{(1)}, \mathbb{P}^{( \lfloor d/2\rfloor +1)}\right)
    & \geq & - \log \left(2 \mathbb{P}^{(\lfloor d/2\rfloor +1)}\left(F\right) + 2\mathbb{P}^{(1)}\left(\overline{F}\right)\right)\nonumber\\
    & \geq &  \log\left(T\right) - \log\left(2T\mathbb{P}^{(\lfloor d/2\rfloor +1)}\left(F\right) + 2T\mathbb{P}^{(1)}\left(\overline{F}\right)\right)\label{eq:BHKld}.
\end{eqnarray}
Combining Equations \eqref{eq:Ri}, \eqref{eq:R1}, and \eqref{eq:BHKld}, we find that
\begin{eqnarray}\label{eq:KLd}
    KL\left(\mathbb{P}^{(1)}, \mathbb{P}^{(\lfloor d/2\rfloor +1)}\right) & \geq & \log\left(T\right) - \log\left(\frac{4(R_T^{(\lfloor d/2\rfloor +1)} + R_T^{(1)})}{ \Delta_{\neq}}\right).
\end{eqnarray}

Also, note that for all $i \in \{2, ..., \lfloor d/2\rfloor+1\}$, the Kullback-Leibler divergence between $\mathbb{P}^{(1)}$ and $\mathbb{P}^{(i)}$ can be decomposed as follows (see, e.g., Lemma 15.1 in \cite{BanditBook}) :
\begin{eqnarray}\label{eq:KL_decomposition}
    KL(\mathbb{P}^{(1)}, \mathbb{P}^{(i)}) &=& \underset{j \leq d+1}{\sum} \mathbb{E}^{(1)}\left[N_{j}(T)\right]KL(\mathbb{P}^{(1)}_{j}, \mathbb{P}^{(i)}_{j}).
\end{eqnarray}

\paragraph{\textbf{Lower bound in} $d\Delta_{\min}^{-1}\log T $.}\label{par:low_bound_linear}
By design, for $i \in  \{2, ..., \lfloor d/2\rfloor\}$, all actions but $x_i$ and $x_{\lfloor d\rfloor+i}$ have the same distribution under $\mathbb{P}^{(1)}$ and $\mathbb{P}^{(i)}$ - see {\bf Fact 3.1}. Then, Equation \eqref{eq:KL_decomposition} becomes from {\bf Fact 3.1} and from the expression of KL divergence between standard Gaussian distributions:
\begin{eqnarray*}
     KL(\mathbb{P}^{(1)}, \mathbb{P}^{(i)})&=& \frac{4\Delta_{\min}^2}{2}\mathbb{E}^{(1)}\left[N_{i}(T)\right] +  \frac{4\Delta_{\min}^2}{2}  \mathbb{E}^{(1)}\left[N_{\lfloor{d}\rfloor+i}(T)\right].
\end{eqnarray*}
So that, summing over $i \in  \{2, ..., \lfloor d/2\rfloor\}$, and by {\bf Fact 1}:
\begin{eqnarray*}
     \sum_{i \in  \{2, ..., \lfloor d/2\rfloor\}} KL(\mathbb{P}^{(1)}, \mathbb{P}^{(i)})&\leq& 2\Delta_{\min} R_T^{(1)}.
\end{eqnarray*}
So that by Equation~\eqref{eq:KLi} (summing over $i \in  \{2, ..., \lfloor d/2\rfloor\}$):
\begin{eqnarray*}
    2\Delta_{\min} R_T^{(1)} &\geq & \sum_{i \in  \{2, ..., \lfloor d/2\rfloor\}}\left[\log\left(T\right) - \log\left(\frac{4(R_T^{(i)} + R_T^{(1)})}{\Delta_{\min}}\right)\right]\\
    &=& (\lfloor d/2\rfloor - 1)\log\left(T\right) - \sum_{i \in  \{2, ..., \lfloor d/2\rfloor\}}\log\left(\frac{4(R_T^{(i)} + R_T^{(1)})}{\Delta_{\min}}\right).
\end{eqnarray*}
Let us assume that our algorithm satisfies $\max_{i \leq \lfloor d/2\rfloor} R_T^{(i)} \leq  \frac{d\log \left(T\right)}{\Delta_{\min}}$ - otherwise the bound immediately follows for this algorithm. Then
\begin{eqnarray}
    R_T^{(1)} &\geq & \frac{1}{2\Delta_{\min}}(\lfloor d/2\rfloor - 1)\log\left(T\right) - \frac{1}{2\Delta_{\min}}\sum_{i \in  \{2, ..., \lfloor d/2\rfloor\}}\log\left(\frac{8 d \log T}{\Delta_{\min}^2}\right) \nonumber\\
    &\geq & \frac{1}{2\Delta_{\min}}(\lfloor d/2\rfloor - 1)\left[\log\left(T\right) -\log\left(\frac{8 d \log \left(T\right)}{\Delta_{\min}^2}\right)\right].\label{eq:fin_lb_d}
\end{eqnarray}
Sine
$d \geq 4$, we note that $\lfloor d/2\rfloor - 1 \geq d/5$. This concludes the proof for this part of the bound.

\paragraph{\textbf{Lower bound in} $\kappa_*\Delta_{\neq}^{-2}\log T$.}
By design, all actions but $x_{d+1}$ have the same evaluation under {\bf Problem 1} and {\bf Problem $\lfloor d/2\rfloor +1$} - see {\bf Fact 3.2}. Then, by {\bf Fact 3.2} and the expression between the KL divergence of standard Gaussians, Equation \eqref{eq:KL_decomposition} becomes
\begin{eqnarray*}
     KL(\mathbb{P}^{(1)}, \mathbb{P}^{(\lfloor d/2\rfloor +1)})&=& \mathbb{E}^{(1)}\left[N_{d +1}(T)\right]\frac{\left(\alpha\Delta_{\neq}\right)^2}{2} = \frac{1}{2}\mathbb{E}^{(1)}\left[N_{d+1}(T)\right]\left(\frac{2\Delta_{\neq}}{\sqrt{\kappa_*}+1}\right)^2.
\end{eqnarray*}
Combined with equation \eqref{eq:KLd}, this implies that 
\begin{eqnarray}\label{eq:lb_nolog}
    \frac{1}{2}\mathbb{E}^{(1)}\left[N_{d+1}(T)\right]\left(\frac{2\Delta_{\neq}}{\sqrt{\kappa_*}+1}\right)^2 & \geq & \log\left(T\right) - \log\left(\frac{4(R_T^{(\lfloor d/2\rfloor +1)} + R_T^{(1)})}{ \Delta_{\neq}}\right).
\end{eqnarray}
Let us assume that our algorithm satisfies $\max_{i \leq \lfloor d/2\rfloor+1} R_T^{(i)} \leq  \frac{\kappa_*\log \left(T\right)}{\Delta_{\neq}^2}$ - otherwise the bound immediately follows for this algorithm. We then have
\begin{eqnarray*}
    \frac{1}{2}\mathbb{E}^{(1)}\left[N_{d+1}(T)\right]\left(\frac{2\Delta_{\neq}}{\sqrt{\kappa_*}+1}\right)^2 & \geq & \log\left(T\right) - \log\left( \frac{8\kappa_*\log \left(T\right)}{\Delta_{\neq}^3}\right).
\end{eqnarray*}
Using Equation \eqref{eq:lbregret}, we find that
\begin{eqnarray}\label{eq:fin_lb_z}
    R_T^{(1)} & \geq & \frac{\kappa_*+1}{4\Delta_{\neq}^2} \left[\log\left(T\right) - \log\left(\frac{8\kappa_*\log \left(T\right)}{\Delta_{\neq}^3}\right)\right].
\end{eqnarray}

\paragraph{\textbf{Lower bound in} $\kappa_*\Delta_{\neq}^{-2}$.}
Let us assume that our algorithm satisfies $\max_{i \leq \lfloor d/2\rfloor+1} R_T^{(i)} \leq  \frac{\kappa_*}{\Delta_{\neq}^2}$ - otherwise the bound immediately follows for this algorithm. Then, Equation \eqref{eq:lb_nolog} implies
\begin{eqnarray*}
    \frac{1}{2}\mathbb{E}^{(1)}\left[N_{d+1}(T)\right]\left(\frac{2\Delta_{\neq}}{\sqrt{\kappa_*}}\right)^2 & \geq & \log\left(T\right) - \log\left( \frac{8\kappa_*}{\Delta_{\neq}^3}\right).
\end{eqnarray*}
Using again Equation \eqref{eq:lbregret}, we find that
\begin{eqnarray}\label{eq:fin_lb_z_wc}
    R_T^{(1)} & \geq & \frac{\kappa_*+1}{4\Delta_{\neq}^2}\log\left(\frac{T\Delta_{\neq}^3}{8\kappa_*}\right).
\end{eqnarray}
We conclude the proof of Theorem \ref{thm:lower_bound} by combining Equations \eqref{eq:fin_lb_d}, \eqref{eq:fin_lb_z} and \eqref{eq:fin_lb_z_wc}.

\paragraph{Bounds on $\kappa(\Delta)$} Finally, the following lemma allows to express $\kappa(\Delta)$ as a function of $\kappa_*$.
\begin{lem}\label{lem:borne_inf_kappa}
For any $i \in \{1, ..., \lfloor d/2\rfloor +1\}$, the gap vector $\Delta$ verifies $$\kappa(\Delta)= \frac{(1+\sqrt{\kappa_*})^2\Delta_{d+1}}{4}$$
where $\Delta_{d+1} = \max_{i}(x_i - x_{d+1})^{\top}\gamma^{(i)}.$
\end{lem}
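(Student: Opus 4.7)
The plan is to reduce the computation of $\kappa(\Delta)$ to a $c$-optimal design problem and invoke Elfving's characterization. By Lemma \ref{lem:calcul:kappa}, one has $\kappa(\Delta) = \sum_{x} \mu^{\Delta}(x)\Delta_x = \kappa^{\Delta}\sum_{x}\pi^{\Delta}(x) = \kappa^{\Delta}$, so $\kappa(\Delta)$ coincides with the value of the $e_{d+1}$-optimal design problem on the rescaled action set $\mathcal{A}^{\Delta} = \{\Delta_x^{-1/2}a_x : x \in \cX\}$. Elfving's theorem (Theorem \ref{thm:Elfving}) then gives $\kappa(\Delta) = t^{-2}$, where $t$ is the largest positive scalar such that $t\,e_{d+1}$ lies in the Elfving set $\mathcal{S}^{\Delta}$, defined as the convex hull of $\{\pm\Delta_x^{-1/2}a_x : x \in \cX\}$.

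The second step is to exploit the sparsity of $\mathcal{A}$. For every $j \in \{2,\dots,d\}$ with $j \neq \lfloor d/2\rfloor+1$, the action $a_{x_j} = e_j \pm e_{d+1}$ is the only element of $\mathcal{A}$ with a nonzero projection on $e_j$; hence in any signed convex decomposition $t\,e_{d+1} = \sum_{x}\beta_x(\pm\Delta_x^{-1/2}a_x)$ with $\sum_{x}|\beta_x|\leq 1$, the coefficient $\beta_{x_j}$ must vanish, and an analogous remark handles $x_{\lfloor d/2\rfloor+1}$ whenever it is not the optimal action. This collapses the effective support to at most three actions, living in the subspace spanned by $e_1$, $e_{\lfloor d/2\rfloor+1}$ and $e_{d+1}$: the optimal action $x_i$, its counterpart in the opposite group when needed, and $x_{d+1}$. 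Since $\Delta_{x_i}=0$, the rescaling factor $\Delta_{x_i}^{-1/2}$ is infinite and $\mathcal{S}^{\Delta}$ contains the entire line $\mathbb{R}\cdot a_{x_i}$; I will absorb this into the parametrization by writing $t\,e_{d+1} = s\,a_{x_i} + u$, with $s \in \mathbb{R}$ unconstrained and $u$ in the convex hull of the remaining signed rescaled actions.

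A short explicit calculation then closes the proof. Using $a_{x_{d+1}} = -(1-\alpha)e_1 - e_{d+1}$ with $\alpha := 2/(\sqrt{\kappa_*}+1)$, the requirement that the $e_1$-component of $u$ match that of $s\,a_{x_i}$ fixes the ratio between the coefficients of $a_{x_1}$ (or $a_{x_{\lfloor d/2\rfloor+1}}$) and $a_{x_{d+1}}$, while saturating the budget $\sum_{x}|\beta_x|=1$ determines the maximal $t$. The resulting two-variable linear system gives $t = \alpha/\sqrt{\Delta_{x_{d+1}}}$, hence $\kappa(\Delta) = \Delta_{x_{d+1}}/\alpha^2 = (1+\sqrt{\kappa_*})^2\Delta_{d+1}/4$. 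The main obstacle is the careful handling of the infinite rescaling associated with the optimal action $x_i$, formalized as the limit $\mu_{x_i}\to\infty$ in the minimization defining $\kappa(\Delta)$; once the line $\mathbb{R}\cdot a_{x_i}$ is absorbed into the Elfving set, everything else reduces to elementary two-dimensional linear algebra.
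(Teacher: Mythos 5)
Your route is, at its core, the same as the paper's: reduce $\kappa(\Delta)$ to an $e_{d+1}$-optimal design on the rescaled actions via Lemma \ref{lem:calcul:kappa}, invoke Elfving's characterization (Theorem \ref{thm:Elfving}), use the coordinate sparsity of $\mathcal{A}$ to annihilate all coefficients except those attached to $x_{d+1}$ and the action sharing the $e_1$ direction with it, and solve a small linear system to get $t=\alpha\Delta_{d+1}^{-1/2}$ with $\alpha=2/(\sqrt{\kappa_*}+1)$, hence $\kappa(\Delta)=\Delta_{d+1}/\alpha^2$. The final arithmetic agrees with the paper's.

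The one step that is not yet a proof is exactly the one you flag as the main obstacle. Since $\Delta_{x_i}=0$ for the optimal action, the rescaled point $\Delta_{x_i}^{-1/2}a_{x_i}$ is not a point of $\mathbb{R}^{d+1}$, and the ``Elfving set containing the line $\mathbb{R}\cdot a_{x_i}$'' is not an object to which Theorem \ref{thm:Elfving} (stated for the convex hull of finitely many points) applies; declaring that this is ``formalized as the limit $\mu_{x_i}\to\infty$'' defers precisely the argument that is needed, namely that the infimum defining $\kappa(\Delta)$ over measures with unbounded, cost-free mass on $x_i$ is computed by your degenerate picture. The paper closes this gap by a different regularization: it replaces $\Delta$ by $\Delta\vee\epsilon$ with $0<\epsilon<\Delta_{\min}$, applies Lemma \ref{lem:calcul:kappa} and Elfving's theorem to the now non-degenerate rescaled set to obtain an explicit formula for $\kappa(\Delta\vee\epsilon)$, and then lets $\epsilon\to 0$ using the continuity of $\epsilon\mapsto\kappa(\Delta\vee\epsilon)$ at $0$ (claim (iv) of Lemma \ref{lem:prop_kappa}). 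Either you prove a degenerate version of Elfving's theorem to support your line-absorbed set, or you adopt the $\epsilon$-regularization, which turns your computation into the paper's. A smaller caveat: matching ``the $e_1$-component of $u$ to that of $s\,a_{x_i}$'' only makes literal sense for $i=1$; for $2\leq i\leq\lfloor d/2\rfloor$ the optimal action $a_{x_i}=e_i+e_{d+1}$ is the unique action with a nonzero $e_i$-component, so the free direction is forced to carry zero weight and contributes nothing to bias estimation, leaving the design supported on $x_1$ (whose gap is $\Delta_{\min}$, not $0$) and $x_{d+1}$ --- a case your sketch, like the paper's displayed system, handles only implicitly.
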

On the one hand, since $\kappa_* \geq 1$, we see that $\kappa_* \leq (1+\sqrt{\kappa_*})^2 \leq 4\kappa_*$. On the other hand, $1/2 \leq \Delta_{d+1}\leq 2$, so $\kappa(\Delta) \in \left[\frac{\kappa_*}{8}, 2\kappa_*\right]$.


\subsection{Extension of the gap-dependent lower bounds to $d=2,3$} \label{app:lower_bound_gap_2}

Theorem \ref{thm:lower_bound_gap} can be extended to $d\in \{2,3\}$ by considering separately the cases $\frac{d}{\Delta_{\min}} \geq \frac{\kappa}{\Delta_{\neq}^2}$ and $\frac{d}{\Delta_{\min}} <\frac{\kappa}{\Delta_{\neq}^2}$.

\paragraph{Case 1 : $\frac{d}{\Delta_{\min}} \geq \frac{\kappa}{\Delta_{\neq}^2}$}
Let us consider the set of actions defined by $\mathcal{A} =\left\{\left({x_1 \atop z_{x_1}}\right), ...,\left({x_{d+1} \atop z_{x_{d+1}}}\right) \right\}$, where $\left({x_i \atop z_{x_i}}\right) = e_1 + e_{d+1}$ for $i \in \{1, ..., d\}$, and $\left({x_{d+1} \atop z_{x_{d+1}}}\right) = -\left(1-\frac{2}{\sqrt{\kappa_*}+1} \right)e_1 - e_{d+1}$. Using the same proof as in Lemma \ref{lem:kappa}, we see that 
\begin{equation*}
   \underset{\pi \in \mathcal{P}^{\mathcal{A}}}{\min}\left\{ e_{d+1}^{\top}\left(\underset{\left({x \atop z}\right) \in \mathcal{A}}{\displaystyle \sum}\pi_x \left({x \atop z_x}\right) \left({x \atop z_x}\right)^{\top} \right)^+e_{d+1}\right\} = \kappa.
\end{equation*}

Then, we consider the following problems : for $i\leq d$, {\bf Problem i} is characterized by the parameter $\theta^{(i)}$, where $\theta^{(i)} = \left({\gamma^{(i)}\atop \omega^{(i)}}\right)$ is defined as:
\begin{eqnarray*}
    \gamma^{(1)} & =& \frac{1-\Delta_{\min}}{2}\underset{i \leq d}{\sum}e_i + \Delta_{\min} e_1\\
    \gamma^{(i)} & =& \frac{1-\Delta_{\min}}{2}\underset{i \leq d}{\sum}e_i+\Delta_{\min} e_1+\Delta_{\min} e_i \quad \text{ for i >1}
\end{eqnarray*}
and the bias parameters are defined as $\omega^{(i)} = 0$ for $i\leq d$. The following facts hold:
\begin{enumerate}
\item[{\bf Fact 1}] For any $i \in \{1, ..., d\}$, action $x_i$ is the unique optimal action in {\bf Problem i}. Sampling any other (sub-optimal) action leads to an instantaneous regret of at least $\Delta_{\min}$. 
\item[{\bf Fact 2}] In {\bf Problem i}, for $i \in \{1, ..., d\}$, when sampling action $x_j$ at time, $t$ the distribution of the observation does not depend on $t$ or on the past (except through the choice of $x_j$) and is $\mathbb P_j^{(i)}$. It is characterized as:
\begin{align*}
&\forall i \in\{1, ...,d\}, \mathbb P_1^{(i)} ~~~\mathrm{is}~~~\mathcal N((1+\Delta_{\min})/2,1)\\
&\forall i \in\{1, ...,d\}, \mathbb P_{d+1}^{(1)}~~~\mathrm{is}~~~~\mathcal N(-(1-\frac{2}{\sqrt{\kappa_*}+1})(1+\Delta_{\min})/2,1)\\
&\forall i \in\{2, ...,d\}, \mathbb P_i^{(i)} ~~~\mathrm{is}~~~\mathcal N((1+3\Delta_{\min})/2,1)\\
&\forall i,j \in\{2, ...,d\}, i\neq j : \mathbb P_j^{(i)} ~~~\mathrm{is}~~~\mathcal N((1-\Delta_{\min})/2,1)\\
\end{align*}
So that for any $i \in \{2, ..., d \}$, between {\bf Problem 1} and {\bf Problem i}, the only action that provides different evaluations when sampled is action $i$ , and the mean gap is $2\Delta_{\min}$.
\end{enumerate}

Since $\Delta_{\neq}\leq \frac{1}{8}$, this choice of parameters ensures that $\forall i \in \{1, ..., d\}$, $\theta^{(i)} \in \mathbf{\Theta}^{\mathcal{A}}_{\Delta_{\min}, \Delta_{\neq}, \kappa_*}$. Adapting the proof of Lemma \ref{lem:kappa}, we note that the minimal variance of bias estimation is at least $\kappa_*$.
This proves that $\cA \in \mathbf{\Theta}^{\mathcal{A}}_{\Delta_{\min}, \Delta_{\neq}, \kappa_*}$. Now, the lower bound $$R_T \geq \frac{d-1}{2\Delta_{\min}}\left[\log\left(T\right) -\log\left(\frac{8 d \log \left(T\right)}{\Delta_{\min}^2}\right)\right]$$ follows directly using arguments from the proof of Theorem \ref{thm:lower_bound}.

\paragraph{Case 2 : $\frac{d}{\Delta_{\min}} > \frac{\kappa}{\Delta_{\neq}^2}$}
Let the action set be given by $\mathcal{A} =\left\{\left({x_1 \atop z_{x_1}}\right), ...,\left({x_{d+1} \atop z_{x_{d+1}}}\right) \right\}$, where $\left({x_1 \atop z_{x_1}}\right) = e_1 + e_{d+1}$, $\left({x_i \atop z_{x_i}}\right) = e_i - e_{d+1}$ for $i \in \{2, ..., d\}$, and $\left({x_{d+1} \atop z_{x_{d+1}}}\right) = -\left(1-\frac{2}{\sqrt{\kappa_*}+1} \right)e_1 - e_{d+1}$. By Lemma \ref{lem:kappa}, $\mathcal{A}\in \mathbf{A}_{\kappa_*,d}$. We consider two bandit problems characterized by two parameters $\theta^{(1)}$ and $\theta^{(2)}$, defined as:
\begin{eqnarray*}
    \gamma^{(1)} & =& \frac{1+\Delta_{\neq}}{2}e_1 + \frac{1 - \Delta_{\neq}}{2}e_{2} - \frac{\Delta_{\neq}}{2} e_3 \\
    \gamma^{(2)} & =& \frac{1-\Delta_{\neq}}{2}e_1 + \frac{1 + \Delta_{\neq}}{2}e_{2} + \frac{\Delta_{\neq}}{2} e_3.
\end{eqnarray*}
On top of this, two bias parameters are defined as $\omega^{(1)} = -\frac{\Delta_{\neq}}{2}$ and $\omega^{(2)} = \frac{\Delta_{\neq}}{2}$.

The following facts hold:
\begin{enumerate}
\item[{\bf Fact 1}] For any $i \in \{1, 2\}$, action $x_i$ is the unique optimal action in {\bf Problem i}. Since $1/2 \geq \Delta_{\neq}$, sampling any other (sub-optimal) action leads to an instantaneous regret of at least $\Delta_{\neq}$.
\item[{\bf Fact 2}] In {\bf Problem i}, for $i \in \{1, ..., d\}$, when sampling action $x_j$ at time, $t$ the distribution of the observation does not depend on $t$ or on the past (except through the choice of $x_j$) and is $\mathbb P_j^{(i)}$. It is characterized as:
\begin{align*}
&\forall i \in\{1, 2\}, \forall j \in\{1, 2\}, \mathbb P_j^{(i)} ~~~\mathrm{is}~~~\mathcal N(1/2,1)\\
&\forall i \in\{1, 2\}, \mathbb P_{3}^{(1)}~~~\mathrm{is}~~~~\mathcal N(0,1)\\
&\mathbb P_{d+1}^{(1)} ~~~\mathrm{is}~~~\mathcal N\left(\left(1-\frac{2}{\sqrt{\kappa_*}+1} \right)\left(\frac{1+\Delta_{\neq}}{2}\right) + \frac{\Delta_{\neq}}{2},1\right)\\
&\mathbb P_{d+1}^{(2)} ~~~\mathrm{is}~~~\mathcal N\left(\left(1-\frac{2}{\sqrt{\kappa_*}+1} \right)\left(\frac{1-\Delta_{\neq}}{2}\right) - \frac{\Delta_{\neq}}{2},1\right)
\end{align*}
So that, between {\bf Problem 1} and {\bf Problem 2}, the only action that provides different evaluations when sampled is action $1$, and the mean gaps in both cases is $\frac{2\Delta_{\neq}}{\sqrt{\kappa_*}+1}$.
\end{enumerate}
Note that the minimum gap for these parameters is $\Delta_{\neq}\geq \Delta_{\min}$. Thus, this choice of parameters ensures that $\forall i \in \{1, ..., d\}$, $\theta^{(i)} \in \mathbf{\Theta}^{\mathcal{A}}_{\Delta_{\min}, \Delta_{\neq}, \kappa_*}$. Adapting the proof of Lemma \ref{lem:kappa}, we note that the minimal variance of bias estimation is at least $\kappa_*$.This proves that $\cA \in \mathbf{\Theta}^{\mathcal{A}}_{\Delta_{\min}, \Delta_{\neq}, \kappa_*}$. Then, the lower bound $$R_T \geq \frac{\kappa_*+1}{4\Delta_{\neq}^2} \left[\log\left(T\right) - \log\left(\frac{8\kappa_*\log \left(T\right)}{\Delta_{\neq}^3}\right)\right].$$ follows directly using arguments from the proof of Theorem \ref{thm:lower_bound}.
\subsection{Auxiliary Lemmas}


\subsubsection{Proof of Lemma \ref{lem:upsilon_margin}}

Lemma \ref{lem:upsilon_margin} follows from the characterization of $\kappa_*$ given in Lemma \ref{lem:upsilon}.  We begin by proving the first statement. Assume that $\kappa_*>1$ (otherwise the first statement is void). Note that for all $u \in \mathbb{R}^d$, $\underset{\lambda \rightarrow +\infty}{\text{lim}} (\max_{x \in \cX} \left(x^{\top}(\lambda u) + z_x\right)^2)^{-1} = 0,$ so the minimum over $u \in \mathbb{R}^d$ of $(\max_{x \in \cX} \left(x^{\top}(\lambda u) + z_x\right)^2)^{-1}$ is attained for some vector $\tilde{u}\in \mathbb{R}^d$. Since $\kappa_* >1$, $\tilde{u}$ is not null. Moreover, $\max_{x\in\cX}(1 + z_x x^{\top}\tilde{u})^2<1$, so $\max_{x\in\cX}z_x x^{\top}\tilde{u} <0$. Thus, for all $x\in \cX$,  $x^{\top}\tilde{u}$ and $z_x$ are of opposite sign, and $ x^{\top}\tilde{u} \neq 0$. This implies that the hyperplane containing 0 with normal vector $\tilde{u}$ contains no action, and separates the two groups. Moreover,
$$\kappa_*^{-1/2} = \max_{x \in \cX} \vert z_xx^{\top}\tilde{u} + 1\vert.$$

We denote $x^{(1)}\in \argmax_{x \in \cX} z_{z}x^{\top}\tilde{u}$, and $x^{(2)}\in \argmin_{x \in \cX} z_{z}x^{\top}\tilde{u}$. Let us show that $(z_{x^{(1)}}{x^{(1)}}^{\top}\tilde{u}+1) = -\left(1 + z_{x^{(2)}} {x^{(2)}}^{\top}\tilde{u}\right)$, i.e that $z_{x^{(1)}}{x^{(1)}}^{\top}\tilde{u}+ z_{x^{(2)}} {x^{(2)}}^{\top}\tilde{u} = -2$. Indeed, note that $$ \kappa_*^{-1/2} =( z_{x^{(1)}}{x^{(1)}}^{\top}\tilde{u}+1) \vee -(1 + z_{x^{(2)}} {x^{(2)}}^{\top}\tilde{u}).$$
Then, for $u' = \frac{-2}{\left(z_{x^{(1)}}{x^{(1)}}+ z_{x^{(2)}} {x^{(2)}}\right)^{\top}\tilde{u}}\tilde{u}$, we see that $$z_{x^{(1)}}{x^{(1)}}^{\top}u' +1 =  - \left(1 + z_{x^{(2)}}{x^{(2)}}^{\top}u'\right) = \max_{x \in \cX}\vert z_x x^{\top}u' +1 \vert.$$

By contradiction, let us first assume that $z_{x^{(1)}}{x^{(1)}}^{\top}\tilde{u}+ z_{x^{(2)}} {x^{(2)}}^{\top}\tilde{u} < -2$. Then, 
$$\max_{x \in \cX}\vert z_x x^{\top}u' +1 \vert = z_{x^{(1)}}{x^{(1)}}^{\top}u' +1 < z_{x^{(1)}}{x^{(1)}}^{\top}\tilde{u} +1 = \kappa_*^{-1/2}$$
which contradicts the definition of $\kappa_*$.

Similarly, if we assume that  $z_{x^{(1)}}{x^{(1)}}^{\top}\tilde{u}+ z_{x^{(2)}} {x^{(2)}}^{\top}\tilde{u} > -2$, then 
$$\max_{x \in \cX}\vert z_x x^{\top}u' +1 \vert =  -(z_{x^{(2)}}{x^{(2)}}^{\top}u' +1 )< -(z_{x^{(2)}}{x^{(2)}}^{\top}\tilde{u} +1) = \kappa_*^{-1/2}$$
which contradicts again the definition of $\kappa_*$. Therefore, $$(z_{x^{(1)}}{x^{(1)}}^{\top}\tilde{u}+1) = -\left(1 + z_{x^{(2)}} {x^{(2)}}^{\top}\tilde{u}\right) = \kappa_*^{-1/2}.$$ Then, the hyperplane containing $0$ with normal vector $\tilde u$ separates the actions of the two groups. Moreover, the margin is $-z_{x^{(1)}}{x^{(1)}}^{\top}\tilde{u} = 1-\kappa_*^{-1/2}$, while the maximum distance of all points is $-z_{x^{(2)}}{x^{(2)}}^{\top}\tilde{u} = 1+\kappa_*^{-1/2}$. Thus, there exists $\tilde{u}$ such that the hyperplane containing $0$ with normal vector $\tilde{u}$ separates the actions of the two groups, with margin equal to $\frac{\sqrt{\kappa_*}-1}{\sqrt{\kappa_*}+1}$ times the maximum distance of all points to the hyperplane.

Conversely, assume that there exists $\kappa>\kappa_*$ such that there exists $u\in \mathbb{R}^d$ such that the hyperplane containing $0$ with normal vector $u$ separates the actions of the two groups, with margin equal to $\frac{\sqrt{\kappa}-1}{\sqrt{\kappa}+1} = \frac{1-\kappa^{-1/2}}{1+\kappa^{-1/2}}$ times the maximum distance of all points to the hyperplane, denoted hereafter $d$. Since the hyperplane separates the points, we can assume without loss of generality that for all $x \in \cX$, $z_x x^{\top}u < 0$. Similarly, up to a renormalization, we can assume without loss of generality that $d = 1+ \kappa^{-1/2}$. Then, 
\begin{eqnarray*}
  \max_{x\in \cX}\vert z_x x^{\top}u + 1\vert &=&  (\max_{x \in \cX}z_x x^{\top}u +1) \vee -(\min_{x \in \cX}z_x x^{\top}u +1) \\
  &=& \left(-\frac{1-\kappa^{-1/2}}{1+\kappa^{-1/2}} \times (1+ \kappa^{-1/2}) + 1\right) \vee -(1-\kappa^{-1/2} - 1) = \kappa^{-1/2}<\kappa_*^{-1/2}
\end{eqnarray*}
which contradicts the definition of $\kappa_*$. This concludes the proof of the first statement.

\bigskip
To prove the second statement, let us assume that no separating hyperplane containing zero exists. Then, for all $u\in \mathbb{R}^d$, there exists $x\in \cX$ such that $z_x x^{\top}u\geq 0$. This implies that $\min_{u\in \mathbb{R}^d}\max_{x\in\cX}(z_x x^{\top}u+1) \geq 1$, so $\kappa_* \leq 1$. Choosing $u = 0$, we see that $\kappa_* \geq 1$, which implies that $\kappa_* = 1$.


\subsubsection{Proof of Lemma \ref{lem:kappa_tilde}}
Since for all $\gamma \in \cX$ and all $x \in \cX$, $\vert x^{\top}\gamma \vert\leq 1$, it is easy to see that the gaps are bounded by $2$, and that $\widetilde{\kappa} \leq 2 \kappa_*$. 

Let us now show that $\widetilde{\kappa} \geq \nicefrac{\kappa_*}{2}$.
\begin{eqnarray*}
  \left(x^{(1)}, x^{(2)}, \widetilde{\gamma}\right)&\in& \underset{(x,x') \in \cX, \gamma \in \cC(\cX)}{\argmax} (x - x')^{\top}\gamma\\
  \overline{x} &= & \frac{1}{2}(x^{(1)}+ x^{(2)})\\
  \widetilde{n} &=& \sum_{x\in\cX}\widetilde{\mu}(x)\\
  \text{and }\ \ \ \widetilde{x} &=& \frac{1}{\widetilde{n}}\sum_{x\in\cX}\widetilde{\mu}(x)x.
\end{eqnarray*}

Recall that $\kappa_*$ can equivalently be defined as the budget necessary to estimate the bias with a variance smaller than $1$. Therefore, we have 
\begin{equation}\label{eq:tilde_n}
    \widetilde{n} \geq \kappa_*.
\end{equation}
Let us define $\Delta_{\max}$ as $\Delta_{\max} = (x^{(1)}-x^{(2)})^{\top}\widetilde \gamma = \underset{(x,x') \in \cX, \gamma \in \cC(\cX)}{\max} (x - x')^{\top}\gamma$. By definition of $\widetilde{\kappa}$ and $\widetilde \mu$,
\begin{eqnarray*}
  \widetilde{\kappa} &\geq& \underset{x \in \cX}{\sum}\widetilde{\mu}(x)(x^{(1)} - x)^{\top}\widetilde{\gamma}\\
  & = & \widetilde{n}(x^{(1)} - \widetilde{x})^{\top}\widetilde{\gamma}.
\end{eqnarray*}
Using Equation \eqref{eq:tilde_n}, we find that
\begin{eqnarray}\label{eq:un_sens}
  \frac{\widetilde{\kappa}}{\kappa_*} &\geq& (x^{(1)} - \overline{x})^{\top}\widetilde{\gamma} + (\overline{x} - \widetilde{x})^{\top}\widetilde{\gamma}\nonumber\\
  &=& \frac{\Delta_{\max}}{2} + (\overline{x} - \widetilde{x})^{\top}\widetilde{\gamma}.
\end{eqnarray}
Now, since $\widetilde{\gamma} \in \cC(\cX)$, we also have $-\widetilde{\gamma} \in \cC(\cX)$, and therefore 
\begin{eqnarray*}
  \widetilde{\kappa} &\geq& \underset{x \in \cX}{\sum}\widetilde{\mu}(x)(x^{(2)} - x)^{\top}(-\widetilde{\gamma})\\
  &=&\widetilde{n}(\widetilde{x}-x^{(2)})^{\top}\widetilde{\gamma}
\end{eqnarray*}
Using again Equation \eqref{eq:tilde_n}, we find that
\begin{eqnarray}\label{eq:autre_sens}
  \frac{\widetilde{\kappa}}{\kappa_*}&\geq&(\widetilde{x}-\overline{x})^{\top}\widetilde{\gamma} +  (\overline{x} - x^{(2)})^{\top}\widetilde{\gamma}\nonumber\\
   &=& (\widetilde{x}-\overline{x})^{\top}\widetilde{\gamma} + \frac{\Delta_{\max}}{2}.
\end{eqnarray}
Combining Equations \eqref{eq:un_sens} and \eqref{eq:autre_sens}, we find that
\begin{eqnarray*}
  \frac{\widetilde{\kappa}}{\kappa_*} &\geq& \frac{\Delta_{\max}}{2} + \vert (\overline{x} - \widetilde{x})^{\top}\widetilde{\gamma}\vert.
\end{eqnarray*}
This implies in particular that $\widetilde\kappa\geq \frac{\Delta_{\max}\kappa_*}{2}$.

To conclude the proof of the Lemma, we show that $\Delta_{\max} \geq 1$. By contradiction, assume that $\Delta_{\max} < 1$. 

For all non-zero vector $u \in \mathbb{R}^d$, let us denote $x_u = \argmax_{x \in \cX}\vert x^{\top}u \vert$. Since $\cX$ spans $\mathbb{R}^d$, we necessarily have $\vert x_u^{\top}u \vert >0$, so we can define the normalized vector $\tilde{u} = u/\vert x_u^{\top}u \vert$ such that $\tilde{u}$ belongs to the set $\cC(\cX)$. Finally, denote $x_u^{(1)}, x_u^{(2)} \in \argmax_{x, x' \in \cX} (x_u^{(1)} - x_u^{(2)})^{\top} \tilde{u}$. Note that by definition of $\Delta_{\max}$, we always have $(x_u^{(1)} - x_u^{(2)})^{\top} \tilde{u} \leq \Delta_{\max} < 1$.

\underline{Case 1 : $x_u^{\top}\tilde{u}>0$} Then, by definition of $x_u$ and $x_u^{(1)}$, we see that ${x_u^{(1)}}^{\top}\tilde{u} = x_u^{\top}\tilde{u} = 1$. Then, $(x_u^{(1)} - x_u^{(2)})^{\top} \tilde{u} < 1$ implies that $1 - {x_u^{(2)}}^{\top} \tilde{u} < 1$, so ${x_u^{(2)}}^{\top}\tilde{u} >0$, and in particular ${x_u^{(2)}}^{\top}u >0$. 

\underline{Case 2 : $x_u^{\top}u<0$} Then, by definition of $x_u$ and $x_u^{(2)}$, we see that  ${x_u^{(2)}}^{\top}\tilde{u} = x_u^{\top}u = - 1$. Then $(x_u^{(1)} - x_u^{(2)})^{\top} \tilde{u} < 1$ implies that ${x_u^{(1)}}^{\top} \tilde{u} + 1 < 1$, so ${x_u^{(1)}}^{\top}\tilde{u} <0$, and in particular ${x_u^{(1)}}^{\top}u <0$. 

\medskip

Putting together Case 1 and Case 2, we see that ${x_u^{(1)}}^{\top}u$ and ${x_u^{(2)}}^{\top}u$ are of the same sign and are not null. By definition of $x_u^{(1)}$ and $x_u^{(2)}$, we conclude that for all $x \in \cX$, the sign of $x^{\top}u$ is the same, and that $x^{\top}u$ is not $0$. Since this is true for all non-zero vector $u$, this implies in particular that no hyperplane containing the origin can separate the actions, which contradicts the assumption that $\cX$ spans $\mathbb{R}^d$.


\subsubsection{Proof of Lemmas \ref{lem:c-opt} and \ref{lem:G-opt}}

We begin by proving Lemma \ref{lem:G-opt}. Recall that $\pi$ is a G-optimal design for the set $\{a_x : x\in \cX\}$, and that $\mu$ is defined as  $\mu(x) = \lceil m\pi(x)\rceil$ for all $x \in \cX$.

We first observe that $V(\pi)=A_{\pi}^\top A_{\pi}$, where $A_{\pi}$ is the matrix with lines given by $[\sqrt{\pi(x)} a_{x}^\top]_{x\in\cX}$. 
Since the supports of $\mu$ and $\pi$ are the same, we get that $\Image(A_{\pi}^\top)=\Image(A_{\mu}^\top)$. As a consequence
$$\Image(V(\pi))=\Image(A_{\pi}^\top)=\Image(A_{\mu}^\top)=\Image(V(\mu)),$$
and $x\in \Image(V(\mu))$ for all $x \in \cX$. This ensures that $a_x^\top\widehat\theta_{\mu}$ is an unbiased estimator of $a_x^\top\theta^*$. 

Furthermore  $V(\mu) \succcurlyeq mV(\pi)$, so the variance $a_x^\top V(\mu)^+ a_x$ of $a_x^\top\widehat\theta_{\mu}$ is upper-bounded by $a_x^\top V(\mu)^+ a_x \leq m^{-1} a_x^\top V(\pi)^{+}a_x$. Now, the General Equivalence Theorem of Kiefer and Pukelshein shows that $\max_{x\in \cX}a_x^\top V(\pi)^{+}a_x \leq d+1$. Thus, $a_x^\top V(\pi)^{+}a_x \leq m^{-1}(d+1)$.

We now prove Lemma \ref{lem:c-opt}. Recall that $\pi \in \cM^{\cX}_{e_{d+1}}$ is such that $e_{d+1}\in \Image V(\pi)$, and that $\mu$ is defined as  $\mu(x) = \lceil m\pi(x)\rceil$ for all $x \in \cX$. Using similar arguments, we can show that  $e_{d+1}\in \Image(V(\mu))$, which ensures that $e_{d+1}^\top\widehat\theta_{\mu}$ is an unbiased estimator of $e_{d+1}^\top\theta^*$. The second part of the Lemma follows directly using that $V(\mu) \succcurlyeq mV(\pi)$.

\subsubsection{Proof of Lemma \ref{lem:upsilon}}

Elfving's set $\mathcal{S}$ for estimating the bias in the biased linear bandit problem is given by 
$$\mathcal{S} = convex \ hull \left\{ \left({x \atop z_x}\right), \left({-x \atop -z_x}\right) : x \in \cX \right\},$$
or equivalently by 
$$\mathcal{S} = convex \ hull \left\{ \pm \left({z_xx \atop 1}\right) : x \in \cX \right\}.$$

Now, Theorem \ref{thm:Elfving} indicates that $\kappa_*^{-\nicefrac{1}{2}} e_{d+1}$ belongs to a supporting hyperplane of $\mathcal{S}$. We first show that when $\cA$ spans $\mathbb{R}^{d+1}$, any normal vector $w \in \mathbb{R}^{d+1}$ to this hyperplane is such that $w^{\top}e_{d+1}\neq 0$. 

By contradiction, let us assume that $\kappa_*^{-\nicefrac{1}{2}} e_{d+1}$ belongs to some supporting hyperplane $\mathcal{H}$ of $\mathcal{S}$ parametrized as $\mathcal{H} = \left\{a \in \mathbb{R}^{d+1} : a^{\top}w = b \right\}$, where the normal vector $w$ is of the form  $w = \left({u \atop 0}\right)$. Then, $\kappa_*^{-\nicefrac{1}{2}} e_{d+1} \in \mathcal{H}$, so  $\kappa_*^{-\nicefrac{1}{2}} e_{d+1}^{\top}w = b$, and thus $b = 0$. Now, $\mathcal{H}$ is a supporting hyperplane of $\mathcal{S}$, so for all $a \in \mathcal{S}$ we see that $a^{\top}w \leq b$. In particular, for all $x \in \cX$, $x^{\top}u\leq 0$ and $-x^{\top}u \leq 0$, so $x^{\top}u = 0$. This implies that $\cX$ is supported by an hyperplane in $\mathbb{R}^d$ with normal vector $u$, which contradicts our assumption that $\cA$ spans $\mathbb{R}^{d+1}$. Thus, the supporting hyperplane of $\mathcal{S}$ containing $\kappa_*^{-1/2}e_{d+1}$ has a normal vector $w \in \mathbb{R}^{d+1}$ such that $w^{\top}e_{d+1}\neq 0$. In particular, we can parameterize this hyperplane as $\mathcal{H}_{u, b} = \left\{ a \in \mathbb{R}^{d+1} :  a^{\top}\left({u \atop 1}\right) = b\right\}$ for some $b \in \mathbb{R}$ and $u \in \mathbb{R}^{d}$.

Now, if $\mathcal{H}_{u, b}$ is a supporting hyperplane of $\mathcal{S}$, then, by definition, $\mathcal{S}$ is contained in the half space $\left\{ a \in \mathbb{R}^{d+1} :  a^{\top}\left({u \atop 1}\right) \leq b\right\}$. In particular, for all $x \in \cX$, one must have
$z_x x^{\top}u + 1 \leq b $ and $-z_x x^{\top}u - 1 \leq b$ : therefore, for all $x \in \cX$, $\vert z_x x^{\top}u + 1 \vert \leq b $. Moreover, $\mathcal{H}_{u, b}$ is a supporting hyperplane of $\mathcal{S}$, so there exists an extreme point $a \in \mathcal{S}$ such that $a \in \mathcal{H}_{u, b}$. Note that $\mathcal{S}$ is the convex hull of $\left\{ \pm \left({z_xx \atop 1}\right) : x \in \cX \right\}$, so the extreme points of $\mathcal{S}$ are in $\left\{ \pm \left({z_xx \atop 1}\right) : x \in \cX \right\}$. In particular, this implies that $b = \max \left\{\vert z_x x^{\top}u + 1 \vert : x \in \cX\right\}$. Thus, the supporting hyperplane of $\mathcal{S}$ containing $\kappa_*^{-1/2}e_{d+1}$ is necessarily of the form $\mathcal{H}_{u, \max \left\{\vert z_x x^{\top}u + 1 \vert : x \in \cX\right\}}$.

On the one hand, $\kappa_*^{-\nicefrac{1}{2}}$ belongs to the boundary of $\mathcal{S}$ and therefore to a supporting hyperplane $\mathcal{H}_{u, \max \left\{\vert z_x x^{\top}u + 1 \vert : x \in \cX\right\}}$ of $\mathcal{S}$. Then, there exists $u \in \mathbb{R}^d$ such that $\kappa_*^{-\nicefrac{1}{2}} = \max \left\{\vert z_x x^{\top}u + 1 \vert : x \in \cX\right\}$. 

On the other hand, it is easy to verify that for all $u \in \mathbb{R}^d$, $\mathcal{H}_{u, \max \left\{\vert z_x x^{\top}u + 1 \vert : x \in \cX\right\}}$ is a supporting hyperplane of $\mathcal{S}$. Now, $\kappa_*^{-\nicefrac{1}{2}}e_{d+1}$ belongs to $\mathcal{S}$, so $\kappa_*^{-\nicefrac{1}{2}}e_{d+1}^{\top} \left({u \atop 1}\right) \leq \max \left\{\vert z_x x^{\top}u + 1 \vert : x \in \cX\right\}$.

These two results imply that 
$$\kappa_*^{-\nicefrac{1}{2}} = \min_{u \in \mathbb{R}^d} \max_{x \in \cX} \vert z_x x^{\top}u + 1 \vert$$
which proves the Lemma.

\subsubsection{Proof of Lemma \ref{lem:upsilon_alpha}}
We prove that $2(\sqrt{\kappa_*}-1)^2 \vee 1 \leq \alpha \leq 8(\kappa_* + 1)$. Lemma \ref{lem:upsilon_alpha} follows directly by noticing that $\alpha\geq 1$ and $\kappa_* \geq 1$.

Let us begin by proving that $2(\sqrt{\kappa_*}-1)^2\leq \alpha$ for $\kappa_* >1$ (otherwise this inequality is automatically verified). Note that for all $u \in \mathbb{R}^d$, $\underset{\lambda \rightarrow +\infty}{\text{lim}} \frac{1}{\max_{x \in \cX} \left(x^{\top}(\lambda u) + z_x\right)^2} = 0,$ so the minimum over $u \in \mathbb{R}^d$ of $\frac{1}{\max_{x \in \cX} \left(x^{\top}u + z_x\right)^2} = 0$ is attained for some vector $\tilde{u}\in \mathbb{R}^d$. Let us also denote $\tilde{x}\in \argmax_{x \in \cX} (z_x x^{\top}\tilde{u} + 1)^2$, such that 
$$\kappa_* =  \frac{1}{\left(z_{\tilde{x}}\tilde{x}^{\top}\tilde{u} + 1\right)^2}.$$ 

With these notations, we see that for all $x \in \cX$, $$(z_x x^{\top}\tilde{u}+1)^{2} \leq (z_{\tilde{x}}\tilde{x}^{\top}\tilde{u}+1)^{2} = \kappa_*^{-1} < 1.$$ This implies that for all $x \in \cX$,
$$z_x x^{\top}\tilde{u} \leq -1 + \kappa_*^{-1/2} < 0.$$

Now, let us denote $x^{(1)}, x^{(2)} \in \argmax_{x, x' \in \cX} (x-x')^{\top}\tilde{u}$. By definition of $\alpha$, we see that
$$\alpha \geq \frac{\left((x^{(1)}-x^{(2)})^{\top}\tilde{u}\right)^2}{\left(z_{\tilde{x}}\tilde{x}^{\top}\tilde{u} + 1\right)^2} = \left((x^{(1)}-x^{(2)})^{\top}\tilde{u}\right)^2 \times \kappa_*.$$ Since $z_x x^{\top}\tilde{u} < 0$ for all $x \in \cX$, and since no group is empty, we can conclude that there exists $x, x' \in \cX$ such that $x^{\top}\tilde{u} >0$ and $x'^{\top}\tilde{u} <0$. In particular, by definition of $x^{(1)}$ and $x^{(2)}$, we see that $(x^{(1)})^{\top}\tilde{u} >0$ and $(x^{(2)})^{\top}\tilde{u} <0$. Then, 
$$\left((x^{(1)}-x^{(2)})^{\top}\tilde{u}\right)^2 \geq \left((x^{(1)})^{\top}\tilde{u}\right)^2 + \left((x^{(2)})^{\top}\tilde{u}\right)^2 \geq 2(1-\kappa_*^{-1/2})^2.$$

This implies that 
$$\alpha \geq 2(1-\kappa_*^{-1/2})^2 \times \kappa_* = 2(\sqrt{\kappa_*}-1)^2.$$

Let us now prove that $\alpha \geq 1$. 
Note that by assumption, $\cX$ spans $\mathbb{R}^d$, and in particular there exists $\tilde{u} \in \mathbb{R}^d$ and $x, x' \in \cX$ such that $\max_{x\in \cX}x^{\top}\tilde{u}  >0$ and $\min_{x\in \cX}x^{\top}\tilde{u}  \leq0$. Thus, $\max_{x, x' \in \cX}((x - x')^{\top}\tilde{u})^2 \geq \max_{x \in \cX} (x^{\top}\tilde{u})^2$. For any $\lambda >0$, choosing $u = \lambda \tilde{u}$ in the definition of $\alpha$ implies that
$$\alpha \geq \frac{\lambda^2 \max_{x \in \cX} (x^{\top}u)^2}{\max_{x \in \cX}(\lambda z_x  x^{\top}u+1)^2}.$$

Letting $\lambda$ go to infinity, we find that $\alpha \geq 1$.

Finally, we prove that $\alpha\leq 8(\kappa_* + 1)$. For all $u \in \mathbb{R}^d$, we see that $$\frac{\max_{x, x' \in \cX}((x - x')^{\top}u)^2}{\max_{x \in \cX}(z_x  x^{\top}u+1)^2} \leq \frac{4\max_{x\in \cX}(z_x x^{\top}u)^2}{\max_{x \in \cX}(z_x  x^{\top}u+1)^2}.$$
Now, we see that $$\frac{\max_{x\in \cX}(z_x x^{\top}u)^2}{\max_{x \in \cX}(z_x  x^{\top}u+1)^2} \leq \frac{2\max_{x\in \cX}(z_xx^{\top}u+1)^2 + 2}{\max_{x \in \cX}(z_x  x^{\top}u+1)^2} \leq 2 + \frac{2}{\max_{x \in \cX}(z_x  x^{\top}u+1)^2}.$$
This in turn implies that for all $u \in \mathbb{R}^d$,
$$\frac{\max_{x, x' \in \cX}((x - x')^{\top}u)^2}{\max_{x \in \cX}(z_x  x^{\top}u+1)^2} \leq 8(1+\kappa_*),$$ which finally implies that $\alpha \leq 8(1+\kappa_*).$


\subsubsection{Proof of Lemma \ref{lem:prop_kappa}}
\noindent\textbf{Proof of Claim \ref{eq:prop_kappa_i}} 
The proof of the first claim is immediate by definition of $\kappa$. Indeed, let $\widetilde \cM = \left\{\mu \in \cM^{\cX}_{e_{d+1}} : e_{d+1}^{\top}V(\mu)^+e_{d+1} \leq 1\right\}$ be the set of measures $\mu$ admissible for estimating $\omega^*$ with a precision level $1$. Then,
\begin{eqnarray*}
  \kappa(c\Delta) &=&\min_{\mu \in \widetilde \cM}\sum_x \mu(x)c\Delta_x = c\min_{\mu \in \widetilde \cM}\sum_x \mu(x)\Delta_x = c\kappa(\Delta).
\end{eqnarray*}

\noindent\textbf{Proof of Claim \ref{eq:prop_kappa_ii}} 
The proof of the second claim is also straightforward. If $\Delta \leq \Delta'$, then for all $\mu \in \widetilde \cM$, $\sum_x \mu(x)\Delta_x \leq \sum_x \mu(x)\Delta'_x$. Recall that $\mu^{\Delta'} = \argmin_{\mu \in \widetilde \cM}\sum_x \mu(x)\Delta'_x$. Then,
\begin{eqnarray*}
  \kappa(\Delta') &=& \sum_x \mu^{\Delta'}(x)\Delta'_x \geq \sum_x \mu^{\Delta'}(x)\Delta_x \geq \min_{\mu \in \widetilde \cM}\sum_x \mu(x)\Delta_x = \kappa(\Delta).
\end{eqnarray*}

\noindent\textbf{Proof of Claim \ref{eq:prop_kappa_iii}} 
To prove the third claim, note that
\begin{eqnarray*}  
\kappa(\Delta \lor \Delta') &=& \min_{\mu \in \widetilde{\cM}}\sum_x \mu(x)\left(\Delta_x \lor \Delta_x\right)\\
  &\geq &  \min_{\mu \in \widetilde \cM} \left(\sum_x \mu(x)\Delta_x \lor \sum_x \mu(x)\Delta'_x \right) \\
  &\geq & \left(\min_{\mu \in \widetilde \cM} \sum_x \mu(x)\Delta_x\right) \lor \left(\min_{\mu \in \widetilde \cM}\sum_x \mu(x)\Delta'_x\right)\\
  &\geq &\kappa(\Delta)\lor \kappa(\Delta').
\end{eqnarray*}

\noindent\textbf{Proof of Claim \ref{eq:prop_kappa_iv}} Recall that 
$$\kappa(\Delta) =\min_{\mu \in \widetilde \cM}\sum_x \mu(x)\Delta_x.$$
Let us define a sequence $(\mu_n)_{n\in \mathbb{N}} \in \widetilde \cM^\mathbb{N}$ such that 
$\sum_x \mu_n(x)\Delta_x\underset{n\rightarrow\infty}{\rightarrow} \kappa(\Delta),$
and let us denote $\kappa_n = \sum_x \mu_n(x)\Delta_x$. According to Claim  \ref{eq:prop_kappa_ii}, 
we have 
$$\kappa(\Delta) \leq \kappa(\Delta \vee \epsilon) = \min_{\mu \in \widetilde{\cM}}\sum_x \mu(x)\left(\Delta_x \vee \epsilon \right)
  \leq   \sum_x \mu_n(x)\Delta_x + \epsilon  \sum_x \mu_n(x).$$
It follows that  for all $n$,
$$\kappa(\Delta) \leq \liminf_{\epsilon \to 0^+}\kappa(\Delta \vee \epsilon) \leq  \limsup_{\epsilon \to 0^+}\kappa(\Delta \vee \epsilon)\leq \kappa_{n}.$$
Letting $n$ go to infinity, we get that $\lim_{\epsilon\to 0^+} \kappa(\Delta\vee \epsilon)=\kappa(\Delta)$.


\subsubsection{Proof of Lemma \ref{lem:calcul:kappa}}

Setting $\mu\cdot\Delta=(\mu(x)\Delta_{x})_{x\in\cX}$ and
 $$V_{\Delta}(\lambda)=\sum_{x\in\cX} \lambda_{x} \left({\Delta_{x}^{-1/2}x \atop \Delta_{x}^{-1/2}z_x}\right) \left({\Delta_{x}^{-1/2}x \atop \Delta_{x}^{-1/2}z_x}\right)^{\top},$$
 we observe that 
 $V_{\Delta}(\mu\cdot\Delta)=V(\mu)$. Hence, 
 $$\kappa(\Delta)=\min_{\substack{\mu \in \mathcal{M}^+\\ e_{d+1}^{\top}V_{\Delta}(\mu\cdot \Delta)^{+}e_{d+1}\leq 1}}\ \sum_{x\in\cX} (\mu\cdot\Delta)_{x}.$$
 We observe that $e_{d+1}\in \Image(V(\mu))$ is equivalent to $e_{d+1}\in \Image(V_{\Delta}(\mu\cdot \Delta))$.
 Hence, $\mu^{\Delta}\cdot \Delta = \lambda^{\Delta}$ where 
 $$\lambda^{\Delta }\in\argmin_{\substack{\lambda\in\R_{+}^{\cX} \\ e_{d+1}\in \Image(V_{\Delta}(\lambda))\\ e_{d+1}^{\top}V_{\Delta}(\lambda)^{+}e_{d+1}\leq 1}}\ \sum_{x\in\cX} \lambda_{x}.$$
The conclusion then follows by noticing that by homogeneity, $\lambda^{\Delta}=\kappa^{\Delta} \pi^{\Delta}$.


\subsubsection{Proof of Lemma \ref{lem:proba_F}}
Lemma \ref{lem:proba_F} follows directly from Lemmas \ref{lem:control_gamma} and \ref{lem:control_omega}.

\begin{lem}\label{lem:control_gamma}
\begin{equation*}
\mathbb{P}\left(\exists l \geq 1, z\in \{-1,1\} \text{ such that Explore}_l^{(z)} = \text{True}  \text{, and } x\in \mathcal{X}_l^{(z)} \text{ such that } \ \left\vert  \left({\hgamma_l^{(z)} - \gamma^* \atop \homega_l^{(z)} - \omega^*}\right)^{\top} \left({x \atop z_x}\right)\right \vert \geq \epsilon_l\right) \leq  \delta.
\end{equation*}
\end{lem}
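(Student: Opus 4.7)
The plan is a union bound over $(l, z, x)$, combining the variance control of the rounded G-optimal design (Lemma \ref{lem:G-opt}) with the standard Gaussian tail bound.

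First I would fix $l \geq 1$ and $z \in \{-1,+1\}$ and condition on the $\sigma$-algebra $\mathcal{F}_{l-1}$ generated by the algorithm's observations up to the start of phase $l$. Given $\mathcal{F}_{l-1}$, the elimination set $\mathcal{X}_l^{(z)}$, the G-optimal design $\pi_l^{(z)}$, and the sampling counts $\mu_l^{(z)}(\cdot) = \lceil m_l \pi_l^{(z)}(\cdot) \rceil$ with $m_l := 2(d+1)\epsilon_l^{-2}\log(kl(l+1)/\delta)$ are deterministic. On the event $\{\text{Explore}_l^{(z)} = \text{True}\}$ the observations used to form $\hat\theta_l^{(z)}$ are fresh, with independent $\mathcal{N}(0,1)$ noise. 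Hence for every $x \in \mathcal{X}_l^{(z)}$ the scalar $a_x^\top(\hat\theta_l^{(z)} - \theta^*)$ is (conditionally) a centered Gaussian with variance $a_x^\top (V_l^{(z)})^+ a_x$, and Lemma \ref{lem:G-opt} applied with scaling $m_l$ on the G-optimal design $\pi_l^{(z)}$ yields the uniform bound $a_x^\top (V_l^{(z)})^+ a_x \leq (d+1)/m_l = \epsilon_l^2 / \bigl(2 \log(kl(l+1)/\delta)\bigr)$.

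Second, the standard Gaussian tail bound gives
\[
\mathbb{P}\!\left(\left|a_x^\top(\hat\theta_l^{(z)} - \theta^*)\right| \geq \epsilon_l \,\Big|\, \mathcal{F}_{l-1}\right) \leq 2\exp\!\bigl(-\log(kl(l+1)/\delta)\bigr) = \frac{2\delta}{kl(l+1)}.
\]
After integrating out the conditioning, a union bound over $x \in \mathcal{X}_l^{(z)}$ (at most $k$ terms), over $z \in \{-1,+1\}$, and over $l \geq 1$, together with the telescoping identity $\sum_{l \geq 1} \frac{1}{l(l+1)} = 1$, controls the total failure probability by a small absolute multiple of $\delta$; adjusting the constant in $m_l$ (equivalently, absorbing the numerical factor into the logarithmic argument, as the log is taken over $kl(l+1)/\delta$) delivers the stated bound $\delta$.

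The only subtle point, and really the only one worth checking with care, is the conditioning argument: one has to verify that the Gaussian concentration applies inside the conditional expectation at each phase, regardless of the random history of previous eliminations. This is automatic from the algorithm's structure, since the design for phase $l$ is a deterministic function of $\mathcal{F}_{l-1}$ while the phase-$l$ noise variables are independent of $\mathcal{F}_{l-1}$; the OLS estimator $\hat\theta_l^{(z)}$ and its linear projections therefore remain exactly Gaussian conditionally on $\mathcal{F}_{l-1}$, with the claimed variance. Beyond this bookkeeping there is no technical obstacle.
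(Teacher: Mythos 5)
Your proposal is correct and follows essentially the same route as the paper: condition on the phase history so that the design is fixed, observe that $a_x^{\top}(\widehat\theta_l^{(z)}-\theta^*)$ is conditionally centered Gaussian with variance $a_x^{\top}(V_l^{(z)})^{+}a_x\leq \epsilon_l^2/(2\log(kl(l+1)/\delta))$ by Lemma \ref{lem:G-opt}, then apply a Gaussian tail bound and a union bound over $x$, $z$ and $l$. The only difference is bookkeeping: the paper writes out the OLS error explicitly to justify unbiasedness via $V_l^{(z)}(V_l^{(z)})^{+}a_x=a_x$, and your crude tail bound $2e^{-t^2/2}$ leaves a factor $2$ that the sharper Gaussian tail (or a harmless constant adjustment, as you note) removes.
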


\begin{lem}\label{lem:control_omega}
\begin{equation*}
\mathbb{P}\left(\exists l\geq 1 \text{ such that Explore}_l^{(0)} = \text{True and } \left\vert  \homega_l^{(0)}- \omega^*\right \vert \geq \epsilon_l\right) \leq  \delta.
\end{equation*}
\end{lem}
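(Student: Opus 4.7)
The plan is a standard Gaussian concentration argument combined with a union bound over the phases $l\geq 1$.

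\textbf{Step 1: Conditional variance control.} Fix $l\geq 1$ and condition on the history up to the beginning of phase $l$. If $\text{Explore}_l^{(0)}=\text{True}$, then by construction each action $x\in\cX$ is sampled exactly $\mu_l^{(0)}(x)=\lceil m_l\,\widehat\mu_l(x)\rceil$ times, with $m_l=(2/\epsilon_l^2)\log(l(l+1)/\delta)$ and $\widehat\mu_l$ the computed $\widehat\Delta^l$-optimal design. Since $\widehat\mu_l$ is the minimizer in \eqref{eq:kappa_equivalent}, it satisfies $e_{d+1}\in\Image V(\widehat\mu_l)$ and $e_{d+1}^\top V(\widehat\mu_l)^+e_{d+1}\leq 1$. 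Applying Lemma~\ref{lem:c-opt} with $\pi=\widehat\mu_l$ and $m=m_l$ shows that $\widehat\omega_l^{(0)}=e_{d+1}^\top\widehat\theta$ is an unbiased estimator of $\omega^*$ with variance at most $m_l^{-1}\cdot e_{d+1}^\top V(\widehat\mu_l)^+e_{d+1}\leq m_l^{-1}=\epsilon_l^2/(2\log(l(l+1)/\delta))$.

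\textbf{Step 2: Gaussian tail bound for a single phase.} Conditionally on the history up to phase $l$, the design $\widehat\mu_l$ and the allocation $\mu_l^{(0)}$ are deterministic, while the noise terms $\xi_t$ during phase $l$ are i.i.d.\ $\cN(0,1)$ and independent of this history. Hence $\widehat\omega_l^{(0)}-\omega^*$ is a centered Gaussian with variance at most $\sigma_l^2:=\epsilon_l^2/(2\log(l(l+1)/\delta))$, and the standard Gaussian tail inequality yields
\begin{equation*}
\mathbb{P}\bigl(\text{Explore}_l^{(0)}=\text{True},\ |\widehat\omega_l^{(0)}-\omega^*|\geq\epsilon_l\bigr)\leq 2\exp\!\Bigl(-\tfrac{\epsilon_l^2}{2\sigma_l^2}\Bigr)\leq 2\exp\bigl(-\log(l(l+1)/\delta)\bigr)=\frac{2\delta}{l(l+1)}.
\end{equation*}

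\textbf{Step 3: Union bound over $l$.} Summing over $l\geq 1$ and using the telescoping identity $\sum_{l\geq 1}\tfrac{1}{l(l+1)}=1$ gives the bound $2\delta$ (which, up to the constant $2$ absorbed in the proof of Lemma~\ref{lem:proba_F} by an analogous treatment of $\gamma^*$, yields the stated bound $\delta$; equivalently, one can set $m_l=(2/\epsilon_l^2)\log(2l(l+1)/\delta)$).

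\textbf{Main obstacle.} The only subtle point is that $\widehat\mu_l$ is data-dependent, so the variance $e_{d+1}^\top V(\mu_l^{(0)})^+e_{d+1}$ is itself random. This is handled by conditioning on the filtration generated by the first $l-1$ phases: with respect to this filtration $\widehat\mu_l$ is measurable, the phase-$l$ noise is independent of it, and the bound in Step~1 holds almost surely, so the Gaussian tail bound can be applied path-by-path before the union bound in Step~3.
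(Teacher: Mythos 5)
Your proposal is correct and follows essentially the same route as the paper: condition on the (data-dependent but past-measurable) design, invoke Lemma~\ref{lem:c-opt} to get unbiasedness and the variance bound $e_{d+1}^{\top}\big(V_l^{(0)}\big)^{+}e_{d+1}\leq \epsilon_l^2/\big(2\log(l(l+1)/\delta)\big)$, apply a Gaussian tail bound, and union-bound over $l$ via $\sum_{l\geq 1}\tfrac{1}{l(l+1)}=1$. The only discrepancy is your factor of $2$ in Step~2: the paper obtains exactly $\delta/(l(l+1))$ per phase by using the sharper two-sided tail $\mathbb{P}(|Z|\geq t)\leq e^{-t^2/(2\sigma^2)}$ (valid since $\Phi(-u)\leq \tfrac12 e^{-u^2/2}$ for $u\geq 0$), so no adjustment of $m_l$ or absorption elsewhere is needed to reach the stated bound $\delta$.
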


\subsubsection{Proof of Lemma \ref{lem:discard_subopt}}
To prove Lemma \ref{lem:discard_subopt}, we rely on the following key lemma. This lemma proves that on $\overline{\mathcal{F}}$, i.e. when the error bounds hold, the algorithm never eliminates the best action or the best group.

\begin{lem}\label{lem:E}
 On the event $\overline{\mathcal{F}}$, for all $x^*\in \argmax_{x\in \mathcal{X}} x^{\top}\gamma^*$ and all $l$ such that  Explore$_l^{(z_{x^*})} = $ True, $x^* \in \mathcal{X}_{l+1}^{(z_{x^*})}$. Moreover, on the event $\overline{\mathcal{F}}$, for all $l$ such that Explore$_l^{(0)} = $ True, there exists  $x^*\in \argmax_{x\in \mathcal{X}} x^{\top}\gamma^*$ such that $\widehat{z^*}_{l+1} \neq -z_{x^*}$.
\end{lem}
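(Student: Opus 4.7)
My plan is to prove the two statements jointly by induction on the phase index $l$, with the induction hypothesis being that $x^* \in \mathcal{X}_{l}^{(z_{x^*})}$ and $\widehat{z^*}_{l} \neq -z_{x^*}$. The base case $l=1$ is immediate from the initialization ($\mathcal{X}_1^{(z_{x^*})}=\{x:z_x=z_{x^*}\}$ and $\widehat{z^*}_1=0$). For the inductive step, the key algebraic observation is that actions in a same group $z$ only differ on their first $d$ coordinates, so for any $x,x'\in\mathcal{X}_{l}^{(z)}$,
\begin{equation*}
(a_{x'}-a_x)^{\top}\theta^* = (x'-x)^{\top}\gamma^*,
\end{equation*}
which means the bias cancels out within a group and the G-exploration elimination rule can be analyzed as in the standard linear bandit.

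For the first claim, assuming inductively that $x^*\in \mathcal{X}_l^{(z_{x^*})}$ and that Explore$_l^{(z_{x^*})}=$True, I apply the definition of $\overline{\mathcal{F}}$ twice (for $x^*$ and for an arbitrary $x'\in\mathcal{X}_l^{(z_{x^*})}$) to get
\begin{equation*}
(a_{x'}-a_{x^*})^{\top}\hgamma_l^{(z_{x^*})} \leq (a_{x'}-a_{x^*})^{\top}\theta^* + 2\epsilon_l = (x'-x^*)^{\top}\gamma^* + 2\epsilon_l \leq 2\epsilon_l,
\end{equation*}
since $x^*$ is optimal. Taking the maximum over $x'$ yields $\max_{x'}(a_{x'}-a_{x^*})^{\top}\widehat\theta_l^{(z_{x^*})}\leq 2\epsilon_l < 3\epsilon_l$, so the elimination rule of \textsc{G-Exp-Elim} keeps $x^*$ in $\mathcal{X}_{l+1}^{(z_{x^*})}$.

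For the second claim, I first observe that when Explore$_l^{(0)}=$True we must have $\widehat{z^*}_l=0$ (so $\Delta$-Exp-Elim is run) and both Explore$_l^{(z)}=$True for $z\in\{-1,+1\}$, and by the first claim $x^*\in \mathcal{X}_{l+1}^{(z_{x^*})}$. The next step is to control the debiased estimator by writing
\begin{equation*}
\widehat m_{l,x} - x^{\top}\gamma^* = a_x^{\top}\bigl(\widehat\theta_l^{(z_x)}-\theta^*\bigr) - z_x\bigl(\homega_l^{(0)}-\omega^*\bigr),
\end{equation*}
and applying the two bounds from $\overline{\mathcal{F}}$ to conclude $|\widehat m_{l,x}-x^\top\gamma^*|\leq 2\epsilon_l$ for all $x\in\mathcal{X}_{l+1}^{(z)}$, $z\in\{-1,+1\}$. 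From this, $\max_{x\in\mathcal{X}_{l+1}^{(z_{x^*})}}\widehat m_{l,x}\geq \widehat m_{l,x^*} \geq (x^*)^\top\gamma^* - 2\epsilon_l$, while $\max_{x\in\mathcal{X}_{l+1}^{(-z_{x^*})}}\widehat m_{l,x} \leq (x^*)^\top\gamma^* + 2\epsilon_l$ by optimality of $x^*$. Chaining the inequalities gives
\begin{equation*}
\max_{x\in\mathcal{X}_{l+1}^{(-z_{x^*})}}\widehat m_{l,x} - 2\epsilon_l \;\leq\; (x^*)^\top\gamma^* \;\leq\; \max_{x\in\mathcal{X}_{l+1}^{(z_{x^*})}}\widehat m_{l,x} + 2\epsilon_l,
\end{equation*}
so the condition that would set $\widehat{z^*}_{l+1}=-z_{x^*}$ (a strict $4\epsilon_l$ separation in favor of the wrong group) cannot be met, completing the induction.

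The only real obstacle is bookkeeping rather than mathematics: I must carefully go through the branches of Algorithm \ref{alg:FPE} to verify that when Explore$_l^{(z_{x^*})}=$True the algorithm actually updated $\mathcal{X}_{l+1}^{(z_{x^*})}$ via the elimination rule (and likewise that $\widehat{z^*}_{l+1}$ inherits the value $\widehat{z^*}_l\in\{0,z_{x^*}\}$ in the branches where Explore$_l^{(0)}=$False or $\widehat{z^*}_l\neq 0$), so that the induction hypothesis $\widehat{z^*}_l\neq -z_{x^*}$ is preserved across phases where $\Delta$-Exp-Elim is skipped.
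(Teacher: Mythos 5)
Your proof is correct and rests on exactly the same ingredients as the paper's: the error bounds defining $\overline{\mathcal{F}}$, the within-group cancellation of the bias, and the triangle inequalities showing that $x^*$ can be neither $3\epsilon_l$-dominated in its group nor $4\epsilon_l$-dominated across groups. The paper merely writes the argument in the contrapositive (showing that elimination of $x^*$ or of its group at the first such phase forces one of the estimation errors to reach $\epsilon_l$, hence is contained in $\mathcal{F}_l$), whereas you run the same computation as a forward induction; the only detail worth tightening is that the inequalities inherited from $\overline{\mathcal{F}}$ are strict, which is what rules out the boundary case $\max_{x\in\cX_{l+1}^{(-z_{x^*})}}\widehat m_{l,x} = \max_{x\in\cX_{l+1}^{(z_{x^*})}}\widehat m_{l,x} + 4\epsilon_l$ in the group-elimination test.
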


Let $l\geq 1$ be such that Explore$_l^{(z_{x^*})}=$ True. Then, on $\overline{\mathcal{F}}$, $x^* \in \mathcal{X}_{l+1}^{(z_{x^*})}$ by Lemma $\ref{lem:E}$. Moreover, for all $x \in \mathcal{X}_{l+1}^{(z_{x^*})}$, by definition of $\mathcal{X}_{l+1}^{(z_{x^*)}}$, we have that on $\overline{\mathcal{F}}$
\begin{equation*}
    \left(\left({x^* \atop z_{x^*}}\right) - \left({x \atop z_{x^*}}\right)\right)^{\top}\left({\hgamma_l^{(z)}\atop\homega_l^{(z)}} \right)\leq 3\epsilon_l.
\end{equation*}
which implies that
\begin{equation*}
    \left(\left({x^* \atop z_{x^*}}\right) - \left({x \atop z_{x^*}}\right)\right)^{\top}\left({\gamma^*\atop\omega^*} \right)  \leq 3\epsilon_l + \left\vert\left({x^* \atop z_{x^*}}\right)^{\top}\left({\hgamma_l^{(z)} - \gamma^*\atop\homega_l^{(z)}- \omega^*} \right)\right\vert + \left\vert\left({x \atop z_{x^*}}\right)^{\top}\left({\hgamma_l^{(z)} - \gamma^*\atop\homega_l^{(z)}- \omega^*} \right)\right\vert.
\end{equation*}
Thus, on the event $\overline{\mathcal{F}}$, for all $x \in \mathcal{X}_{l+1}^{(z_{x^*})}$
\begin{equation*}
    \left(x^*-x\right)^{\top}\gamma^* < 5\epsilon_l\,,
\end{equation*}
which proves Equation \eqref{eq:subopt_goodz}.
To prove the second claim of Lemma \ref{lem:discard_subopt}, assume that for all $x'\in \argmax_{x\in \mathcal{X}} x^{\top}\gamma^*$, $z_{x'} = z_{x^*}$ (when this does not hold, the second claim follows from Equation \eqref{eq:subopt_goodz}). Now, let $l\geq 1$ be such that Explore$_l^{(-z_{x^*})}=$ True. By Lemma $\ref{lem:E}$, on $\overline{\mathcal{F}}$, $x^* \in \mathcal{X}_{l}^{(z_{x^*})}$ and $\widehat{z^*_l} = 0$. Then, the algorithm is unable to determine the group containing the best set during the phase Exp$_{l-1}^{(0)}$, so there must exist $x' \in \mathcal{X}_{l}^{(-z_{x^*})}$ such that 
\begin{equation*}
    \left({x^* \atop z_{x^*}}\right)^{\top}\left({\hgamma_{l-1}^{(z_{x^*})} \atop \homega_{l-1}^{(z_{x^*})}}\right)\leq \left({x' \atop -z_{x^*}}\right)^{\top}\left({\hgamma_{l-1}^{(-z_{x^*})} \atop \homega_{l-1}^{(-z_{x^*})}}\right) + 2z_{x^*}\homega_{l-1}^{(0)}+ 4\epsilon_{l-1}.
\end{equation*}
It follows that
\begin{equation*}
     \left({x^* - x' \atop 2 z_{x^*}}\right)^{\top}\left({\gamma^* \atop \omega^*}\right) \leq \left({x^* \atop z_{x^*}}\right)^{\top}\left({\gamma^* - \hgamma_{l-1}^{(z_{x^*})} \atop \omega^* - \homega_{l-1}^{(z_{x^*})}}\right)  + \left({x' \atop -z_{x^*}}\right)^{\top}\left({\hgamma_{l-1}^{(-z_{x^*})}-\gamma^* \atop \homega_{l-1}^{(-z_{x^*})}-\omega^*}\right)+ 2z_{x^*}\homega_{l-1}^{(0)}+ 4\epsilon_{l-1}.
\end{equation*}
On $\overline{\mathcal{F}}$, this implies that 
\begin{equation*}
    \left({x^* - x' \atop 2 z_{x^*}}\right)^{\top}\left({\gamma^* \atop \omega^*}\right) < 2z_{x^*}\homega_{l-1}^{(0)}+ 6\epsilon_{l-1}
\end{equation*}
so
\begin{equation}\label{eq:trans-gr}
    \left(x^* - x'\right)^{\top}\gamma^* \leq 2z_{x^*}\left(\homega^{(0)}_{l-1} - \omega^* \right)+ 6\epsilon_{l-1} < 8\epsilon_{l-1} = 16\epsilon_{l}.
\end{equation}
Moreover, for all $x\in\cX_{l+1}^{(-z_{x^*})}$ we have $(a_{x'}-a_{x})^\top \widehat\theta_{l}^{(-z_{x^*})} \leq 3\epsilon_{l}$, so following the same lines as for the first claim, we get $(x'-x)^{\top}\gamma^*<5\epsilon_{l}$. Combining this bound with (\ref{eq:trans-gr}), we get 
$$\max_{x\in\cX_{l+1}^{(-z_{x^*})}}(x^*-x)^\top \gamma^* < 21 \epsilon_{l}.$$
This concludes the proof of Lemma \ref{lem:discard_subopt}.


\subsubsection{Proof of Lemma \ref{lem:time}}
For $z \in \{-1, +1\}$ and $l>0$,
\begin{equation*}
    \sum_{x} \mu^{(z)}_{l}(x) \leq \sum_{x} \frac{2(d+1)\pi^{(z)}_{l}(x)}{\epsilon_l^2} \log\left(\frac{kl(l+1)}{ \delta}\right) + \vert\supp(\pi^{(z)}_l) \vert.
\end{equation*}
Now, $\supp(\pi^{(z)}_l)\leq \frac{(d+1)(d+2)}{2}$ and $\sum_x \pi^{(z)}_l(x) = 1$, so 
\begin{equation*}
    \sum_{x} \mu^{(z)}_{l}(x) \leq \frac{2(d+1)}{\epsilon_l^2} \log\left(\frac{kl(l+1)}{ \delta}\right) + \frac{(d+1)(d+2)}{2}
\end{equation*}
which proves the first claim of Lemma \ref{lem:time}. 

To prove the second claim, we bound the regret for bias estimation at stage $l$ as follows. On $\overline{\mathcal F}$, we have $\Delta_{x}\leq \widehat \Delta_{x}^l$ for all $x\in\cX$ and $l\geq 1$, so
\begin{align*}
\sum_{x\in \cX} \mu_{l}^{(0)}(x) \Delta_{x} & \ \leq\ \sum_{x\in \cX} \mu_{l}^{(0)}(x) \widehat{\Delta}_{x}^l.
\end{align*}
Recall that $\hat\mu_{l}$ is the $\widehat{\Delta}^l$-optimal design, and that for all $x \in \cX$, $\mu^{(0)}_{l}(x) = \lceil\frac{2\hat\mu_{l}(x)}{\epsilon_l^2}\log\left(\frac{l(l+1)}{\delta}\right) \rceil$. Since $\widehat{\Delta}_{x}^l\leq 2$ for all $x\in \cX$, we have
\begin{equation*}
    \sum_{x\in\cX} \mu^{(0)}_{l}(x)\widehat{\Delta}_{x}^l \leq \sum_{x\in\cX} \frac{2\hat \mu_{l}(x)}{\epsilon_l^2} \log\left(\frac{l(l+1)}{ \delta}\right)\widehat{\Delta}_{x}^l + 2 \vert\supp(\mu^{(0)}_l) \vert
\end{equation*}
and $|\supp(\mu^{(0)}_l)|\leq d+1$, so 
\begin{equation*}
    \sum_{x} \mu^{(0)}_{l}(x) \Delta_{x} \leq  \frac{2}{\epsilon_l^2} \log\left(\frac{l(l+1)}{ \delta}\right)\sum_{x\in \cX} \hat \mu_{l}(x) \widehat{\Delta}_{x}^l+ 2(d+1).
\end{equation*}
By definition of $\hat \mu_{l}(x)$, we have that
\begin{align*}
\sum_{x\in \cX} \widehat \mu_{l}(x) \widehat\Delta_{x}^l =\kappa(\widehat \Delta^{l}).
\end{align*}
It follows that, on $\overline{\mathcal F}$, 
$$ \sum_{x} \mu^{(0)}_{l}(x) \Delta_{x} \leq \sum_{x} \mu^{(0)}_{l}(x) \widehat{\Delta}^l_{x}\leq \frac{2}{\epsilon_l^2} \log\left(\frac{l(l+1)}{ \delta}\right) \kappa(\widehat \Delta^{l})+2(d+1).$$

%

\subsubsection{Proof of Lemma \ref{lem:kappa-l}}

For the first claim, we rely on the next lemma.
\begin{lem}\label{lem:kappa-l:tech}
Let us set $\ell_{x}=\max\ac{l\geq 1: x\in\cX_{l}^{(-1)}\cup\cX_{l}^{(1)}}$. 
On $\overline{\mathcal F}$, we have for any $l\geq 1$
\begin{enumerate}
\item $\widehat \Delta^l_{x} \leq \Delta_{x} +16\epsilon_{l}$ for all $x\in\cX_{l}^{(-1)}\cup\cX_{l}^{(1)}$ (i.e. for all $x$ such that $l\leq \ell_{x}$);
\item if $\Delta_{x}\geq 21\epsilon_{l}$ then $\ell_{x} \leq l$;
\item $\epsilon_{\ell_{x}}<\Delta_{x}$ for all $x\in\cX$.
\end{enumerate}
\end{lem}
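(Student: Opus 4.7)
The proof of the three claims relies on the accuracy of the estimators $\widehat\theta_{l}^{(z)}$ and $\widehat\omega_{l}^{(0)}$ guaranteed by $\overline{\mathcal{F}}$, together with Lemmas \ref{lem:E} and \ref{lem:discard_subopt}. The key preliminary observation is that, writing $\widehat m_{l,x}=a_x^\top \widehat\theta_{l}^{(z_x)}-z_x\widehat\omega_{l}^{(0)}$ and using the identity $a_x^\top \theta^*-z_x\omega^*=x^\top\gamma^*$, the triangle inequality yields $|\widehat m_{l,x}-x^\top\gamma^*|\leq 2\epsilon_l$ on $\overline{\mathcal F}$ for every $x\in\mathcal X^{(z_x)}_{l}$ at which both the within-group and the bias estimators were just computed.

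For Claim 1, I plug this bound into the update rule $\widehat\Delta^l_x=\bigl(\max_{x'\in \mathcal{X}^{(-1)}_{l}\cup \mathcal{X}^{(1)}_{l}}\widehat m_{l-1,x'}-\widehat m_{l-1,x}+4\epsilon_{l-1}\bigr)\wedge 2$. Upper bounding the maximum by $\max_{x'\in \mathcal X}x'^\top\gamma^*+2\epsilon_{l-1}$ and lower bounding $\widehat m_{l-1,x}\geq x^\top\gamma^*-2\epsilon_{l-1}$ gives $\widehat\Delta^l_x\leq\Delta_x+8\epsilon_{l-1}=\Delta_x+16\epsilon_l$; the initial case $l=1$ where $\widehat\Delta^1_x=2\leq 16\epsilon_1$ is automatic. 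Claim 2 is the contrapositive of Lemma \ref{lem:discard_subopt}: that lemma shows that on $\overline{\mathcal F}$ every $x\in \mathcal{X}^{(-1)}_{l+1}\cup \mathcal{X}^{(1)}_{l+1}$ satisfies $\Delta_x<42\epsilon_{l+1}=21\epsilon_l$ (using the worse of the two reported constants, from the disadvantaged group). Hence $\Delta_x\geq 21\epsilon_l$ forces $x\notin \mathcal{X}^{(-1)}_{l+1}\cup \mathcal{X}^{(1)}_{l+1}$, i.e. $\ell_x\leq l$.

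For Claim 3 I argue by cases on why $x$ disappears after round $\ell_x$. When $x$ is pruned by the G-Exp-Elim routine in phase $\ell_x$, the elimination test furnishes some $x'\in \mathcal{X}^{(z_x)}_{\ell_x}$ with $(a_{x'}-a_x)^\top\widehat\theta_{\ell_x}^{(z_x)}>3\epsilon_{\ell_x}$; since $a_{x'}$ and $a_x$ share their last coordinate, $(a_{x'}-a_x)^\top\theta^*=(x'-x)^\top\gamma^*$, and the concentration bound on $\overline{\mathcal F}$ gives $(x'-x)^\top\gamma^*>3\epsilon_{\ell_x}-2\epsilon_{\ell_x}=\epsilon_{\ell_x}$, hence $\Delta_x>\epsilon_{\ell_x}$. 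If instead the whole group of $x$ is discarded at round $\ell_x$ by the $\Delta$-Exp-Elim step, Lemma \ref{lem:E} forces the eliminated group to be $-z_{x^*}$, so that $\Delta_x\geq\Delta_{\neq}$; combining the triggering condition $4\epsilon_{\ell_x}\leq \max_{\mathcal X^{(z_{x^*})}_{\ell_x}}\widehat m_{\ell_x,\cdot}-\max_{\mathcal X^{(-z_{x^*})}_{\ell_x}}\widehat m_{\ell_x,\cdot}$ with the $2\epsilon_{\ell_x}$-accuracy of $\widehat m_{\ell_x,\cdot}$ and the fact (Lemma \ref{lem:E}) that $x^*\in\mathcal X^{(z_{x^*})}_{\ell_x}$ controls $\epsilon_{\ell_x}$ in terms of the true between-group separation, from which $\epsilon_{\ell_x}<\Delta_x$ follows.

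The main obstacle is this last case in Claim 3: the triggering event on its own only certifies a non-negative between-group gap, so one must chain it with the invariant that $x^*$ is kept in the winning active set and with Claim 2 applied at round $\ell_x-1$ (to rule out $\Delta_x$ being already detectable earlier, which would have ejected $x$ sooner) in order to extract a usable lower bound on the true gap in terms of $\epsilon_{\ell_x}$. Bookkeeping the factor $16$ in Claim 1, which arises precisely because $\widehat\Delta^l$ is built from estimators computed one phase earlier so that $\epsilon_{l-1}=2\epsilon_l$, is the other point where carelessness would cost a constant.
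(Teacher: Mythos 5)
Your treatments of Claims 1 and 2 coincide with the paper's: Claim 1 is the same direct computation from the update rule $\widehat\Delta^{l}_x=\bigl(\max_{x'}\widehat m_{l-1,x'}-\widehat m_{l-1,x}+4\epsilon_{l-1}\bigr)\wedge 2$ using $|\widehat m_{l-1,x}-x^\top\gamma^*|\leq 2\epsilon_{l-1}$ on $\overline{\mathcal F}$ (giving $8\epsilon_{l-1}=16\epsilon_l$), and Claim 2 is exactly the contrapositive of Lemma \ref{lem:discard_subopt} with the constant $42\epsilon_{l+1}=21\epsilon_l$. For Claim 3, your first case (elimination by the within-group test, giving $3\epsilon_{\ell_x}<(a_{x'}-a_x)^\top\widehat\theta_{\ell_x}^{(z_x)}\leq \Delta_x+2\epsilon_{\ell_x}$) is precisely the paper's entire proof of that claim; the paper does not consider any other mechanism by which $x$ can leave the active sets. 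Your second case (the whole group being discarded by $\Delta$-Exp-Elim) is therefore extra relative to the paper, and as you yourself concede, your sketch of it does not close: the triggering condition $\max\widehat m^{(z_{x^*})}\geq\max\widehat m^{(-z_{x^*})}+4\epsilon_l$ combined with the $2\epsilon_l$-accuracy of the $\widehat m$'s only certifies that the true between-group gap is non-negative, and chaining in Lemma \ref{lem:E} or Claim 2 at round $\ell_x-1$ does not manufacture the strict inequality $\epsilon_{\ell_x}<\Delta_x$ for, say, the best action of the discarded group when $\Delta_{\neq}$ is small compared to $\epsilon_{\ell_x}$. So that sub-case remains unproved in your write-up — but it is equally unaddressed in the paper, which implicitly restricts to actions removed by the G-elimination test (the only situation in which Claim 3 is actually invoked, since $\kappa(\widehat\Delta^l)$ is only used while both groups are still active). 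In short: your proof reproduces the paper's argument wherever the paper gives one, and the loose end you flag is a loose end of the original proof, not a defect introduced by you.
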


Lemma \ref{lem:kappa-l} relies on the following remarks : if $\Delta, \Delta'$ are such that $\Delta_{x}\leq \Delta'_{x}$ for all $x\in\cX$, then by Lemma \ref{lem:prop_kappa} \eqref{eq:prop_kappa_ii}, $\kappa(\Delta)\leq \kappa(\Delta')$. Let us now prove that for all $l \geq 1$ and all $x\in\cX$, $\widehat \Delta_{x}^l \leq 513(\Delta \lor \epsilon_l)$.

\noindent{\bf Case $\epsilon_{l}\geq \Delta_{x}$.} On $\cbF$, we have $l\leq \ell_{x}-1$ according to the third claim of Lemma \ref{lem:kappa-l:tech}. So, on $\cbF$,
$$\widehat \Delta^l_{x} \leq \Delta_{x} +16\epsilon_{l} \leq 17 (\Delta_{x} \vee\epsilon_{l}).$$

\noindent{\bf Case $\epsilon_{l}< \Delta_{x}$.} Then, on $\cbF$, we have $32 \epsilon_{l+5} < \Delta_{x}$ and so $l+5\geq \ell_{x}$ according to the second claim of Lemma~\ref{lem:kappa-l:tech}. Hence, on $\cbF$,  according to  Lemma~\ref{lem:kappa-l:tech}, we have
\begin{align*}
\widehat \Delta_{x}^l &\leq \max_{k=0,\ldots,5} \widehat \Delta_{x}^{\ell_{x}-k} \leq \Delta_{x}+16 \epsilon_{\ell_{x}-5}\\
&\leq \Delta_{x}+512 \epsilon_{\ell_{x}} \leq 513 \Delta_{x}.
\end{align*}
Thus, for all $l \geq 1$ and all $x\in\cX$, 
\begin{equation*}
    \widehat \Delta_{x}^l \leq 513(\Delta \lor \epsilon_l).
\end{equation*} 

Now, let $\widetilde \cM = \left\{\mu \in \cM^{\cX}_{e_{d+1}} : e_{d+1}^{\top}V(\mu)^+e_{d+1} \geq 1\right\}$ the measures $\mu$ admissible for estimating $\omega^*$ with a precision level $1$. Note that for all $a, b, c>0$, 
\begin{equation}\label{eq:ineq}
    (1+ab^{-1})(c\vee b) = (c + cab^{-1})\vee(a + b)\geq c \vee(a+b)\geq c\vee a.
\end{equation} 
Using Equation \eqref{eq:ineq} with $a= \Delta_{x}$, $b = \tau$ and $c = \epsilon$, we see that
\begin{eqnarray*}
  \kappa(\Delta \vee \epsilon) &=&\min_{\mu \in \widetilde \cM}\sum_x \mu(x)(\Delta_x\vee \epsilon) \leq (1+\epsilon/\tau)  \min_{\mu \in \widetilde \cM}\sum_x \mu(x)(\Delta_x \vee \tau) =  (1+\epsilon/\tau)\kappa(\Delta \vee \tau).
\end{eqnarray*}
Using Lemma \ref{lem:prop_kappa} together with $\widehat \Delta_{x}^l \leq 513(\Delta \lor \epsilon_l)$, we find that
\begin{eqnarray*}
  \kappa(\widehat \Delta_{x}^l) \leq 513\kappa(\Delta \lor \epsilon_l) \leq 513(1+\epsilon_l/\tau)\kappa(\Delta \vee \tau).
\end{eqnarray*}
This proves the first claim of Lemma \ref{lem:kappa-l}.
\bigskip

To prove the second claim, we use Lemma \ref{lem:prop_kappa} and the fact that for all $x$, $\widehat{\Delta}_{x}^l \geq \epsilon_l$. Moreover, on $\overline{\mathcal{F}}$, $\widehat \Delta_{x}^l \geq \Delta_x$ for all $x\in \cX$. Then, $\kappa(\widehat \Delta)\geq \kappa(\epsilon_l \lor \Delta)$ by Lemma \ref{lem:prop_kappa} \eqref{eq:prop_kappa_iii}.


\subsubsection{Proof of Lemmas \ref{lem:discard_z} }
To prove Lemma \ref{lem:discard_z}, let us consider $l$ such that $\epsilon_l \leq \frac{\Delta_{\neq}}{8}$. According to Lemma \ref{lem:E}, on $\overline{\mathcal{F}}$ we know that $\widehat{z^*_{l}} \neq -z_{x^*}$. 
When $\widehat{z^*}_{l} = z_{x^*}$, then we also have $\widehat{z^*}_{l+1} = z_{x^*}$ and the conclusion follows immediately.
Let us consider now the case where $\widehat{z^*_{l}}=0$. By definition of $\Delta_{\neq}$, for all $x' \in \mathcal{X}_{l+1}^{(-z_{x^*})}$,
\begin{equation*}
    \left(x^* - x'\right)^{\top}\gamma^* \geq\Delta_{\neq}.
\end{equation*}
This implies that 
\begin{eqnarray*}
        \left({x^* \atop z_{x^*}}\right)^{\top}\left({\hgamma_l^{(z_{x^*})} \atop \homega_l^{(z_{x^*})}}\right) - z_{x^*}\homega_l^{(0)} &\geq&\underset{x \in\mathcal{X}_{l+1}^{(-z_{x^*})}}{\max} \left({x \atop -z_{x^*}}\right)^{\top}\left({\hgamma_l^{(-z_{x^*})} \atop \homega_l^{(-z_{x^*})}}\right) +z_{x^*}\homega_l^{(0)}\\
        &&  + \left({x^* \atop z_{x^*}}\right)^{\top}\left({\hgamma_l^{(z_{x^*})}-\gamma^* \atop \homega_l^{(z_{x^*})}-\omega^*}\right)  + \underset{x \in\mathcal{X}_{l+1}^{(-z_{x^*})}}{\min} \left({x \atop -z_{x^*}}\right)^{\top}\left({\gamma^*-\hgamma_l^{(-z_{x^*})} \atop \omega^*-\homega_l^{(-z_{x^*})}}\right)  \\
        && + \Delta_{\neq}+ 2z_{x^*}\left(\omega^*-\homega_l^{(0)}\right).
\end{eqnarray*}
On $\overline{\mathcal{F}}$, it follows that
\begin{eqnarray*}
        \left({x^* \atop z_{x^*}}\right)^{\top}\left({\hgamma_l^{(z_{x^*})} \atop \homega_l^{(z_{x^*})}}\right) - z_{x^*}\homega_l^{(0)} -2\epsilon_{l}&\geq&\underset{x \in\mathcal{X}_{l+1}^{(-z_{x^*})}}{\max} \left({x \atop -z_{x^*}}\right)^{\top}\left({\hgamma_l^{(-z_{x^*})} \atop \homega_l^{(-z_{x^*})}}\right) +z_{x^*}\homega_l^{(0)} - 6\epsilon_l + \Delta_{\neq}.
\end{eqnarray*}
When $\Delta_{\neq} \geq 8\epsilon_l$, this implies that $\widehat{z^*_{l+1}} = z_{x^*}$.



\subsubsection{Proof of Lemmas \ref{lem:kappa_2} and \ref{lem:kappa}}
We prove Lemma \ref{lem:kappa_2}. The proof of Lemma \ref{lem:kappa} follows by noticing that the two actions sets are equal up to a permutation of the direction of some basis vectors. To prove Lemma \ref{lem:kappa}, we rely on Elfving's characterization of $c$-optimal design, given in Theorem \ref{thm:Elfving}. Theorem \ref{thm:Elfving} shows that for $\pi\in \mathcal{P}^{\{1,.., d+1\}}$ to be $e_{d+1}$-optimal, there must exist $t>0$ and $\zeta \in \{-1,+1\}^{d+1}$ such  that 

\begin{eqnarray*}
\underset{1 \leq i \leq d+1}{\sum}\pi_{i} &=& 1\\
0 &=& \pi_{1}\zeta_{1} - (1- \frac{2}{\sqrt{\kappa_*}+1})\pi_{{d+1}}\zeta_{{d+1}}\\
\forall i \in \{2, ..., d\}, \ 0 &=& \pi_{i}\zeta_{i}\\
t &=& \underset{1 \leq i \leq \lfloor d/2\rfloor }{\sum}\pi_{i}\zeta_{i} -\underset{\lfloor d/2\rfloor +1 \leq i \leq d+1}{\sum}\pi_{i}\zeta_{i}.
\end{eqnarray*}
Solving this system, we find that $t^{-2} = \kappa_*$. Note  that the unicity of the solution for the corresponding probability measure $\pi$ guarantees that $te_{d+1}$ belongs to the boundary of $\mathcal{S}$.

\subsubsection{Proof of Lemma \ref{lem:borne_inf_kappa}}
For a given parameter $\gamma^*$, let us denote by $\Delta_i$ the gap corresponding to the action $i$. To compute $\kappa(\Delta)$, we could want to rely on Lemma \ref{lem:calcul:kappa} to find the $\Delta$-optimal design, corresponding to the $e_{d+1}$-optimal design on the rescaled features $\Delta_x^{-1/2}\left({x \atop z_x}\right)$. Theorem \ref{thm:Elfving} indeed allows us to compute such a design, as seen in the proof of Lemma \ref{lem:kappa_2}. Unfortunately, we cannot rescale the features using the true gaps, since $\Delta_{x^*}=0$. To circumvent this problem, we rely on the following reasoning :
\begin{enumerate}
    \item We use Lemma \ref{lem:calcul:kappa} and Theorem \ref{thm:Elfving} to compute the design $\mu^{\Delta \vee \epsilon}$ for $\epsilon \in (0, \Delta_{\min})$; and the corresponding regret $\kappa(\Delta \vee \epsilon)$;
    \item We find the value of $\kappa(\Delta)$ by noticing that $\epsilon \mapsto \kappa(\Delta \vee \epsilon)$ is continuous at 0.
\end{enumerate}

For $\epsilon\in (0,\Delta_{\min})$, define $\overline{\Delta} = \Delta \vee \epsilon$, and $\overline{x} = \overline{\Delta}_x^{-1/2}x$. Let $\overline{\pi}$ denote the $e_{d+1}$-optimal design for the rescaled features $\overline{x}$, and let $\overline{\kappa_*}$ denote its variance. Then, Lemma \ref{lem:calcul:kappa} ensures that $\kappa(\overline{\Delta}) = \overline{\kappa_*}$.

Now, Theorem \ref{thm:Elfving} shows that there exists $\zeta \in \{-1,+1\}^{d+1}$ such  that 
\begin{eqnarray*}
\underset{1 \leq i \leq d+1}{\sum}\overline{\pi}_{i} &=& 1\\
0 &=& \overline{\pi}_{1}\zeta_{1}\overline{\Delta}_1^{-1/2} - (1- \frac{2}{\sqrt{\kappa_*}+1})\overline{\pi}_{{d+1}}\zeta_{{d+1}}\overline{\Delta}_{d+1}^{-1/2}\\
\forall i \in \{2, ..., d\}, \ 0 &=& \overline{\pi}_{i}\zeta_{i}\overline{\Delta}^{-1/2}_{i}\\
\overline{\kappa_*}^{-1/2} &=& \underset{1 \leq i \leq \lfloor d/2\rfloor }{\sum}\overline{\pi}_{i}\zeta_{i}\overline{\Delta}^{-1/2}_{i} -\underset{\lfloor d/2\rfloor +1 \leq i \leq d+1}{\sum}\overline{\pi}_{i}\zeta_{i}\overline{\Delta}^{-1/2}_{i}
\end{eqnarray*}
and $\overline{\kappa_*}^{-1/2}e_{d+1}$ belongs to the boundary of $\mathcal{S}$.
Solving this system, we find that $$\kappa(\overline{\Delta})^{-1/2} = \overline{\kappa_*}^{-1/2} = \frac{\left(\frac{2}{\sqrt{\kappa_*}+1}\right)\overline{\Delta}^{-1/2}_{d+1}}{1 + \left(1-\frac{2}{\sqrt{\kappa_*}+1}\right)\overline{\Delta}^{-1/2}_{d+1}\overline{\Delta}_1^{1/2}}.$$ As in Lemma \ref{lem:kappa_2}, the unicity of the solution for the corresponding probability measure $\overline{\pi}$ guarantees that $\overline{\kappa_*}^{-1/2}e_{d+1}$ belongs to the boundary of the Elfving's set. Now, $\epsilon \leq \Delta_{\min}$, so 
$$\kappa(\overline{\Delta})^{-1/2}= \kappa(\Delta \vee \epsilon)^{-1/2}= \frac{\left(\frac{2}{\sqrt{\kappa_*}+1}\right){\Delta}^{-1/2}_{d+1}}{1 + \left(1-\frac{2}{\sqrt{\kappa_*}+1}\right){\Delta}^{-1/2}_{d+1}{\epsilon}^{1/2}}.$$

The fourth claim of Lemma \ref{lem:prop_kappa} ensures that $ \kappa(\Delta \vee \epsilon) \underset{\epsilon \rightarrow 0}{\rightarrow} \kappa(\Delta)$. Therefore, 
$$\kappa(\Delta) = \underset{\epsilon \rightarrow 0}{\text{lim}} \left(\frac{\left(\frac{2}{\sqrt{\kappa_*}+1}\right){\Delta}^{-1/2}_{d+1}}{1 + \left(1-\frac{2}{\sqrt{\kappa_*}+1}\right){\Delta}^{-1/2}_{d+1}{\epsilon}^{1/2}}\right)^{-2} = \frac{(\sqrt{\kappa_*}+1)^2{\Delta}_{d+1}}{4}.$$


\subsubsection{Proof of Lemma \ref{lem:control_gamma}}
Recall that $\xi_t = y_t - x_{t}^{\top}\gamma^* - z_{x_t}\omega^*$. For $l \geq 0$ and $z\in \{-1,+1\}$, when  $\text{ Explore}_l^{(z)} = \text{True}$, the least square estimator $\left({\hgamma_l^{(z)} \atop \homega_l^{(z)}}\right)$ is given by
\begin{eqnarray*}
    \left({\hgamma_l^{(z)} \atop \homega_l^{(z)}}\right) &=& \left(V^{(z)}_l\right)^{+} \sum_{t \in \text{Exp}_l^{(z)}}\left(\left({x_t \atop z_{x_t}}\right)^{\top}\left({\gamma^* \atop \omega^*}\right)+ \xi_t\right)\left({x_t \atop z_{x_t}}\right)\\
    &=& \left(V^{(z)}_l\right)^{+} \left(V^{(z)}_l\right)   \left({\gamma^* \atop \omega^*}\right)  +\left(V^{(z)}_l\right)^{+}  \sum_{t \in \text{Exp}_l^{(z)}} \xi_t\left({x_t \atop z_{x_t}}\right),
\end{eqnarray*}
where $\left(V^{(z)}_l\right)^{+}$ is a generalized inverse of $V^{(z)}_l$. 
Since $V^{(z)}_l\left(V^{(z)}_l\right)^+V^{(z)}_l=V^{(z)}_l$, multiplying the left and right hand side of the last equation by $V^{(z)}_l$, we find that
\begin{eqnarray}\label{eq:def_error_gamma}
    V^{(z)}_l\left({\hgamma_l^{(z)} - \gamma^* \atop \homega_l^{(z)} - \omega^*}\right) &=&  V^{(z)}_l\left(V^{(z)}_l\right)^{+}\sum_{t \in \text{Exp}_l^{(z)}} \xi_t \left({x_t \atop z_{x_t}}\right).
\end{eqnarray}
By Lemma~\ref{lem:G-opt}, for all $x \in \mathcal{X}_{l}^{(z)}$, $\left({x \atop z_x}\right) \in \Image\left(V^{(z)}_l\right)$,  so 
 \begin{equation}\label{eq:Image_inversible}
     V^{(z)}_l\left(V^{(z)}_l\right)^{+}\left({x \atop z_x}\right) = \left({x \atop z_x}\right).
 \end{equation} Then, 
 \begin{eqnarray*}
\left({\hgamma_l^{(z)} - \gamma^* \atop \homega_l^{(z)} - \omega^*}\right)^{\top} \left({x \atop z_x}\right) &=&  \left({\hgamma_l^{(z)} - \gamma^* \atop \homega_l^{(z)} - \omega^*}\right)^{\top}V^{(z)}_l \left(V^{(z)}_l\right)^{+} \left({x \atop z_x}\right) \\
 &=&  \sum_{t \in \text{Exp}_l^{(z)}} \left({x_t \atop z_{x_t}}\right)^{\top} \left(V^{(z)}_l\right)^{+}V^{(z)}_l \left(V^{(z)}_l\right)^{+}\left({x \atop z_x}\right) \xi_t\\
  &=&  \sum_{t \in \text{Exp}_l^{(z)}} \left({x_t \atop z_{x_t}}\right)^{\top} \left(V^{(z)}_l\right)^{+}\left({x \atop z_x}\right) \xi_t,
\end{eqnarray*}
where the first and third lines follow from Equation \eqref{eq:Image_inversible}, and the second line follows from Equation \eqref{eq:def_error_gamma}. By definition of our algorithm, conditionally on $\mathcal{X}_l^{(z)}$ and $\text{ Explore}_l^{(z)} = \text{True}$, the variables $\left(\xi_t \right)_{t \in \text{Exp}^{(z)}_l}$ are independent centered normal gaussian variables. Then, 

\begin{eqnarray*}
    &\mathbb{P}_{\vert \mathcal{X}_l^{(z)}, \text{ Explore}_l^{(z)} = \text{True}}\left(\left\vert \left({\hgamma_l^{(z)} - \gamma^* \atop \homega_l^{(z)} - \omega^*}\right)^{\top} \left({x \atop z_x}\right)\right \vert \geq \sqrt{2\sum_{t \in \text{Exp}_l^{(z)}}\left(\left({x_t \atop z_{x_t}}\right)^{\top} \left(V^{(z)}_l\right)^{+}\left({x \atop z_x}\right)\right) ^2\log\left(\frac{kl(l+1)}{ \delta}\right)}\right) \leq \frac{ \delta}{kl(l+1)}.
\end{eqnarray*}
Expanding $\left(\left({x_t \atop z_{x_t}}\right)^{\top} \left(V^{(z)}_l\right)^{+}\left({x \atop z_x}\right)\right) ^2 = \left({x \atop z_x}\right)^{\top}\left(V^{(z)}_l\right)^{+}\left({x_t \atop z_{x_t}}\right)\left({x_t \atop z_{x_t}}\right)^{\top}\left(V^{(z)}_l\right)^{+}\left({x \atop z_x}\right)$, and using the definition of $V^{(z)}_l$, we find that
\begin{eqnarray*}
     &\mathbb{P}_{\vert \mathcal{X}_l^{(z)}, \text{ Explore}_l^{(z)} = \text{True}}\left(\left\vert \left({\hgamma_l^{(z)} - \gamma^* \atop \homega_l^{(z)} - \omega^*}\right)^{\top} \left({x \atop z_x}\right)\right \vert \geq \sqrt{2\left({x \atop z_x}\right)^{\top}\left(V^{(z)}_l\right)^{+} V^{(z)}_l\left(V^{(z)}_l\right)^{+}\left({x \atop z_x}\right)\log\left(\frac{kl(l+1)}{ \delta}\right)}\right) \leq \frac{ \delta}{kl(l+1)}
\end{eqnarray*}
which in turn implies (using Equation \eqref{eq:Image_inversible})
\begin{eqnarray*}
&\mathbb{P}_{\vert \mathcal{X}_l^{(z)}, \text{ Explore}_l^{(z)} = \text{True}}\left(\left\vert \left({\hgamma_l^{(z)} - \gamma^* \atop \homega_l^{(z)} - \omega^*}\right)^{\top} \left({x \atop z_x}\right)\right \vert \geq \sqrt{2\left \Vert \left({x \atop z_x}\right)\right \Vert_{\left(V^{(z)}_l\right)^{+}}^2\log\left(\frac{kl(l+1)}{ \delta}\right)}\right) \leq \frac{ \delta}{kl(l+1)}
\end{eqnarray*}
Now, using Lemma \ref{lem:G-opt} and the definition of $\mu_{l}^{z}$, we see that for all $x \in \cX_{l}^{(z)}$, 
\begin{equation*}
    \left({x \atop z_{x}}\right)^{\top}\left(V^{(z)}_l\right)^{+} \left({x \atop z_{x}}\right)\leq \frac{\epsilon_l^2}{2\log\left(kl(l+1)/ \delta\right)}.
\end{equation*}
Finally, for all $x \in \mathcal{X}_l^{(z)}$,
\begin{eqnarray*}
\lefteqn{\mathbb{P}_{\vert \mathcal{X}_l^{(z)}, \text{ Explore}_l^{(z)} = \text{True}}\left(\left\vert \left({\hgamma_l^{(z)} - \gamma^* \atop \homega_l^{(z)} - \omega^*}\right)^{\top} \left({x \atop z_x}\right)\right \vert \geq \epsilon_{l}\right)}\\
&\leq &\mathbb{P}_{\vert \mathcal{X}_l^{(z)}, \text{ Explore}_l^{(z)} = \text{True}}\left(\left\vert \left({\hgamma_l^{(z)} - \gamma^* \atop \homega_l^{(z)} - \omega^*}\right)^{\top} \left({x \atop z_x}\right)\right \vert \geq \sqrt{2\left \Vert \left({x \atop z_x}\right)\right \Vert_{\left(V^{(z)}_l\right)^{+}}^2\log\left(\frac{kl(l+1)}{ \delta}\right)}\right) \leq \frac{ \delta}{kl(l+1)}\,.
\end{eqnarray*}
Integrating out the conditioning on the value of $\mathcal{X}^{(z)}_l$ and $\text{ Explore}_l^{(z)}$ and using a union bound yields the desire result.


\subsubsection{Proof of Lemma \ref{lem:control_omega}}
The proof is similar to that of Lemma \ref{lem:control_gamma}. If $\text{Explore}_l^{(0)} = \text{True}$, then $\homega_l$ is defined as

\begin{eqnarray*}
    \homega_l^{(0)}&=& e_{d+1}^{\top}\left(V^{(0)}_l\right)^{+} \sum_{t \in \text{Exp}_l^{(0)}}\left(\left({x_t \atop z_{x_t}}\right)^{\top}\left({\gamma^* \atop \omega^*}\right)+ \xi_t\right)\left({x_t \atop z_{x_t}}\right).
\end{eqnarray*}
Since $\left({x \atop z_x}\right)_{x \in \mathcal{X}}$ spans $\mathbb{R}^{d+1}$, $\mu$ is finite and $e_{d+1} \in \Image\left(V(\hat\mu_{l})\right)$. Then, according to Lemma~\ref{lem:c-opt}, for every round $l$, we have $e_{d+1} \in \Image\left(V^{(0)}_l\right)$, so $V^{(0)}_l\left(V^{(0)}_l\right)^{+}e_{d+1}= e_{d+1}$. This implies that
\begin{eqnarray*}
    \homega_l^{(0)}- \omega^* &=&   \sum_{t \in \text{Exp}_l^{(0)}}e_{d+1}^{\top}\left(V^{(0)}_l\right)^{+}\left({x_t \atop z_{x_t}}\right)\xi_t.
\end{eqnarray*}
By definition of our algorithm, conditionally on $\text{Explore}_l^{(0)} = \text{True}$, the variables $\left(\xi_t \right)_{t \in \text{Exp}_l^{(0)}}$ are independent centered normal gaussian variables. Then, 

\begin{equation*}
\mathbb{P}_{\vert \text{Explore}_l^{(0)} = \text{True}}\left(\left \vert \homega_l^{(0)}- \omega^* \right \vert \geq \sqrt{2\sum_{t \in \text{Exp}_l^{(z)}} \left(e_{d+1}^{\top}\left(V^{(0)}_l\right)^{+} \left({x_t \atop z_{x_t}}\right)\right)^2\log\left(\frac{l(l+1)}{ \delta} \right)}\right) \leq \frac{ \delta}{l(l+1)}.
\end{equation*}
Using again $V^{(0)}_l\left(V^{(0)}_l\right)^{+}e_{d+1}= e_{d+1}$ and the definition of $V^{(0)}_l$, we find that 
\begin{equation}\label{eq:bound_omega}
\mathbb{P}_{\vert \text{Explore}_l^{(0)} = \text{True}}\left(\left \vert \homega_l^{(0)}- \omega^* \right \vert \geq \sqrt{2 e_{d+1}^{\top}\left(V^{(0)}_l\right)^{+}e_{d+1}\log\left(\frac{l(l+1)}{ \delta} \right)}\right) \leq  \frac{\delta}{l(l+1)}.
\end{equation}
Now, Lemma \ref{lem:c-opt} and the definition of $\mu_l^{(0)}$ imply that
\begin{equation*}
    e_{d+1}^\top\left(V^{(0)}_l\right)^{+} e_{d+1} \leq \frac{\epsilon_l^2}{2\log\left(l(l+1)/ \delta\right)}.
\end{equation*}
Finally, Equation \eqref{eq:bound_omega} implies that
\begin{equation*}
\mathbb{P}_{\vert \text{Explore}_l^{(0)} = \text{True}}\left(\left \vert \homega_l^{(0)}- \omega^* \right \vert \geq \epsilon_l\right) \leq \frac{ \delta}{l(l+1)}.
\end{equation*}
Using a union bound over the phases $\text{Exp}^{(0)}_l$ yields the result.

\subsubsection{Proof of Lemma \ref{lem:E}}

To prove Lemma \ref{lem:E}, we begin by showing that it is enough to prove that for $l\geq 1$, 
\begin{eqnarray}\label{eq:distinction_cas}
  \mathcal{F}_l \supset&& \left\{\exists x^* \in \argmax_{x \in \mathcal{X}} x^\top \gamma^* : \text{Explore}_l^{(z_{x^*})} = \text{ True and }x^* \notin \mathcal{X}_{l+1}^{(z_{x^*})}\right\} \\
  & \bigcup&  \Bigg\{\bigcap_{l'\leq l}\overline{\left\{\exists x^* \in \argmax_{x \in \mathcal{X}} x^\top \gamma^* : \text{Explore}_{l'}^{(z_{x^*})} = \text{ True and }x^* \notin \mathcal{X}_{l'+1}^{(z_{x^*})}\right\}}\nonumber\\
  && \ \ \  \bigcap\left\{\text{Explore}_l^{(0)} = \text{ True and } \forall  x^* \in \argmax_{x \in \mathcal{X}} x^\top \gamma^*,  \widehat{z^*}_{l+1} = - z_{x^*}\right\}\ \ \ \Bigg\}.  \nonumber
\end{eqnarray}
Indeed, denoting $\mathcal{F}_l^{(1)} = \left\{\exists x^* \in \argmax_{x \in \mathcal{X}}x^\top \gamma^* :\text{Explore}_l^{(z_{x^*})} = \text{ True and }x^* \notin \mathcal{X}_{l+1}^{(z_{x^*})}\right\}$ and\newline \noindent $\mathcal{F}_l^{(2)} = \left\{\text{Explore}_l^{(0)} = \text{ True and }\forall  x^* \in \argmax_{x \in \mathcal{X}}x^\top \gamma^*, \widehat{z^*}_{l+1} = -z_{x^*}\right\}$, we see that Equation \eqref{eq:distinction_cas} would then be rewritten as
\begin{eqnarray*}
  \mathcal{F}_l &\supset& \mathcal{F}_l^{(1)} \bigcup \left\{\bigcap_{l'\leq l}\overline{\mathcal{F}_{l'}^{(1)}} \bigcap\mathcal{F}_l^{(2)}\right\}
\end{eqnarray*}
which implies
\begin{eqnarray*}
  \bigcup_{l\geq 1}\mathcal{F}_l \ \ \supset \ \ \  \bigcup_{l\geq 1}\left\{\mathcal{F}_l^{(1)} \bigcup \left\{\left\{\bigcap_{l'\leq l}\overline{\mathcal{F}_{l'}^{(1)}} \bigcap\mathcal{F}_l^{(2)}\right\}\bigcup_{l'\leq l} \mathcal{F}_{l'}^{(1)}\right\}\right\} \ \ \ \ \supset \ \ \  \bigcup_{l\geq 1}\left\{\mathcal{F}_l^{(1)} \cup \mathcal{F}_l^{(2)}\right\}.
\end{eqnarray*}
Then, Equation \eqref{eq:distinction_cas}  would imply that
\begin{eqnarray*}
    \overline{\mathcal{F}} \ = \ \overline{\bigcup_{l\geq 1}\mathcal{F}_l} \ \subset\ \overline{\bigcup_{l\geq 1}\left\{\mathcal{F}_l^{(1)} \bigcup \mathcal{F}_l^{(2)}\right\}} \ = \ \bigcap_{l\geq 1}\left\{\overline{\mathcal{F}_l^{(1)}} \bigcap \overline{\mathcal{F}_l^{(2)}}\right\},
\end{eqnarray*}
thus proving Lemma \ref{lem:E}. To prove Equation \eqref{eq:distinction_cas}, we show that both $\mathcal{F}^{(1)}_l$ and  $\bigcap_{l'\leq l}\overline{\mathcal{F}_{l'}^{(1)}} \bigcap\mathcal{F}_l^{(2)}$ imply $\mathcal{F}_l$.

\smallskip
\underline{\textbf{If $\mathcal{F}^{(1)}_l$ is true:}} then $\exists x^* \in \argmax_{x \in \mathcal{X}}:$ Explore$_l^{(z_{x^*})} = $ True and $x^* \notin \mathcal{X}_{l+1}^{(z_{x^*})}$.
\newline
Without loss of generality, assume that $l>1$ is the smallest integer such that Explore$_l^{(z_{x^*})} = $ True and $x^* \notin \mathcal{X}_{l+1}^{(z_{x^*})}$. Then, necessarily $x^* \in \mathcal{X}_{l}^{(z_{x^*})}$ (because either $l=1$, or Explore$_{l-1}^{(z_{x^*})} = $ True). Now, because $x^* \in \mathcal{X}^{(z_{x^*})}_{l}\setminus \mathcal{X}^{(z_{x^*})}_{l+1}$, there exists $x \in \mathcal{X}^{(z_{x^*})}_{l}$ such that
\begin{equation*}
    (x - x^*)^{\top}\hgamma_l^{(z_{x^*})}\geq 3\epsilon_l
\end{equation*} 
and in particular
\begin{equation*}
    x^{\top}\hgamma_l^{(z_{x^*})} - \epsilon_l  > (x^*)^{\top}\hgamma_l^{(z_{x^*})} + \epsilon_l.
\end{equation*} 
Recall that by definition of $x^*$, $(\gamma^*)^{\top}(x^* - x) \geq 0$. This in turn implies that 
\begin{equation*}
     \left({x \atop z_{x^*}} \right)^{\top}\left({\hgamma_l^{(z_{x^*})} - \gamma^* \atop \homega_l^{(z_{x^*})} - \omega^*} \right) - \epsilon_l > \left({x^* \atop z_{x^*}} \right)^{\top}\left({\hgamma_l^{(z_{x^*})} - \gamma^* \atop \homega_l^{(z_{x^*})} - \omega^*} \right) + \epsilon_l.
\end{equation*} 
The last equation implies that either $\left({x \atop z_{x}}\right)^{\top}\left({\gamma_l^{(z)} - \gamma^* \atop \homega_l^{(z)} - \omega^*}\right) > \epsilon_l$ or $\left({x^* \atop z_{x^*}}\right)^{\top}\left({\gamma_l^{(z)} - \gamma^* \atop \homega_l^{(z)} - \omega^*}\right) < - \epsilon_l$, which in turn implies $\mathcal{F}_l$.

\underline{\textbf{If $\bigcap_{l'\leq l}\overline{\mathcal{F}_{l'}^{(1)}} \bigcap\mathcal{F}_l^{(2)}$ is true:}} then $\text{Explore}_l^{(0)} = \text{ True and }\forall  x^* \in \argmax_{x \in \mathcal{X}} x^\top \gamma^*,  \widehat{z^*_{l+1}} = - z_{x^*}$. Moreover, for all $l' \leq l$, $\text{Explore}_{l'}^{(z_{x^*})} = \text{ False or }x^* \in \mathcal{X}_{l'+1}^{(z_{x^*})}$.
\newline
Note that this case can only hold if all optimal actions $x^*$ belong to the same group $z_{x^*}$. Without loss of generality, assume that $l>1$ is the smallest integer such that $\text{Explore}_l^{(0)} = \text{ True and }\widehat{z^*_{l+1}} = -z_{x^*}$, and for all $l' \leq l$, $\text{Explore}_{l'}^{(z_{x^*})} = \text{ False or }x^* \in \mathcal{X}_{l'+1}^{(z_{x^*})}$. Note that because $\text{Explore}_l^{(0)} = $ True, necessarily $\text{Explore}_{l'}^{(z_{x^*})} =$ True for all $l' \leq l$, and in particular $x^* \in \mathcal{X}^{(z_{x^*})}_{l+1}$.

Then, there exists $x \in\mathcal{X}_{l+1}^{(-z_{x^*})}$ such that
\begin{equation*}
    \left({x \atop -z_{x^*}}\right)^{\top}\left({\hgamma_l^{(-z_{x^*})} \atop \homega_l^{(-z_{x^*})}}\right) - \left({x^* \atop z_{x^*}}\right)^{\top}\left({\hgamma_l^{(z_{x^*})} \atop \homega_l^{(z_{x^*})}}\right) + 2z_{x^*}\homega_l^{(0)}\geq 4\epsilon_l.
\end{equation*} 
Recall that all optimal actions $x^*$ are in the same group $z_{x^*}$, so $(\gamma^*)^{\top}(x^* - x) >0$. This in turn implies that 
\begin{equation*}
    \left({x \atop -z_{x^*}}\right)^{\top}\left({\hgamma_l^{(-z_{x^*})} - \gamma^* \atop \homega_l^{(-z_{x^*})} - \omega^*}\right) - \left({x^* \atop z_{x^*}}\right)^{\top}\left({\hgamma_l^{(z_{x^*})} - \gamma^* \atop \homega_l^{(z_{x^*})}- \omega^*}\right) + 2z_{x^*}(\homega_l^{(0)}- \omega^*) \geq 4\epsilon_l.
\end{equation*} 
The last equation implies that either $\left({x \atop -z_{x^*}}\right)^{\top}\left({\hgamma_l^{(-z_{x^*})} - \gamma^* \atop \homega_l^{(-z_{x^*})} - \omega^*}\right) \geq \epsilon_l$, or $\left({x^* \atop z_{x^*}}\right)^{\top}\left({\hgamma_l^{(z_{x^*})} - \gamma^* \atop \homega_l^{(z_{x^*})}- \omega^*}\right) \leq -\epsilon_l$, or $z_{x^*}(\homega_l^{(0)}- \omega^*) \geq \epsilon_l$, which in turn implies $\mathcal{F}_l$.


\subsubsection{Proof of Lemma \ref{lem:kappa-l:tech}}
The first claim holds for $l=1$. For $l\geq 1$, for any $x\in\cX_{l+1}^{(-1)}\cup\cX_{l+1}^{(1)}$, we have $\widehat \Delta^{l+1}_{x} \leq \Delta_{x} +8\epsilon_{l}$ on $\overline{\mathcal F}$ according to the definition of $\widehat \Delta^{l+1}$ and $\mathcal{F}$. The first claim then follows.

For the second claim, Lemma \ref{lem:discard_subopt} gives that, on $\overline{\mathcal F}$, $\Delta_{x}<21\epsilon_{l}$ for any $x\in\cX_{l+1}^{(-1)}\cup\cX_{l+1}^{(1)}$. 
So $\Delta_{x}\geq 21\epsilon_{l}$ implies $x\notin \cX_{l+1}^{(-1)}\cup\cX_{l+1}^{(1)}$ and hence $l\geq \ell_{x}$ on $\overline{\mathcal F}$.

For the third claim, we notice that
$$\max_{x'\in \cX^{(z_{x})}_{\ell_{x}}} (a_{x'}-a_{x})^\top \widehat\theta^{(z_{x})}_{\ell_{x}} > 3\epsilon_{\ell_{x}},$$
since $x\notin \cX_{\ell_{x}+1}$. Since the left-hand side is smaller than $\Delta_{x}+2\epsilon_{\ell_{x}}$ on $\overline{\mathcal F}$, we get
$\Delta_{x} > \epsilon_{\ell_{x}}$.

\end{document}